\definecolor{newcolor}{rgb}{.8,.349,.1}
\newcommand{\Rmnum}[1]{\uppercase\expandafter{\romannumeral #1}}
\newtheorem{theorem}{Theorem}
\numberwithin{equation}{section}
\numberwithin{figure}{section} 
\numberwithin{table}{section}
\theoremstyle{definition}
\newtheorem{definition}{Definition}
\newtheorem{example}{Example}
\newtheorem{remark}{Remark}
\renewcommand\baselinestretch{1.2}
\begin{document}

\verso{Chuan Fan, Kailiang Wu}

\begin{frontmatter}

\title{{\bf High-Order Oscillation-Eliminating Hermite WENO Method for Hyperbolic Conservation Laws}\tnoteref{tnote1}}  
 
\author[1]{Chuan {Fan}}
\ead{fanc@sustech.edu.cn}
\author[1,2]{Kailiang {Wu}\corref{cor1}}
\cortext[cor1]{Corresponding author.}
\ead{wukl@sustech.edu.cn}

\address[1]{Department of Mathematics and Shenzhen International Center for Mathematics, Southern University of Science and Technology, Shenzhen 518055, China}
\address[2]{Guangdong Provincial Key Laboratory of Computational Science and Material Design, Shenzhen 518055, China} 

\begin{abstract}

This paper proposes high-order accurate, oscillation-eliminating, Hermite weighted essentially non-oscillatory (OE-HWENO) finite volume schemes for hyperbolic conservation laws,  
motivated by the oscillation-eliminating (OE) discontinuous Galerkin schemes recently proposed in [M.~Peng, Z.~Sun, and K.~Wu, {\em Math. Comp.}, 2024, {\tt doi.org/10.1090/mcom/3998}].  
 The OE-HWENO schemes incorporate an OE procedure after each Runge--Kutta stage, by dampening the first-order moments of the HWENO solution to suppress spurious oscillations without any problem-dependent parameter. The OE procedure acts as a moment filter and is derived from the solution operator of a novel damping equation, which is exactly solved without any discretization. Thanks to this distinctive feature, the OE-HWENO method remains stable with a normal CFL number, even for strong shocks resulting in highly stiff damping terms. To ensure the essentially non-oscillatory property of the OE-HWENO method across problems with varying scales and wave speeds, we design a scale-invariant and evolution-invariant damping equation and propose a generic dimensionless transformation for HWENO reconstruction. The OE-HWENO method offers several notable advantages over existing HWENO methods. First, the OE procedure is highly efficient and straightforward to implement, requiring only simple multiplication of first-order moments by a damping factor. Furthermore, we rigorously prove that the OE procedure maintains the high-order accuracy and local compactness of the original HWENO schemes and demonstrate that it does not compromise the spectral properties via the approximate dispersion relation for smooth solutions. Notably, the proposed OE procedure is non-intrusive, enabling seamless integration as an independent module into existing HWENO codes. Finally, we rigorously analyze the bound-preserving (BP) property of the OE-HWENO method using the optimal cell average decomposition approach [S. Cui, S. Ding, and K. Wu, {\em SIAM J. Numer. Anal.}, 62:775–810, 2024], which relaxes the theoretical BP constraint for time step-size and reduces the number of decomposition points, thereby further enhancing efficiency. Extensive benchmarks validate the accuracy, efficiency, high resolution, and robustness of the OE-HWENO method. 
\vspace{-0.0001mm}
\vspace{2mm}
\end{abstract}

\begin{keyword}	
{\bf Keywords:} 
Hyperbolic conservation laws, 
Hermite WENO scheme, 
oscillation-eliminating (OE) procedure, 
bound-preserving,  
high-order accuracy,
moment filter
\vspace{-11mm}
\end{keyword}

\end{frontmatter}


\renewcommand\baselinestretch{1.39}

\section{Introduction}

Hyperbolic conservation laws are a class of fundamental mathematical models used to depict the evolution of conservative variables in physical systems. These laws play a significant role in various science and engineering fields, such as fluid mechanics, gas dynamics, meteorology, oceanography, and hydrology. The equations governing hyperbolic conservation laws can be expressed as follows: 
\begin{equation}\label{sec:HCLs}
	\left\{
	\begin{aligned}
		&u_t+ \nabla \cdot \bm{f}(u)=0,~(\bm{x},t)\in \mathbb{R}^d\times \mathbb{R}^+,
		\\
		&u({\bm{x}},0)=u_0(\bm{x}),~~\;\bm{x} \in \mathbb{R}^d.
	\end{aligned}
	\right.
\end{equation}  
Solutions to these nonlinear hyperbolic conservation laws often lack regularity and may include discontinuities such as shocks, even if the initial conditions are smooth. High-order numerical methods employed to solve these problems frequently generate spurious non-physical oscillations near discontinuities, leading to numerical instability and potentially causing the solutions to blow up. Consequently, designing efficient and essentially non-oscillatory high-order numerical methods for solving hyperbolic conservation laws is critically important. 

Over the last few decades, the increasing demands for solving hyperbolic conservation laws and related equations have spurred vigorous developments and widespread applications of various high-order numerical methods. These methods include, but are not limited to, finite difference (FD) methods \cite{Harten,JS,Pirozzoli,WT}, finite volume (FV) methods \cite{HS,LOC,Shu1998}, and discontinuous Galerkin (DG) finite element methods \cite{CS2,DBTM,LLS2,LLS1,PSW}. Each of these methods offers distinct advantages, effectively addressing challenges posed by phenomena such as shock waves, contact discontinuities, and other intricate waves. Among these, the weighted essentially non-oscillatory (WENO) schemes represent a significant class of high-order numerical methods. These schemes were developed based on the earlier essentially non-oscillatory schemes \cite{Harten}. The third-order FV WENO scheme was first proposed by Liu, Osher, and Chan in 1994 \cite{LOC}. In 1996, Jiang and Shu introduced a general framework for constructing FD WENO schemes of arbitrary order by incorporating smooth indicators and nonlinear weights. Subsequently, in 1998, Shu designed a fifth-order FV WENO scheme \cite{Shu1998}. Since then, WENO schemes have gained popularity and have been further developed and refined in works such as \cite{BGS,BS,CCD,WC2001,ZQ_FDWENO,ZS_MRWENO}. The common attribute of these schemes is their ability to achieve high-order numerical accuracy in regions of smooth solutions while preserving favorable non-oscillatory properties near discontinuities. For more developments of WENO schemes, readers are referred to the recent review \cite{Shu2020} and the references therein.

The Hermite WENO (HWENO) method is an efficient variant of the WENO method based on the Hermite interpolations \cite{QS1}. This approach utilizes two pieces of information at each point (both the point value and its first derivative) or within each cell (the cell average and its first-order moment), rendering the stencil of HWENO schemes more compact than that of the standard WENO schemes. However, the derivatives or first-order moments may be quite large near discontinuities, which can easily produce numerical oscillations and make the HWENO schemes less robust than the standard WENO schemes.  Qiu and Shu \cite{QS1} first proposed one-dimensional (1D) FV HWENO schemes by evolving the equations \eqref{sec:HCLs} and the first-order derivative equations simultaneously and designed the HWENO limiter for DG methods. However, applying this approach directly to two-dimensional (2D) FV HWENO schemes as in \cite{QS2} did not effectively control spurious oscillations near strong shocks. To overcome this, Zhu and Qiu \cite{ZQS2008} developed an alternative 2D HWENO scheme using a set of different stencils to approximate the first-order derivatives. 
Subsequently, various HWENO schemes have been developed by employing different stencils or techniques for the spatial discretization of the governing equations and their first-order moments (or derivatives) \cite{Cap,LSQ1,LQ1,TLQ,ZA,ZCQ}. For example, Zhao, Chen, and Qiu proposed a hybrid HWENO scheme \cite{ZCQ} that uses a technique to modify the first-order moments, akin to an HWENO limiter, to manage their magnitudes and suppress spurious oscillations near discontinuities. 
These moment-limiting or derivative-limiting techniques have enhanced the robustness of HWENO schemes and were widely used in \cite{FZQ,FZXQ,LSQ2,WYE,ZZ,ZQ1,ZZQ}. However, the resulting moment-based HWENO schemes \cite{FZQ,LSQ2,ZCQ,ZQ1} typically achieve a maximum of fifth-order accuracy due to the application of moment-limiting approach. 
Recently, inspired by the oscillation-free DG methods \cite{LLS1}, Zhao and Qiu proposed a sixth-order oscillation-free HWENO scheme \cite{ZQ2} that modifies the first-order moment equations by incorporating damping terms to mitigate spurious oscillations.  
It is worth mentioning that these damping terms in \cite{ZQ2} include empirical parameters that heavily depend on numerical experience in simulations; inappropriate parameter choices can significantly affect the stability and performance of the resulting schemes. Additionally, the damping terms become highly stiff when simulating strong discontinuities and/or large-scale problems, leading to very stringent restrictions on the time step-size. The (modified) exponential Runge--Kutta (RK) time discretization \cite{HS_ERK} is often required to alleviate this issue.

The aim of this paper is to develop new robust, moment-based, high-order HWENO schemes, exemplified by a sixth-order version, for solving hyperbolic conservation laws. Building on the recent advancements in filter-based oscillation-eliminating DG (OEDG) approach \cite{PSW}, we propose novel oscillation-eliminating HWENO (OE-HWENO) method. This method effectively suppresses spurious oscillations across a wide range of scales and wave speeds, without relying on any problem-specific parameters across all cases tested. The OE-HWENO schemes exhibit several distinctive features: they maintain an essentially non-oscillatory behavior in the presence of discontinuities, offer scale invariance to accommodate multi-scale problems, and ensure evolution invariance across varying wave speeds. Additionally, they are rigorously proven to be bound-preserving (BP) through optimal convex decomposition \cite{CDW1,CDW2} under a suitable time step constraint, provided that the HWENO reconstructed values satisfy the desired bound constraints (see Theorem \ref{sec2:thm_BP2d} for details).   
Furthermore, the proposed OE-HWENO schemes retain many advantageous features of traditional HWENO schemes, including compact reconstructed stencils, the flexibility to use arbitrary linear weights, and high resolution for capturing discontinuities. The specific efforts and innovations of this work are detailed as follows:
\begin{itemize}
	\item A prevalent issue in moment-based HWENO schemes is the uncontrolled large variations of the first-order moments near discontinuities, which can easily lead to spurious oscillations and numerical instability. To address this challenge, we introduce an oscillation-eliminating (OE) procedure after each Runge--Kutta (RK) stage to control the magnitudes of the moments. The proposed OE technique acts as a filter on the first-order moments, based on the solution operator of the novel damping equations and theoretically maintaining the original high-order accuracy (see Theorems \ref{thm:accuracy} and \ref{thm:accuracy2d}), significantly differs from the moment-limiting techniques \cite{LSQ2,ZCQ,ZQ1}, which retained at most fifth-order accuracy for originally sixth-order HWENO schemes.

	\item Thanks to the linearity of our damping equations, they are exactly solvable without any discretization (Theorems \ref{thm:OE} and \ref{thm:2dOE}).  Consequently, the implementation of the OE procedure is straightforward and highly efficient, as it involves only the multiplication of first-order moments by a damping factor. This exact solver ensures the stability of OE-HWENO method when coupled with standard explicit RK time discretization with a normal CFL number, even in the presence of highly stiff damping terms associated with strong shocks. Unlike the damping-based oscillation-free HWENO method \cite{ZQ2}, our approach does not require empirical problem-dependent parameters or (modified) exponential time discretization.

	\item 
	To maintain the non-oscillatory behavior of the OE-HWENO schemes for problems spanning various scales and wave speeds, we propose a damping operator devoid of problem-dependent parameters, ensuring the scale and evolution invariance of the damping strength (Theorems \ref{sec2:thm_F} and \ref{thm:EI}). Additionally, we introduce a generic dimensionless transformation to achieve scale-invariant high-order accurate HWENO reconstruction for spatial discretization (Theorem \ref{sec2:thm_si}).

	\item We present a rigorous BP analysis (Theorem \ref{sec2:thm_BP2d}) of the OE-HWENO method, based on the optimal cell average decomposition (OCAD) approach in \cite{CDW1,CDW2}. Compared to the classic decomposition \cite{ZS1, ZS2}, the OCAD requires fewer internal points, and moreover, it allows us to establish the BP property of the OE-HWENO schemes under the mildest theoretical time step constraint. This further enhances the computational efficiency of our method.

	\item We prove that the OE procedure retains the original high-order accuracy of the HWENO schemes in theory (Theorems \ref{thm:accuracy} and \ref{thm:accuracy2d}). Furthermore, using the approximate dispersion relations (ADR) \cite{Pirozzoli}, we analyze the dispersion and dissipation properties of the proposed 1D OE-HWENO method (Section \ref{sec:ADR}), illustrating that the OE procedure does not affect the spectral properties of the original HWENO schemes for smooth solutions.
\end{itemize}

This paper is organized as follows: Section \ref{sec2} proposes the OE-HWENO method. Section \ref{sec3} conducts extensive benchmarks to illustrate the accuracy, high resolution, and robustness of the OE-HWENO method. Section \ref{sec4} concludes the paper. 

\section{Numerical schemes}\label{sec2} 

This section is organized into four subsections. Section \ref{framework} introduces the basic framework of the OE-HWENO method. Section \ref{sec:1D} presents the 1D OE-HWENO method, covering its computational details, accuracy analysis, scale invariance, evolution invariance, and the ADR analysis. Section \ref{sec:2D} details the 2D extension of the OE-HWENO method. Section \ref{sec:BP} analyzes the BP property of the OE-HWENO method via optimal convex decomposition.

\subsection{Framework of OE-HWENO approach}\label{framework} 

This subsection presents an outline of the OE-HWENO method for a general $d$-dimensional system of hyperbolic conservation laws: 
\begin{equation}\label{sec1:HCLs}
	\left\{
	\begin{aligned}
		&\bm{u}_t+ \nabla \cdot \bm{f}(\bm{u})=0,~(\bm{x}, t)\in \Omega\times [0,T],
		\\&
		{\bm{u}({\bm{x}},0)=\bm{u}_0(\bm{x})},~~\;\bm{x} \in \Omega.
	\end{aligned}
	\right.
\end{equation}   
Here, $\Omega\subset\mathbb{R}^{d}$ represents the (bounded) spatial domain, $\bm{x}$ denotes the spatial variables, $t$ is the time variable, $\bm{u}\in\mathbb{R}^N$  are the conservative variables, and $\bm{f}(\bm{u})\in\mathbb{R}^{N \times d}$ represents the flux functions corresponding to each spatial direction. Let $\mathcal{T}_h$ be a partition of the domain $\Omega$. To derive the weak form of equation \eqref{sec1:HCLs}, we multiply it by a test function  ${\phi}(\bm{x})$ and integrate over each cell 
$K\in\mathcal{T}_h$.  After applying the divergence theorem, we obtain
\begin{equation}\label{sec_weakform}
	\int_{K} \bm{u}_t {\phi}(\bm{x}) \mathrm{d} \bm{x} - \int_{K} \bm{f} \nabla {\phi}(\bm{x})\mathrm{d}\bm{x}+
	\int_{\partial K} (\bm{f} \bm{n}){\phi}(\bm{x}) \mathrm{d}S=0\quad \forall{\phi}(\bm{x})\in\mathbb{P}^{1}{(K)},
\end{equation}
where $\bm{n}\in\mathbb{R}^{d}$ is the unit outward normal of the boundary $\partial K$.
To achieve $(k+1)$-th order accuracy in space, we seek a piecewise polynomial vector function $\bm{u}_h(\bm{x},t)\in \mathbb{V}^k_h$ to approximate the exact solution $\bm{u}(\bm{x},t)$ for any fixed $t$, where
\begin{equation*}
	\mathbb{V}^k_h:=\big\{\bm{u}=(u_1,\cdots,u_N)^{\top}: u_\ell\big|_{K} \in \mathbb{P}^k(K),\ 1\le \ell\le N,\ \forall K\in\mathcal{T}_h \big\},
\end{equation*}
where $\mathbb{P}^k(K)$ is the space of polynomials of total degree up to $k$ in cell $K$.
For moment-based HWENO schemes, the test function ${\phi}(\bm{x})$ is taken from $\mathbb{P}^1(K)$ to derive the discrete evolution equations for the zeroth-order (cell-average) and first-order moments. 
It follows from  \eqref{sec_weakform} that
\begin{equation}\label{sec:semi}
	\frac{\mathrm{d}}{\mathrm{d}t} \int_{K}\bm{u}(\bm{x},t) {\phi}^{(\ell)}_K(\bm{x}) \mathrm{d}\bm{x}=  \int_{K} \bm{f} \nabla {\phi}^{(\ell)}_K(\bm{x})\mathrm{d}\bm{x}-
	\int_{\partial K} (\bm{f} \bm{n}){\phi}^{(\ell)}_K(\bm{x}) \mathrm{d}S,\quad 0 \le \ell \le d,
\end{equation}  
where $\{{\phi}^{(\ell)}_K(\bm{x})\}_{\ell=0}^{d}$ represent a basis of $\mathbb{P}^1(K)$ with ${\phi}^{(0)}_K(\bm{x}) \equiv \frac{1}{|K|}$. Let $\{\bar{\bm{u}}^{(\ell)}_K(t)\}^d_{\ell=0}$ denote the approximations to the zeroth-order ($\ell=0$) and first-order ($1 \le \ell \le d$) moments, namely,
\begin{equation*}
	\bar{\bm{u}}^{(\ell)}_K(t) \approx \int_{K}\bm{u}(\bm{x},t) {\phi}^{(\ell)}_K(\bm{x}) \mathrm{d}\bm{x}, \quad 0 \le \ell \le d.
\end{equation*}
Let ${\mathcal{L}}^{(\ell)}_{K}({\bm u}_h(\bm{x},t))$ denote the numerical approximation to $\int_{K}\bm{f}\nabla {\phi}^{(\ell)}_K(\bm{x})\mathrm{d}\bm{x} - \int_{\partial K} (\bm{f} \bm{n})\phi^{(\ell)}_K(\bm{x}) \mathrm{d}S$ with suitable quadrature rules and numerical flux on $\partial K$, and let $\bm{u}_h(\bm{x},t)$ be the approximate piecewise polynomial solution obtained by the HWENO reconstruction from $\{{\bm{U}}_K\}$. 
Then the semi-discrete FV HWENO schemes can be expressed as 
\begin{equation}\label{sec_semidiscrete}
	\frac{\mathrm{d}}{\mathrm{d}t}{{\bm{U}}_K(t)}= 
	\bm{\mathcal{L}}_{K}(\bm{u}_h(\bm{x},t)) \quad \mbox{with} 
	\quad \bm{\mathcal{L}}_{K}:=(\mathcal{L}^{(0)}_{K},\cdots,\mathcal{L}^{(d)}_{K}),
\end{equation}
where ${\bm{U}}_K=(\bar{\bm{u}}^{(0)}_K,\cdots,\bar{\bm{u}}^{(d)}_K)$.  
The detailed expressions for equations \eqref{sec:semi} and \eqref{sec_semidiscrete} will be presented in Section \ref{sec:1D} for the 1D case and in Section \ref{sec:2D} for the 2D case. In fact, the computations of $\bm{\mathcal{L}}_{K}(\bm{u}_h(\bm{x},t))$ involve only the values of $\bm{u}_h(\bm{x},t)$, rather than the full polynomials. For hyperbolic systems, the HWENO reconstruction at certain quadrature points should be performed in characteristic variables \cite{ZCQ,ZQ1,ZQ2}. This enhances the essentially non-oscillatory property, similar to the classical WENO schemes  in \cite{JS}. When characteristic decomposition is applied, we reconstruct only the point values rather than the full polynomials; however, we continue to use $\bm{u}_h(\bm{x},t)$ broadly to represent the HWENO reconstructed values at the quadrature points. 

Typically, the semi-discrete HWENO scheme \eqref{sec_semidiscrete} is further discretized in time using an explicit $r$th-order $s$-stage RK method:
\begin{equation*}
	\begin{aligned} 
		1.\quad &\text{Set}~~{\bm{U}}^{n,0}_K={\bm{U}}^n_K,\\
		2.\quad &\text{For}~~\ell=0,\ldots,s-1,~\text{compute the intermediate values:}\\
		~~\quad&\quad\bm{u}^{n,\ell}_h = {\mathcal H} \{  {\bm{U}}^{n,\ell}_K \},\\
		~~\quad&~~~{\bm{U}}^{n,\ell+1}_K=\sum\limits_{0\le m \le \ell}c_{\ell m}[
		{\bm{U}}^{n,m}_K + {\Delta t} d_{\ell m} \bm{\mathcal{L}}_{K}(\bm{u}_h^{n,m})  
		],\quad  \\
		3.\quad &\text{Set}~~{\bm{U}}^{n+1}_K={\bm{U}}^{n,s}_K,
	\end{aligned}
\end{equation*} 
where $\bm{u}_h^{n,m}$ is the HWENO reconstructed piecewise polynomial solution at the $m$-th stage of the RK method, the operator $\mathcal{H}$ is the standard HWENO reconstruction based on the zeroth-order and first-order moments in the cell $K$ and its adjacent cells, ${\Delta t}$ denotes the time step-size, and $\sum_{0\le m\le \ell}c_{\ell m}=1$.
The resulting HWENO scheme works well for smooth solutions. However, for strong discontinuities, it may generate spurious oscillations and suffer from nonlinear instability. This is due to the uncontrolled large variations of the first-order moments near discontinuities.

To address this critical issue, we introduce an OE procedure after each RK stage to control the magnitudes of the first-order moments, resulting in the novel OE-HWENO scheme:
\begin{subequations}
	\begin{align}
		1.\quad &\text{Set}~~{\bm{U}}^{\sigma,0}_K={\bm{U}}^n_K,\notag\\
		2.\quad &\text{For}~~\ell=0,\ldots,s-1,~\text{compute the intermediate values:}\notag\\
		~~\quad&\quad\quad\bm{u}^{\sigma,\ell}_h = {\mathcal H} \{  {\bm{U}}^{\sigma,\ell}_K \},\notag\\
		~~\quad&\quad\quad {\bm{U}}^{n,\ell+1}_K=\sum\limits_{0\le m\le\ell}c_{\ell m}\big[
		{\bm{U}}^{\sigma,m}_K + {\Delta t} d_{\ell m} \bm{\mathcal{L}}_K(\bm{u}_h^{\sigma,m})
		\big], \notag\\   
			~~\quad&\quad\quad {\bm{U}}^{\sigma,\ell+1}_K = {\mathcal F}_{\rm OE} \left \{  {\bm{U}}^{n,\ell+1}_{K'} \right \}_{K' \in \Lambda (K) }, \notag \\
		3.\quad&\text{Set}~~{\bm{U}}^{n+1}_K={\bm{U}}^{\sigma,s}_K, \notag
	\end{align}
\end{subequations}  
where $\Lambda (K) $ denotes the local stencil of the high-order linear Hermite reconstruction for cell $K$, and  
$\mathcal{F}_{\rm OE}$ denotes the proposed OE procedure.  

The OE procedure ${\bm{U}}^{\sigma,\ell+1}_K = {\mathcal F}_{\rm OE} \left \{  {\bm{U}}^{n,\ell+1}_{K'} \right \}_{K' \in \Lambda (K) }$ is founded upon the solution operator of a novel damping equation, which is exactly solvable without any discretization. 
A similar decoupled OE procedure, based on the technique of using damping equations to suppress spurious oscillations, was proposed in \cite{PSW} and has been applied to solve compressible Euler, two-phase flow, and MHD equations \cite{PSW, YAW2024, LW2024MHD, ZW2024MHD}. This significantly distinguishes it from the moment-limiting techniques utilized in existing HWENO schemes. 
More precisely, the modified moments  ${\bm{U}}^{\sigma,\ell+1}_K$ are defined as the zeroth- and first-order moments of $\bm{u}_\sigma(\bm{x},{\Delta t})$, where $\bm{u}_\sigma(\bm{x},\tau) \in \mathbb V_h^k$ ($0 \le \tau \le \Delta t$) represents the solution to the following damping equations:
\begin{equation}\label{sec:DampingEq}
	\left\{
	\begin{aligned}
		&\frac{\mathrm{d} }{\mathrm{d} \tau} \int_{K} \bm{u}_\sigma \phi \mathrm{d}\bm{x} + \sigma_{K}(\bm{u}_h^*) \int_{K}(\bm{u}_\sigma-P^{0}\bm{u}_\sigma)  \phi \mathrm{d}\bm{x}= 0 \quad \forall \phi \in \mathbb{P}^1(K),
		\\
		&\bm{u}_\sigma(\bm{x},0) = \bm{u}_h^{*} (\bm{x}) = {\Pi_h}  \left \{  {\bm{U}}^{n,\ell+1}_{K} \right \}, 
	\end{aligned}
	\right. 
\end{equation}
Here, $\tau$ represents a pseudo-time distinct from $t$, and the operator ${\Pi_h}$ denotes a simple linear Hermite reconstruction. 
The operator $P^0$ represents the standard $L^2$ projection into $\mathbb{V}^0_h$. Specifically,   $P^0\bm{u}\big|_{K}=\bar{\bm{u}}_K^{(0)}=:\bar{\bm{u}}_K$ denotes the cell average of $\bm{u}$ over $K$.  
In the damping equations \eqref{sec:DampingEq}, $\sigma_{K}(\bm{u}_h^*)$ represents the damping coefficient. It should be carefully chosen to be small in smooth regions and large near discontinuities, as defined in \eqref{sec3:sigma_1d} and \eqref{sec3:sigma_2d} for the 1D and 2D cases, respectively.   
Notably, the damping coefficient $\sigma_{K}(\bm{u}_h^*)$ in \eqref{sec:DampingEq} only depends on the ``initial" solution $\bm{u}_\sigma(\bm{x},0)= \bm{u}_h^*(\bm{x})$. Consequently, the damping ODE system \eqref{sec:DampingEq} is linear, and its exact solution can be explicitly formulated without any time discretization, facilitating easy implementation of the OE procedure with very low computational cost. 
 Although the operator ${\Pi_h}$ is involved in the initial value of \eqref{sec:DampingEq}, the final expression of the OE procedure does not require the implementation of ${\Pi_h}$. Thanks to this remarkable feature and the exact solver of \eqref{sec:DampingEq}, the OE procedure is highly efficient and easy to implement. 
 The simple expression of the OE procedure 
  will be detailed in subsections \ref{sec:1D} and \ref{sec:2D} for the 1D and 2D cases, respectively.

\subsection{One-dimensional OE-HWENO method}\label{sec:1D}	

Consider the 1D scalar conservation law 
\begin{equation}\label{sec2:1dHCLS}
	\begin{cases}
		u_t + f(u)_x=0,~(x, t)\in \Omega\times [0,T],
		\\
		u_0(x)=u(x,0),~x\in \Omega.
	\end{cases}
\end{equation}
A uniform  partition of computational domain $\Omega=[a,b]$ is defined as $\Omega=\cup^{N_x}_{i=1}I_i$ 
with $I_i=[x_{i-\frac12},x_{i+\frac12}]$ and $h_x=x_{i+\frac12}-x_{i-\frac12}$ denoting the constant spatial step-size. 
Define $x_i=\frac12(x_{i-\frac12}+x_{i+\frac12})$ as the center of cell $I_i$. 
Following \eqref{sec:semi} with $\phi^{(0)}_i(x)=\frac{1}{h_x}$ and $\phi^{(1)}_i(x)=\frac{x-x_i}{h^2_x}$, we obtain
\begin{equation}\label{sec2:WeakForm1d}
	\left \{
	\begin{aligned} 
        & 	\frac{{\rm d} }{{\rm d} t} \bigg(\int_{I_i}u(x,t) \frac{1}{h_x}\mathrm{d}x\bigg)
		=- \frac 1 {h_x}\;\; \bigg( {f\big(u(x_{i+\frac12},t)\big)-f\big(u(x_{i-\frac12},t)} \big) \bigg),\\ 
		&\frac{{\rm d} }{{\rm d} t}  \bigg(\int_{I_i} u(x,t) \frac{x-x_i}{h^2_x}\mathrm{d}x\bigg)
		=- \frac 1 {2h_x} \bigg( f\big(u(x_{i-\frac12},t)\big) +f\big(u(x_{i+\frac12},t)\big)\bigg) +\frac 1 {h_x^2} \int_{I_i}f(u)\mathrm{d}x.
	\end{aligned}
	\right.
\end{equation} 
Let $\bar{u}_i(t)$ and $\bar{v}_i(t)$ denote the approximations to the zeroth- and first-order moments of $u(x,t)$ in $I_i$, respectively, that is,  
\begin{equation*} 
		\bar{u}_i(t) \approx\frac{1}{h_x}\int_{I_i}u(x,t)\mathrm{d}x,\quad \bar{v}_i(t) \approx \frac{1}{h_x} \int_{I_i} u(x,t) \frac{x-x_i}{h_x}\mathrm{d}x.
\end{equation*} 
Based on $\{\bar{u}_i, \bar{v}_i\}$, the $(k + 1)$th-order HWENO method constructs a piecewise polynomial solution $u_h(x,t)$ of degree $k$ to approximate the exact solution. By approximating the flux $ f\big(u(x_{i+\frac12},t)$ with an appropriate numerical flux and evaluating the integral  $\int_{I_i}f(u)\mathrm{d}x$ with an $L$-point Gauss--Lobatto quadrature (where $L = \lceil \frac{k+3}2 \rceil $), we obtain the following semi-discrete FV HWENO scheme:  
\begin{equation}\label{sec2:1dSemi_HWENO}
	\left\{
	\begin{aligned}
		&\frac{{\rm d} \bar{u}_i(t)}{{\rm d} t}=-\frac{1}{h_x}\;(\hat{f}_{i+\frac12}-\hat{f}_{i-\frac12}) =: \mathcal{L}_i^{(0)}(u_h(x,t)),
		\\
		&\frac{{\rm d} \bar{v}_i(t)}{{\rm d} t}=-\frac{1}{2h_x}(\hat{f}_{i-\frac12}+\hat{f}_{i+\frac12})+\frac{1}{h_x} \sum\limits_{\ell=1}^L {\omega}^{\rm GL}_\ell f(u_h({x}^{\rm GL}_{i,\ell},t)) =: \mathcal{L}_i^{(1)}(u_h(x,t)),
	\end{aligned}
	\right.
\end{equation} 
which corresponds to \eqref{sec_semidiscrete} in the 1D scalar case. 
Here, $\{x^{\rm GL}_{i,\ell}\}^L_{\ell = 1}$ represent the Gauss--Lobatto quadrature nodes in $I_i$, with the normalized weights $\{\omega^{\rm GL}_{\ell}\}^L_{\ell = 1}$ satisfying $\sum_{\ell = 1}^{L}\omega^{\rm GL}_\ell = 1$. For example, in the sixth-order OE-HWENO scheme, we use a four-point Gauss--Lobatto quadrature with 
\begin{align*}
	& {x}^{\rm GL}_{i,1}=x_{i-\frac12},\quad {x}^{\rm GL}_{i,2}=x_{i-\frac{\sqrt{5}}{10}},\quad {x}^{\rm GL}_{i,3}=x_{i+\frac{\sqrt{5}}{10}},\quad {x}^{\rm GL}_{i,4}=x_{i+\frac12},
	\\
	&{\omega}^{\rm GL}_{1}={\omega}^{\rm GL}_{4}=\frac{1}{12}, \quad {\omega}^{\rm GL}_{2}={\omega}^{\rm GL}_{3}=\frac{5}{12}.
\end{align*} 
For the numerical flux $\hat{f}_{i + \frac{1}{2}}$, we opt for the simple Lax--Friedrichs flux: 
 \begin{equation}\label{sec2:Def_LF1d}
 	\hat{f}_{i+\frac12}=\frac12\left( f(u_{i+\frac12}^-) + f(u_{i+\frac12}^+)  - {\alpha} ( u_{i+\frac12}^+  - u_{i+\frac12}^- ) \right),
 \end{equation}
 where $u_{i+\frac12}^{-}$ and $u_{i+\frac12}^{+}$ represent the left-hand and right-hand limits of $u_h(x,t)$ at $x=x_{i+\frac12}$, respectively, and $\alpha=\max_{1\le i\le N_x}|f'(\bar{u}_i)|$. Other suitable numerical fluxes, such as Godunov, HLL, or HLLC fluxes, can also be used.
  
Let $\bm{U}_i(t)=\big(\bar{{u}}_i(t),\bar{{v}}_i(t)\big)^\top$ and $\bm{\mathcal{L}}_{i}(u_h) =(\mathcal{L}_i^{(0)}(u_h),\mathcal{L}_i^{(1)}(u_h))$. Then the semi-discrete HWENO scheme \eqref{sec2:1dSemi_HWENO} can be rewritten as 
\begin{equation}
	\frac{\mathrm{d}}{\mathrm{d}t}\bm{U}_i(t)=\bm{\mathcal{L}}_{i}(u_h),
\end{equation}
which can be further discretized in time using a RK method. To suppress spurious oscillations near discontinuities, we introduce an OE procedure after each RK stage to obtain the 1D OE-HWENO schemes. For instance, the 1D OE-HWENO scheme, in conjunction with the classic third-order explicit  strong-stability-preserving (SSP) RK method, is 
\begin{equation}\label{sec2:1dH}
\left\{
\begin{aligned}
	{\bm{U}}^{\sigma,0}_i=&~{\bm{U}}^{n}_i,\\
	\bm{U}^{n,1}_i=&~{\bm{U}}^{\sigma,0}_i+\Delta t \bm{\mathcal{L}}_{i}(u^{\sigma,0}_h),\quad\quad\quad\quad\quad~~~~
	\bm{U}^{\sigma,1}_i = \mathcal{F}_{\rm OE} \{\bm{U}^{n,1}_{j} \}_{j\in\Lambda_i},
	\\ 
	\bm{U}^{n,2}_i=&~\frac34{\bm{U}}^{\sigma,0}_i+\frac14( {\bm{U}}^{\sigma,1}_i +\Delta t \bm{\mathcal{L}}_{i}(u^{\sigma,1}_h) ),\quad 
	\bm{U}^{\sigma,2}_i = \mathcal{F}_{\rm OE} \{\bm{U}^{n,2}_{j} \}_{j\in\Lambda_i},
	\\ 
	\bm{U}^{n,3}_i=&~\frac13{\bm{U}}^{\sigma,0}_i+\frac23( {\bm{U}}^{\sigma,2}_i +\Delta t \bm{\mathcal{L}}_{i}(u^{\sigma,2}_h) ), \quad 
	\bm{U}^{\sigma,3}_i = \mathcal{F}_{\rm OE} \{\bm{U}^{n,3}_{j} \}_{j\in\Lambda_i},
	\\
	 {\bm{U}}^{n+1}_i  = &~ {\bm{U}}^{\sigma,3}_i, 
\end{aligned}
\right.
\end{equation}
where ${u}^{\sigma,\ell}_h (x) = {\mathcal H} \{  {\bm{U}}^{\sigma,\ell}_i \}$, $\ell=0,1,2$;  
the operator $\mathcal{H}$ represents the standard 1D HWENO reconstruction based on the values of $\{{\bm{U}}^{\sigma,\ell}_i\}$; and the operator $\mathcal{F}_{\rm OE}$ denotes the 1D OE procedure with $\Lambda_i=\{i-1,i,i+1\}$. We will introduce the operators $\mathcal{H}$ and $\mathcal{F}$ in Subsections \ref{sec:1dH} and \ref{sec:1dF}, respectively. 
 
\subsubsection{HWENO operator $\mathcal{H}$}\label{sec:1dH}
Taking the sixth-order OE-HWENO method as an example,  
the piecewise polynomial functions $\{u_h^{\sigma,\ell}\}^2_{\ell=0}$ in \eqref{sec2:1dH} are reconstructed as follows: based on the values $\{\bm{U}_{i+k}^{\sigma,\ell}\}^1_{k=-1}$, we construct a quintic polynomial $p_0(x)$, a cubic polynomial $p_1(x)$, and two linear polynomials $\{p_k(x)\}^3_{k=2}$ in $I_i$. Then we compute the smoothness indicators of $\{p_k(x)\}^3_{k=0}$ in $I_i$. Finally,  through the nonlinear HWENO weights, we obtain 
\begin{equation}\label{sec3:1dHWENO_rec} 
	{u_h^{\sigma,\ell}(x)={\mathcal H}\{  {\bm{U}}^{\sigma,\ell}_i \}:=\omega^\text{H}_0\bigg(\frac{1}{\gamma^\text{H}_0}p_0(x) - \frac{\gamma^\text{H}_1}{\gamma^\text{H}_0}\tilde{q}_1(x) \bigg) + \omega^\text{H}_1\tilde{q}_1(x) \qquad \forall x \in I_i,} 
\end{equation} 
where $\tilde{q}_1(x)=\omega^\text{L}_1\big(\frac{1}{\gamma^\text{L}_1}p_1(x) - \frac{\gamma^\text{L}_2}{\gamma^\text{L}_1}p_2(x) - \frac{\gamma^\text{L}_3}{\gamma^\text{L}_1}p_3(x)\big) + \omega^\text{L}_2p_2(x)  + \omega^\text{L}_3p_3(x)$, and $\{\omega^\text{H}_k\}^1_{k=0}$ and $\{\omega^\text{L}_k\}^3_{k=1}$ are the nonlinear weights. 
The linear weights $\{\gamma^\text{H}_k\}^1_{k=0}$ and $\{\gamma^\text{L}_k\}^3_{k=1}$ are positive, with $\sum^{1}_{k=0}\gamma^\text{H}_k=1$ and $\sum^{3}_{k=1}\gamma^\text{L}_k=1$. 
For the reader's convenience, the detailed procedure of HWENO reconstruction \eqref{sec3:1dHWENO_rec} is provided in \ref{sec:A_1dHWENO}. 
Note that the standard HWENO operator, ${\mathcal H}$, is not scale-invariant and cannot consistently suppress spurious oscillations in problems spanning various scales. To address this issue, a scale-invariant dimensionless HWENO operator, ${\mathcal H}_{\mathcal D}$, will be introduced in Section \ref{sec2:SI} as an effective alternative to ${\mathcal H}$.

\subsubsection{OE operator $\mathcal{F}_{\rm OE}$} \label{sec:1dF}

We now detail the OE procedure  $\bm{U}^{\sigma,\ell+1}_i=\mathcal{F}_{\rm OE}\{\bm{U}^{n,\ell+1}_{j}\}_{j\in\Lambda_i}$  $(\ell=0,1,2)$ in \eqref{sec2:1dH}. For clarity and  simplicity, we denote $\bm{U}^{\sigma}_i=\bm{U}^{\sigma,\ell+1}_i$ and $\bm{U}_{i}=\bm{U}^{n,\ell+1}_{i}$ in the following. 
The OE modified moments $\bm{U}^{\sigma}_i$ are defined as the zeroth-order and first-order moments of ${u}_\sigma({x},{\Delta t})$, where ${u}_\sigma({x},\tau) \in \mathbb V_h^k$ ($0 \le \tau \le \Delta t$) represents the solution to the following damping equations:
\begin{equation}\label{sec2:OE1d_ODE}
	\left\{
	\begin{aligned}
		&\frac{\mathrm{d} }{\mathrm{d} \tau} \int_{I_i} {u}_\sigma \phi \mathrm{d}{x} + \sigma_{i}({u}_h^*) \int_{I_i}({u}_\sigma-P^{0}{u}_\sigma)  \phi \mathrm{d}{x}= 0 \quad \forall \phi \in \mathbb{P}^1(I_i),
		\\
		&{u}_\sigma({x},0) = {u}_h^* ({x}) = {\Pi_h}  \left \{  {\bm{U}}_{i} \right \}, 
	\end{aligned}
	\right. 
\end{equation}
where  the operator ${\Pi_h}$ denotes the $(k+1)$th-order linear Hermite reconstruction, and  
the damping coefficient  
$$\sigma_{i}(u_h^*)=\frac{ \alpha }{ {h_x} } {\widehat{\sigma}_{i}(u_h^*)}.$$ 
To make the damping strength scale-invariant, we define $\widehat{\sigma}_{i}(u_h^*)$ as
\begin{equation}\label{sec3:sigma_1d}
	\widehat {\sigma}_{i}(u_h^*)= 
	\begin{cases}
		\displaystyle
		0,\quad &\text{if}~{\max\limits_{1\le i \le N_x}} |\bar{u}_i -\overline u_\Omega | = 0,
		\\ 
		\displaystyle
		\frac{\sum\limits_{m\in\{0,1\}} h_x^m\left( \big| [\![\partial^m_x u_h^*]\!]_{i-\frac12}\big|+\big|[\![\partial^m_x u_h^*]\!]_{i+\frac12}\big| \right) }{ \max_{1\le i \le N_x} |\bar{u}_i -\overline u_\Omega |},\quad &\mbox{otherwise},
	\end{cases} 
\end{equation}
where $[\![ \partial^m_x u_h^* ]\!]_{i+\frac12}=\partial^m_x u_h^* (x_{i+\frac12}^+) - \partial^m_x u_h^* (x_{i+\frac12}^-)$ denotes the jump of $\partial^m_x u_h^*$ at the cell interface $x_{i+\frac12}$, and $\overline u_\Omega$ represents the average of $u_h^*$ over the whole domain $\Omega$, namely, 	for uniform meshes, 
$$\overline u_\Omega = \frac{1}{N_x} \sum_{i=1}^{N_x} \bar{u}_i.$$ 	
Note that $\max_{1\le i \le N_x} |\bar{u}_i -\overline u_\Omega |$ in \eqref{sec3:sigma_1d} is a global constant over all the cells and is computed  only once in each OE step.

Since the damping coefficient $\sigma_{i}({u}_h^*)$ only depends on the ``initial" value ${u}_\sigma({x},0) = {u}_h^* ({x})$, the damping equations \eqref{sec2:OE1d_ODE} are essentially a linear system of ODEs and are exactly solvable without requiring discretization. Moreover, we discover that the final expression of the OE procedure can be formulated in a simple form without implementing the linear Hermite reconstruction ${\Pi_h}$. Specifically, we  have the following conclusion.
	
\begin{theorem}[Exact solver of OE procedure]\label{thm:OE}
	Denote $(\bar{u}_i^{\sigma},\bar{v}_i^{\sigma}):=\bm{U}^{\sigma}_i$. 
	The OE procedure $ \bm{U}^{\sigma}_i=\mathcal{F}_{\rm OE}\{\bm{U}_{j}\}_{j\in\Lambda_i} $ can be exactly solved and explicitly expressed as 
	\begin{equation}\label{sec:1dOE_sol}
		\left\{\begin{aligned}
			\bar{u}_i^{\sigma}&=\bar{u}_i,
			\\
			\bar{v}_i^{\sigma}&=\bar{v}_i\exp\bigg(-\alpha\frac{{\Delta t}}{h_x} \widehat \sigma_{i}(u_h^*) \bigg),
		\end{aligned} \right.
	\end{equation}
	where the coefficient $\widehat \sigma_{i}(u_h^*)$ is defined in \eqref{sec3:sigma_1d}. 
	For the 1D sixth-order OE-HWENO scheme, 
	the jumps $[\![\partial^m_x u_h^*]\!]_{i+\frac12}$ in \eqref{sec3:sigma_1d} can be explicitly expressed as 
	\begin{equation}\label{eq:jump1}
		[\![\partial^m_x u_h^*]\!]_{i+\frac12}=
		\left\{  
		\begin{aligned} 
			&\frac{-13\bar{{u}}_{i-1}-31\bar{{u}}_i+31\bar{{u}}_{i+1}+13\bar{{u}}_{i+2}-50\bar{{v}}_{i-1}-370\bar{{v}}_{i}-370\bar{{v}}_{i+1}-50\bar{{v}}_{i+2}}{108},~m=0
			\\
			&\frac{-5\bar{{u}}_{i-1}+5\bar{{u}}_{i}+5\bar{{u}}_{i+1}-5\bar{{u}}_{i+2}-22\bar{{v}}_{i-1}-54\bar{{v}}_{i}+54\bar{{v}}_{i+1}+{22\bar{{v}}_{i+2}}}{36h_x},\quad\quad\quad~ m=1.
		\end{aligned}
		\right.
	\end{equation}
	Note that the above explicit exact solver \eqref{sec:1dOE_sol} of the OE procedure only involves the values $\bar{u}_i$ and $\bar{v}_i$, without the need to formulate the linear Hermite reconstruction $u_h^*$. 
\end{theorem}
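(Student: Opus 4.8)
The plan is to reduce the damping system \eqref{sec2:OE1d_ODE} to two scalar ODEs for the moments by testing against the natural basis $\{\phi^{(0)}_i,\phi^{(1)}_i\}$ of $\mathbb{P}^1(I_i)$, where $\phi^{(0)}_i(x)=\frac{1}{h_x}$ and $\phi^{(1)}_i(x)=\frac{x-x_i}{h_x^2}$, and then to observe that these two equations decouple. Writing $\bar{u}_i(\tau)=\int_{I_i}u_\sigma\phi^{(0)}_i\,\mathrm{d}x$ and $\bar{v}_i(\tau)=\int_{I_i}u_\sigma\phi^{(1)}_i\,\mathrm{d}x$, the fact that the linear Hermite reconstruction $\Pi_h$ reproduces the zeroth- and first-order moments of the central cell gives the initial data $\bar{u}_i(0)=\bar{u}_i$ and $\bar{v}_i(0)=\bar{v}_i$. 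The target $\bm{U}^\sigma_i$ is then simply $(\bar{u}_i(\Delta t),\bar{v}_i(\Delta t))$.

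First I would take $\phi=\phi^{(0)}_i\equiv\frac{1}{h_x}$. Since $P^0 u_\sigma$ is by definition the cell average of $u_\sigma$ over $I_i$, the function $u_\sigma-P^0 u_\sigma$ has vanishing integral over $I_i$, so the damping term $\sigma_{i}(u_h^*)\int_{I_i}(u_\sigma-P^0 u_\sigma)\phi^{(0)}_i\,\mathrm{d}x$ is identically zero. Hence $\frac{\mathrm{d}}{\mathrm{d}\tau}\bar{u}_i(\tau)=0$, giving $\bar{u}^\sigma_i=\bar{u}_i(\Delta t)=\bar{u}_i$. Next I would take $\phi=\phi^{(1)}_i$. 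Because $P^0 u_\sigma$ is constant on $I_i$ and $\int_{I_i}(x-x_i)\,\mathrm{d}x=0$ (as $x_i$ is the cell center), the projection contributes nothing, so $\int_{I_i}(u_\sigma-P^0 u_\sigma)\phi^{(1)}_i\,\mathrm{d}x=\bar{v}_i(\tau)$. This yields the scalar linear ODE $\frac{\mathrm{d}}{\mathrm{d}\tau}\bar{v}_i(\tau)+\sigma_{i}(u_h^*)\bar{v}_i(\tau)=0$. The crucial observation is that $\sigma_{i}(u_h^*)$ depends only on the frozen initial datum $u_\sigma(\cdot,0)=u_h^*$ and is therefore constant in $\tau$; the ODE thus has constant coefficient and integrates exactly to $\bar{v}_i(\tau)=\bar{v}_i\exp(-\sigma_{i}(u_h^*)\tau)$. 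Evaluating at $\tau=\Delta t$ and substituting $\sigma_{i}(u_h^*)=\frac{\alpha}{h_x}\widehat{\sigma}_{i}(u_h^*)$ produces \eqref{sec:1dOE_sol}. Notably, only the two scalar moments enter the final result, so $\Pi_h$ never needs to be formed explicitly.

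For the jump formulas \eqref{eq:jump1}, I would compute the linear Hermite reconstruction explicitly following the procedure in \ref{sec:A_1dHWENO}. Working in the scaled variable $\xi=(x-x_i)/h_x$, the quintic $p_0$ on $I_i$ is the unique degree-five polynomial matching the six moment conditions on the stencil $\{I_{i-1},I_i,I_{i+1}\}$; solving this $6\times 6$ system expresses $p_0^{(i)}$ as a fixed linear combination of $\{\bar{u}_{i-1},\bar{u}_i,\bar{u}_{i+1},\bar{v}_{i-1},\bar{v}_i,\bar{v}_{i+1}\}$. By translation invariance on the uniform mesh, the reconstruction $p_0^{(i+1)}$ on $I_{i+1}$ is the same combination with all indices shifted by one. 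I would then differentiate $m$ times, take the one-sided limits at $x_{i+\frac12}$ from $p_0^{(i)}$ (left) and $p_0^{(i+1)}$ (right), and subtract, which naturally brings in all four cells $I_{i-1},\dots,I_{i+2}$.

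The conceptual content is entirely in the decoupling argument above, which is short; the main practical obstacle is the bookkeeping in the $6\times 6$ reconstruction for the jump formulas. This is purely mechanical, and I would verify it against the expected reflection symmetry about $x_{i+\frac12}$: for $m=0$ the $\bar{u}$-coefficients should be antisymmetric and the $\bar{v}$-coefficients symmetric, with the roles reversed for $m=1$, exactly as exhibited in \eqref{eq:jump1}.
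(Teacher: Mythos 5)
Your proposal is correct and takes essentially the same route as the paper's proof: test the damping equation against a basis of $\mathbb{P}^1(I_i)$, observe that the damping coefficient is frozen at $\tau=0$ so the first-moment equation is a constant-coefficient linear ODE that integrates exactly to the exponential factor, and obtain the jump formulas by explicitly solving the $6\times 6$ linear Hermite reconstruction system on the three-cell stencil and subtracting one-sided traces (and derivatives) at $x_{i+\frac12}$. The only cosmetic difference is that you track the moments $\bar{u}_i(\tau),\bar{v}_i(\tau)$ directly, using $\int_{I_i}(x-x_i)\,\mathrm{d}x=0$ to kill the projection term, whereas the paper expands $u_\sigma$ in scaled Legendre polynomials and derives the ODE for the coefficient $c_1(\tau)$ via orthogonality; both amount to the same decoupling, and your reflection-symmetry check on the coefficients in \eqref{eq:jump1} is a valid consistency test.
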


\begin{proof}
	 The proof largely follows the analysis in section 2.2 of \cite{PSW}.
	Let $\{\phi^{(\ell)}_i(x)\}^k_{\ell=1}$ be a local orthogonal basis  of $\mathbb{P}^{k}(I_i)$, for instance, the scaled
	Legendre polynomials: 
\begin{equation*}
	\begin{aligned}
		&\phi_i^{(0)}(x)=1, \quad 
		\phi_i^{(1)}(x)=\xi_i, \quad 
		\phi_i^{(2)}(x)=\xi_i^2-\frac{1}{12}, \quad 
		\phi_i^{(3)}(x)=\xi_i^3-\frac{3}{20}\xi_i, \\&
		\phi_i^{(4)}(x)=\xi_i^4-\frac{3}{14}\xi_i^2+\frac{3}{560}, \quad 
		\phi_i^{(5)}(x)=\xi_i^5-\frac{5}{18}\xi_i^3+\frac{5}{336}\xi_i,\cdots,
	\end{aligned}
\end{equation*}
where $\xi_i:= {(x-x_i)}/{h_x}$. 

For the sixth-order linear Hermite reconstruction  $u_h^*(x) = {\Pi_h} \{ {\bm U}_i\}$, 
the reconstructed solution $u_h^*(x)$
 in the cell $I_i$ is a quintic polynomial 
\begin{equation}\label{eq:uhstar}
	u_h^*(x)\Big|_{I_i} =: p_{0}^{(i)}(x)=\sum_{\ell=0}^k c_{0,\ell} \phi^{(\ell)}_i(x),
\end{equation}
 where the coefficients $\{c_{0,\ell}\}$ are determined by matching the relations in \eqref{eq:p0}, and the expressions of $\{c_{0,\ell}\}$ are listed in Table \ref{secA:coe1D} for the sixth-order OE-HWENO scheme. 
 
	Assume that the solution $u_\sigma(x,\tau) \in \mathbb V_h^k$ of the damping equations \eqref{sec2:OE1d_ODE} can be expressed as
	\begin{equation*}
		u_\sigma(x,\tau)=\sum_{\ell=0}^k c_{\ell}(\tau) \phi^{(\ell)}_i(x) \qquad \forall x \in I_i, ~~ 0 \le \tau \le \Delta t, 
	\end{equation*} 
	with $c_{\ell} ( 0) = c_{0,\ell} $ and 
	$$\bar{u}^\sigma_i= \frac{1}{h_x}\int_{I_i} u_\sigma(x,\Delta t) \mathrm{d}x, \qquad \bar{v}^{\sigma}_i= \frac{1}{h_x}\int_{I_i} u_\sigma(x,\Delta t) \frac{x-x_i}{h_x} \mathrm{d}x = \frac{c_{1}(\Delta t)}{h_x}  \int_{I_i} \left( {\phi^{(1)}_i(x)} \right)^2  \mathrm{d}x.$$
	Note that   
	\[
	({u}_\sigma-P^0{u}_\sigma)(x,\tau)=\sum^k_{\ell=1} c_{\ell}(\tau)\phi^{(\ell)}_{i}(x).
	\]
	Taking $\phi(x)= \phi_i^{(0)}(x) =\frac{1}{h_x}$ in \eqref{sec2:OE1d_ODE}, we have 
	\begin{align*} 
		\frac{\mathrm{d}}{\mathrm{d}\tau} \left( \frac{1}{h_x}\int_{I_i} u_\sigma(x,\tau) \mathrm{d}x \right) & = - \sigma_{i}({u}_h^*) \int_{I_i}({u}_\sigma-P^{0}{u}_\sigma)  \phi \mathrm{d}{x}
		= 0, 
	\end{align*} 
	which yields
	\begin{equation}\label{sec2:1dOE_uave}
		\bar{u}_i^{\sigma} = \frac{1}{h_x}\int_{I_i} u_\sigma(x,\Delta t) \mathrm{d}x = \frac{1}{h_x}\int_{I_i} u_\sigma(x,0) \mathrm{d}x  =\bar{u}_i.
	\end{equation} 
	Taking $\phi(x)=\frac{x-x_i}{h_x}$ in \eqref{sec2:OE1d_ODE}, we derive 
	\begin{equation}\label{sec2:OEstepVi} 
		\frac{\mathrm{d} }{\mathrm{d} \tau} c_{1} (\tau) 
		+ c_{1} (\tau)  \alpha  \frac{{\widehat \sigma_{i}(u_h^*)}}{h_x}
		= 0.
	\end{equation}
	Integrating \eqref{sec2:OEstepVi} from $\tau=0$ to $\Delta t$ gives
	\begin{equation*} 
		c_{1} (\Delta t) = c_1(0) \exp\bigg(-\alpha\frac{{\Delta t}}{h_x} \widehat  \sigma_{i}(u_h^*) \bigg),
	\end{equation*}
	or equivalently, 
	\begin{equation*}
		\bar{v}_i^{\sigma} =\bar{v}_i\exp\bigg(-\alpha\frac{{\Delta t}}{h_x} \widehat  \sigma_{i}(u_h^*) \bigg),
	\end{equation*}
    where the coefficient $\widehat  \sigma_{i}(u_h^*)$ is defined in \eqref{sec3:sigma_1d}. 
    For the 1D sixth-order OE-HWENO scheme, 
    using \eqref{eq:uhstar} and the expressions of $\{c_{0,\ell}\}$ listed in Table \ref{secA:coe1D}, we obtain 
	\begin{equation*}
		\begin{aligned}
			p_{0}^{(i)}(x^-_{i+\frac12})&=\frac{13}{108}\bar{u}_{i-1}+\frac{7}{12}\bar{u}_{i}+\frac{8}{27}\bar{u}_{i+1}+\frac{25}{54}\bar{v}_{i-1}+\frac{241}{54}\bar{v}_{i}-\frac{28}{27}\bar{v}_{i+1},
			\\
			p_{0}^{(i+1)}(x^+_{i+\frac12})&=\frac{8}{27}\bar{u}_{i}+\frac{7}{12}\bar{u}_{i+1}+\frac{13}{108}\bar{u}_{i+2}+\frac{28}{27}\bar{v}_{i}-\frac{241}{54}\bar{v}_{i+1}-\frac{25}{54}\bar{v}_{i+2},
			\\
			\partial_x p_{0}^{(i)} (x^-_{i+\frac12})&=\frac{1}{h_x}\left(\frac{5}{36}\bar{u}_{i-1}-\frac{9}{4}\bar{u}_{i}+\frac{19}{9}\bar{u}_{i+1}+\frac{11}{18}\bar{v}_{i-1}-\frac{97}{18}\bar{v}_{i}-\frac{62}{9}\bar{v}_{i+1} \right),
			\\
			\partial_x p_{0}^{(i+1)} (x^+_{i+\frac12})&=\frac{1}{h_x} \left(-\frac{19}{9}\bar{u}_{i}+\frac{9}{4}\bar{u}_{i+1}-\frac{5}{36}\bar{u}_{i+2}-\frac{62}{9}\bar{v}_{i}-\frac{97}{18}\bar{v}_{i+1}+\frac{11}{18}\bar{v}_{i+2} \right).
		\end{aligned}
	\end{equation*}
	Therefore, the jumps $[\![\partial^m_x u_h^*]\!]_{i+\frac12}=\partial^m_x p_0^{(i+1)} (x^+_{i+\frac12}) - \partial^m_x p_0^{(i)}(x^-_{i+\frac12})$ can be explicitly expressed as 
	\begin{equation*}
		[\![\partial^m_x u_h^*]\!]_{i+\frac12}=
		\left\{  
		\begin{aligned} 
			&\frac{-13\bar{{u}}_{i-1}-31\bar{{u}}_i+31\bar{{u}}_{i+1}+13\bar{{u}}_{i+2}-50\bar{{v}}_{i-1}-370\bar{{v}}_{i}-370\bar{{v}}_{i+1}-50\bar{{v}}_{i+2}}{108},~m=0
			\\
			&\frac{-5\bar{{u}}_{i-1}+5\bar{{u}}_{i}+5\bar{{u}}_{i+1}-5\bar{{u}}_{i+2}-22\bar{{v}}_{i-1}-54\bar{{v}}_{i}+54\bar{{v}}_{i+1}+22\bar{{v}}_{i+2}}{36h_x},\quad\quad\quad~ m=1.
		\end{aligned}
		\right.
	\end{equation*}
The proof is completed. 
\end{proof}

Some notable advantages of the proposed OE technique are summarized as follows.

\begin{remark}[{Stability}]\label{sec2:rmk1}
	Thanks to the simple exact solver \eqref{sec:1dOE_sol} of the OE procedure, the OE-HWENO method remains stable when coupled with standard explicit RK time discretization using a normal CFL number, even in the presence of highly stiff damping terms associated with strong shocks. Unlike the damping-based oscillation-free HWENO method \cite{ZQ2}, our OE approach does not require empirical, problem-dependent parameters or (modified) exponential time discretizations.
\end{remark}

\begin{remark}[{Conservation}]\label{sec2:rmk2}
	Given that $\bar{u}_i^{\sigma}=\bar{u}_i$ in \eqref{sec:1dOE_sol}, it is clear that the zeroth-order moment (i.e., the cell averages) remains unchanged in the OE modification. This means that the OE procedure preserves the local conservation of the HWENO solutions.
\end{remark}

\begin{remark}[{Efficiency and Simplicity}]\label{sec2:rmk3}
	The OE procedure is non-intrusive and completely independent of the RK stage update. This design allows for the seamless integration of the OE technique into existing HWENO codes as an independent module with only very slight adjustments. The implementation of the OE procedure \eqref{sec:1dOE_sol} is highly simple and efficient, as it involves only the multiplication of first-order moments by a damping factor. This approach significantly differs from the moment-limiting techniques \cite{LSQ2,ZCQ,ZQ1} used in existing HWENO schemes.
\end{remark}

Besides the above-mentioned features, we can prove that the OE procedure maintains the original high-order accuracy of the HWENO schemes, as shown in the following Theorem \ref{thm:accuracy}. 
This is different from the moment-limiting techniques \cite{LSQ2,ZCQ,ZQ1} in the literature, which retained at most fifth-order accuracy for originally sixth-order HWENO schemes. 

	\begin{theorem}[Maintain accuracy]\label{thm:accuracy}
		Consider the $(k+1)$th-order 1D OE-HWENO scheme \eqref{sec2:1dH} under a CFL condition $\alpha\frac{{\Delta t}}{h_x} \le C_\text{cfl}$ with $C_\text{cfl} < 1$. 
		Assume that the exact solution $u(x,t) \in C^{k+1}(\Omega)$ for a given $t \in [0,T]$, and the boundary conditions are periodic. 
		Let $\bm{U}_i^e=(\bar u_i^e, \bar v_i^e)$ denote the exact zeroth- and first-order moments of $u(x,t)$ on cell $I_i$. 
		If $\bm{U}_i=(\bar u_i, \bar v_i)$ are $(k+1)$th-order accurate approximations to 
		$\bm{U}_i^e$ for all $i$, then the OE modified moments $\bm{U}^{\sigma}_i=\mathcal{F}_{\rm OE}\{\bm{U}_{j}\}_{j\in\Lambda_i} $
		are also $(k+1)$th-order accurate approximations to 
		$\bm{U}_i^e$, namely, 
		\begin{equation}\label{eq:OEarracy}
		   \max_{1\le i\le N_x}	\big\| \bm{U}^{\sigma}_i - \bm{U}_i^e\big\| \lesssim h_x^{k+1}. 
		\end{equation}
	This means the OE procedure maintains the original high-order accuracy of the HWENO schemes. 
	\end{theorem}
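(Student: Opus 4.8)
The plan is to reduce everything to the explicit formula for the OE procedure furnished by Theorem~\ref{thm:OE}, namely $\bar u_i^\sigma = \bar u_i$ and $\bar v_i^\sigma = \bar v_i \exp(-\alpha\frac{\Delta t}{h_x}\widehat\sigma_i(u_h^*))$, so that the only quantity requiring control is the modification of the first-order moment. The zeroth-order moment is untouched, giving $|\bar u_i^\sigma - \bar u_i^e| = |\bar u_i - \bar u_i^e| \lesssim h_x^{k+1}$ directly from the hypothesis. For the first-order moment I would write
\[
\bar v_i^\sigma - \bar v_i^e = (\bar v_i - \bar v_i^e)\,e^{-\theta_i} + \bar v_i^e\,(e^{-\theta_i}-1), \qquad \theta_i := \alpha\tfrac{\Delta t}{h_x}\widehat\sigma_i(u_h^*) \ge 0,
\]
and bound the two terms separately.

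Since $\widehat\sigma_i \ge 0$ we have $0 < e^{-\theta_i} \le 1$, so the first term is controlled by $|\bar v_i - \bar v_i^e| \lesssim h_x^{k+1}$. For the second term I would combine the elementary inequality $|e^{-\theta}-1| \le \theta$ $(\theta \ge 0)$ with the CFL bound to get $|e^{-\theta_i}-1| \le \theta_i \le C_{\mathrm{cfl}}\,\widehat\sigma_i(u_h^*)$, and use that $\bar v_i^e = \frac{1}{h_x}\int_{I_i} u\,\frac{x-x_i}{h_x}\,\mathrm{d}x = O(h_x)$ for smooth $u$ (the leading $O(1)$ contribution vanishing by symmetry of $x-x_i$ over $I_i$). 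Hence the second term is $\lesssim h_x\,\widehat\sigma_i(u_h^*)$, and the entire proof reduces to showing that the scale-invariant damping strength satisfies $\widehat\sigma_i(u_h^*) \lesssim h_x^{k}$ in smooth regions.

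To bound $\widehat\sigma_i$ I would first note that, for a fixed non-constant $u$, the denominator $\max_i |\bar u_i - \overline u_\Omega|$ in \eqref{sec3:sigma_1d} converges to $\max_x |u-\overline u_\Omega| > 0$ and is thus bounded below by a positive constant for small $h_x$ (the constant case falling under the first branch of \eqref{sec3:sigma_1d}, where $\widehat\sigma_i \equiv 0$). It then remains to estimate the interface jumps $[\![\partial_x^m u_h^*]\!]_{i\pm\frac12}$ for $m=0,1$. The crucial structural fact, visible in the explicit formulas \eqref{eq:jump1}, is that each jump is a fixed linear functional $J^m$ of the stencil moments with coefficients of size $O(1)$ for $m=0$ and $O(1/h_x)$ for $m=1$, and that $J^m$ annihilates $\mathbb{P}^k$ because the linear Hermite reconstruction $\Pi_h$ reproduces polynomials of degree $\le k$ exactly (so a globally smooth reconstruction has no jumps).

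Concretely, I would split $J^m = J^m(\{\bar u_j^e,\bar v_j^e\}) + J^m(\{\bar u_j - \bar u_j^e,\bar v_j - \bar v_j^e\})$. For the exact-moment part, a local Taylor expansion of $u$ at the interface together with the exactness of $J^m$ on $\mathbb{P}^k$ and the continuity of $\partial_x^m u$ gives $[\![\partial_x^m u_h^e]\!]_{i\pm\frac12} = O(h_x^{k+1-m})$. For the perturbation part, the hypothesis $\|\bm{U}_j - \bm{U}_j^e\| \lesssim h_x^{k+1}$ combined with the coefficient sizes gives $O(h_x^{k+1})$ for $m=0$ and $O(h_x^{k})$ for $m=1$. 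Therefore $|[\![u_h^*]\!]| \lesssim h_x^{k+1}$ and $|[\![\partial_x u_h^*]\!]| \lesssim h_x^{k}$, so the numerator $\sum_{m\in\{0,1\}} h_x^m(|[\![\partial_x^m u_h^*]\!]_{i-\frac12}| + |[\![\partial_x^m u_h^*]\!]_{i+\frac12}|) \lesssim h_x^{k+1}$, whence $\widehat\sigma_i(u_h^*) \lesssim h_x^{k+1}$. Plugging this into the two-term estimate yields $\max_i\|\bm{U}_i^\sigma - \bm{U}_i^e\| \lesssim h_x^{k+1}$. I expect the main obstacle to be the clean verification that the jump functionals annihilate $\mathbb{P}^k$ and the careful tracking of the $h_x$-powers through the numerator and denominator of $\widehat\sigma_i$; the exponential/CFL estimate and the invariance of the cell average are routine.
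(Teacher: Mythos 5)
Your proposal is correct and follows essentially the same route as the paper's proof: exact solver formula, the inequality $1-e^{-\theta}\le\theta$ with the CFL bound, the lower bound on the denominator $\max_i|\bar u_i-\overline u_\Omega|$, and the key estimate $\widehat\sigma_i(u_h^*)\lesssim h_x^{k+1}$ obtained by splitting the interface jumps into the part from the reconstruction of exact moments (controlled by polynomial exactness/approximation of $\Pi_h$) and the perturbation part (controlled by the hypothesis and the $O(1)$, $O(1/h_x)$ coefficient sizes in \eqref{eq:jump1}). The only cosmetic difference is that you decompose $\bar v_i^\sigma-\bar v_i^e$ directly (using $\bar v_i^e=O(h_x)$), whereas the paper bounds $\bm{U}_i^\sigma-\bm{U}_i$ and concludes by the triangle inequality; the two are equivalent.
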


\begin{proof}
	According to the solution \eqref{sec:1dOE_sol} of the OE procedure (Theorem \ref{thm:OE}), we have
	\begin{equation*}
		\bm{U}^{\sigma}_i - \bm{U}_i = \begin{pmatrix} 0\\ \exp\bigg(-\alpha\frac{{\Delta t}}{h_x} \widehat  \sigma_{i}(u_h^*) \bigg)-1 \end{pmatrix}\bar{v}_i.
	\end{equation*}
It follows that 
		\begin{align*}\nonumber
		\big\| \bm{U}^{\sigma}_i - \bm{U}_i\big\| & = |\bar v_i| \left( 1 - \exp\bigg(-\alpha\frac{{\Delta t}}{h_x} \widehat  \sigma_{i}(u_h^*) \bigg)   \right)
		 \le |\bar v_i|  \alpha\frac{{\Delta t}}{h_x} \widehat \sigma_{i}(u_h^*)
		 \le |\bar v_i|  {{\widehat \sigma_{i}}(u_h^*)}, 
\end{align*}
	where we have used the elementary inequality $1 -e^{-x} \le x$ for all $x\ge 0$ in the second step and the CFL condition in the third step. 
	If $\max_{1\le i \le N_x} |\bar{u}_i -\overline u_\Omega | = 0$, then $\widehat \sigma_{i}(u_h^*)=0$ and the conclusion holds evidently. 
	In the following, we assume that $\max_{1\le i \le N_x} |\bar{u}_i -\overline u_\Omega | \ge C> 0$. 
	According to the definition of $\widehat \sigma_{i}(u_h^*)$ in \eqref{sec3:sigma_1d}, we obtain 
			\begin{align}\nonumber
		\big\| \bm{U}^{\sigma}_i - \bm{U}_i\big\| &  \le |\bar v_i|  \widehat \sigma_{i}(u_h^*)
		\\  \nonumber
		&= |\bar v_i| \left( 
		\frac{ \big| [\![ u_h^*]\!]_{i-\frac12}\big|+\big|[\![ u_h^*]\!]_{i+\frac12}\big| }{ \max_{1\le i \le N_x} |\bar{u}_i -\overline u_\Omega |}
		+ h_x\frac{ \big| [\![\partial_x u_h^*]\!]_{i-\frac12}\big|+\big|[\![\partial_x u_h^*]\!]_{i+\frac12}\big| }{ \max_{1\le i \le N_x} |\bar{u}_i -\overline u_\Omega |}
		\right)
		\\  \label{eq:WKL55}
		& = \frac{ |\bar v_i| }{ \max_{1\le i \le N_x} |\bar{u}_i -\overline u_\Omega | }  \left( 
		\Xi_{i-\frac12} + \Xi_{i+\frac12} 
		\right) 
	\end{align}
	with 
	$$
	\Xi_{i+\frac12} := \big|[\![ u_h^*]\!]_{i+\frac12}\big| + h_x \big|[\![\partial_x u_h^*]\!]_{i+\frac12}\big|.
	$$
	Let $\widehat u(x) \in \mathbb V_h^k$ denote the $(k+1)$th-order linear Hermite reconstruction from the exact zeroth- and first-order moments $\bm{U}_i^e=(\bar u_i^e, \bar v_i^e)$, namely, 
	$$
	\widehat u(x) = {\Pi_h}  \{ \bm{U}_i^e  \}. 
	$$
	According to the approximation accuracy of Hermite reconstruction for the exact solution $u(x,t) \in C^{k+1}(\Omega)$ with a given $t \in [0,T]$, we have 
\begin{equation}\label{eq:WKL2}
		  \left\| \widehat u(x) - u(x,t) \right\|_{L^\infty(\Omega)} \lesssim h_x^{k+1}, \qquad  \left\| \partial_x \widehat u(x) - \partial_x u(x,t) \right\|_{L^\infty(\Omega)} \lesssim h_x^{k}. 
\end{equation}
	Note that  $u(x,t) \in C^{k+1}(\Omega)$ satisfies 
\begin{equation}\label{eq:WKL3}
		\big|[\![ u]\!]_{i+\frac12}\big| + h_x \big|[\![\partial_x u]\!]_{i+\frac12}\big| = 0. 
\end{equation}
Using \eqref{eq:WKL2} and \eqref{eq:WKL3}, we derive 
\begin{align} \nonumber
	\max_{0\le i \le N_x} \left\{   \big|[\![ \widehat u]\!]_{i+\frac12}\big| + h_x \big|[\![\partial_x \widehat u]\!]_{i+\frac12}\big| \right\}
	& =  \max_{0\le i \le N_x} \left\{   \big|[\![ \widehat u  -  u  ]\!]_{i+\frac12}\big| + h_x \big|[\![\partial_x \widehat u - \partial_x  u ]\!]_{i+\frac12}\big|  \right\}
	\\  \nonumber
	& \le 2 \left\| \widehat u(x) - u(x,t) \right\|_{L^\infty(\Omega)} + 2 h_x  \left\| \partial_x \widehat u(x) - \partial_x u(x,t) \right\|_{L^\infty(\Omega)} 
	\\ \label{eq:WKL22}
	& \lesssim h_x^{k+1}. 
\end{align}
Similar to \eqref{eq:jump1}, we have 
		\begin{equation}\label{eq:jump2}
			\begin{aligned}
			[\![ \widehat u]\!]_{i+\frac12} &= 	\frac{-13\bar{{u}}_{i-1}^e-31\bar{{u}}_i^e+31\bar{{u}}_{i+1}^e+13\bar{{u}}_{i+2}^e-50\bar{{v}}_{i-1}^e-370\bar{{v}}_{i}^e-370\bar{{v}}_{i+1}^e-50\bar{{v}}_{i+2}^e}{108},
			\\
		 [\![\partial_x \widehat u]\!]_{i+\frac12}	& = \frac{-5\bar{{u}}_{i-1}^e+5\bar{{u}}_{i}^e+5\bar{{u}}_{i+1}^e-5\bar{{u}}_{i+2}^e-22\bar{{v}}_{i-1}^e-54\bar{{v}}_{i}^e+54\bar{{v}}_{i+1}^e+{22\bar{{v}}_{i+2}^e}}{36h_x}.
			\end{aligned}
\end{equation}
Combining \eqref{eq:jump1} with \eqref{eq:jump2}, we obtain 
\begin{align*}
	\left| [\![  u_h^* - \widehat u   ]\!]_{i+\frac12} \right| 
&\le \frac{13+31+31+13}{108} \max_{ i-1 \le j \le i+2 } \big\{ | \bar{{u}}_{j} - \bar{{u}}_{j}^e  | \big \} +  \frac{50+370+370+50}{108} \max_{ i-1 \le j \le i+2 } \big \{ | \bar{{v}}_{j} - \bar{{v}}_{j}^e  | \big \}
\\
& \le  {\frac{70}{9}} \left(   \max_{ i-1 \le j \le i+2 } \big\{ | \bar{{u}}_{j} - \bar{{u}}_{j}^e  | \big \} +   \max_{ i-1 \le j \le i+2 } \big \{ | \bar{{v}}_{j} - \bar{{v}}_{j}^e  | \big \} \right),
\end{align*}
and similarly, we get 
$$
\left| [\![ \partial u_h^* - \partial \widehat u   ]\!]_{i+\frac12} \right| \le {\frac{38}{9 h_x}} \left(   \max_{ i-1 \le j \le i+2 } \big\{ | \bar{{u}}_{j} - \bar{{u}}_{j}^e  | \big \} +   \max_{ i-1 \le j \le i+2 } \big \{ | \bar{{v}}_{j} - \bar{{v}}_{j}^e  | \big \} \right). 
$$
Under the hypothesis that 
$\bm{U}_i=(\bar u_i, \bar v_i)$ are $(k+1)$th-order accurate approximations to 
$\bm{U}_i^e$ for all $i$, we have 
\begin{equation}\label{eq:WKL33}
 \max_{0\le i\le N_x} \left\{ 	\left| [\![  u_h^* - \widehat u   ]\!]_{i+\frac12} \right| \right \} 
  \lesssim h_x^{k+1}, \qquad  \max_{0\le i\le N_x} \left\{ 	\left| [\![ \partial_x u_h^* - \partial_x \widehat u   ]\!]_{i+\frac12} \right| \right \} 
  \lesssim h_x^{k}. 
\end{equation}
Using the estimates \eqref{eq:WKL33} and \eqref{eq:WKL22}, we obtain 
\begin{align}
	\max_{0\le i \le N_x}\Xi_{i+\frac12} &=  \max_{0\le i \le N_x} \left\{  \big|[\![ u_h^*]\!]_{i+\frac12}\big| + h_x \big|[\![\partial_x u_h^*]\!]_{i+\frac12}\big| \right\} \nonumber
	\\
	& \le \max_{0\le i \le N_x} \left\{  \big|[\![ u_h^* - \widehat u  ]\!]_{i+\frac12}\big| + h_x \big|[\![\partial_x  u_h^*  - \partial_x \widehat u  ]\!]_{i+\frac12}\big| \right\} \nonumber
	\\
	& \quad + \max_{0\le i \le N_x} \left\{   \big|[\![ \widehat u]\!]_{i+\frac12}\big| + h_x \big|[\![\partial_x \widehat u]\!]_{i+\frac12}\big| \right\} \nonumber
	\\
	& \lesssim h_x^{k+1}, \nonumber
\end{align}
which together with \eqref{eq:WKL55} implies \eqref{eq:OEarracy}. This proof is completed. 
\end{proof}

\subsubsection{Extension to 1D hyperbolic systems} 

For the OE-HWENO method to solve the 1D hyperbolic system of conservation laws $\bm{u}_t+\bm{f}(\bm{u})_x=0$,  we propose the OE procedure by the following damping equations:  
\begin{equation}\label{eq:OE-1Dsystem}
	\left\{
	\begin{aligned}
		&\frac{\mathrm{d} }{\mathrm{d} \tau } \int_{I_i} \bm{u}_\sigma \cdot \bm{\phi} \mathrm{d}x + \sigma_{i}(\bm{u}_h^*) \int_{I_i}(\bm{u}_\sigma-P^{0}\bm{u}_\sigma)\cdot\bm{\phi} \mathrm{d}x= 0 \quad \forall \bm{\phi} \in [\mathbb{P}^{1}(I_i)]^N,
		\\
		&\bm{u}_\sigma(x,0) = \bm{u}_h^*(x)= \Pi_h \{\bm{U}_{i}\}, 
	\end{aligned}
	\right. 
\end{equation} 
where ${\bm U}_i=(\bar{\bm{u}}_i,\bar{\bm{v}}_i)$, the damping coefficient $\sigma_{i}(\bm{u}_h^*)= \frac{\alpha}{h_x} \widehat \sigma_{i}(\bm{u}_h^*) $, $\alpha$ denotes the (estimated) maximum wave speed in the $x$-direction. Here, $ \widehat \sigma_{i}(\bm{u}_h^*)$  is defined as
\begin{equation}\label{eq:sigma2}
	\widehat \sigma_{i}(\bm{u}_h^*):=\max\limits_{1\le \ell\le N}  \widehat \sigma_{i}( {u}_{h}^{*,\ell}),
\end{equation}
where ${u}_{h}^{*,\ell}$ is the $\ell$-th component of $\bm{u}_h^*$, and $ \widehat \sigma_{i}( {u}_{h}^{*,\ell})$ is computed by \eqref{sec3:sigma_1d} and \eqref{eq:jump1}. 

Similar to Theorem \ref{thm:OE}, one can obtain the exact solver of the OE procedure defined by  \eqref{eq:OE-1Dsystem}. 

\begin{theorem}\label{thm:OE2}
	The OE procedure $(\overline{\bm u}_i^{\sigma},\overline{\bm v}_i^{\sigma}):=\bm{U}^{\sigma}_i=\mathcal{F}_{\rm OE}\{\bm{U}_{j}\}_{j\in\Lambda_i} $ for 1D hyperbolic systems can be exactly solved and explicitly expressed as 
	\begin{equation*} 
		\left\{\begin{aligned}
			\overline{\bm u}_i^{\sigma}&=\bar{\bm u}_i,
			\\
			\overline{\bm v}_i^{\sigma}&=\bar{\bm v}_i\exp\bigg(-\alpha\frac{{\Delta t}}{h_x} \widehat \sigma_{i}({\bm u}_h^*) \bigg),
		\end{aligned} \right.
	\end{equation*}
	where the coefficient $\widehat \sigma_{i}({\bm u}_h^*)$ is defined in \eqref{eq:sigma2}. 
\end{theorem}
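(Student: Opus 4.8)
The plan is to follow the same strategy as in the proof of Theorem \ref{thm:OE}, exploiting the fact that the damping equations \eqref{eq:OE-1Dsystem} decouple into $N$ independent scalar problems. The crucial observation is that the damping coefficient $\sigma_{i}(\bm{u}_h^*) = \frac{\alpha}{h_x}\widehat\sigma_{i}(\bm{u}_h^*)$ is a single scalar shared by all $N$ components, since by the definition \eqref{eq:sigma2} it depends only on the ``initial'' reconstruction $\bm{u}_h^* = \Pi_h\{\bm{U}_i\}$ through a maximum over components. Because this coefficient does not couple the components to one another, and the vector test functions $\bm{\phi}\in[\mathbb{P}^{1}(I_i)]^N$ can be chosen to activate one component at a time, the ODE system \eqref{eq:OE-1Dsystem} separates into $N$ scalar damping equations of exactly the form \eqref{sec2:OE1d_ODE}.

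First I would expand each component $u_\sigma^{\ell}(x,\tau)$ of $\bm{u}_\sigma$ in the local scaled-Legendre basis $\{\phi_i^{(m)}\}_{m=0}^{k}$, writing $u_\sigma^{\ell}(x,\tau)=\sum_{m=0}^{k}c_m^{\ell}(\tau)\phi_i^{(m)}(x)$ with initial data $c_m^{\ell}(0)$ read off from $\bm{u}_h^*$. Choosing $\bm{\phi}$ to carry $\phi_i^{(0)}(x)=\frac{1}{h_x}$ in its $\ell$-th slot and zeros elsewhere, the dot product $\bm{u}_\sigma\cdot\bm{\phi}$ isolates the $\ell$-th component, and the subtraction of $P^0\bm{u}_\sigma$ annihilates the zeroth mode, so $\frac{\mathrm{d}}{\mathrm{d}\tau}\bar u_i^{\sigma,\ell}=0$. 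Integrating yields $\overline{\bm u}_i^{\sigma}=\bar{\bm u}_i$ component-wise, exactly reproducing \eqref{sec2:1dOE_uave}.

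Next, selecting $\bm{\phi}$ with $\frac{x-x_i}{h_x}$ in the $\ell$-th slot, the same computation leading to \eqref{sec2:OEstepVi} gives the scalar linear ODE $\frac{\mathrm{d}}{\mathrm{d}\tau}c_1^{\ell}(\tau)+c_1^{\ell}(\tau)\,\alpha\frac{\widehat\sigma_{i}(\bm{u}_h^*)}{h_x}=0$ for every $1\le\ell\le N$, all sharing the identical damping rate. Integrating from $\tau=0$ to $\Delta t$ produces the common exponential factor $\exp\!\big(-\alpha\frac{\Delta t}{h_x}\widehat\sigma_{i}(\bm{u}_h^*)\big)$, whence $\overline{\bm v}_i^{\sigma}=\bar{\bm v}_i\exp\!\big(-\alpha\frac{\Delta t}{h_x}\widehat\sigma_{i}(\bm{u}_h^*)\big)$ holds entrywise, which is the claimed formula.

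Since the scalar derivation of Theorem \ref{thm:OE} transfers verbatim to each component, there is no genuine analytical obstacle here; the only point requiring care is justifying the component decoupling, namely confirming that the shared scalar coefficient $\widehat\sigma_{i}(\bm{u}_h^*)$ prevents any cross-component coupling in \eqref{eq:OE-1Dsystem}. Once this is observed, the result follows at once, and as in the scalar case no explicit formula for the Hermite reconstruction $\Pi_h$ is needed, since the final expression involves only the moments $\bar{\bm u}_i$ and $\bar{\bm v}_i$.
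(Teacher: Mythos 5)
Your proof is correct and follows exactly the route the paper intends: the paper omits the proof of Theorem \ref{thm:OE2}, stating it is ``similar to that of Theorem \ref{thm:OE},'' and your argument is precisely that componentwise reduction—the shared scalar damping coefficient $\widehat\sigma_{i}(\bm{u}_h^*)$ from \eqref{eq:sigma2} decouples \eqref{eq:OE-1Dsystem} into $N$ independent scalar problems, each solved verbatim as in Theorem \ref{thm:OE}. Your explicit attention to the choice of vector test functions that activate one component at a time is the right justification for the decoupling and fills in the only detail the paper leaves implicit.
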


The proof of Theorem \ref{thm:OE2} is similar to that of Theorem \ref{thm:OE} and thus is omitted here.

\subsubsection{Scale Invariance}\label{sec2:SI}
In this subsection, we introduce the concept of scale invariance and its importance in consistently suppressing spurious oscillations across various scales. We will demonstrate that the OE operator, $\mathcal{F}_{\rm OE}$, satisfies the scale-invariant property. However, the nonlinear HWENO and standard WENO operators generally do not exhibit scale invariance, as discussed in \cite{CW, Deng2023, DLWW} with specific techniques to address these issues. A common strategy in \cite{CW, Deng2023, DLWW} is to achieve the scale-invariant property by modifying the nonlinear weights. 
In the following, we propose a different, simple yet universal technique---a generic dimensionless transformation. This transformation can render any nonlinear reconstruction operator, such as WENO and HWENO operators, scale-invariant.

\begin{definition}[Scale invariance]\label{sec2:si_def} 
	For any $\bm{U}_j=(\bar{\bm u}_j,\bar{\bm v}_j)$, define the affine transformation: 
	$$
	{\mathcal A}_{\lambda,c} \bm{U}_j = \left( \lambda \bar{\bm u}_j +c,  \lambda \bar{\bm v}_j  \right), \qquad {\mathcal A}_{\lambda,c} {\bm u}_h =  \lambda {\bm u}_h +c. 
	$$
	A operator  $\mathcal{P}$ is termed scale-invariant if 
	$\mathcal{P}$ and ${\mathcal A}_{\lambda,c}$ are commutative for any $\lambda\ne0$ and $c\in \mathbb{R}$.  
\end{definition}

\begin{theorem}\label{sec2:thm_F}
	The OE operator $\mathcal{F}_{\rm OE}$ is scale-invariant. Specifically, we have $\mathcal{F}_{\rm OE}\{ {\mathcal A}_{\lambda,c} \bm{U}_j \}_{j\in\Lambda_i}={\mathcal A}_{\lambda,c} \{ \mathcal{F}_{\rm OE} \{  \bm{U}_j \}_{j\in\Lambda_i} \}$ for any $\lambda\ne0$ and $c\in \mathbb{R}$. 
\end{theorem}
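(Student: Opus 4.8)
The plan is to reduce everything to the explicit exact solver \eqref{sec:1dOE_sol} furnished by Theorem \ref{thm:OE}, so that the entire operator $\mathcal{F}_{\rm OE}$ is captured by the two formulas $\bar u_i^\sigma = \bar u_i$ and $\bar v_i^\sigma = \bar v_i \exp(-\alpha\frac{\Delta t}{h_x}\widehat\sigma_i(u_h^*))$. Since $\alpha$, $\Delta t$, and $h_x$ are fixed parameters of the operator, proving commutativity with ${\mathcal A}_{\lambda,c}$ in the sense of Definition \ref{sec2:si_def} boils down to a single claim: the dimensionless damping coefficient $\widehat\sigma_i(u_h^*)$ defined in \eqref{sec3:sigma_1d} is invariant under the affine data transformation ${\mathcal A}_{\lambda,c}$ for every $\lambda\ne0$ and $c\in\mathbb{R}$.

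First I would track how the linear Hermite reconstruction responds to ${\mathcal A}_{\lambda,c}$. Because ${\Pi_h}$ is linear and reproduces constants (it is exact on $\mathbb{P}^k$), the reconstruction built from the transformed moments $\{\lambda\bar u_j + c,\ \lambda\bar v_j\}$ is exactly $\lambda u_h^* + c$. Consequently each interface jump transforms cleanly: the additive constant $c$ is continuous across $x_{i+\frac12}$ and drops out of every $[\![\partial_x^m u_h^*]\!]_{i+\frac12}$, while the multiplicative factor $\lambda$ passes through, giving $[\![\partial_x^m(\lambda u_h^*+c)]\!]_{i+\frac12} = \lambda[\![\partial_x^m u_h^*]\!]_{i+\frac12}$ for $m=0,1$. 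Equivalently, one can read this off the explicit jump formulas \eqref{eq:jump1} by noting that the coefficients multiplying the cell averages $\bar u_j$ sum to zero in both cases, which annihilates $c$. Hence the numerator of $\widehat\sigma_i$ scales by $|\lambda|$.

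Next I would treat the global normalizing denominator $\max_{1\le i\le N_x}|\bar u_i - \overline u_\Omega|$. Since $\overline u_\Omega$ is the mean of the cell averages, it transforms identically to each $\bar u_i$, namely $\overline u_\Omega \mapsto \lambda\overline u_\Omega + c$, so that $\bar u_i - \overline u_\Omega \mapsto \lambda(\bar u_i - \overline u_\Omega)$ and the denominator also scales by $|\lambda|$. The two factors of $|\lambda|$ cancel in the ratio \eqref{sec3:sigma_1d}, and the degenerate branch (denominator zero) is preserved because $\lambda\ne0$; therefore $\widehat\sigma_i$ is unchanged. With the damping factor invariant, I would finally assemble both sides of the identity: applying $\mathcal{F}_{\rm OE}$ to the transformed data yields $(\lambda\bar u_i + c,\ \lambda\bar v_i\exp(-\alpha\frac{\Delta t}{h_x}\widehat\sigma_i))$, while applying ${\mathcal A}_{\lambda,c}$ to $\mathcal{F}_{\rm OE}\{\bm U_j\}_{j\in\Lambda_i} = (\bar u_i,\ \bar v_i\exp(-\alpha\frac{\Delta t}{h_x}\widehat\sigma_i))$ gives the same pair, which establishes the claim.

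I do not expect a genuine obstacle here; the only point requiring care is conceptual rather than computational, namely isolating the two structural facts that make the constant shift harmless --- the constant-reproduction of ${\Pi_h}$ (equivalently the zero sum of the $\bar u$-coefficients in \eqref{eq:jump1}) and the fact that $\overline u_\Omega$ co-transforms with the cell averages so that relative deviations scale homogeneously. One should also state explicitly that $\alpha$, $\Delta t$, and $h_x$ are held fixed as operator parameters, so that the scaling of the solution data enters the exponent of the damping factor only through the invariant quantity $\widehat\sigma_i$.
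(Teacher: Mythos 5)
Your proposal is correct and follows essentially the same route as the paper's proof: invoke the linearity (and constant-reproduction) of $\Pi_h$ so the reconstruction of the transformed data is $\lambda u_h^*+c$, conclude that the dimensionless coefficient $\widehat\sigma_i$ is unchanged, and then read off commutativity from the exact solver \eqref{sec:1dOE_sol}. The only difference is one of detail: the paper simply asserts $\widehat\sigma_i(\lambda u_h^*+c)=\widehat\sigma_i(u_h^*)$ as an observation from \eqref{sec3:sigma_1d}, whereas you verify it explicitly (jumps annihilate $c$ and scale by $|\lambda|$, and the denominator co-scales by $|\lambda|$ since $\overline u_\Omega$ transforms with the cell averages), which is a welcome but not substantively different elaboration.
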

\begin{proof}
	Due to the linearity, 
	the linear Hermite reconstruction operator $\Pi_h$ satisfies $\Pi_h\{ {\mathcal A}_{\lambda,c} \bm{U}_j \} = \lambda \Pi_h\{\bm{U}_j\}+c = \lambda {\bm u}^*_h+c$. 
	From \eqref{sec3:sigma_1d} and \eqref{eq:sigma2}, we observe that the damping coefficient is dimensionless, satisfying 
	$\widehat {\sigma}_{i}(\lambda {\bm u}^*_h+c) = \widehat {\sigma}_{i}( {\bm u}^*_h)$. 
	Hence, based on the formulation \eqref{sec:1dOE_sol}, we know that the OE modified moments for the scaled data satisfy 	
\begin{align*}  
		 \mathcal{F}_{\rm OE}\{ {\mathcal A}_{\lambda,c} \bm{U}_j \}_{j\in\Lambda_i} & 
		 = \left( \lambda \bar{\bm u}_i +c,  \lambda \bar{\bm v}_i \exp\bigg(-\alpha\frac{{\Delta t}}{h_x} \widehat \sigma_{i}(\lambda{\bm u}_h^*+c) \bigg)    \right) 
		 \\
		 & = \left( \lambda \bar{\bm u}_i +c,  \lambda \bar{\bm v}_i \exp\bigg(-\alpha\frac{{\Delta t}}{h_x} \widehat \sigma_{i}({\bm u}_h^*) \bigg)    \right)
		 \\
		 & =  \left( \lambda \bar{\bm u}_i^\sigma,  \lambda \bar{\bm v}_i^\sigma  \right) = {\mathcal A}_{\lambda,c} \{ \mathcal{F}_{\rm OE} \{  \bm{U}_j \}_{j\in\Lambda_i} \}. 
\end{align*}
		 The proof is completed. 
\end{proof}

Numerical schemes that lack scale invariance, such as the damping-based HWENO schemes proposed in \cite{ZQ2}, may produce spurious oscillations near discontinuities for problems across various scales, as illustrated in Figures \ref{sec3:Fig_LWR} and \ref{sec3:Fig_Lax}. In fact, both the damping terms and the HWENO operators in \cite{ZQ2} are not scale-invariant.

To make the nonlinear HWENO reconstruction operator ${\mathcal H}$ scale-invariant, we propose a simple yet universal dimensionless transformation, $\mathcal{D}$. This approach utilizes dimensionless variables and parameters that normalize the problem's scale, thus allowing the reconstruction operators to function effectively regardless of the absolute scale of the underlying physical quantities. Consequently, our dimensionless transformation $\mathcal{D}$ can render any nonlinear reconstruction operator, including WENO and HWENO operators, scale-invariant.

\begin{definition}[Generic dimensionless transformation $\mathcal{D}$] 
	The dimensionless transformation $\mathcal{D}$ is an affine transformation that normalizes the involved quantities by 
	the average, maximum, and minimum of the solution values in a stencil. For all the zeroth- and first-order moments $\{ \bar{u}_j, \bar{v}_j \}$ in the stencil $\Lambda_i$,  the transformation $\mathcal{D}: \{ \bar{u}_j, \bar{v}_j \} \rightarrow \{ \widehat{u}_j, \widehat{v}_j \} $ is defined by 
	\begin{equation}\label{eq:D}
		\widehat{{u}_j}=\frac{  \bar{u}_j-u_{\rm{ave}}  }{u_{\rm{max}}-u_{\rm{min}}+\epsilon}, \quad 
		\widehat{{v}_j}=\frac{\bar{v}_j}{u_{\rm{max}}-u_{\rm{min}}+\epsilon} \quad \forall j \in \Lambda_i
	\end{equation}
	with
	\begin{equation}\label{sec2:def_AveMaxMin}
		u_{\rm ave}=\frac{1}{\# \Lambda_i}\sum_{j \in \Lambda_i}\bar{u}_j,\quad 
		u_{\rm max}=\max_{j \in \Lambda_i}\{\bar{u}_j\},\quad 
		u_{\rm min}=\min_{j \in \Lambda_i}\{\bar{u}_j\},
	\end{equation}
	where $\# \Lambda_i$ denotes the number of cells in $\Lambda_i$, and $\epsilon$ is the machine epsilon to avoid division by zero, e.g., $\epsilon=10^{-15}$ for double precision.
\end{definition} 

\begin{definition}[Dimensionless HWENO operator $\mathcal{H_D}$]\label{sec3:defH_AveMaxMin} 
	The dimensionless HWENO operator is defined as 
	\begin{equation}\label{eq:HD}
		\mathcal{H_D}:=\mathcal{D}^{-1}_{u} \mathcal{H}\mathcal{D},
	\end{equation}
	where $\mathcal{D}: \{ \bar{u}_j, \bar{v}_j \} \rightarrow \{ \widehat{u}_j, \widehat{v}_j \} $ 
	is the dimensionless transformation defined by \eqref{eq:D}, 
	$\mathcal{H}$ is the standard HWENO operator which maps to the normalized moments $\{ \widehat{u}_j, \widehat{v}_j \}_{j \in \Lambda_i}$ to the dimensionless reconstructed solution $\widehat u_h(x)\big|_{I_i} = \widehat u_{h,i}(x)$, and 
	$\mathcal{D}^{-1}_{u}$ is the ``inverse'' transformation
	defined as	 
\begin{equation}\label{inverseD}
		  u_{h,i}(x) = \mathcal{D}^{-1}_{u} \widehat u_{h,i}(x) = (u_{\rm{max}}-u_{\rm{min}}+\epsilon)  \widehat u_{h,i}(x)  +u_{\rm{ave}}. 
\end{equation}
	Note that $\mathcal{D}^{-1}_{u}$ depends on the maximum, minimal, and average values of the original data $u$.  
\end{definition}
	
Based on above definitions, we have the following conclusion. 

\begin{theorem}\label{sec2:thm_si}
	The dimensionless HWENO operator $\mathcal{H_D}$ defined in \eqref{eq:HD} is scale-invariant. 
\end{theorem}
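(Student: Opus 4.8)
The plan is to exploit the multiplicative structure $\mathcal{H_D}=\mathcal{D}^{-1}_{u}\,\mathcal{H}\,\mathcal{D}$ and to track how each of the three factors intertwines with the affine rescaling $\mathcal{A}_{\lambda,c}$ of Definition \ref{sec2:si_def}. I would show that the inner normalization $\mathcal{D}$ collapses the general transformation $\mathcal{A}_{\lambda,c}$ into a mere sign flip $\mathcal{A}_{\operatorname{sgn}(\lambda),0}$, that the standard operator $\mathcal{H}$ commutes with this sign flip, and that the inverse $\mathcal{D}^{-1}$---recomputed from the \emph{rescaled} stencil data---reinstates exactly the factor $\lambda$ and the shift $c$. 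Throughout I treat the safeguard $\epsilon$ in \eqref{eq:D} and \eqref{inverseD} as negligible (equivalently, I work in the generic non-degenerate case $u_{\rm max}>u_{\rm min}$), since $\epsilon$ is only a machine-precision device; with $\epsilon=0$ the invariance is exact.

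First I would establish the \emph{key intertwining identity} for $\mathcal{D}$. Applying $\mathcal{A}_{\lambda,c}$ replaces $\{\bar{u}_j,\bar{v}_j\}$ by $\{\lambda\bar{u}_j+c,\lambda\bar{v}_j\}$, whence $u_{\rm ave}\mapsto\lambda u_{\rm ave}+c$ by \eqref{sec2:def_AveMaxMin}; crucially, when $\lambda<0$ the roles of $u_{\rm max}$ and $u_{\rm min}$ swap, so in every case $u_{\rm max}-u_{\rm min}\mapsto|\lambda|(u_{\rm max}-u_{\rm min})$. Substituting into \eqref{eq:D}, the shift $c$ cancels between the numerator and the transformed average, and the common factor reduces to $\lambda/|\lambda|=\operatorname{sgn}(\lambda)$, giving $\widehat{u}_j\mapsto\operatorname{sgn}(\lambda)\widehat{u}_j$ and $\widehat{v}_j\mapsto\operatorname{sgn}(\lambda)\widehat{v}_j$. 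Hence $\mathcal{D}\,\mathcal{A}_{\lambda,c}=\mathcal{A}_{\operatorname{sgn}(\lambda),0}\,\mathcal{D}$: the rescaling is absorbed into the normalization up to a sign.

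Next I would verify the sign-flip covariance $\mathcal{H}\,\mathcal{A}_{-1,0}=\mathcal{A}_{-1,0}\,\mathcal{H}$, which I expect to be the crux. Each candidate stencil polynomial $p_k$ in \eqref{sec3:1dHWENO_rec} is obtained by matching moments, hence is a linear-homogeneous functional of $\{\widehat{u}_j,\widehat{v}_j\}$, so negating the data negates each $p_k$ and each $\tilde{q}_1$. The Jiang--Shu smoothness indicators are quadratic-homogeneous in the $p_k$, so they---and therefore all nonlinear weights $\omega^{\rm H}_k,\omega^{\rm L}_k$, which depend on the data only through those indicators---are invariant under the sign flip (the WENO regularizer in the weights causes no trouble here, precisely because only a sign change, not a genuine rescaling, is involved). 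Since $\sum_k\gamma^{\rm H}_k=\sum_k\gamma^{\rm L}_k=1$, the convex-combination structure of \eqref{sec3:1dHWENO_rec} then yields $\mathcal{H}\{-\widehat{u}_j,-\widehat{v}_j\}=-\mathcal{H}\{\widehat{u}_j,\widehat{v}_j\}$, i.e.\ the dimensionless reconstructed polynomial $\widehat{u}_{h,i}(x)$ merely flips sign. I would then recompute $\mathcal{D}^{-1}$ from the rescaled data, whose statistics are $u_{\rm ave}'=\lambda u_{\rm ave}+c$ and $u_{\rm max}'-u_{\rm min}'=|\lambda|(u_{\rm max}-u_{\rm min})$.

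Assembling the three steps, and writing $\mathcal{D}^{-1}_{u'}$ for the inverse transform built from the rescaled stencil data, I would compute
\begin{align*}
\mathcal{H_D}\{\mathcal{A}_{\lambda,c}\bm{U}_j\}_{j\in\Lambda_i}
&=\mathcal{D}^{-1}_{u'}\,\mathcal{H}\,\mathcal{A}_{\operatorname{sgn}(\lambda),0}\,\mathcal{D}\{\bm{U}_j\}
=\mathcal{D}^{-1}_{u'}\big(\operatorname{sgn}(\lambda)\,\widehat{u}_{h,i}(x)\big)\\
&=|\lambda|(u_{\rm max}-u_{\rm min})\,\operatorname{sgn}(\lambda)\,\widehat{u}_{h,i}(x)+\lambda u_{\rm ave}+c,
\end{align*}
and the cancellation $|\lambda|\operatorname{sgn}(\lambda)=\lambda$ turns the right-hand side into $\lambda\big[(u_{\rm max}-u_{\rm min})\widehat{u}_{h,i}(x)+u_{\rm ave}\big]+c=\lambda\,\mathcal{H_D}\{\bm{U}_j\}+c=\mathcal{A}_{\lambda,c}\big(\mathcal{H_D}\{\bm{U}_j\}\big)$, which is exactly the claimed commutativity. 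The main obstacle is Step~2: one must pin down that the nonlinear weights depend on the data only through the quadratic-homogeneous smoothness indicators (so that a sign flip leaves them untouched) and that the candidate polynomials are genuinely linear-homogeneous. The remaining delicacies are the bookkeeping of the $u_{\rm max}$/$u_{\rm min}$ swap for $\lambda<0$ in Step~1 and the justification that the $\epsilon$ safeguard does not spoil exact invariance.
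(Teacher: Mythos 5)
Your proposal is correct and follows the same overall conjugation strategy as the paper's proof: push ${\mathcal A}_{\lambda,c}$ through the factorization $\mathcal{H_D}=\mathcal{D}^{-1}_{u}\mathcal{H}\mathcal{D}$ of \eqref{eq:HD}, using the fact that $\mathcal{D}$ absorbs the rescaling and that $\mathcal{D}^{-1}$, rebuilt from the rescaled stencil statistics, reinstates $\lambda$ and $c$. However, your version is genuinely more careful than the paper's on one point: the sign of $\lambda$. The paper asserts $\mathcal{D}\{\lambda\bar u_j+c,\lambda\bar v_j\}=\mathcal{D}\{\bar u_j,\bar v_j\}$ for \emph{all} $\lambda\ne 0$, and in its final display writes the rescaled range as $\lambda u_{\rm max}-\lambda u_{\rm min}+\lambda\epsilon$; both statements hold only for $\lambda>0$. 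For $\lambda<0$ the maximum and minimum in \eqref{sec2:def_AveMaxMin} swap, so the range becomes $|\lambda|(u_{\rm max}-u_{\rm min})$ and the normalized data in \eqref{eq:D} picks up a factor $\operatorname{sgn}(\lambda)$, exactly as you compute via the intertwining $\mathcal{D}\,{\mathcal A}_{\lambda,c}={\mathcal A}_{\operatorname{sgn}(\lambda),0}\,\mathcal{D}$. Your additional lemma---that $\mathcal{H}$ commutes with the sign flip because the candidate polynomials in \eqref{sec3:1dHWENO_rec} are linear-homogeneous in the data while the smoothness indicators, hence the nonlinear weights \eqref{sec:1d_non_weights} (including their $\varepsilon$-regularization and the $\tau$'s), are even functions of the data---is precisely the ingredient needed to close the $\lambda<0$ case, and it is sound: negating $\{\widehat u_j,\widehat v_j\}$ negates each $p_k$ and $\tilde q_1$ and leaves every $\beta_m$, $\tau$, and $\omega$ unchanged, so $\widehat u_{h,i}$ flips sign, and the final cancellation $|\lambda|\operatorname{sgn}(\lambda)=\lambda$ yields commutativity with ${\mathcal A}_{\lambda,c}$ in the sense of Definition \ref{sec2:si_def}. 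In short, what your route buys is a proof valid for every $\lambda\ne0$ as the theorem claims, at the cost of the extra homogeneity verification for $\mathcal{H}$; the paper's shorter argument implicitly restricts to $\lambda>0$. Your treatment of the safeguard $\epsilon$ (exact invariance modulo treating $\epsilon$ as machine zero) matches the paper's.
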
 
\begin{proof} 
	According to the definition of $\mathcal{D}: \{ \bar{u}_j, \bar{v}_j \} \rightarrow \{ \widehat{u}_j, \widehat{v}_j \} $ in \eqref{eq:D}--\eqref{sec2:def_AveMaxMin}, we have  
	$$
	\mathcal{D} \{ \lambda \bar{u}_j + c, \lambda \bar{v}_j \} =  \{ \widehat{u}_j, \widehat{v}_j \}  = \mathcal{D} \{  \bar{u}_j,  \bar{v}_j \} \quad \forall \lambda \neq 0,~~\forall c \in \mathbb R.
	$$
	It follows that 
	$$
	{\mathcal H}	\mathcal{D} \{ \lambda \bar{u}_j + c, \lambda \bar{v}_j \} = {\mathcal H} \mathcal{D} \{  \bar{u}_j,  \bar{v}_j \} = \widehat u_h(x). 
	$$
	From \eqref{inverseD}, we can observe that 	
\begin{align*}
			\mathcal{D}^{-1}_{\lambda u + c} \widehat u_h(x) & = ( \lambda  u_{\rm{max}}- \lambda  u_{\rm{min}}+ \lambda \epsilon)  \widehat u_h(x)  + \lambda  u_{\rm{ave}} + c 
			\\
		& =  \lambda (  u_{\rm{max}}-   u_{\rm{min}}+\epsilon)  \widehat u_h(x)  + \lambda  u_{\rm{ave}} + c 
		= \lambda \mathcal{D}^{-1}_{u} \widehat u_h(x) + c,
\end{align*}
	where in the second step we have treated the tiny number $\epsilon$ as machine zero. 
	This indicates that $\mathcal{H_D}$ and ${\mathcal A}_{\lambda,c}$ are commutative for any $\lambda\ne0$ and $c\in \mathbb{R}$. 
	The proof is completed.
\end{proof}
	
Let $\mathcal{S}_t$ denotes the solution operator of equation \eqref{sec2:1dHCLS}, namely, $\mathcal{S}_t(u(x,0))=u(x,t)$.
Let $\mathcal{E}_n$ represent the solution operator of the OE-HWENO schemes, i.e., $\mathcal{E}_n\bm{U}_i^0)=\bm{U}_i^n$. For the homogeneous flux, namely, $f(\lambda u)=\lambda f(u)$, the exact solution operator $\mathcal{S}_t$ is also homogeneous: $\mathcal{S}(\lambda u(x,0))=\lambda \mathcal{S}(u(x,0)).$

\begin{theorem}[Homogeneous]
	If the flux function $f(u)$ and the numerical flux $\hat{f}_{i+\frac12}$ are both homogeneous, then the OE-HWENO solution operator with the dimensionless HWENO operator $\mathcal{H_D}$ is homogeneous:
	\begin{align*}
		\mathcal{E}_n(\lambda\bm{U}_i^0)=\lambda\mathcal{E}_n(\bm{U}_i^0)\qquad \forall \lambda \in \mathbb{R}.
	\end{align*}
\end{theorem}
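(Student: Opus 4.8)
The plan is to establish homogeneity for a single time step, $\mathcal{E}_1(\lambda\bm{U}_i^0)=\lambda\mathcal{E}_1(\bm{U}_i^0)$, by induction over the Runge--Kutta stages, and then to obtain the statement for $\mathcal{E}_n$ by composition, since the composition of degree-one homogeneous maps is again homogeneous. Throughout I regard $\Delta t$ as fixed; if $\Delta t$ is instead selected from a CFL condition $\alpha\Delta t/h_x\le C_{\rm cfl}$, the admissible step is itself unaffected by scaling because $\alpha$ is, as shown below. The two structural ingredients are the homogeneity specializations (set $c=0$) of the scale-invariance results already proved: $\mathcal{H_D}\{\lambda\bm{U}_j\}=\lambda\,\mathcal{H_D}\{\bm{U}_j\}$ from Theorem~\ref{sec2:thm_si}, and $\mathcal{F}_{\rm OE}\{\lambda\bm{U}_j\}_{j\in\Lambda_i}=\lambda\,\mathcal{F}_{\rm OE}\{\bm{U}_j\}_{j\in\Lambda_i}$ from Theorem~\ref{sec2:thm_F}.

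First I would show the spatial operator is homogeneous, $\bm{\mathcal{L}}_i(\lambda u_h)=\lambda\,\bm{\mathcal{L}}_i(u_h)$. The crucial point is that the global Lax--Friedrichs viscosity $\alpha=\max_i|f'(\bar u_i)|$ is invariant under $u\mapsto\lambda u$: differentiating the homogeneity identity $f(\lambda u)=\lambda f(u)$ gives $f'(\lambda u)=f'(u)$, so $\alpha$ recomputed from the scaled averages $\lambda\bar u_i$ coincides with the original. Together with the assumed homogeneity of $f$ and of $\hat f_{i+\frac12}$ in \eqref{sec2:Def_LF1d}, both components in \eqref{sec2:1dSemi_HWENO} scale linearly in $u_h$: $\mathcal{L}_i^{(0)}$ is a flux difference, and $\mathcal{L}_i^{(1)}$ is a flux difference plus the Gauss--Lobatto term $\frac{1}{h_x}\sum_\ell\omega_\ell^{\rm GL}f(u_h(x_{i,\ell}^{\rm GL}))$, each of which inherits the factor $\lambda$.

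With these facts the induction is mechanical. Writing $\tilde{(\cdot)}$ for quantities generated from the scaled data $\lambda\bm{U}_i^0$, the base case is $\tilde{\bm{U}}_i^{\sigma,0}=\lambda\bm{U}_i^n=\lambda\bm{U}_i^{\sigma,0}$. Assuming $\tilde{\bm{U}}_i^{\sigma,m}=\lambda\bm{U}_i^{\sigma,m}$, homogeneity of $\mathcal{H_D}$ gives $\tilde u_h^{\sigma,m}=\lambda u_h^{\sigma,m}$, hence $\bm{\mathcal{L}}_i(\tilde u_h^{\sigma,m})=\lambda\bm{\mathcal{L}}_i(u_h^{\sigma,m})$; the affine RK update, being a fixed linear combination of $\tilde{\bm{U}}^{\sigma,m}$ and $\Delta t\,\bm{\mathcal{L}}_i(\tilde u_h^{\sigma,m})$, yields $\tilde{\bm{U}}_i^{n,m+1}=\lambda\bm{U}_i^{n,m+1}$; and the OE step together with homogeneity of $\mathcal{F}_{\rm OE}$ gives $\tilde{\bm{U}}_i^{\sigma,m+1}=\lambda\bm{U}_i^{\sigma,m+1}$. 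After the final stage this is exactly $\mathcal{E}_1(\lambda\bm{U}_i^0)=\lambda\mathcal{E}_1(\bm{U}_i^0)$, and iterating over time steps completes the proof. The only genuine obstacle is the invariance of $\alpha$ (and thus of the whole operator $\bm{\mathcal{L}}_i$ and of the damping coefficient) under scaling; everything else is a routine propagation of linearity through the RK/OE loop. The degenerate value $\lambda=0$ is handled separately, noting that homogeneity forces $f(0)=0$, so the zero state is preserved by the scheme.
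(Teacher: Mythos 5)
Your proposal is correct and takes essentially the same route as the paper's proof: homogeneity of the spatial operator $\bm{\mathcal{L}}_{i}$ (from the assumed homogeneity of $f$ and $\hat{f}_{i+\frac12}$ together with the scale invariance of $\mathcal{H_D}$), homogeneity of $\mathcal{F}_{\rm OE}$ from Theorem \ref{sec2:thm_F}, and propagation of the factor $\lambda$ through the RK/OE composition. Your explicit stage-by-stage induction, the verification that $\alpha=\max_i|f'(\bar{u}_i)|$ is unchanged under $u\mapsto\lambda u$, and the separate treatment of $\lambda=0$ merely fill in details that the paper's brief proof leaves implicit.
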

\begin{proof}
	When $f(u)$ and $\hat{f}_{i+\frac12}$ are homogeneous, the $\bm{\mathcal{L}}_i(u_h^{n,\ell})$ is homogeneous  for the OE-HWENO schemes with $\mathcal{H_D}$, demonstrating that the local solution operator for each RK stage in \eqref{sec2:1dH} is homogeneous. According to Theorem \ref{sec2:thm_F}, the OE operator $\mathcal{F}$ is homogeneous. Consequently, $\mathcal{E}_n$ is homogeneous.
\end{proof}	
	
\subsubsection{Evolution invariance} 
	
We now discuss another important invariant property, namely, evolution invariance. This property ensures that for problems featuring slow-propagating waves (resulting in large time steps) and fast-propagating waves (resulting in small time steps), if their solutions are consistent, then the OE-HWENO method with the same CFL number will produce identical results after a fixed number of steps. More specifically, for any constant $\lambda>0$, let $\mathcal{S}^{\lambda}_t$ be the solution operator of the equation $u_t+\lambda f(u)_x=0$, which is essentially the reformulation of equation \eqref{sec2:1dHCLS} with the time unit adjusted. The operator $\mathcal{S}^{\lambda}_t$ adheres to the following evolution-invariant property:
\begin{equation*}
	\mathcal{S}^{\lambda}_t=\mathcal{S}_{\lambda t}\quad\forall \lambda>0,~\forall t>0,
\end{equation*}
where $\mathcal{S}_t$ denotes the solution operator of equation \eqref{sec2:1dHCLS}.
	
\begin{theorem}[Evolution invariance]\label{thm:EI}
	Let $\mathcal{E}^{\lambda}_n$ denote the solution operator of the OE-HWENO schemes solving $u_t+\lambda f(u)_x=0$ for $\lambda>0$ on a fixed mesh with a same CFL number, namely, $\mathcal{E}^{\lambda}_n( {\bm{U}}^0_i)= {\bm{U}}^n_i$ at time $\tau^\lambda_n=n\tau_\lambda$ where $\tau_\lambda=\tau/\lambda$ is the time step-size. Then we have 
	\begin{equation}\label{sec2:EI_pro}
		\mathcal{E}^\lambda_n=\mathcal{E}_n\quad\forall \lambda>0,
	\end{equation}
	where $\mathcal{E}_n$ denotes the solution operator of the OE-HWENO schemes solving equation \eqref{sec2:1dHCLS}.
\end{theorem}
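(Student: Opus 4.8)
The plan is to track exactly where the parameter $\lambda$ enters the one-step map of the OE-HWENO scheme and to show that every occurrence is neutralized by the compensating choice $\tau_\lambda = \tau/\lambda$ dictated by the fixed CFL number. First I would observe that the HWENO reconstruction operator ($\mathcal{H}$ or its scale-invariant version $\mathcal{H_D}$) depends only on the stored moments $\{\bm{U}_j\}$ and not on the flux; hence, as long as the input moments for the scaled problem and the original problem agree, the reconstructed polynomials $u_h$ coincide. The only genuinely $\lambda$-dependent ingredients are therefore the flux $\lambda f$, the associated maximum wave speed $\alpha^\lambda = \lambda\alpha$ (since $\alpha=\max_i|f'(\bar u_i)|$ scales linearly under $f\mapsto\lambda f$ for $\lambda>0$, and the $\bar u_i$ will coincide by the induction below), and the time step $\tau_\lambda = \tau/\lambda$.

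The key structural fact I would establish next is that the spatial operator is homogeneous of degree one in the flux, namely $\bm{\mathcal{L}}^\lambda_i(u_h) = \lambda\,\bm{\mathcal{L}}_i(u_h)$. This follows by inspecting the semi-discrete scheme \eqref{sec2:1dSemi_HWENO}: replacing $f$ by $\lambda f$ scales the volume term $\int_{I_i} f(u_h)\,\mathrm{d}x$ by $\lambda$, and the Lax--Friedrichs numerical flux \eqref{sec2:Def_LF1d} satisfies $\hat f^\lambda_{i+\frac12}=\lambda\hat f_{i+\frac12}$, because both the physical-flux evaluations and the dissipation coefficient $\alpha^\lambda=\lambda\alpha$ scale by $\lambda$. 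Consequently, each explicit RK stage increment obeys
\begin{equation*}
	\tau_\lambda\,\bm{\mathcal{L}}^\lambda_i(u_h) = \frac{\tau}{\lambda}\cdot\lambda\,\bm{\mathcal{L}}_i(u_h) = \tau\,\bm{\mathcal{L}}_i(u_h),
\end{equation*}
so that, given identical incoming moments, the pre-OE stage values $\bm{U}^{n,\ell+1}_i$ produced by \eqref{sec2:1dH} for the scaled and original problems are identical.

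It then remains to check that the intervening OE steps also coincide. Using the exact solver of Theorem \ref{thm:OE}, the damping factor for the scaled problem is $\exp(-\alpha^\lambda\frac{\tau_\lambda}{h_x}\widehat\sigma_i(u_h^*))$; since $\alpha^\lambda\tau_\lambda = \lambda\alpha\cdot(\tau/\lambda)=\alpha\tau$ while $\widehat\sigma_i(u_h^*)$ depends only on the moments (through the jumps \eqref{eq:jump1}) and not on $\lambda$, this factor equals $\exp(-\alpha\frac{\tau}{h_x}\widehat\sigma_i(u_h^*))$, i.e.\ exactly the damping factor of the original problem. Hence $\mathcal{F}_{\rm OE}$ acts identically in both cases. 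The proof is then completed by a double induction: an inner induction on the stage index $\ell=0,\dots,s-1$ shows that all intermediate quantities $\bm{U}^{n,\ell}_i$ and $\bm{U}^{\sigma,\ell}_i$ agree within a single step, and an outer induction on $n$ propagates the equality of the full updates $\bm{U}^n_i$, yielding $\mathcal{E}^\lambda_n=\mathcal{E}_n$.

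I expect the only delicate point to be the homogeneity of the numerical flux, which is the linchpin that turns $\tau_\lambda\,\bm{\mathcal{L}}^\lambda_i$ into $\tau\,\bm{\mathcal{L}}_i$; it is immediate for the Lax--Friedrichs flux used here but should be verified (or assumed) for any alternative flux, since the entire cancellation rests on the numerical flux inheriting the degree-one homogeneity of $f\mapsto\lambda f$ together with the correct scaling of the dissipation speed $\alpha^\lambda=\lambda\alpha$.
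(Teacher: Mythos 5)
Your proposal is correct and follows essentially the same route as the paper's proof: show each RK update is evolution-invariant (via the degree-one homogeneity of the numerical flux and spatial operator in $\lambda$, cancelled by $\tau_\lambda=\tau/\lambda$), and show the OE damping factor is unchanged because $\alpha^{\lambda}\tau_\lambda=\alpha\tau$ while $\widehat\sigma_i$ depends only on the moments. The paper states these two facts tersely ("easy to verify"), whereas you supply the explicit verification and the induction over stages and steps, but the underlying argument is identical.
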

\begin{proof}
	It is easy to verify that each RK update in the scale-invariant HWENO schemes is evolution-invariant, namely, it is 
	identical regardless of the value of $\lambda$. The OE procedure also maintains the evolution invariance. This is because $\alpha=\lambda\big|f'(\bar{{u}}_i)\big|$ for $u_t+\lambda f(u)_x=0$, and we have
	\begin{equation*}
		\mathcal{F}^{\tau_\lambda}_{{\rm OE},\lambda}=\mathcal{F}_{{\rm OE},1}^{\lambda\tau_\lambda}=\mathcal{F}^{\tau}_{{\rm OE},1}\quad\forall\lambda>0,
	\end{equation*}   
	where $\mathcal{F}^{\tau_\lambda}_{{\rm OE},\lambda}$ denotes the OE operator for $u_t+\lambda f(u)_x=0$ with the time step-size $\tau_\lambda$. 
	Consequently, both the RK update and the OE procedure yield identical outcomes independent on the value
	of $\lambda$. Hence, the OE-HWENO schemes satisfy the evolution invariance \eqref{sec2:EI_pro}.
\end{proof}

\subsubsection{Analysis of approximate dispersion relation (ADR)}\label{sec:ADR}

To investigate the dispersion and dissipation (spectral) properties of nonlinear shock-capturing schemes, Pirozzoli introduced a numerical methodology \cite{Pirozzoli} to study the ADR for a linear wave equation. The ADR serves as a tool to predict the spectral characteristics of a general nonlinear scheme. This is accomplished by evolving a single Fourier mode in a $2\pi$-periodic domain for a linear advection equation with constant velocity over a short period, followed by analyzing the solution in the Fourier space to determine the spectral property of the scheme. 
In this subsection, we will analyze the spectral property of the sixth-order OE-HWENO scheme using the ADR technique.

Consider the linear advection equation with $f(u)=u$ in \eqref{sec2:1dHCLS}, with monochromatic sinusoidal initial conditions of wavelength $\lambda$ (and wavenumber $\kappa =2\pi/\lambda$), 
\begin{equation*}%
	\begin{cases}
		u_t + u_x=0,~~~~-\infty<x<+\infty,~ t>0,
		\\
		u(x,0)=\hat{u}_0e^{\mathrm{i}\kappa x},-\infty<x<+\infty,
	\end{cases}
\end{equation*}
where $\mathrm{i}=\sqrt{-1}$ is the imaginary unit.
Consider a uniform grid with $x_j=jh_x,j=0,\cdots,N$ in a $2\pi$-periodic domain, where $h_x=\frac{2\pi}{N}$ is the space step-size. Following \cite{Pirozzoli}, one can define a modified wavenumber
\begin{equation*}
	\Phi(\phi)= \frac{\mathrm{i}h_x}{t} \ln
	\left(\frac{\hat{u}(\phi; t)}{\hat{u}_0}\right), 
\end{equation*}
where $\hat{u}(\phi; t)$ is the Fourier spectrum of the computed solution at a vary short time $t\rightarrow0$, with the given reduced wavenumber $\phi=\kappa h_x$. 
In practice, the above procedure is repeated for the $\phi_n=nh_x\le\pi,n=0,\ldots,\frac{N}{2}$ to obtain the corresponding modified wavenumber  $\Phi(\phi_n)$ defined by
\begin{equation*}
	\Phi\left(\phi_{n}\right)= \frac{\mathrm{i}h_x}{t} \ln
	\left(\frac{\hat{u}(\phi_{n} ; t)}{\hat{u}\left(\phi_{n};0\right)}\right), 
\end{equation*}
where $\hat{u}(\phi_{0} ; t)$ and $\hat{u}(\phi_{n} ; t)$ are computed by the discrete Fourier transform of the computed solution at $\phi_{n}$, for example,
\begin{equation}\label{DFT}
	\widehat{{u}}\left(\phi_{n} ; t\right)=
	\frac{1}{N} \sum_{j=0}^{N-1} {u}_{j}(t) \mathrm{e}^{-\mathrm{i}j \phi_{n}}.
\end{equation}

\begin{figure}[!thb]
	\centering
	\begin{subfigure}{0.48\textwidth}
		{\includegraphics[width=7.5cm,angle=0]{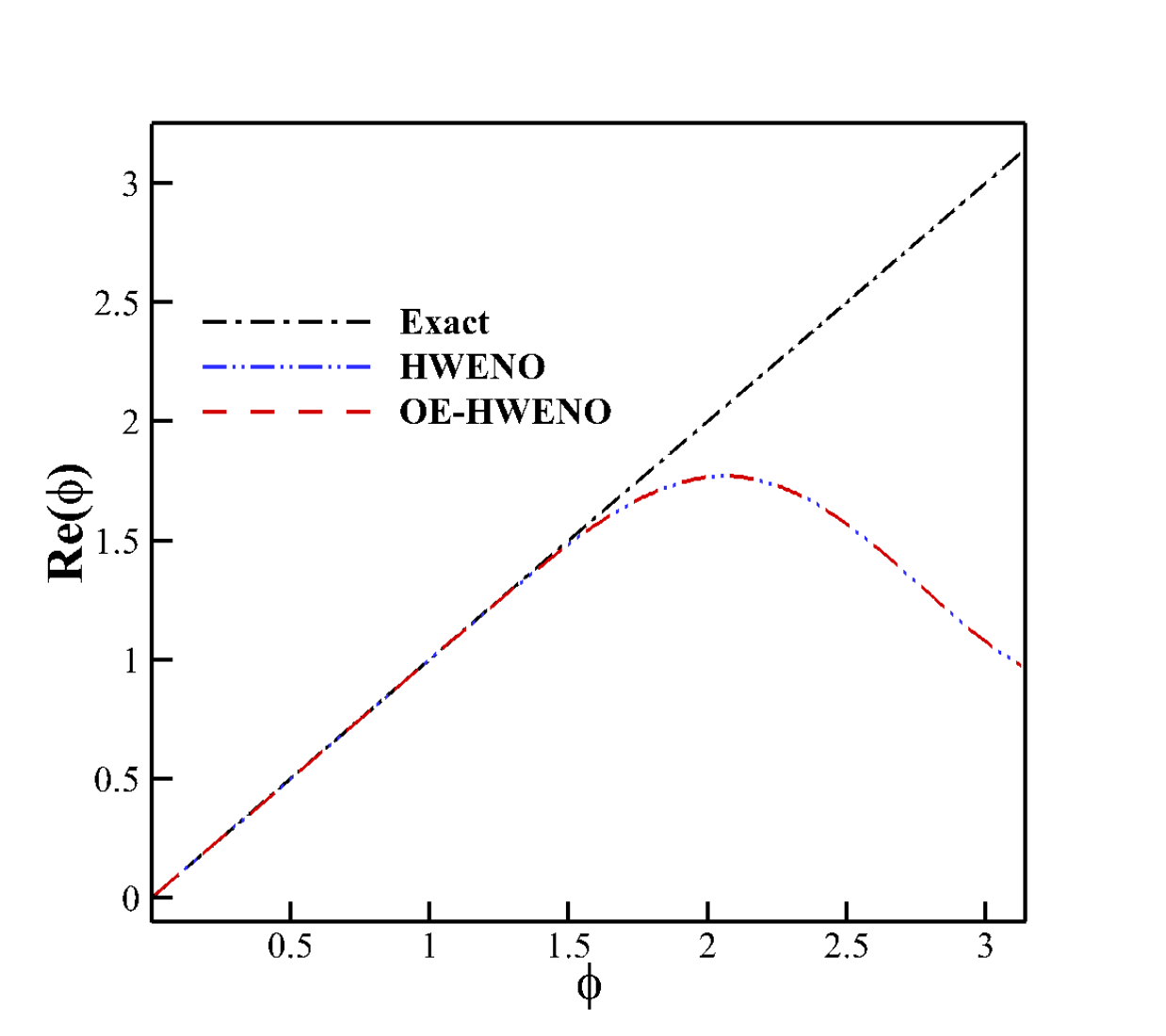}}
		\caption{Dispersion}
	\end{subfigure} 
	\begin{subfigure}{0.48\textwidth}
		{\includegraphics[width=7.5cm,angle=0]{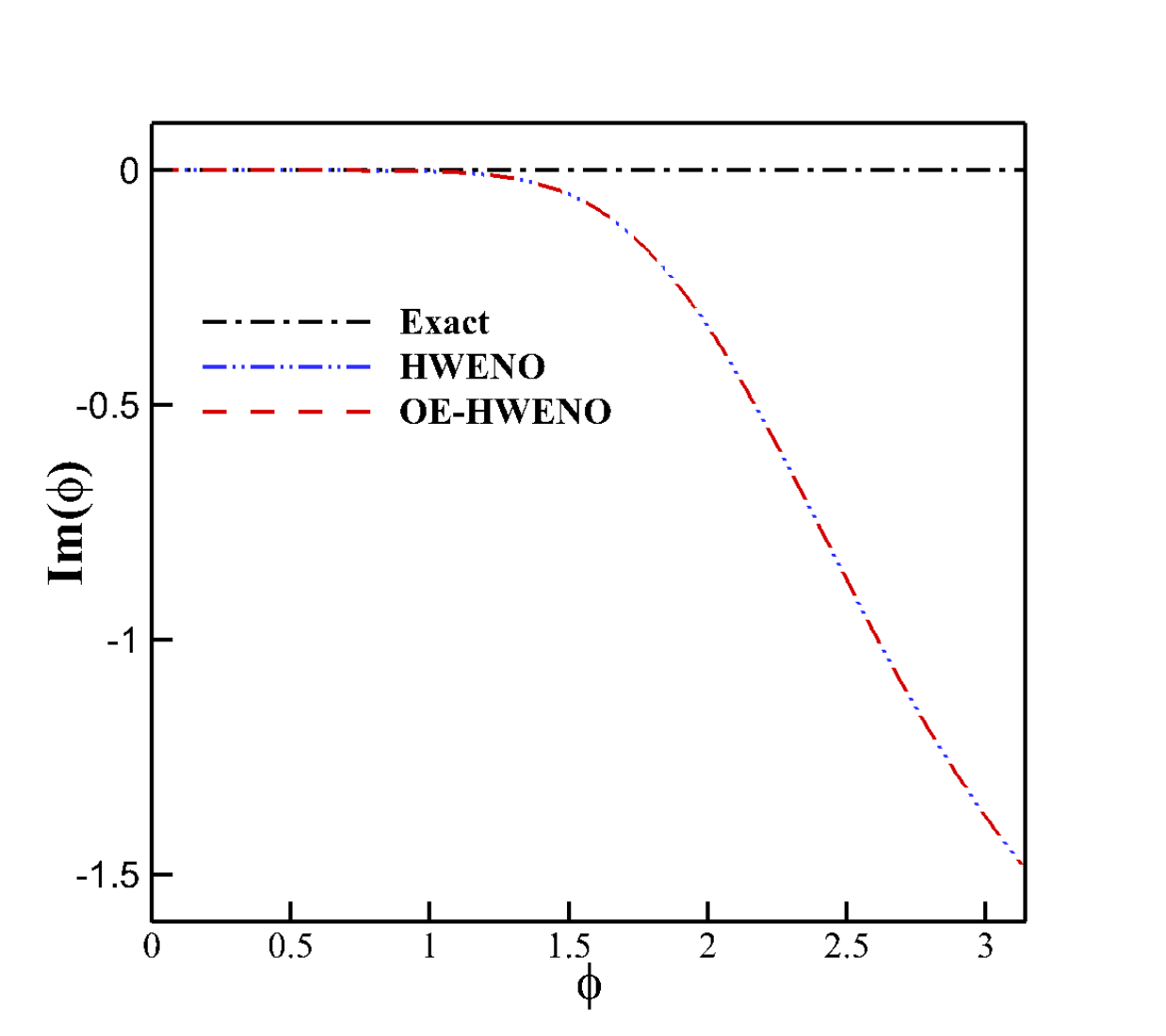}} 
		\caption{Dissipation}
	\end{subfigure} 
	\caption{The dispersion and dissipation errors of HWENO and OE-HWENO schemes. 
	} \label{sec3:ADR_HWENO}
\end{figure}  
Following the above process, one can derive the ADR to analyze the spectral property of nonlinear schemes. While spectral schemes have $\Phi(\phi)=\phi$, this generally does not hold for nonlinear shock-capturing schemes. The imaginary part $\mathbf{Im}(\Phi(\phi_n))$ represents the dissipation properties, while the real part $\mathbf{Re}(\Phi(\phi_n))$ represents the dispersion properties. Therefore, simple harmonic waves in the wave packet may have different wave velocities. As a result, the overall waveform may change with time, leading to numerical oscillations.

To obtain the ADR for the OE-HWENO scheme, we first use the corresponding HWENO reconstruction to obtain the point value ${u}_{j}$ in each cell, based on the cell-average of the computed solution at each $\phi_{n}$. We then utilize the aforementioned procedure \eqref{DFT} to determine the dispersion and dissipation properties of the OE-HWENO scheme. Fig.~\ref{sec3:ADR_HWENO} presents the dispersion and dissipation errors of the HWENO scheme without the OE procedure and the proposed OE-HWENO scheme. These two schemes exhibit nearly identical results, demonstrating  
that our OE procedure does not influence the spectral property of the HWENO schemes for smooth solutions.

\subsection{Two-dimensional OE-HWENO method}\label{sec:2D}   
Consider the 2D scalar conservation law
\begin{equation}\label{sec2:2dHCLS}
	\begin{cases}
		u_t+ f(u)_x+g(u)_y=0,~(x,y, t)\in \Omega\times [0,T], \\
		u(x,y,0)=u_0(x,y),~~~~~(x,y)\in \Omega. \\
	\end{cases}
\end{equation}
A uniform partition of the domain $\Omega=[a,b]\times[c,d]$ is defined as $\Omega=\cup^{N_x,N_y}_{i=1,j=1}I_{i,j}$ with $~I_{i,j}=I_i \times I_j$, $I_i=[x_{i-\frac12},x_{i+\frac12}]$, $I_j=[y_{j-\frac12},y_{j+\frac12}]$, $h_x=x_{i+\frac12}-x_{i-\frac12}$, and $h_y=y_{j+\frac12}-y_{j-\frac12}$. Define $(x_i,y_j)$ as the center of cell $I_{i,j}$ with $x_i=\frac12(x_{i-\frac12}+x_{i+\frac12})$ and $y_j=\frac12(y_{j-\frac12}+y_{j+\frac12})$.
Following \eqref{sec:semi} with $\phi^{(0)}_{i,j}(x,y)=\frac{1}{h_xh_y}$, $\phi^{(1)}_{i,j}(x,y)=\frac{x-x_i}{(h_x)^2h_y}$ and $\phi^{(2)}_{i,j}(x,y)=\frac{y-y_j}{h_x(h_y)^2}$, we obtain
\begin{align*}
	&\begin{aligned}
		&\frac{{\rm d}}{{\rm d} t}\left(\int_{I_{i,j}}u(x,y,t)\frac{1}{h_xh_y} \mathrm{d}x\mathrm{d}y\right)=
		-\frac{1} {h_x h_y}  \int_{I_j}\bigg[ f\big(u(x_{i+\frac12},y,t)\big)- f\big(u(x_{i-\frac12},y,t)\big)\bigg]\mathrm{d}y
		\\&\quad\quad\quad\quad\quad\quad\quad\quad\quad\quad\quad~~ 
		-\frac{1} {h_xh_y}  \int_{I_i}\big[ g\big(u(x,y_{j+\frac12},t)\big)- g\big(u(x,y_{j-\frac12},t)\big)\big]\mathrm{d}x,
	\end{aligned}
	\\		 
	&\begin{aligned}
		&\frac{{\rm d}}{{\rm d} t}\left(\int_{I_{i,j}}u(x,y,t)\frac{x-x_i}{(h_x)^2h_y} \mathrm{d}x\mathrm{d}y\right)=
		-\frac{1} {2h_xh_y}  \int_{I_j}\bigg[ f\big(u(x_{i-\frac12},y,t)\big) + f\big(u(x_{i+\frac12},y,t)\big)\bigg]\mathrm{d}y
		\\& 
		+\frac{1}{(h_x)^2 h_y}\int_{I_{i,j}}f(u)\mathrm{d}x\mathrm{d}y
		-\frac{1}{h_xh_y}\int_{I_i}\bigg[g\big(u(x,y_{j+\frac12},t)\big)-g\big(u(x,y_{j-\frac12},t)\big)\bigg]\frac{(x-x_i)}{h_x}\mathrm{d}x,
	\end{aligned}
	\\ 
	&\begin{aligned}
		&\frac{{\rm d}}{{\rm d} t}\left(\int_{I_{i,j}}u(x,y,t)\frac{y-y_j}{h_x(h_y)^2} \mathrm{d}x\mathrm{d}y\right)=
		-\frac{1}{h_xh_y}  \int_{I_j}\bigg[ f\big(u(x_{i+\frac12},y,t)\big)- f\big(u(x_{i-\frac12},y,t)\big)\bigg]\frac{(y-y_j)}{h_y}\mathrm{d}y
		\\&
		\quad-\frac{1} {2h_xh_y}  \int_{I_i}\bigg[ g\big(u(x,y_{j-\frac12},t)\big)+ g\big(u(x,y_{j+\frac12},t)\big)\bigg]dx+\frac 1{h_x (h_y)^2}\int_{I_{i,j}}g(u)\mathrm{d}x\mathrm{d}y.
	\end{aligned}
\end{align*}		
Let $\bar{u}_{i,j}(t)$, $\bar{v}_{i,j}(t)$, and $\bar{w}_{i,j}(t)$ denote the approximations to the zeroth-order moment, the first-order moment in the $x$-direction, and the first-order moment in the $y$-direction of $u(x,y,t)$, respectively, that is,
\begin{align*}
	&\bar{u}_{i,j}(t)\approx\frac{1}{h_xh_y}\int_{I_{i,j}} u(x,y,t)\mathrm{d}x\mathrm{d}y,
	\\
	&\bar{v}_{i,j}(t)\approx\frac{1}{h_xh_y}\int_{I_{i,j}}u(x,y,t)\frac{x-x_i}{h_x} \mathrm{d}x\mathrm{d}y,
	\\
	&\bar {w}_{i,j}(t)\approx\frac{1}{h_xh_y}\int_{I_{i,j}}u(x,y,t)\frac{y-y_j}{h_y} \mathrm{d}x\mathrm{d}y.
\end{align*}
Similar to the 1D case, based on $\{\bar{u}_{i,j},\bar{v}_{i,j},\bar{w}_{i,j}\}$, the ($k$+1)th-order HWENO method constructs a piecewise polynomial solution $u_h(x,y,t)$ of degree $k$ to approximate the exact solution. Though approximating the fluxes $f\big(u(x_{i+\frac12},y,t)\big)$ and $g\big(u(x,y_{j+\frac12},t) \big)$ with suitable numerical fluxes, and evaluating the integrals over $I_i$, $I_j$, and $I_{i,j}$ with proper quadrature rules of sufficiently high-order accuracy,
we obtain the following semi-discrete 2D FV HWENO scheme:
\begin{equation}\label{sec2:2dSemi_HWENO}
	\left\{
	\begin{aligned}
		\frac{{\rm d} \bar{{u}}_{i,j}(t)}{{\rm d} t} =& -\frac{1}{h_x}\bigg((\hat{f}_1)_{i+\frac12,j}-(\hat{f}_1)_{i-\frac12,j}\bigg)
		-\frac{1}{h_y}\bigg((\hat{g}_1)_{i,j+\frac12}-(\hat{g}_1)_{i,j-\frac12}\bigg)
		=: \mathcal{L}^{(0)}_{i,j}(u_h(x,y,t)), \\
		\frac{{\rm d} \bar{{v}}_{i,j}(t)}{{\rm d} t}=& -\frac{1}{2h_x}\bigg((\hat{f}_1)_{i+\frac12,j}+(\hat{f}_1)_{i-\frac12,j}\bigg)
		+\frac{1}{h_x}\sum\limits_{\ell=1}^Q\sum\limits_{m=1}^Q{\omega}^{{\rm G}}_\ell{\omega}^{{\rm G}}_m f(u_h({x}_{i,\ell}^{{\rm G}},{y}_{j,m}^{{\rm G}},t))\\&
		-\frac{1}{h_y}\bigg((\hat{g}_2)_{i,j+\frac12}-(\hat{g}_2)_{i,j-\frac12}\bigg)
		=: \mathcal{L}^{(1)}_{i,j}(u_h(x,y,t)), \\
		\frac{{\rm d} \bar{{w}}_{i,j}(t)}{{\rm d}t}=&
		-\frac{1}{h_x}\bigg((\hat{f}_2)_{i+\frac12,j}-(\hat{f}_2)_{i-\frac12,j}\bigg)
		-\frac{1}{2h_y}\bigg((\hat{g}_1)_{i,j+\frac12}+(\hat{g}_1)_{i,j-\frac12}\bigg)\\&
		+\frac{1}{h_y}\sum\limits_{\ell=1}^Q\sum\limits_{m=1}^Q{\omega}^{{\rm G}}_\ell{\omega}^{{\rm G}}_m g(u_h({x}_{i,\ell}^{{\rm G}},{y}_{j,m}^{{\rm G}},t))  =: \mathcal{L}^{(2)}_{i,j}(u_h(x,y,t)), 
	\end{aligned}
	\right.
\end{equation}
which corresponds to \eqref{sec_semidiscrete} in the 2D scalar case. Here, $(\hat{f}_{\ell})_{i+\frac12,j}$ and $(\hat{g}_{\ell})_{i,j+\frac12}$ are defined as
\begin{equation}\label{eq:WKL03}
	\begin{aligned}
		(\hat{f}_1)_{i+\frac12,j}&=\sum\limits_{\ell=1}^{Q}{\omega}^{{\rm G}}_\ell \hat{f}\big(u^{-,{\rm G}}_{i+\frac12,\ell},u^{+,{\rm G}}_{i+\frac12,\ell} \big),\quad
		(\hat{f}_2)_{i+\frac12,j}=\sum\limits_{\ell=1}^{Q}{{\omega}^{\rm G}_\ell}  \hat{f}\big(u^{-,{\rm G}}_{i+\frac12,\ell},u^{+,{\rm G}}_{i+\frac12,\ell}\big)\frac{{y}^{{\rm G}}_{j,\ell}-y_j}{h_y},
		\\
		(\hat{g}_1)_{i,j+\frac12}&=\sum\limits_{\ell=1}^{Q}{\omega}_\ell^{{\rm G}} \hat{g}\big(u^{-,{\rm G}}_{\ell,j+\frac12},u^{+,{\rm G}}_{\ell,j+\frac12}\big),\quad
		(\hat{g}_2)_{i,j+\frac12}=\sum\limits_{\ell=1}^{Q}{\omega}_\ell^{{\rm G}} \hat{g}\big(u^{-,{\rm G}}_{\ell,j+\frac12},u^{+,{\rm G}}_{\ell,j+\frac12}\big)\frac{{x}^{{\rm G}}_{i,\ell}-x_i}{h_x},
	\end{aligned}
\end{equation}
where $u^{-,{\rm G}}_{i+\frac12,\ell}$ and $u^{+,{\rm G}}_{i+\frac12,\ell}$ represent the left and right limits of $u_h(x,y)$ at $(x_{i+\frac12},{y}_{j,\ell}^{{\rm G}})$; $u^{-,{\rm G}}_{\ell,j+\frac12}$ and $u^{+,{\rm G}}_{\ell,j+\frac12}$ denote the left and right limits of $u_h(x,y)$ at $({x}^{{\rm G}}_{i,\ell},y_{j+\frac12})$.  
Here, $\{x^{\rm G}_{i,\ell}\}^Q_{\ell=1}$ and $\{y^{\rm G}_{j,\ell}\}^Q_{\ell=1}$ represent the Gauss quadrature nodes in $I_{i}$ and $I_{j}$, respectively, with the normalized weights $\{\omega^{\rm G}_{\ell}\}^Q_{\ell=1}$ satisfying $\sum^Q_{\ell=1}\omega^{\rm G}_{\ell}=1$. For instance, in the sixth-order HWENO scheme, we use the Gauss quadrature with
\begin{equation*}
	\begin{aligned}
		&{x}^{{\rm G}}_{i,1}={x}_{i-\frac{\sqrt{15}}{10}},~{x}^{{\rm G}}_{i,2}={x}_i,~{x}^{{\rm G}}_{i,3}={x}_{i+\frac{\sqrt{15}}{10}},\\
		&{y}^{{\rm G}}_{j,1}={y}_{j-\frac{\sqrt{15}}{10}},~{y}^{{\rm G}}_{j,2}={y}_j,~{y}^{{\rm G}}_{j,3}={y}_{j+\frac{\sqrt{15}}{10}},\\
		&{\omega}^{{\rm G}}_{1}=\frac5{18},~{\omega}^{{\rm G}}_{2}=\frac4{9},~{\omega}^{{\rm G}}_{3}=\frac5{18}.
	\end{aligned}
\end{equation*}
For the numerical fluxes $\hat{f}\big(u^-,u^+\big)$ and  $\hat{g}\big(u^-,u^+\big)$ in \eqref{eq:WKL03}, we use the Lax--Friedrichs flux:
\begin{align*}
	&\hat{f}\big(u^-,u^+\big)=
	\frac12\Big( f(u^-)+f(u^+)-\alpha_x(u^+-u^-)\Big),
	\\
	&\hat{g}\big(u^-,u^+\big)=
	\frac12\Big( g(u^-)+g(u^+)-\alpha_y(u^+-u^-)\Big),
\end{align*}
where $\alpha_x=\max_{i,j}|f'(\bar{u}_{i,j})|$ and $\alpha_y=\max_{i,j}|g'(\bar{u}_{i,j})|$.  

Let $\bm{U}_{i,j}(t)=(\bar{u}_{i,j}(t),\bar{v}_{i,j}(t),\bar{w}_{i,j}(t))^\top$ and $\bm{\mathcal{L}}_{i,j}(u_h)=(\mathcal{L}_{i,j}^{(0)}(u_h)$, $\mathcal{L}_{i,j}^{(1)}(u_h),\mathcal{L}_{i,j}^{(2)}(u_h))$. Then the semi-discrete HWENO scheme \eqref{sec2:2dSemi_HWENO} can be rewritten as
\begin{equation*}
	\frac{\rm d}{{\rm d}t}\bm{U}_{i,j}(t)=\bm{\mathcal{L}}_{i,j}(u_h),
\end{equation*}
which can also be further discretized in time using a RK method. By introducing an OE procedure after each RK stage to suppress spurious oscillations, we obtain the 2D OE-HWENO schemes. For instance, the 2D OE-HWENO scheme, coupled with the third-order explicit SSP RK method, is 
\begin{equation}\label{sec2:2dHWENO}
	\left\{
	\begin{aligned}
		{\bm{U}}^{\sigma,0}_{i,j}=&~{\bm{U}}^{n}_{i,j},\\
		\bm{U}^{n,1}_{i,j}=&~{\bm{U}}^{\sigma,0}_{i,j}+\Delta t \bm{\mathcal{L}}_{i,j}(u^{\sigma,0}_h),\quad\quad\quad\quad\quad~~~~
		\bm{U}^{\sigma,1}_{i,j} = \mathcal{F}_{\rm OE} \{\bm{U}^{n,1}_{\kappa} \}_{\kappa\in\Lambda_{i,j}},
		\\ 
		\bm{U}^{n,2}_{i,j}=&~\frac34{\bm{U}}^{\sigma,0}_{i,j}+\frac14( {\bm{U}}^{\sigma,1}_{i,j} +\Delta t \bm{\mathcal{L}}_{i,j}(u^{\sigma,1}_h) ),\quad 
		\bm{U}^{\sigma,2}_{i,j} = \mathcal{F}_{\rm OE} \{\bm{U}^{n,2}_{\kappa} \}_{\kappa\in\Lambda_{i,j}},
		\\ 
		\bm{U}^{n,3}_{i,j}=&~\frac13{\bm{U}}^{\sigma,0}_{i,j}+\frac23( {\bm{U}}^{\sigma,2}_{i,j} +\Delta t \bm{\mathcal{L}}_{i,j}(u^{\sigma,2}_h) ), \quad 
		\bm{U}^{\sigma,3}_{i,j} = \mathcal{F}_{\rm OE} \{\bm{U}^{n,3}_{\kappa} \}_{\kappa\in\Lambda_{i,j}},
		\\
		{\bm{U}}^{n+1}_{i,j}  = &~ {\bm{U}}^{\sigma,3}_{i,j}. 
	\end{aligned}
	\right.
\end{equation}
where ${u}^{\sigma,\ell}_h (x) = {\mathcal H} \{  {\bm{U}}^{\sigma,\ell}_{i,j} \}$, $\ell=0,1,2$; the operator $\mathcal{H}$ denotes the standard 2D HWENO reconstruction based on the values of $\{{\bm{U}}^{\sigma,\ell}_{i,j}\}$; and the operator $\mathcal{F}_{\rm OE}$ denotes the 2D OE procedure
with $\Lambda_{i,j} = \{(i+m,j+n),-1\le m,n\le1\}$.
The operators $\mathcal{H}$ and $\mathcal{F}$ will be introduced in Subsections \ref{sec:2dH} and \ref{sec:2dF}, respectively.	   
	
\subsubsection{HWENO Operator $\mathcal{H}$}\label{sec:2dH} 
Taking the 2D sixth-order OE-HWENO method as an example, the piecewise polynomial functions $\{u^{\sigma,\ell}_h\}^2_{\ell=0}$ in \eqref{sec2:2dHWENO} are reconstructed as follows: based on the values values $\{\bar{{u}}^{\sigma,\ell}_{i+m,j+n}$, $\bar{{v}}^{\sigma,\ell}_{i+m,j+n}$, $\bar{{w}}^{\sigma,\ell}_{i+m,j+n}\}_{-1\le m,n\le 1}$, we construct a quintic polynomial $p_0(x,y)$, a cubic polynomial $p_1(x,y)$ and four linear polynomials $\{p_m(x,y)\}^5_{m=2}$ in $I_{i,j}$. Then we compute the smoothness indicators $\{\beta_m\}^5_{m=0}$ of functions $\{p_m(x,y)\}^5_{m=0}$ in $I_{i,j}$. Finally, thought the nonlinear weights of HWENO reconstruction, we obtain
\begin{equation}\label{sec3:2dHWENO_rec}
	{u}^{\sigma,\ell}_h (x) = {\mathcal H} \{  {\bm{U}}^{\sigma,\ell}_{i,j} \}:=
	\omega^\text{H}_0\bigg(\frac{1}{\omega^\text{H}_0}p_0(x,y) - \frac{\omega^\text{H}_1}{\omega^\text{H}_0}\tilde{q}_1(x,y) \bigg) + \gamma^\text{H}_1\tilde{q}_1(x,y)\quad \forall (x,y)\in I_{i,j},
\end{equation}
where $\tilde{q}_1(x,y)=\omega^\text{L}_1\big(\frac{1}{\gamma^\text{L}_1}p_1(x,y) - \sum\limits_{m=2}^5\frac{\gamma^\text{L}_m}{\gamma^\text{L}_1}p_m(x,y) \big) + \sum\limits_{m=2}^5\omega^\text{L}_mp_m(x,y)$; $\{\gamma^\text{H}_m\}^1_{m=0}$ and $\{\gamma^\text{L}_m\}^5_{m=2}$ are arbitrary positive linear weights with $\sum_{m=0}^{1}\gamma^\text{H}_m=1$ and $\sum_{m=1}^{5}\gamma^\text{L}_m=1$; $\{\omega^\text{H}_m\}^1_{m=0}$ and $\{\omega^\text{L}_m\}^5_{m=2}$ are nonlinear weights. For the convenience of the reader, the detailed procedure of the 2D HWENO reconstruction \eqref{sec3:2dHWENO_rec} is provided in \ref{sec:A_2dHWENO}.

As the 1D case, the standard 2D HWENO operator $\mathcal{H}$ in \eqref{sec3:2dHWENO_rec} not scale-invariant. To this end, we propose the 2D dimensionless HWENO operator $\mathcal{H_D}$ to replace $\mathcal{H}$. The definition of $\mathcal{H_D}$ is based on a dimensionless transformation $\mathcal{D}$, similar to Definition \ref{sec3:defH_AveMaxMin} in the 1D case. 
\begin{definition}[2D dimensionless transformation $\mathcal{D}$] 
	The dimensionless transformation $\mathcal{D}$ is an affine transformation that normalizes the involved quantities by 
	the average, maximum, and minimum of the solution values in a stencil. For all the zeroth- and first-order moments $\{ \bar{u}_{\kappa}, \bar{v}_{\kappa},\bar{w}_{\kappa} \}$ in the stencil $\Lambda_{i,j}$,  the transformation $\mathcal{D}: \{ \bar{u}_{\kappa}, \bar{v}_{\kappa},\bar{w}_{\kappa} \} \rightarrow \{ \widehat{u}_{\kappa}, \widehat{v}_{\kappa},\widehat{w}_{\kappa} \} $ is defined by 
	\begin{equation}\label{eq:D_2}
		\widehat{u}_{\kappa}=\frac{\bar{u}_{\kappa}-u_{\rm{ave}}}{u_{\rm{max}}-u_{\rm{min}}+\epsilon}, \quad 
		\widehat{v}_{\kappa}=\frac{\bar{v}_{\kappa}}{u_{\rm{max}}-u_{\rm{min}}+\epsilon}, \quad 
		\widehat{w}_{\kappa}=\frac{\bar{w}_{\kappa}}{u_{\rm{max}}-u_{\rm{min}}+\epsilon} \quad \forall {\kappa} \in \Lambda_{i,j}
	\end{equation}
	with
	\begin{equation*} 
		u_{\rm ave}=\frac{1}{\# \Lambda_{i,j}}\sum_{{\kappa} \in \Lambda_{i,j}}|\bar{u}_{\kappa}|,\quad 
		u_{\rm max}=\max_{{\kappa} \in \Lambda_{i,j}}\{\bar{u}_{\kappa}\},\quad 
		u_{\rm min}=\min_{{\kappa} \in \Lambda_{i,j}}\{\bar{u}_{\kappa}\},
	\end{equation*}
	where $\# \Lambda_{i,j}$ denotes the number of cells in $\Lambda_{i,j}$.
\end{definition} 

\begin{definition}[2D dimensionless HWENO operator $\mathcal{H_D}$]\label{sec3:defH_AveMaxMin_2} 
	The dimensionless HWENO operator is defined as 
	\begin{equation}\label{eq:HD_2}
		\mathcal{H_D}:=\mathcal{D}^{-1}_{u} \mathcal{H}\mathcal{D},
	\end{equation}
	where $\mathcal{D}: \{ \bar{u}_{\kappa}, \bar{v}_{\kappa},\bar{w}_{\kappa} \} \rightarrow \{ \widehat{u}_{\kappa}, \widehat{v}_{\kappa},\widehat{w}_{\kappa} \} $ 
	is the dimensionless transformation defined by \eqref{eq:D_2}, 
	$\mathcal{H}$ is the standard HWENO operator which maps to the normalized moments $\{ \widehat{u}_{\kappa}, \widehat{v}_{\kappa},\widehat{w}_{\kappa} \}_{{\kappa} \in \Lambda_{i,j}}$ to the dimensionless reconstructed solution $\widehat u_h(x,y)\big|_{I_{i,j}} = \widehat u_{h,i,j}(x,y)$, and 
	$\mathcal{D}^{-1}_{u}$ is the ``inverse'' transformation
	defined as 
	\begin{equation*} 
		u_{h,i,j}(x,y) = \mathcal{D}^{-1}_{u} \widehat u_{h,i,j}(x,y) = (u_{\rm{max}}-u_{\rm{min}}+\epsilon)  \widehat u_{h,i,j}(x,y)  +u_{\rm{ave}}. 
	\end{equation*}
\end{definition}

According to the above definitions, the dimensionless HWENO operator $\mathcal{H_D}$ defined in \eqref{eq:HD_2} is scale-invariant, with the proof similar to Theorem \ref{sec2:thm_si} and thus  omitted here.
	
\subsubsection{OE operator $\mathcal{F}$}\label{sec:2dF}

Now we will introduce the OE procedure  $\bm{U}^{\sigma,\ell+1}_{i,j}=\mathcal{F}_{\rm OE}\{\bm{U}^{n,\ell+1}_{\kappa}\}_{\kappa\in\Lambda_{i,j}}$  $(\ell=0,1,2)$ in \eqref{sec2:2dHWENO}. For simplicity, we define $\bm{U}^{\sigma}_{i,j}=\bm{U}^{\sigma,\ell+1}_{i,j}$ and $\bm{U}_{i,j}=\bm{U}^{n,\ell+1}_{i,j}$ in the following.
The OE modified moments $\bm{U}^{\sigma}_{i,j}$ are defined as the zeroth-order and first-order moments of ${u}_\sigma({x,y},{\Delta t})$, where ${u}_\sigma({x,y},\tau) \in \mathbb V_h^k$ ($0 \le \tau \le \Delta t$) represents the solution to the following damping equations:
\begin{equation}\label{sec2:OE2d_ODE}
	\left\{
	\begin{aligned}
		&\frac{\mathrm{d} }{\mathrm{d} \tau} \int_{I_{i,j}} u_\sigma  \phi \mathrm{d}x\mathrm{d}y + \sigma_{i,j}(u_h^*)\int_{I_{i,j}}(u_\sigma-P^{0}u_\sigma)\phi \mathrm{d}x\mathrm{d}y= 0\quad\forall \phi\in\mathbb{P}^{1}(I_{i,j}),
		\\
		&u_\sigma(x,y,0) = u_h^*(x,y)=\Pi_h\{\bm{U}_{i,j}\}, 
	\end{aligned}
	\right. 
\end{equation}
where the operator $\Pi_h$ denotes the $(k+1)$th-order linear Hermite reconstruction, and the damping
coefficient  
\[
\sigma_{i,j}(u_h^*)=  {\frac{\alpha_x}{h_x}\widehat{\sigma}_{i,j}(u_h^*) +   \frac{\alpha_y}{h_y}\widetilde{\sigma}_{i,j}(u_h^*).}
\]
To maintain the scale-invariant property of damping strength, we define $\widehat{\sigma}_{i,j}(u_h^*)$ and  $\widetilde{\sigma}_{i,j}(u_h^*)$ as 
\begin{equation}\label{sec3:sigma_2d}
	\begin{aligned}
		&
		\widehat{\sigma}_{{i,j}}(u_h^*)=
		\begin{cases}
			\displaystyle 
			0,\qquad &\text{if}~\max\limits_{{1\le i \le N_x}, {1\le j\le N_y}}|\bar{u}_{i,j} -\overline{u}_{\Omega} |=0,
			\\ 
			\displaystyle 
			\frac{\sum\limits_{m\in\{0,1\}} \left( h_x^m \big| [\![\partial^m_x u_h^*]\!]_{i-\frac12,j}\big|+\big|[\![\partial^m_x u_h^*]\!]_{i+\frac12,j}\big| \right) }{ \max_{1\le i \le N_x, 1\le j\le N_y}|\bar{u}_{i,j} -\bar{u}_{\Omega} |},~&\mbox{otherwise}.
		\end{cases} 
		\\&
		\widetilde{\sigma}_{{i,j}}(u_h^*)=
		\begin{cases}
			\displaystyle 
			0,\qquad&\text{if}~\max\limits_{1\le i \le N_x, 1\le j\le N_y}|\bar{u}_{i,j} -\overline{u}_{\Omega} |=0,
			\\
			\displaystyle  
			\frac{ \sum\limits_{m\in\{0,1\}} \left( h_y^m \big| [\![\partial^m_y u_h^*]\!]_{i,j-\frac12}\big|+\big|[\![\partial^m_y u_h^*]\!]_{i,j+\frac12}\big| \right) }{ \max_{1\le i \le N_x, 1\le j\le N_y}|\bar{u}_{i,j} -\overline{u}_{\Omega} |},~&\mbox{otherwise}.
		\end{cases} 
	\end{aligned}
\end{equation} 
where $[\![ \partial^m_x u_h^* ]\!]_{i+\frac12,j}=\partial^m_x u_h^*(x_{i+\frac12}^+,y_j)-\partial^m_x u_h^*(x_{i+\frac12}^-,y_j)$ denotes  the jumps of $\partial^m_x u_h^*$ across the interface at  $(x_{i+\frac12},y_j)$,
$[\![ \partial^m_y u^*_h ]\!]_{i,j+\frac12}=\partial^m_y u_h^*(x_{i},y_{j+\frac12}^+)-\partial^m_y u_h^*(x_{i},y_{j+\frac12}^-)$ denotes  the jumps of $\partial^m_y u_h^*$ across the interface at  $(x_{i},y_{j+\frac12})$, and $\overline{u}_{\Omega}$ represents the average of $u^*_h$ over the whole domain $\Omega$, namely, for uniform meshes,
\[
\overline{u}_{\Omega}=\frac{1}{N_xN_y}\sum_{i=1}^{N_x}\sum_{j=1}^{N_y}\bar{u}_{i,j}.
\]
Note that $\max_{{1\le i \le N_x}, {1\le j\le N_y}}|\bar{u}_{i,j} -\overline{u}_{\Omega} |$ in \eqref{sec3:sigma_2d} is a global constant over all the cells and is computed only once in each OE step.

Since the damping coefficient $\sigma_{i,j}({u}_h^*)$ only depends on the ``initial" value 
${u}_\sigma(x,y,0) = {u}_h^* (x,y)$, the damping equations \eqref{sec2:OE2d_ODE} are essentially a linear system of ODEs and are exactly solvable without requiring discretization. As the 1D case, we find that the final expression of the OE procedure can be formulated in a simple form without implementing the linear Hermite reconstruction ${\Pi_h}$. Specifically, we have the following conclusion.

\begin{theorem}[Exact solver of 2D OE procedure]\label{thm:2dOE}
	Denote $(\bar{u}_{i,j}^\sigma,\bar{v}_{i,j}^\sigma,\bar{w}_{i,j}^\sigma):=\bm{U}^\sigma_{i,j}$. The OE procedure $ \bm{U}^{\sigma}_{i,j}=\mathcal{F}_{\rm OE}\{\bm{U}_{\kappa}\}_{\kappa\in\Lambda_{i,j}} $ can be exactly solved and explicitly expressed as
	\begin{equation*} 
		\bar{u}_{i,j}^{\sigma}=\bar{u}_{i,j}, \quad 
		\bar{v}_{i,j}^{\sigma}=\bar{v}_{i,j} \delta, \quad 
		\bar {w}_{i,j}^{\sigma}=\bar{w}_{i,j} \delta 
	\end{equation*}
	with 		
    \begin{equation}\label{key098}
    	\delta := \exp\bigg(-\alpha_x\frac{{\Delta t}}{h_x} \widehat{\sigma}_{i,j}(u_h^{*}) -\alpha_y\frac{{\Delta t}}{h_y} \widetilde{\sigma}_{i,j}(u_h^{*})\bigg),
    \end{equation} 
	where the coefficients $\widehat{\sigma}_{i,j}(u_h^{*})$ and $\widetilde{\sigma}_{i,j}(u_h^{*})$ are defined in \eqref{sec3:sigma_2d}. For the 2D sixth-order OE-HWENO scheme, the jumps $[\![\partial^m_x u_h^{*}]\!]_{i+\frac12,j}$ and $[\![\partial^m_y u_h^{*}]\!]_{i,j+\frac12}$ are expressed as 
	\begin{equation} \label{eq:2Djump}
		\left\{
		\begin{aligned}  
			&[\![\partial^m_x u_h^{*}]\!]_{i+\frac12,j}=\frac{1}{h_x^m}
			\left(
			\left \langle {\bf A}^{(m)},{\bf U}_x\right \rangle +
			\left \langle {\bf B}^{(m)},{\bf V}_x\right \rangle + 
			\left \langle {\bf C}^{(m)},{\bf W}_x\right \rangle 
			\right),
			\\
			&[\![\partial^m_y u_h^{*}]\!]_{i,j+\frac12}=\frac{1}{h_y^m}
			\left(
			\left \langle {\bf A}^{(m)},{\bf U}^{\top}_y\right \rangle +
			\left \langle {\bf C}^{(m)},{\bf V}^{\top}_y\right \rangle + 
			\left \langle {\bf B}^{(m)},{\bf W}^{\top}_y\right \rangle 
			\right),
		\end{aligned}
		\right. 
		\quad m=0,1,
	\end{equation}
	where  the constant matrices ${\bf A}^{(m)},{\bf B}^{(m)},{\bf C}^{(m)}$  
	are presented in \ref{sec:A3_Matrices}. The notation 
	$\left \langle\cdot,\cdot\right \rangle$ denotes the inner product of two matrices; ${\bf U}_x=[\bar{u}_{s,\ell}],{\bf V}_x=[\bar{v}_{s,\ell}],{\bf W}_x=[\bar{w}_{s,\ell}]\in\mathbb{R}^{4\times3}$ with $i-1\le s \le i+2$ and $j-1\le \ell \le j+1$;
	${\bf U}_y=[\bar{u}_{s,\ell}],{\bf V}_y=[\bar{v}_{s,\ell}],{\bf W}_y=[\bar{w}_{s,\ell}]\in\mathbb{R}^{3\times4}$ with $i-1\le s\le i+1$ and $j-1 \le \ell \le j+2$. 
	Notably, the above exact solver of the OE procedure only involves the values $\bar{u}_{i,j}$, $\bar{v}_{i,j}$ and $\bar{w}_{i,j}$ without the need to formulate the linear Hermite reconstruction $u^*_h$.
\end{theorem}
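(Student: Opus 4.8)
The plan is to follow the strategy of Theorem \ref{thm:OE}, exploiting the fact that the damping coefficient $\sigma_{i,j}(u_h^*)$ depends only on the initial data $u_\sigma(x,y,0)=u_h^*$ and is therefore a fixed scalar throughout the pseudo-time evolution. This renders \eqref{sec2:OE2d_ODE} a decoupled linear ODE system once $u_\sigma$ is expanded in a cell-local orthogonal basis. I would adopt the tensor-product scaled Legendre basis $\{\phi_i^{(m)}(x)\phi_j^{(n)}(y)\}$, for which the constant mode is $\phi^{(0,0)}\equiv 1$ and the two first-order modes $\phi^{(1,0)}\propto \xi_i$ and $\phi^{(0,1)}\propto \eta_j$ (with $\xi_i=(x-x_i)/h_x$, $\eta_j=(y-y_j)/h_y$) are precisely the test functions paired with the moments $\bar{v}_{i,j}$ and $\bar{w}_{i,j}$.

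Writing $u_\sigma(x,y,\tau)=\sum_{m,n}c_{m,n}(\tau)\phi_i^{(m)}\phi_j^{(n)}$, the first step is to project \eqref{sec2:OE2d_ODE} onto each basis function. Choosing the constant test function $\phi=\phi^{(0,0)}$, the damping integral vanishes because $u_\sigma-P^0u_\sigma$ carries no constant component; hence $c_{0,0}$ is conserved and $\bar{u}_{i,j}^{\sigma}=\bar{u}_{i,j}$, exactly as in the 1D case. Choosing $\phi=\phi^{(1,0)}$ and invoking orthogonality to discard every modal contribution except $c_{1,0}$, the evolution collapses to the scalar ODE $\frac{\mathrm{d}}{\mathrm{d}\tau}c_{1,0}+\sigma_{i,j}(u_h^*)\,c_{1,0}=0$, whose solution over $[0,\Delta t]$ is $c_{1,0}(\Delta t)=c_{1,0}(0)\,e^{-\sigma_{i,j}(u_h^*)\Delta t}$. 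Substituting $\sigma_{i,j}(u_h^*)=\frac{\alpha_x}{h_x}\widehat{\sigma}_{i,j}+\frac{\alpha_y}{h_y}\widetilde{\sigma}_{i,j}$ and identifying $\bar{v}_{i,j}\propto c_{1,0}$ yields $\bar{v}_{i,j}^{\sigma}=\bar{v}_{i,j}\,\delta$ with $\delta$ as in \eqref{key098}. An identical computation with $\phi=\phi^{(0,1)}$ gives $\bar{w}_{i,j}^{\sigma}=\bar{w}_{i,j}\,\delta$; crucially, both first-order moments inherit the same decay factor because $\sigma_{i,j}$ is a single scalar multiplying the entire non-constant part of $u_\sigma$.

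The second, heavier part is the explicit evaluation of the interface jumps $[\![\partial_x^m u_h^*]\!]_{i+\frac12,j}$ and $[\![\partial_y^m u_h^*]\!]_{i,j+\frac12}$. Here I would invoke the sixth-order linear Hermite reconstruction $\Pi_h$: in each cell $u_h^*|_{I_{i,j}}$ is the unique quintic $p_0^{(i,j)}$ whose zeroth- and first-order moments over the $3\times3$ stencil match $\{\bar{u}_\kappa,\bar{v}_\kappa,\bar{w}_\kappa\}_{\kappa\in\Lambda_{i,j}}$. Evaluating $p_0^{(i,j)}$ and $p_0^{(i+1,j)}$ together with their $x$-derivatives at $x_{i+\frac12}$ (fixing $y=y_j$) and subtracting expresses each jump as a linear combination of the surrounding moments; collecting the coefficients of the $\bar{u}$-, $\bar{v}$-, and $\bar{w}$-moments over the $4\times3$ block of cells produces exactly the inner-product form $\langle \mathbf{A}^{(m)},\mathbf{U}_x\rangle+\langle \mathbf{B}^{(m)},\mathbf{V}_x\rangle+\langle \mathbf{C}^{(m)},\mathbf{W}_x\rangle$ with the constant matrices recorded in \ref{sec:A3_Matrices}. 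The $y$-direction jumps then follow from the discrete $x\leftrightarrow y$ symmetry of the reconstruction, which interchanges the roles of $\bar{v}$ and $\bar{w}$ and hence of $\mathbf{B}^{(m)}$ and $\mathbf{C}^{(m)}$, accounting for the transposed blocks $\mathbf{U}_y^{\top},\mathbf{V}_y^{\top},\mathbf{W}_y^{\top}$ appearing in \eqref{eq:2Djump}.

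The main obstacle I anticipate is purely computational: deriving and verifying the entries of $\mathbf{A}^{(m)},\mathbf{B}^{(m)},\mathbf{C}^{(m)}$ requires inverting the $3\times3$-stencil Hermite moment-matching system for the quintic $p_0^{(i,j)}$ and carefully tracking the contributions of all nine neighbouring cells to each one-sided interface value—substantially more bookkeeping than the 1D formulas \eqref{eq:jump1}, though conceptually identical. The ODE part, by contrast, is immediate once orthogonality decouples the modes, so I would present it succinctly and relegate the matrix derivation to the appendix, precisely as the theorem statement anticipates.
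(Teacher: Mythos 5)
Your proposal is correct and follows essentially the same route as the paper's proof: expand $u_\sigma$ in a cell-local orthogonal Legendre basis, use orthogonality to decouple the damping ODEs (constant mode conserved, first-order modes decaying with the common factor $\delta$ since $\sigma_{i,j}(u_h^*)$ is a fixed scalar), and obtain the jump formulas by evaluating the quintic Hermite reconstruction $p_0^{(i,j)}$ at the interfaces. The only cosmetic difference is your reference to a tensor-product basis rather than the paper's total-degree Legendre basis, which is immaterial here since the constant and linear modes---the only ones entering the moments---coincide in both.
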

\begin{proof}
	Let $\{\phi^{(\ell)}_{i,j}(x,y)\}^k_{\ell=1}$ be a local orthogonal basis  of $\mathbb{P}^{k}(I_{i,j})$, for instance, the scaled 2D	Legendre polynomials:
	\begin{equation*}
		\begin{aligned}
			&\phi_{i,j}^{(0)}(x,y)=1,\quad \phi_{i,j}^{(1)}(x,y)=\xi_i,\quad \phi_{i,j}^{(2)}(x,y)= \eta_j, 
			\\&
			\phi_{i,j}^{(3)}(x,y)=\xi_i^2-\frac{1}{12}, \quad   \phi_{i,j}^{(4)}(x,y)=\xi_i\eta_j, \quad \phi_{i,j}^{(5)}(x,y)=\eta_j^2-\frac{1}{12}, \dots ,
		\end{aligned}
	\end{equation*} 
	where $\xi=\frac{x-x_i}{h_x}$and $\eta_j=\frac{y-y_j}{h_y}$.
	
	For the sixth-order linear Hermite reconstruction $\Pi_h\{\bm{U}_{i,j}\}$, the reconstructed solution $u_h^*(x,y)$ in the cell $I_{i,j}$ is a quintic polynomial
	\begin{equation}\label{eq:uhxystar}
		u_h^*(x,y)\Big|_{I_{i,j}} =: p_{0}^{(i,j)}(x,y)=\sum_{\ell=0}^k c_{0,\ell} \phi^{(\ell)}_{i,j}(x,y),
	\end{equation}
	where the coefficients $\{c_{0,\ell}\}$ are determined by matching the relations in \eqref{sec:p0(x,y)}, and the expressions of $\{c_{0,\ell}\}$ are listed in Table \ref{secA:coe2D} for the sixth-order OE-HWENO scheme.
	
	Assume that the solution $u_\sigma(x,y,\tau) \in \mathbb V_h^k$ of the damping equations \eqref{sec2:OE2d_ODE} can be expressed as
	\begin{equation*}
		u_\sigma(x,y,\tau)=\sum_{\ell=0}^k c_{\ell}(\tau) \phi^{(\ell)}_i(x,y) \qquad \forall (x,y) \in I_{i,j}, ~~ 0 \le \tau \le \Delta t, 
	\end{equation*} 
	with $c_{\ell} ( 0) = c_{0,\ell} $ and 
	\begin{equation*}
		\begin{aligned}
			&\bar{u}^\sigma_{i,j}= \frac{1}{h_xh_y}\int_{I_{i,j}} u_\sigma(x,y,\Delta t) \mathrm{d}x\mathrm{d}y,\\
			&\bar{v}^{\sigma}_{i,j}= \frac{1}{h_xh_y}\int_{I_{i,j}} u_\sigma(x,y,\Delta t) \frac{x-x_i}{h_x} \mathrm{d}x\mathrm{d}y = \frac{c_{1}(\Delta t)}{h_xh_y}  \int_{I_{i,j}} \left( \phi^{(1)}_i(x,y) \right)^2  \mathrm{d}x\mathrm{d}y,\\
			&\bar{w}^{\sigma}_{i,j}= \frac{1}{h_xh_y}\int_{I_{i,j}} u_\sigma(x,y,\Delta t) \frac{y-y_j}{h_y} \mathrm{d}x\mathrm{d}y = \frac{{c_{2}(\Delta t)}}{h_xh_y}  \int_{I_{i,j}} \left( \phi^{(2)}_i(x,y) \right)^2  \mathrm{d}x\mathrm{d}y,
		\end{aligned}
	\end{equation*} 
	Note that  
	\[
	({u}_\sigma-P^{0}{u}_\sigma)(x,y,\tau)=\sum^k_{\ell=1} c_{\ell}(\tau)\phi^{(\ell)}_{i}(x,y).
	\]
	Taking $\phi(x,y)= \phi_i^{(0)}(x,y) =\frac{1}{h_xh_y}$ in \eqref{sec2:OE2d_ODE} gives 
	\begin{align*} 
		&\frac{\mathrm{d}}{\mathrm{d}\tau} \left( \frac{1}{h_xh_y}\int_{I_{i,j}} u_\sigma(x,y,\tau) \mathrm{d}x\mathrm{d}y \right)  = - \sigma_{i}({u}_h^*) \int_{I_{i,j}}({u}_\sigma-P^{0}{u}_\sigma)  \phi \mathrm{d}{x}\mathrm{d}y
		\\
		& = {\frac{1}{h_xh_y}} \left(-\alpha_x\frac{{\Delta t}}{h_x} \widehat{\sigma}_{i,j}(u_h^*) -\alpha_y\frac{{\Delta t}}{h_y} \widetilde{\sigma}_{i,j}(u_h^*)\right) \int_{I_{i,j}}({u}_\sigma-P^{0}{u}_\sigma)   \mathrm{d}{x} \mathrm{d}y= 0, 
	\end{align*} 
	which yields
	\begin{equation*} 
		\bar{u}_{i,j}^{\sigma} = \frac{1}{h_xh_y}\int_{I_{i,j}} u_\sigma(x,y,\Delta t) \mathrm{d}x \mathrm{d}y= \frac{1}{h_xh_y}\int_{I_{i,j}} u_\sigma(x,y,0) \mathrm{d}x\mathrm{d}y  =\bar{u}_{i,j}.
	\end{equation*} 
	Taking $\phi(x,y)=\frac{x-x_i}{h_xh_y}$ in \eqref{sec2:OE2d_ODE}, we derive 
	\begin{equation}\label{sec2:OEstepVij}
		\frac{\mathrm{d} }{\mathrm{d} \tau} c_{1} (\tau) 
		+ c_{1} (\tau)  {\left( \frac{\alpha_x }{h_x}{\widehat  \sigma_{i,j}(u_h^*)}+  \frac{\alpha_y }{h_y}{\widetilde  \sigma_{i,j}(u_h^*)}\right)}
		= 0.
	\end{equation}
	Integrating \eqref{sec2:OEstepVij} from $\tau=0$ to $\Delta t$ gives
	$
	c_{1} (\Delta t) = c_1(0) \delta,
	$  
	or equivalently, 
	$
	\bar{v}_{i,j}^{\sigma} =\bar{v}_{i,j}\delta,
	$ 
	where $\delta$ is defined in \eqref{key098}. 
	Similarly, we have
	\begin{equation*}
		\bar{w}_{i,j}^{\sigma} =\bar{w}_{i,j} \delta. 
	\end{equation*}
	For the 2D sixth-order OE-HWENO scheme, 
	using \eqref{eq:uhxystar} and the expressions of $\{c_{0,\ell}\}$ listed in Table \ref{secA:coe2D}, we obtain the expressions for the jumps 
	$[\![\partial^m_x u_h^{*}]\!]_{i+\frac12,j}$ and $[\![\partial^m_y u_h^{*}]\!]_{i,j+\frac12}$ in \eqref{eq:2Djump}. 
	The proof is completed.
\end{proof}		
		 
Similar to the 1D case, the 2D OE procedure also possesses notable advantages, including stability (Remark \ref{sec2:rmk1}), conservation (Remark \ref{sec2:rmk2}), and efficiency and simplicity (Remark \ref{sec2:rmk3}). Moreover, we can prove that the 2D OE procedure retains the original high-order accuracy of the HWENO schemes, as demonstrated in Theorem \ref{thm:accuracy2d}.
		 
\begin{theorem}[Maintain accuracy]\label{thm:accuracy2d}
	Consider the $(k+1)$th-order 2D OE-HWENO scheme \eqref{sec2:2dHWENO} under a CFL condition $\alpha_x\frac{{\Delta t}}{h_x} + \alpha_y\frac{{\Delta t}}{h_y} \le C_\text{cfl}$ with $C_\text{cfl} < 1$. 
	Assume that the exact solution $u(x,y,t) \in C^{k+1}(\Omega)$ for a given $t \in [0,T]$, and the boundary conditions are periodic. 
	Let $\bm{U}_{i,j}^e=(\bar u_{i,j}^e, \bar v_{i,j}^e, \bar{w}_{i,j}^e)$ denote the exact zeroth- and first-order moments of $u(x,y,t)$ on cell $I_{i,j}$. 
	If $\bm{U}_{i,j}=(\bar u_{i,j}, \bar v_{i,j},\bar{w}_{i,j})$ are $(k+1)$th-order accurate approximations to 
	$\bm{U}_{i,j}^e$ for all ${i,j}$, then the OE modified moments $\bm{U}^{\sigma}_{i,j}=\mathcal{F}_{\rm OE}\{\bm{U}_{\kappa}\}_{\kappa\in\Lambda_{i,j}} $
	are also $(k+1)$th-order accurate approximations to 
	$\bm{U}_{i,j}^e$, namely, 
	\begin{equation*}
		\max_{1\le i\le N_x} \max_{1\le i\le N_y}	\big\| \bm{U}^{\sigma}_{i,j} - \bm{U}_{i,j}^e\big\| \lesssim h_x^{k+1}+h_y^{k+1}. 
	\end{equation*}
	This means the OE procedure maintains the original high-order accuracy of the HWENO schemes. 
\end{theorem}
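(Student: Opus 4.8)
The plan is to mirror the one-dimensional argument in the proof of Theorem \ref{thm:accuracy}, adapting each step to the three-moment, two-direction setting. First I would invoke the exact solver of Theorem \ref{thm:2dOE} to write
\[
\bm{U}^{\sigma}_{i,j} - \bm{U}_{i,j} = \bigl(0,\, (\delta-1)\bar{v}_{i,j},\, (\delta-1)\bar{w}_{i,j}\bigr),
\]
so that
\[
\bigl\|\bm{U}^{\sigma}_{i,j} - \bm{U}_{i,j}\bigr\| = \sqrt{\bar{v}_{i,j}^2 + \bar{w}_{i,j}^2}\,(1-\delta).
\]
Since both terms in the exponent defining $\delta$ in \eqref{key098} are nonpositive, we have $0\le 1-\delta = 1-e^{-S}$ with $S=\alpha_x\frac{\Delta t}{h_x}\widehat{\sigma}_{i,j}+\alpha_y\frac{\Delta t}{h_y}\widetilde{\sigma}_{i,j}$. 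Using $1-e^{-x}\le x$ for $x\ge 0$ together with the CFL condition $\alpha_x\frac{\Delta t}{h_x}+\alpha_y\frac{\Delta t}{h_y}\le C_\text{cfl}<1$ (which forces each factor $\alpha_x\frac{\Delta t}{h_x}\le C_\text{cfl}$ and $\alpha_y\frac{\Delta t}{h_y}\le C_\text{cfl}$), I would bound $1-\delta\le S\le\widehat{\sigma}_{i,j}+\widetilde{\sigma}_{i,j}$. The trivial case $\max_{i,j}|\bar u_{i,j}-\overline u_\Omega|=0$ forces $\widehat{\sigma}=\widetilde{\sigma}=0$ and is dispatched immediately; otherwise the denominator in \eqref{sec3:sigma_2d} is bounded below by a positive constant, and $\bar v_{i,j},\bar w_{i,j}$ are bounded, so the prefactor stays $O(1)$.

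Next, substituting the definitions \eqref{sec3:sigma_2d}, I would reduce the task to estimating the directional jump functionals
\[
\Xi^x_{i+\frac12,j} := \big|[\![u_h^*]\!]_{i+\frac12,j}\big| + h_x\big|[\![\partial_x u_h^*]\!]_{i+\frac12,j}\big|,
\]
together with its $y$-analogue $\Xi^y_{i,j+\frac12}$, since $\widehat{\sigma}_{i,j}+\widetilde{\sigma}_{i,j}$ is a sum of four such terms divided by the global constant. The core step is then to show $\max_{i,j}\Xi^x_{i+\frac12,j}\lesssim h_x^{k+1}+h_y^{k+1}$ and likewise for $\Xi^y$. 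To do this I would introduce the Hermite reconstruction from the exact moments, $\widehat u(x,y)=\Pi_h\{\bm{U}^e_{i,j}\}$, and split each jump via the triangle inequality into a consistency part $[\![u_h^*-\widehat u]\!]$ and an exact-smoothness part $[\![\widehat u]\!]$. Because $u\in C^{k+1}(\Omega)$, the standard approximation estimates for $\Pi_h$ yield $\|\widehat u-u\|_{L^\infty}\lesssim h_x^{k+1}+h_y^{k+1}$ along with the corresponding first-derivative bounds $\lesssim h_x^{k}+h_y^{k}$ in each direction, and the continuity of $u$ makes the true jumps vanish; hence the exact-smoothness part is $\lesssim h_x^{k+1}+h_y^{k+1}$, exactly as in step \eqref{eq:WKL22} of the 1D proof.

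For the consistency part I would exploit the fact that the matrix jump formula \eqref{eq:2Djump} is linear in the moments, so that
\[
[\![\partial^m_x(u_h^*-\widehat u)]\!]_{i+\frac12,j} = \frac{1}{h_x^m}\Bigl(\langle\mathbf{A}^{(m)},\mathbf{U}_x-\mathbf{U}^e_x\rangle + \langle\mathbf{B}^{(m)},\mathbf{V}_x-\mathbf{V}^e_x\rangle + \langle\mathbf{C}^{(m)},\mathbf{W}_x-\mathbf{W}^e_x\rangle\Bigr),
\]
where the superscript $e$ denotes the exact-moment matrices. Since the entries of $\mathbf{A}^{(m)},\mathbf{B}^{(m)},\mathbf{C}^{(m)}$ are fixed constants independent of the mesh, bounding each inner product by a constant times the largest local moment error (which is $\lesssim h_x^{k+1}+h_y^{k+1}$ by hypothesis) gives $h_x^m\big|[\![\partial^m_x(u_h^*-\widehat u)]\!]\big|\lesssim h_x^{k+1}+h_y^{k+1}$ for $m=0,1$, so the consistency part of $\Xi^x$ obeys the same bound. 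Combining the two parts yields the estimate for $\Xi^x$ and, symmetrically, for $\Xi^y$; feeding these back through the chain above shows $\|\bm{U}^{\sigma}_{i,j}-\bm{U}_{i,j}\|\lesssim h_x^{k+1}+h_y^{k+1}$, and a final triangle inequality with the hypothesis $\|\bm{U}_{i,j}-\bm{U}^e_{i,j}\|\lesssim h_x^{k+1}+h_y^{k+1}$ completes the proof. I expect the only genuinely new obstacle relative to the 1D case to be the bookkeeping of the matrix-valued formula \eqref{eq:2Djump}: one must verify that the constant matrices carry the $h_x^{-m}$ (resp. $h_y^{-m}$) scaling cleanly, so that the $h_x^m$ (resp. $h_y^m$) weighting built into $\Xi^x$ (resp. $\Xi^y$) exactly cancels it and leaves a clean $O(h_x^{k+1}+h_y^{k+1})$ bound with no spurious negative powers of the mesh size.
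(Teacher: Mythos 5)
Your proposal is correct and follows exactly the route the paper intends: the paper omits this proof, stating only that it is ``similar to that of Theorem \ref{thm:accuracy},'' and your argument is precisely that adaptation --- the exact solver of Theorem \ref{thm:2dOE}, the bound $1-\delta\le \widehat{\sigma}_{i,j}+\widetilde{\sigma}_{i,j}$ via $1-e^{-x}\le x$ and the CFL condition, the splitting of each directional jump functional into a consistency part (controlled through the linearity of \eqref{eq:2Djump} and the hypothesis on the moment errors) and an exact-smoothness part (controlled through the Hermite approximation estimates and the vanishing jumps of the $C^{k+1}$ solution), followed by the final triangle inequality. The 2D bookkeeping you flag, including the cancellation of the $h_x^{-m}$, $h_y^{-m}$ scalings against the weights in $\Xi^x$, $\Xi^y$ and the mixed term $h_x h_y^k\lesssim h_x^{k+1}+h_y^{k+1}$, is handled correctly.
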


The proof of Theorem \ref{thm:accuracy2d} is similar to that of Theorem \ref{thm:accuracy} and thus is omitted here.		 

\subsubsection{Extension to 2D hyperbolic systems}
For the OE-HWENO method to solve the 2D hyperbolic system of conservation laws $\bm{u}_t+\bm{f}(\bm{u})_x+\bm{g}(\bm{u})_y=0$, we propose the OE procedure by the following damping equations: 
\begin{equation}\label{eq:OE-2Dsystem}
	\left\{
	\begin{aligned}
		&\frac{\mathrm{d} }{\mathrm{d} \tau} \int_{I_{i,j}} \bm{u}_\sigma  \cdot\bm{\phi} \mathrm{d}x\mathrm{d}y + \sigma_{i,j}(\bm{u}_h^{*}) \int_{I_{i,j}}(\bm{u}_\sigma-P^{0}\bm{u}_\sigma)\cdot\bm{\phi} \mathrm{d}x\mathrm{d}y= 0 \quad\forall \bm{\phi}\in[\mathbb{P}^{1}(I_{i,j})]^N,
		\\
		&\bm{u}_\sigma(x,y,0) = \bm{u}_h^{*}(x,y) =\Pi_h\{\bm{U}_{i,j}\},
	\end{aligned}
	\right. 
\end{equation}
where $\bm{U}_{i,j}=(\overline{\bm u}_{i,j},\overline{\bm v}_{i,j},\overline{\bm w}_{i,j})$; 
the damping coefficient $\sigma_{i,j}(\bm{u}_h^{*})= {\frac{\alpha_x}{h_x}\widehat{\sigma}_{i,j}(\bm{u}_h^{*}) +  \frac{\alpha_y}{h_y}\widetilde{\sigma}_{i,j}(\bm{u}_h^{*})}$; $\alpha_x$ and $\alpha_y$ denote the (estimated) maximum wave speeds in the $x$- and $y$-directions, respectively. Here,  $\widehat{\sigma}_{i,j}(\bm{u}_h^{*})$ and $\widetilde{\sigma}_{i,j}(\bm{u}_h^{*})$ are defined as 
\begin{equation}\label{eq:sigma22}
	\widehat{\sigma}_{i,j}(\bm{u}_h^*):=\max\limits_{1\le k\le N} \widehat{\sigma}_{i,j}( {u}_{h}^{*,\ell}), \qquad 
	\widetilde{\sigma}_{i,j}(\bm{u}_h^*):=\max\limits_{1\le k\le N} \widetilde{\sigma}_{i,j}( {u}_{h}^{*,\ell}),
\end{equation}
in which ${u}_{h}^{*,\ell}$ is the $\ell$-th component of $\bm{u}_h^*$, and $\widehat{\sigma}_{i,j}( {u}_{h}^{*,\ell})$ and $\widetilde{\sigma}_{i,j}( {u}_{h}^{*,\ell})$ are computed by \eqref{sec3:sigma_2d}.

Similar to Theorem \ref{thm:2dOE}, one can obtain the exact solver of the OE procedure defined by  \eqref{eq:OE-2Dsystem}. 

\begin{theorem}\label{thm:2dOE2}
	The OE procedure $(\overline{\bm u}_{i,j}^{\sigma},\overline{\bm v}_{i,j}^{\sigma},\overline{\bm w}_{i,j}^{\sigma}):=\bm{U}^{\sigma}_{i,j}=\mathcal{F}_{\rm OE}\{\bm{U}_{\kappa}\}_{\kappa\in\Lambda_{i,j}} $ for 2D hyperbolic systems can be exactly solved and explicitly expressed as 
	\begin{equation*} 
		\overline{\bm u}_{i,j}^{\sigma}=\bar{\bm u}_{i,j},\qquad
		\overline{\bm v}_{i,j}^{\sigma}=\bar{\bm v}_{i,j}\delta,\qquad
		\overline{\bm w}_{i,j}^{\sigma}=\bar{\bm w}_{i,j}\delta, 
	\end{equation*}
	where 
	$\delta := \exp\bigg(-\alpha_x\frac{{\Delta t}}{h_x} \widehat \sigma_{i,j}({\bm u}_h^*) -\alpha_y\frac{{\Delta t}}{h_y} \widetilde \sigma_{i,j}({\bm u}_h^*) \bigg) $ 
	with $\widehat \sigma_{i,j}({\bm u}_h^*)$ and $\widetilde \sigma_{i,j}({\bm u}_h^*)$ defined by \eqref{eq:sigma22}. 
\end{theorem}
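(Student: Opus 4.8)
The plan is to mirror the scalar argument used for Theorem~\ref{thm:2dOE}, exploiting the key structural feature that the damping coefficient $\sigma_{i,j}(\bm u_h^{*})$ built from \eqref{eq:sigma22} is a single scalar completely determined by the initial data $\bm u_\sigma(x,y,0)=\bm u_h^{*}(x,y)$. Because this scalar multiplies every component identically in \eqref{eq:OE-2Dsystem}, the vector damping system decouples into $N$ independent scalar ODE systems that share one and the same constant-in-$\tau$ coefficient, so each component evolves exactly as in the scalar case treated in Theorem~\ref{thm:2dOE}.

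First I would expand the solution in the local scaled Legendre basis $\{\phi^{(\ell)}_{i,j}\}$, writing $\bm u_\sigma(x,y,\tau)=\sum_{\ell=0}^k \bm c_\ell(\tau)\,\phi^{(\ell)}_{i,j}(x,y)$ with vector-valued coefficients and initial data $\bm c_\ell(0)=\bm c_{0,\ell}$ read off from the linear Hermite reconstruction $\Pi_h\{\bm U_{i,j}\}$. Since $\bm u_\sigma-P^{0}\bm u_\sigma=\sum_{\ell\ge1}\bm c_\ell(\tau)\,\phi^{(\ell)}_{i,j}$ retains only the non-constant modes, testing \eqref{eq:OE-2Dsystem} component-by-component against a \emph{constant} test function (i.e.\ $\bm\phi=\phi^{(0)}_{i,j}\bm e_m$, $1\le m\le N$) makes the damping term vanish by orthogonality, yielding $\tfrac{\mathrm d}{\mathrm d\tau}\overline{\bm u}_{i,j}=\bm 0$ and hence $\overline{\bm u}^{\sigma}_{i,j}=\bar{\bm u}_{i,j}$, the conservation of cell averages.

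Next I would test \eqref{eq:OE-2Dsystem} against the $x$-linear test function (component-wise, $\bm\phi=\tfrac{x-x_i}{h_xh_y}\bm e_m$). Orthogonality of the Legendre basis isolates $\bm c_1$ and produces the vector linear ODE $\tfrac{\mathrm d}{\mathrm d\tau}\bm c_1(\tau)+\bm c_1(\tau)\big(\tfrac{\alpha_x}{h_x}\widehat\sigma_{i,j}(\bm u_h^{*})+\tfrac{\alpha_y}{h_y}\widetilde\sigma_{i,j}(\bm u_h^{*})\big)=\bm 0$, which is precisely the vector-valued analogue of \eqref{sec2:OEstepVij} with a scalar coefficient. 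Integrating from $0$ to $\Delta t$ gives $\bm c_1(\Delta t)=\bm c_1(0)\,\delta$, and matching the first moment yields $\overline{\bm v}^{\sigma}_{i,j}=\bar{\bm v}_{i,j}\,\delta$ with $\delta$ as stated; the identical computation with the $y$-linear test function gives $\overline{\bm w}^{\sigma}_{i,j}=\bar{\bm w}_{i,j}\,\delta$. The componentwise jumps entering $\widehat\sigma_{i,j}$ and $\widetilde\sigma_{i,j}$ are simply the scalar jump formulas \eqref{eq:2Djump} applied to each component $u_h^{*,\ell}$, so no new computation is needed.

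I do not expect a genuine obstacle; the one point requiring care — and the very reason the scalar proof transfers without modification — is the shared scalar nature of the damping coefficient. Definition~\eqref{eq:sigma22} takes the maximum of the componentwise indicators, producing a single number $\sigma_{i,j}(\bm u_h^{*})$ common to all $N$ equations, which is exactly what forces every component's first moments to decay by the \emph{same} factor $\delta$ and precludes any cross-component coupling. Once the coefficient is recognized as constant in $\tau$ and uniform across components, the linear ODE system is solved exactly and the conclusion follows.
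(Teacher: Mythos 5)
Your proposal is correct and follows essentially the same route as the paper: the paper omits the proof of Theorem \ref{thm:2dOE2}, noting it is "similar to that of Theorem \ref{thm:2dOE}," and your argument is exactly that scalar proof carried over componentwise — Legendre expansion with vector coefficients, constant test functions to preserve cell averages, linear test functions to isolate $\bm c_1$ and $\bm c_2$, and exact integration of the resulting linear ODEs. Your emphasis on the damping coefficient being a single scalar (via the max in \eqref{eq:sigma22}), constant in $\tau$ and shared by all $N$ components, correctly identifies the one structural fact that makes the decoupled componentwise treatment valid.
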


The proof of Theorem \ref{thm:2dOE2} is similar to that of Theorem \ref{thm:2dOE} and thus is omitted here.
 
\subsection{Bound Preservation via Optimal Convex Decomposition} \label{sec:BP}

Solutions to hyperbolic conservation laws typically satisfy certain bound constraints. For instance, the entropy solutions of scalar conservation laws adhere to the maximum principle \cite{ZS1}, and the physical solutions of compressible Euler equations must maintain positive density and pressure \cite{ZS2}. In numerical simulations, preserving these bounds is crucial for both the physical significance and numerical stability of the results \cite{Wu2023Geometric}. The positivity-preserving analysis of the standard HWENO scheme for compressible Navier--Stokes equations, based on the classic cell average decomposition (CAD) \cite{ZS1}, was discussed in \cite{FZQ}. This subsection will present a rigorous BP  analysis of the OE-HWENO schemes based on the OCAD approach \cite{CDW1,CDW2}. Since the classic CAD \cite{ZS1} was proven optimal in the 1D case \cite{CDW2}, the following discussion will focus exclusively on the 2D OE-HWENO method.

Assume $G$ is a convex set composed of all admissible states that satisfy the desired bounds. For example,  $G = [U_{\text{min}}, U_{\text{max}}]$ with $ U_{\text{min}} := \min_{\bm x} u({\bm x}, 0)$ and $ U_{\text{max}} := \max_{\bm x} u_0({\bm x}, 0) $ for the scalar conservation law \eqref{sec2:2dHCLS}. We aim to develop BP OE-HWENO schemes that ensure the cell averages remain within $ G $ when the initial values belong to $ G $. Following \cite{ZS1, ZS2}, our analysis considers only the forward Euler time discretization, although it is directly extensible to high-order strong-stability-preserving time discretization, which is formally a convex combination of forward Euler steps.

Note that the OE procedure does not change the cell averages. 
The evolution equation of the cell averages for the 2D OE-HWENO method with  forward Euler time discretization can be written as 
\begin{equation}\label{sec2:BP_HWENO2d}
	\begin{aligned}
		\bar{u}_{i,j}^{n+1}=\bar{u}_{i,j}^{n}-\frac{{\Delta t}}{h_x}\sum_{\ell=1}^{Q}{\omega}^{{\rm G}}_\ell\big[\hat{f}(u^{-,{\rm G}}_{i+\frac12,\ell},u^{+,{\rm G}}_{i+\frac12,\ell})-\hat{f}(u^{-,{\rm G}}_{i-\frac12,\ell},u^{+,{\rm G}}_{i-\frac12,\ell})\big]
		\\
		-\frac{{\Delta t}}{h_y}\sum_{\ell=1}^{Q}{\omega}^{{\rm G}}_\ell\big[\hat{g}(u^{-,{\rm G}}_{\ell,j+\frac12},u^{+,{\rm G}}_{\ell,j+\frac12})-\hat{g}(u^{-,{\rm G}}_{\ell,j-\frac12},u^{+,{\rm G}}_{\ell,j-\frac12})\big],
	\end{aligned}
\end{equation}
with the values at the cell interfaces computed by
\begin{equation*}
	\begin{aligned} 
		&
		u^{+,{\rm G}}_{i-\frac12,\ell}=p_{i,j}(x_{i-\frac12},y^{{\rm G}}_{j,\ell}),\quad 
		u^{-,{\rm G}}_{i+\frac12,\ell}=p_{i,j}(x_{i+\frac12},y^{{\rm G}}_{j,\ell}),
		\\
		&
		u^{+,{\rm G}}_{\ell,j-\frac12}=p_{i,j}(x^{{\rm G}}_{i,\ell},y_{j-\frac12}),\quad 
		u^{-,{\rm G}}_{\ell,j+\frac12}=p_{i,j}(x^{{\rm G}}_{i,\ell},y_{j+\frac12}),
	\end{aligned}
\end{equation*}
where $p_{i,j}(x,y) := {u}_h \big|_{I_{i,j}}  \in\mathbb{P}^{k}(I_{i,j})$ denotes the polynomial reconstructed by the HWENO method from the OE modified moments, satisfying
\[
\bar{u}_{i,j}^{n}=\frac{1}{h_xh_y}\int_{I_{i,j}}p_{i,j}(x,y)\mathrm{d}x\mathrm{d}y:=\left\langle p_{i,j} \right\rangle.
\]

Firstly, we discuss the BP conditions for the 2D OE-HWENO schemes based on the classic CAD \cite{ZS1} in the form of 
\begin{equation*}
	\begin{aligned}
		\left\langle p_{i,j} \right\rangle=\frac{\lambda_1}{\lambda_1+\lambda_2}\omega^{\rm GL}_1\sum_{\ell=1}^{Q}\omega_\ell^{{\rm G}}\left(p_{i,j}(x_{i-\frac12},y^{{\rm G}}_{j+\ell})+p_{i,j}(x_{i+\frac12},y^{{\rm G}}_{j+\ell})\right)
		\\
		+\frac{\lambda_2}{\lambda_1+\lambda_2}\omega^{\rm GL}_1\sum_{\ell=1}^{Q}\omega_\ell^{{\rm G}}\left(p_{i,j}(x^{{\rm G}}_{i+\ell},y_{j-\frac12})+p_{i,j}(x^{{\rm G}}_{i+\ell},y_{j+\frac12})\right)
		\\
		+\sum_{\ell=2}^{L-1}\sum_{m=1}^{Q}\omega_\ell^{\rm GL}\omega_m^{{\rm G}}\left( \frac{\lambda_1}{\lambda_1+\lambda_2}p_{i,j}(x^{\rm GL}_{i,\ell},y^{{\rm G}}_{j,m})+\frac{\lambda_2}{\lambda_1+\lambda_2}p_{i,j}(x^{{\rm G}}_{i,m},y^{\rm GL}_{j,\ell})\right),
	\end{aligned}
\end{equation*}
with $\lambda_1=\frac{\alpha_x}{h_x} $ and $\lambda_2=\frac{\alpha_y}{h_y}$.
Following \cite{ZS1}, 
if $p_{i,j}(x^{\rm GL}_{i,\ell},y^{{\rm G}}_{j,m}) \in G$ and  $p_{i,j}(x^{{\rm G}}_{i,m},y^{\rm GL}_{j,\ell}) \in G$, then the scheme \eqref{sec2:BP_HWENO2d} is BP under the traditional BP CFL condition 
\begin{equation}\label{sec2:classic_CFL}
	{\Delta t}(\lambda_1+\lambda_2)\le \omega^{\rm GL}_1,
\end{equation}
 where $\omega^{\rm GL}_1=\frac{1}{12}$ for the sixth-order OE-HWENO scheme. 

Next, we discuss the BP condition for the 2D sixth-order OE-HWENO scheme based on the OCAD \cite{CDW2} in the form of
\begin{equation}\label{sec2:OCAD2d}
	\left\langle p_{i,j} \right\rangle=\overline{\omega}_{\star}\left[(1+\theta)\left\langle p_{i,j} \right\rangle_{x}+(1-\theta)\left\langle p_{i,j} \right\rangle_{y}\right] +
	\sum_{s=1}^{2} \omega_s\overline{p_{i,j}(x^{(s)}_{i},y^{(s)}_{j})},
\end{equation}
with $\theta :=\frac{\lambda_1-\lambda_2}{\lambda_1+\lambda_2} \in [-1,1],$ and 
\[
\begin{aligned} 
	&\left\langle p_{i,j} \right\rangle_{x}:=\frac12\left(\left\langle p_{i,j} \right\rangle_{x}^{-}+\left\langle p_{i,j} \right\rangle_{x}^{+}\right), \quad 
	\left\langle p_{i,j} \right\rangle_{y}:=\frac12\left(\left\langle p_{i,j} \right\rangle_{y}^{-}+\left\langle p_{i,j} \right\rangle_{y}^{+}\right),~
	\\
	&\left\langle p_{i,j} \right\rangle_x^\pm:=\frac{1}{h_y}\int_{y_{j-\frac12}}^{y_{j+\frac12}} p(x_{i\pm\frac12},y)\mathrm{d}y, \quad 
	\left\langle p_{i,j} \right\rangle_{y}^\pm:=\frac{1}{h_x}\int_{x_{i-\frac12}}^{x_{i+\frac12}} p(x,y_{j\pm\frac12})\mathrm{d}x,
	\\
	&
	\overline{p_{i,j}(x^{(s)}_i,y^{(s)}_j)}:=\frac14 \sum_{m,\ell\in\{\pm 1\}} p_{i,j}\left( x_i+m\frac{h_x}{2}x^{(s)},  y_j+\ell \frac{h_y}{2}y^{(s)} \right) ,
\end{aligned}
\]
where
\begin{equation*}
	\begin{aligned}
		&
		\overline{\omega}_{\star}=\left[\frac{14}{3}+\frac23\sqrt{78\theta^2+46}\cos\left(\frac13\arccos\frac{1476\theta^2-244}{78\theta^2+46}\right)\right]^{-1},
		\\
		&
		\omega_1=\frac{5(1-4\overline{{\omega}}_{\star}+2|\theta|\overline{{\omega}}_{\star})^2}{9(1-6\overline{{\omega}}_{\star}+4|\theta|\overline{{\omega}}_{\star})},~\omega_2=1-2\overline{{\omega}}_{\star}-\omega_1,
		\\
		&
		\left(x^{(1)}, y^{(1)}\right) =\left\{\begin{aligned} &\left(\sqrt{\frac{3(1-6\overline{{\omega}}_{\star}+4|\theta|\overline{{\omega}}_{\star})}{5(1-4\overline{{\omega}}_{\star}+2|\theta|\overline{{\omega}}_{\star})}},\sqrt{\frac{1-6\overline{{\omega}}_{\star}}{3(1-4\overline{{\omega}}_{\star}+2|\theta|\overline{{\omega}}_{\star})}}\right),~\text{if}~\theta\in[-1,0],
		\\&\left(\sqrt{\frac{1-6\overline{{\omega}}_{\star}}{3(1-4\overline{{\omega}}_{\star}+2|\theta|\overline{{\omega}}_{\star})}},\sqrt{\frac{3(1-6\overline{{\omega}}_{\star}+4|\theta|\overline{{\omega}}_{\star})}{5(1-4\overline{{\omega}}_{\star}+2|\theta|\overline{{\omega}}_{\star})}}\right),~\text{if}~\theta\in[0,1],\end{aligned}\right.
		\\
		& 
		\left(x^{(2)}, y^{(2)}\right) =\left\{\begin{aligned} &\left(0,\sqrt{\frac{1-4\overline{{\omega}}_{\star}-2|\theta|\overline{{\omega}}_{\star}-3\omega_1 (y^{(1)})^2}{3\omega_2}}\right),\quad\quad\quad\quad\quad~\text{if}~\theta\in[-1,0],
		\\&\left(\sqrt{\frac{1-4\overline{{\omega}}_{\star}-2|\theta|\overline{{\omega}}_{\star}-3\omega_1 (x^{(1)})^2}{3\omega_2}},0\right),\quad\quad\quad\quad\quad~\text{if}~\theta\in[0,1].\end{aligned}\right. 
	\end{aligned}	
\end{equation*}
Consider the Lax-Friedrichs fluxes $\hat{f}$ and $\hat{g}$,  with which the 1D three-point first-order schemes are BP under the CFL condition $\max\{\alpha_x\frac{\Delta t}{h_x}, \alpha_y\frac{\Delta t}{h_y}\}\le1$: 
\begin{equation}\label{sec:BP_1rd}
	u_2-\frac{\Delta t}{h_x}\Big(\hat{f}(u_2,u_3)-\hat{f}(u_1,u_2)\Big)\in G,\qquad
	u_2-\frac{\Delta t}{h_y}\Big(\hat{g}\left(u_2,u_3\right)-\hat{g}\left(u_1,u_2\right)\Big)\in G
\end{equation}
hold for any $u_1$, $u_2$, $u_3 \in G$.

\begin{theorem}\label{sec2:thm_BP2d}
	Consider the sixth-order OE-HWENO scheme and the OCAD \eqref{sec2:OCAD2d}. If the HWENO reconstructed values satisfy for all $i$ and $j$ that 
	\begin{equation}\label{sec2:BP_2d_1}
		{u}^{+,{\rm G}}_{i-\frac12,\ell}\in G,~~~ {u}^-_{i+\frac12,\ell}\in G,~~~  {u}^{+,{\rm G}}_{\ell,j-\frac12}\in G,~~~ {u}^-_{\ell,j+\frac12}\in G, \qquad \ell=1,\ldots,Q,
	\end{equation} 
	\begin{equation}\label{sec2:BP_2d_2}
		\Gamma_{i,j}:=\frac{\bar{u}_{i,j}-{\overline{\omega}_\star}\big[\frac{1+\theta}{2} \sum\limits_{\ell=1}^{Q}\omega^{{\rm G}}_\ell\big( {u}^{+,{\rm G}}_{i-\frac12,\ell} +{u}^{-,{\rm G}}_{i+\frac12,\ell}\big) +\frac{1-\theta}{2} \sum\limits_{\ell=1}^{Q}\omega^{{\rm G}}_\ell\big({u}^{+,{\rm G}}_{\ell,j-\frac12}+{u}^{-,{\rm G}}_{\ell,j+\frac12}\big) \big]}{1-2\overline{\omega}_\star}\in G,
	\end{equation}
	then the sixth-order {OE-HWENO} scheme preserves $\bar{u}^{n+1}_{i,j}\in G$ under the BP CFL condition
	\begin{equation}\label{sec2:BP_CFL2D}
		{\Delta t}(\lambda_1+\lambda_2)\le  \overline{{\omega}}_{\star}.
	\end{equation}
\end{theorem}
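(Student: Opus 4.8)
The plan is to argue within the Zhang--Shu convex-decomposition framework, now built on the OCAD \eqref{sec2:OCAD2d} instead of the classic CAD, so that the cell average splits into boundary pieces that can absorb the flux differences plus a single interior remainder $\Gamma_{i,j}$. Since the OE procedure leaves cell averages unchanged (Remark \ref{sec2:rmk2}), it suffices to treat the forward-Euler update \eqref{sec2:BP_HWENO2d}. First I would substitute the OCAD for $\bar u_{i,j}^n$. Using $1+\theta=\frac{2\lambda_1}{\lambda_1+\lambda_2}$, $1-\theta=\frac{2\lambda_2}{\lambda_1+\lambda_2}$, the exactness of the $Q$-point Gauss rule to write $\langle p_{i,j}\rangle_x=\frac12\sum_\ell\omega^{\rm G}_\ell\big(u^{+,{\rm G}}_{i-\frac12,\ell}+u^{-,{\rm G}}_{i+\frac12,\ell}\big)$ and its $y$-analogue, and the defining identity $\sum_s\omega_s\overline{p_{i,j}(x^{(s)}_i,y^{(s)}_j)}=(1-2\overline{\omega}_\star)\Gamma_{i,j}$ from \eqref{sec2:BP_2d_2}, the decomposition becomes
\[
\bar u_{i,j}^n=\nu_x\sum_\ell\omega^{\rm G}_\ell\big(u^{+,{\rm G}}_{i-\frac12,\ell}+u^{-,{\rm G}}_{i+\frac12,\ell}\big)+\nu_y\sum_\ell\omega^{\rm G}_\ell\big(u^{+,{\rm G}}_{\ell,j-\frac12}+u^{-,{\rm G}}_{\ell,j+\frac12}\big)+(1-2\overline{\omega}_\star)\Gamma_{i,j},
\]
where $\nu_x:=\overline{\omega}_\star\frac{\lambda_1}{\lambda_1+\lambda_2}$ and $\nu_y:=\overline{\omega}_\star\frac{\lambda_2}{\lambda_1+\lambda_2}$.

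The central step is to fuse these boundary pieces with the numerical flux differences in \eqref{sec2:BP_HWENO2d} so as to synthesize one-dimensional monotone schemes. Abbreviating $\hat f_{i\pm\frac12,\ell}:=\hat f(u^{-,{\rm G}}_{i\pm\frac12,\ell},u^{+,{\rm G}}_{i\pm\frac12,\ell})$ and introducing the inner flux $\hat f^{\rm in}_{i,\ell}:=\hat f(u^{+,{\rm G}}_{i-\frac12,\ell},u^{-,{\rm G}}_{i+\frac12,\ell})$, I would split, for each Gauss node $\ell$, the difference as $\hat f_{i+\frac12,\ell}-\hat f_{i-\frac12,\ell}=(\hat f_{i+\frac12,\ell}-\hat f^{\rm in}_{i,\ell})+(\hat f^{\rm in}_{i,\ell}-\hat f_{i-\frac12,\ell})$. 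Pairing the first bracket with the right inner value $u^{-,{\rm G}}_{i+\frac12,\ell}$ and the second with the left inner value $u^{+,{\rm G}}_{i-\frac12,\ell}$, and distributing the weight $\nu_x\omega^{\rm G}_\ell$, each group takes the exact form $\nu_x\omega^{\rm G}_\ell\big(u_2-\frac{\Delta t}{h_x\nu_x}[\hat f(u_2,u_3)-\hat f(u_1,u_2)]\big)$ of a three-point scheme with center $u_2$ and effective step $\Delta t/\nu_x$; here $(u_1,u_2,u_3)=(u^{+,{\rm G}}_{i-\frac12,\ell},u^{-,{\rm G}}_{i+\frac12,\ell},u^{+,{\rm G}}_{i+\frac12,\ell})$ and $(u^{-,{\rm G}}_{i-\frac12,\ell},u^{+,{\rm G}}_{i-\frac12,\ell},u^{-,{\rm G}}_{i+\frac12,\ell})$, respectively. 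All three arguments in each triple lie in $G$ because \eqref{sec2:BP_2d_1} holds for every cell, including the neighbors $i\pm1$. The same construction applies verbatim in $y$ with $\nu_y$ and $\hat g$.

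Next I would invoke the first-order building block \eqref{sec:BP_1rd}: each group belongs to $G$ provided the effective CFL number is admissible, and the $x$-groups give $\alpha_x\frac{\Delta t}{h_x\nu_x}=\frac{\Delta t(\lambda_1+\lambda_2)}{\overline{\omega}_\star}$, which is $\le1$ precisely under the BP CFL condition \eqref{sec2:BP_CFL2D}; the $y$-groups yield the identical bound. Collecting terms, $\bar u^{n+1}_{i,j}$ becomes a combination of members of $G$ with total weight $2\nu_x+2\nu_y+(1-2\overline{\omega}_\star)=2\overline{\omega}_\star+(1-2\overline{\omega}_\star)=1$: the portions $2\nu_x$ and $2\nu_y$ are the monotone blocks, and the remaining $1-2\overline{\omega}_\star$ is $\Gamma_{i,j}\in G$ by \eqref{sec2:BP_2d_2}. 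All weights are positive, since $\overline{\omega}_\star\in(0,\tfrac12)$ follows from its closed form (its bracket exceeds $\tfrac{14}{3}$, so $\overline{\omega}_\star<\tfrac{3}{14}$), and $G$ is convex; hence $\bar u^{n+1}_{i,j}\in G$.

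I expect the main obstacle to be the bookkeeping of the central step: choosing exactly the inner flux $\hat f^{\rm in}_{i,\ell}$ to add and subtract so that both resulting brackets are genuine monotone differences with the correct matching center states, and then aligning the algebraic OCAD weight $\nu_x$ against the effective time step so the induced CFL collapses precisely to \eqref{sec2:BP_CFL2D}. The optimality of the OCAD is what pays off here: it maximizes $\overline{\omega}_\star$, i.e., the weight available to the boundary terms, so that the admissible step in \eqref{sec2:BP_CFL2D} is milder than the classic bound \eqref{sec2:classic_CFL}.
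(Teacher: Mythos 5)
Your proposal is correct and follows essentially the same route as the paper's proof: the OCAD written in Gauss-point form with weights $\widehat{\omega}_x=\nu_x$, $\widehat{\omega}_y=\nu_y$, the add-and-subtract of the inner flux $\hat f(u^{+,{\rm G}}_{i-\frac12,\ell},u^{-,{\rm G}}_{i+\frac12,\ell})$ to synthesize the three-point monotone blocks (the paper's $H^{\pm,{\rm G}}$ quantities), the collapse of the effective CFL $\alpha_x\frac{\Delta t}{\nu_x h_x}\le 1$ to \eqref{sec2:BP_CFL2D}, and the final convex-combination argument. The only cosmetic difference is that the paper defines the $H$ terms directly after substituting the decomposition, whereas you construct them by explicitly splitting the flux differences.
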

\begin{proof}
	The OCAD gives
	\begin{equation}\label{sec2:OCAD_uij}
		\bar{u}^n_{i,j}=\sum_{\ell=1}^{Q}\omega^{{\rm G}}_\ell\left({\widehat{\omega}_x}(u^{-,{\rm G}}_{i+\frac12,\ell}+u^{+,{\rm G}}_{i-\frac12,\ell}) 
		+ \widehat{\omega}_y(u^{-,{\rm G}}_{\ell,j+\frac12}+u^{+,{\rm G}}_{\ell,j-\frac12})\right)+(1-2\overline{{\omega}}_{\star})\Gamma_{i,j},
	\end{equation} 
	with $\widehat{\omega}_x=\frac{\overline{\omega}_{\star}(1+\theta)}{2}$,  $\widehat{\omega}_y=\frac{\overline{\omega}_{\star}(1-\theta)}{2}$, and 
	 $\Gamma_{i,j} = p_{i,j} (\xi) $ for some $\xi \in I_{i,j}$ according to the Mean Value Theorem. 
	Substituting the decomposition \eqref{sec2:OCAD_uij} into \eqref{sec2:BP_HWENO2d}, we obtain 
	\begin{equation}\label{OCAD2d}
		\bar{u}^{n+1}_{i,j}=\sum_{\ell=1}^{Q}\omega^{{\rm G}}_\ell\big(\widehat{\omega}_x(H^{-,{\rm G}}_{i+\frac12,\ell}+H^{+,{\rm G}}_{i-\frac12,\ell})
		+ \widehat{\omega}_y(H^{-,{\rm G}}_{\ell,j+\frac12}+H^{+,{\rm G}}_{\ell,j-\frac12})\big)+(1-2\overline{{\omega}}_{\star})\Gamma_{i,j},
	\end{equation}
	where
	\begin{equation*}
		\begin{aligned}
			H^{-,{\rm G}}_{i+\frac12,\ell}=&u^{-,{\rm G}}_{i+\frac12,\ell} - \frac{{\Delta t}}{\widehat{\omega}_x h_x}\big(\hat{f}(u^{-,{\rm G}}_{i+\frac12,\ell},u^{+,{\rm G}}_{i+\frac12,\ell})-\hat{f}(u^{+,{\rm G}}_{i-\frac12,\ell},u^{-,{\rm G}}_{i+\frac12,\ell})\big),
			\\
			H^{+,{\rm G}}_{i-\frac12,\ell}=&u^{+,{\rm G}}_{i-\frac12,\ell} - \frac{{\Delta t}}{\widehat{\omega}_x h_x}\big(\hat{f}(u^{+,{\rm G}}_{i-\frac12,\ell},u^{-,{\rm G}}_{i+\frac12,\ell})-\hat{f}(u^{-,{\rm G}}_{i-\frac12,\ell},u^{+,{\rm G}}_{i-\frac12,\ell})\big),
			\\
			H^{-,{\rm G}}_{\ell,j+\frac12}=&u^{-,{\rm G}}_{\ell,j+\frac12} - \frac{{\Delta t}}{\widehat{\omega}_y h_y}\big(\hat{g}(u^{-,{\rm G}}_{\ell,j+\frac12},u^{+,{\rm G}}_{\ell,j+\frac12})-\hat{g}(u^{+,{\rm G}}_{\ell,j-\frac12},u^{-,{\rm G}}_{\ell,j+\frac12})\big),
			\\
			H^{+,{\rm G}}_{\ell,j-\frac12}=&u^{+,{\rm G}}_{\ell,j-\frac12} - \frac{{\Delta t}}{\widehat{\omega}_y h_y}\big(\hat{g}(u^{+,{\rm G}}_{\ell,j-\frac12},u^{-,{\rm G}}_{\ell,j+\frac12})-\hat{g}(u^{-,{\rm G}}_{\ell,j-\frac12},u^{+,{\rm G}}_{\ell,j-\frac12})\big),
		\end{aligned}
	\end{equation*} 
	which take the same form as the 1D three-point first-order schemes \eqref{sec:BP_1rd}, ensuring that
	\begin{equation*}
		H^{-,{\rm G}}_{i+\frac12,\ell}\in G,\quad H^{+,{\rm G}}_{i-\frac12,\ell}\in G,\quad H^{-,{\rm G}}_{\ell,j+\frac12}\in G,\quad H^{+,{\rm G}}_{\ell,j-\frac12}\in G,
	\end{equation*}
	under the conditions \eqref{sec2:BP_2d_1}--\eqref{sec2:BP_2d_2} and the CFL conditions 
	\begin{equation*}
		\alpha_x\frac{\Delta t}{\widehat \omega_x h_x}\le 1,\qquad \alpha_y\frac{\Delta t}{\widehat \omega_y  h_y}\le 1.
	\end{equation*}
	which are equivalent to \eqref{sec2:BP_CFL2D}. Given the convex combination form in \eqref{OCAD2d} and the convexity of the set $G$, we conclude that $\bar{u}^{n+1}_{i,j}\in G$ under the CFL condition \eqref{sec2:BP_CFL2D}.
	This completes the proof.
\end{proof}

For the 2D sixth-order OE-HWENO scheme, when $\alpha_xh_y=\alpha_yh_x$ (i.e., $\theta=0$), $\overline{{\omega}}_{\star}=2-\frac{\sqrt{14}}{2}\approx0.1292$ in the BP CFL condition \eqref{sec2:BP_CFL2D}, which is notably milder than the traditional BP CFL condition \eqref{sec2:classic_CFL} with $\omega^{\rm GL}_1=\frac{1}{12}\approx0.0833$.

\begin{remark}[BP limiter] 
	Theorem \ref{sec2:thm_BP2d} establishes a sufficient condition \eqref{sec2:BP_2d_1}--\eqref{sec2:BP_2d_2} for the BP property of the OE-HWENO scheme. However, the HWENO reconstructed values may not always satisfy \eqref{sec2:BP_2d_1}--\eqref{sec2:BP_2d_2}. 
	In enforce this condition \eqref{sec2:BP_2d_1}--\eqref{sec2:BP_2d_2}, we can employ a scaling limiter \cite{ZS4}. For the scalar conservation law, the  limiter  is given by
	\begin{equation*}
		\tilde{p}_{i,j}(x,y)=\delta ({p}_{i,j}(x,y)-\bar{u}_{i,j}^n) + \bar{u}_{i,j}^n,~\delta=\min\left\{ \bigg| \frac{U_{\text{max}}-\bar{u}^n_{i,j}}{p^{\text{max}}_{i,j}-\bar{u}^n_{i,j}}\bigg|,\bigg| \frac{U_{\text{min}}-\bar{u}^n_{i,j}}{p^{\text{min}}_{i,j}-\bar{u}^n_{i,j}}\bigg|,1\right\},
	\end{equation*}
	with
	\begin{equation*}
		\begin{aligned}
			&p^{\text{max}}_{i,j}=\max\{p_{i,j}(x_{i\pm\frac12},y^{{\rm G}}_{j,\ell}),p_{i,j}(x^{{\rm G}}_{i,\ell},y_{j\pm\frac12}),\Gamma_{i,j}\},~\\
			&p^{\text{min}}_{i,j}=\min\{p_{i,j}(x_{i\pm\frac12},y^{{\rm G}}_{j,\ell}),p_{i,j}(x^{{\rm G}}_{i,\ell},y_{j\pm\frac12}),\Gamma_{i,j}\},
		\end{aligned}
	\end{equation*}
	where $\Gamma_{i,j}$ is computed by \eqref{sec2:BP_2d_2}. A similar local scaling positivity-preserving (PP) limiter has been designed for the Euler equations (to preserve the positivity of density and pressure) and related systems \cite{ZS2, ZS4, FZQ}. The PP limiter for the Euler equations will be utilized in Examples \ref{Sec3:Example_1dSedov}, \ref{sec3:Example_Leblanc}, \ref{Sec3:Example_2dSedov}, and \ref{Sec3:Example_2dMach2000}.
\end{remark}

\begin{remark}
This work is the first to employ the 2D optimal cell average decomposition to analyze the BP property of HWENO schemes. Our BP analysis also applies to the standard HWENO method without the OE procedure, because the OE procedure is separate from the standard HWENO scheme and does not alter the cell averages.
\end{remark}

\section{Numerical tests}\label{sec3}

This section presents extensive numerical results for both 1D and 2D benchmark and demanding examples to validate the accuracy, high resolution, non-oscillatory, and BP properties of the proposed sixth-order  OE-HWENO method on uniform Cartesian meshes. Our test cases include {two smooth examples} to verify the accuracy of our method, as well as several non-smooth problems. These include the Lighthill--Whitham--Richards traffic flow problem, five 1D Riemann problems, a 2D Riemann problem, a 2D double Mach reflection problem, a 2D Sedov problem, and a 2D Mach 2000 jet problem.

We will provide comparisons between our OE-HWENO method and the oscillation-free HWENO method from \cite{ZQ2} (termed OF-HWENO for convenience), as both methods employ similar damping techniques to suppress spurious oscillations. A notable difference is that the OF-HWENO method incorporates non-scale-invariant damping terms into the semi-discrete equations of first-order moments. As these damping terms are highly stiff when dealing with strong discontinuities and large-scale problems, the semi-discrete OF-HWENO method must be evolved in time using the modified exponential RK method to mitigate the highly restricted time step-size restriction, as demonstrated in Examples \ref{sec3:Example_LWR}, \ref{sec3:Example_Lax}, \ref{Sec3:Example_1dSedov}, and \ref{Sec3:Example_2dRiemann}. Thanks to the exact solver for our OE procedure, the proposed OE-HWENO method remains stable under a normal CFL constraint. Therefore, for our OE-HWENO method, we use the classic third-order SSP explicit RK method for time discretization. 
To match the sixth-order spatial accuracy, we define the time step size for the 1D accuracy tests as $\Delta t = \frac{C_\text{cfl}}{\alpha_x/h_x^2}$ and for the 2D accuracy tests as $\Delta t = \frac{C_\text{cfl}}{\alpha_x/h_x^2 + \alpha_y/h_y^2}$. For the 1D discontinuity tests, the time step size is set as $\Delta t = \frac{C_\text{cfl}}{\alpha_x/h_x}$, and for the 2D discontinuity tests, it is $\Delta t = \frac{C_\text{cfl}}{\alpha_x/h_x + \alpha_y/h_y}$, where $\alpha_x$ and $\alpha_y$ represent the maximum wave speeds in the $x$- and $y$-directions, respectively. 
To ensure a fair comparison, both OE-HWENO and OF-HWENO methods set the linear weights of the lowest degree polynomial to 0.025, and the CFL number to $C_\text{cfl} = 0.45$ for all numerical tests. 
An empirical artificial parameter $\omega^d$ is required in the OF-HWENO scheme \cite{ZQ2} to achieve satisfactory performance. This parameter can be problem-dependent. Following the suggestion in \cite{ZQ2}, we take $\omega^d = 3.5$ for the 1D case and $\omega^d = 0.75$ for the 2D case, unless otherwise stated.  
The simulations are implemented in FORTRAN 95 with double precision on an Intel(R) Xeon(R) Gold 5218R CPU @ 2.10GHz.

\subsection{Accuracy tests}\label{sec3:AccuracyTests}

\begin{example}[Burgers' equation] \label{Example:Burgers1DTestOrder} 
	The 1D and 2D Burgers' equations are employed to verify the accuracy of our OE-HWENO method. For the 1D Burgers' equation $u_t+(\frac{u^2}{2})_x=0$, the initial condition is $u(x,0) = 0.5 + \sin(\pi x)$ over the domain $\Omega = [0, 2]$ with periodic boundary conditions. For the 2D Burgers' equation  $u_t+(\frac{u^2}{2})_x+(\frac{u^2}{2})_y=0$, the initial condition is $u(x,y,0) = 0.5 + \sin(\pi (x+y)/2)$ over the domain $\Omega = [0, 2]^2$, with periodic boundary conditions in both the $x$- and $y$-directions. The simulations run until the final time $T = 0.5/\pi$, during which the solution remains smooth. Table \ref{sec3:1dBurgersTest} presents the numerical errors and convergence rates obtained using the proposed OE-HWENO method at different mesh resolutions, demonstrating that our method achieves the expected sixth-order accuracy in both 1D and 2D cases. This confirms that the OE procedure preserves the accuracy, being consistent with the theoretical analysis provided in Theorems \ref{thm:accuracy} and \ref{thm:accuracy2d}.

	\begin{table}[!thb]
		\centering
		\begin{threeparttable}
			\caption{Errors and convergence rate of the OE-HWENO method for Example \ref{Example:Burgers1DTestOrder}.}
			\label{sec3:1dBurgersTest}
			{\begin{tabular}{cccccccccccc}
					\toprule
					\multicolumn{1}{l}{\multirow{11}{*}{1D}}
					\multirow{2}{*}{Mesh Resolution}&
					\multicolumn{2}{c}{${\ell^\infty}$-norm} & \multicolumn{2}{c}{${\ell^1}$-norm}& \multicolumn{2}{c}{${\ell^2}$-norm}\cr
					\cmidrule(l){2-3} \cmidrule(l){4-5}\cmidrule(l){6-7}
					&  Error&Order& Error&Order &  Error&Order\cr
					\hline
					30&	2.21E-05&   $- $&    1.67E-06&   $- $&    5.40E-06    $- $	\\
					60&	2.56E-07&   6.43&    1.47E-08&   6.83&    4.74E-08&   6.83	\\
					90&	1.93E-08&   6.37&    1.04E-09&   6.53&    3.36E-09&   6.53	\\
					120&	2.91E-09&   6.58&    1.65E-10&   6.40&    5.46E-10&   6.40	\\
					150&	7.00E-10&   6.38&    3.97E-11&   6.37&    1.32E-10&   6.37	\\
					180&	2.30E-10&   6.10&    1.21E-11&   6.54&    4.06E-11&   6.54	\\ 
					\midrule 
                    \multicolumn{1}{l}{\multirow{6}{*}{2D}}
                    \vspace{-6.5mm} \cr
					30$\times$30&	2.99E-04&   $- $&    2.71E-05&   $- $&    7.48E-05&   $- $	\\
					60$\times$60&	7.17E-06&   5.38&    5.70E-07&   5.57&    1.71E-06&   5.57	\\
					90$\times$90&	7.24E-07&   5.66&    4.49E-08&   6.27&    1.41E-07&   6.27	\\
					120$\times$120&	1.15E-07&   6.40&    7.06E-09&   6.43&    2.30E-08&   6.43	\\
					150$\times$150&	2.74E-08&   6.41&    1.73E-09&   6.31&    5.52E-09&   6.31	\\
					180$\times$180&	8.49E-09&   6.43&    5.41E-10&   6.36&    1.69E-09&   6.36	\\	
					\bottomrule
			\end{tabular}}
		\end{threeparttable}
	\end{table}	 
\end{example}

\begin{example}[Compressible Euler equations]\label{Example:Euler1DTestOrder}
	The 1D and 2D compressible Euler equations are used to further examine the accuracy of our OE-HWENO method for hyperbolic systems.
	For the 1D compressible Euler equations in the form \eqref{sec2:1dHCLS}, with $\bm{u}=(\rho, \rho\mu,E)^{\top}$ and $\bm{f}(\bm{u})=(\rho\mu,\rho\mu^2+p, \mu(E+p))^{\top}$, the initial condition is set as $(\rho_0,\mu_0,p_0)=(1+0.2\sin(\pi x),1,1)$ over the domain $\Omega=[0,2]$ with periodic boundary conditions. For the 2D compressible Euler equations in the form \eqref{sec2:2dHCLS}, with $\bm{u}=(\rho, \rho\mu, \rho\nu,E )^{\top}$, $\bm{f}(\bm{u})=(\rho\mu, \rho\mu^2+p, \rho\mu\nu,\mu(E+p))^{\top}$, and $\bm{g}(\bm{u})=(\rho\nu,\rho\mu\nu,\rho\nu^2+p,\nu(E+p))^{\top}$, we take the initial condition as $(\rho_0,\mu_0,\nu_0,p_0)=(1+0.2\sin(\pi(x+y)),1,1,1)$ over the domain $[0,4]^2$ with periodic boundary conditions in both the $x$- and $y$-directions. 
	Here, $\rho$ denotes the density, $\mu$ and $\nu$ are the velocity in the $x$- and $y$-directions respectively, and $p$ is the pressure. The total energy is given by $E=\frac{p}{\gamma-1}+\frac{1}{2}\rho\mu^2$ in the 1D case and $E=\frac{p}{\gamma-1}+\frac12\rho(\mu^2+\nu^2)$ in the 2D case, where the adiabatic index $\gamma=1.4$ unless stated otherwise. 
	The simulations are run until a final time  $T=2$, where the exact solutions are $(\rho,\mu,p)=(1+0.2\sin(\pi (x-T)),1,1)$ in the 1D case and $(\rho,\mu,\nu,p)=(1+0.2\sin(\pi (x+y-2T)),1,1,1)$ in the 2D case. Table \ref{sec3:1dEulerTest} provides the numerical errors and convergence rate of the density computed by the OE-HWENO method, demonstrating that the OE procedure does not degenerate the high-order accuracy. 
		We observe that the high-order damping effect dominates the numerical errors, resulting in convergence rates higher than the expected sixth-order convergence rate of the HWENO scheme. This phenomenon is common and has also been observed in the OEDG schemes (Table 1 of \cite{PSW}), the sixth-order OF-HWENO scheme (Tables 3 and 5 of \cite{ZQ2}), and the OFDG schemes (Table 4.3 of \cite{LLS1}).  
	\begin{table}[!thb]
		\centering
		\begin{threeparttable}
			\caption{Errors and convergence rate of the OE-HWENO method for Example \ref{Example:Euler1DTestOrder}.} 
			\label{sec3:1dEulerTest}
			{\begin{tabular}{ccccccccccccc}
					\toprule
					\multicolumn{1}{l}{\multirow{11}{*}{1D}}
					\multirow{2}{*}{Mesh Resolution}&
					\multicolumn{2}{c}{${\ell^\infty}$-norm} & \multicolumn{2}{c}{${\ell^1}$-norm}& \multicolumn{2}{c}{${\ell^2}$-norm}\cr
					\cmidrule(l){2-3} \cmidrule(l){4-5}\cmidrule(l){6-7}
					&  Error&Order& Error&Order &  Error&Order\cr
					\midrule
					20&	3.53E-07&   $- $&    1.80E-07&   $- $&    2.10E-07&   $- $	\\
					40&	2.90E-09&   6.93&    1.66E-09&   6.76&    1.88E-09&   6.76	\\
					60&	1.72E-10&   6.97&    1.02E-10&   6.90&    1.14E-10&   6.90	\\
					80&	2.34E-11&   6.93&    1.38E-11&   6.94&    1.54E-11&   6.94	\\
					100&	4.93E-12&   6.98&    2.93E-12&   6.94&    3.27E-12&   6.94	\\
					120&	1.38E-12&   6.99&    8.23E-13&   6.96&    9.19E-13&   6.96	\\ 
					\midrule
					\multicolumn{1}{l}{\multirow{6}{*}{2D}}
					\vspace{-6.5mm} \cr
					20$\times$20 &1.10E-05&   $- $&    4.36E-06&   $- $&    5.88E-06&   $- $\\
					40$\times$40 &9.37E-08&   6.87&    4.49E-08&   6.60&    5.34E-08&   6.60\\
					60$\times$60 &5.62E-09&   6.94&    2.81E-09&   6.83&    3.29E-09&   6.83\\
					80$\times$80 &7.57E-10&   6.97&    3.88E-10&   6.89&    4.49E-10&   6.89\\
					100$\times$100&1.62E-10&   6.90&    8.30E-11&   6.91&    9.54E-11&   6.91\\
					120$\times$120&4.98E-11&   6.47&    2.37E-11&   6.88&    2.72E-11&   6.88\\
					\bottomrule
			\end{tabular}}
		\end{threeparttable}
	\end{table}
\end{example}

\subsection{Discontinuities tests}\label{sec3:NonsmoothTests}

\begin{example}[Lighthill--Whitham--Richards traffic flow model]\label{sec3:Example_LWR}  
	\begin{figure}[!htb]
		\centering  
		\begin{subfigure}{0.32\textwidth}
			{\includegraphics[width=5.25cm,angle=0]{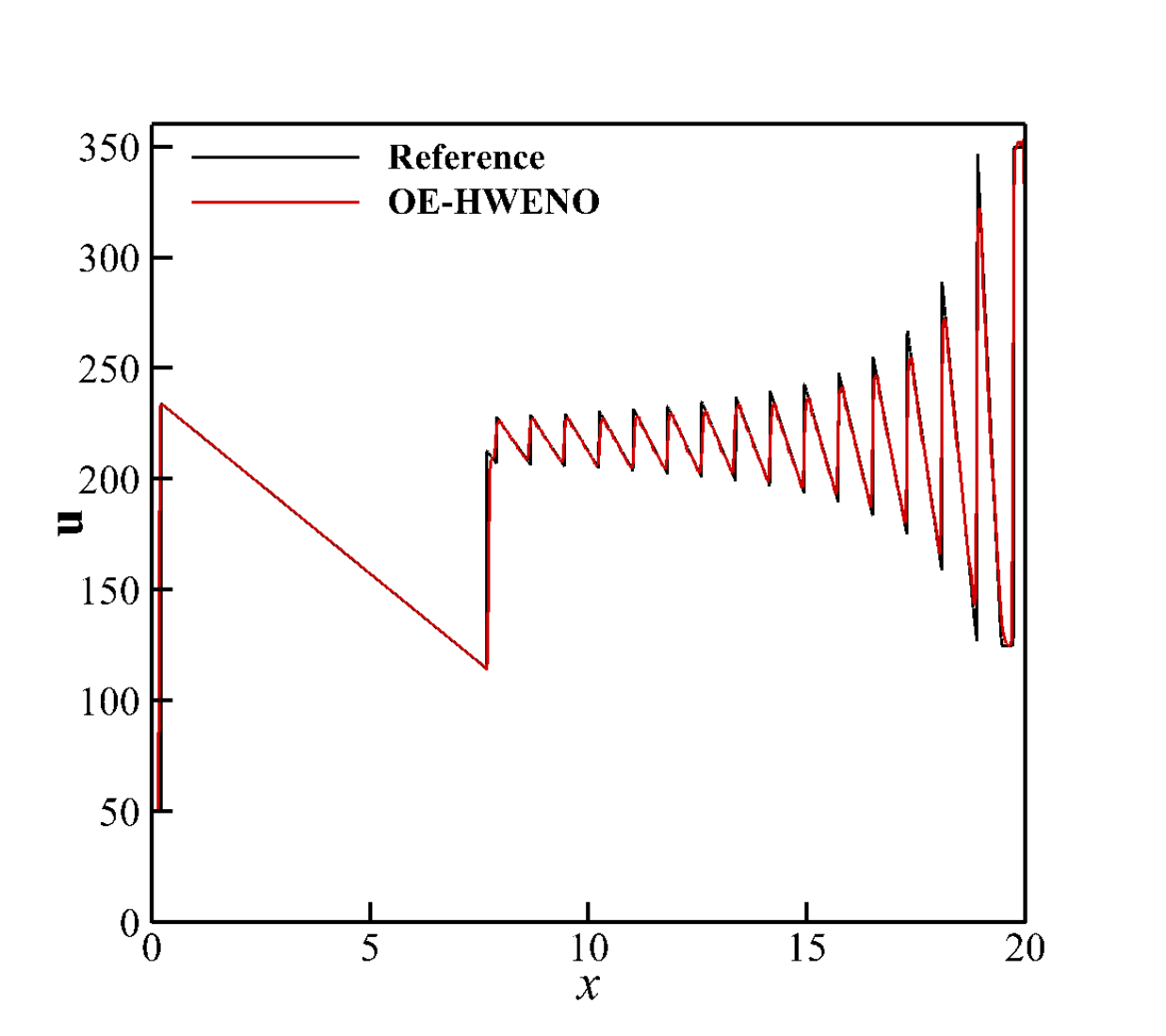}}
		\end{subfigure}  
		\begin{subfigure}{0.32\textwidth}
			{\includegraphics[width=5.25cm,angle=0]{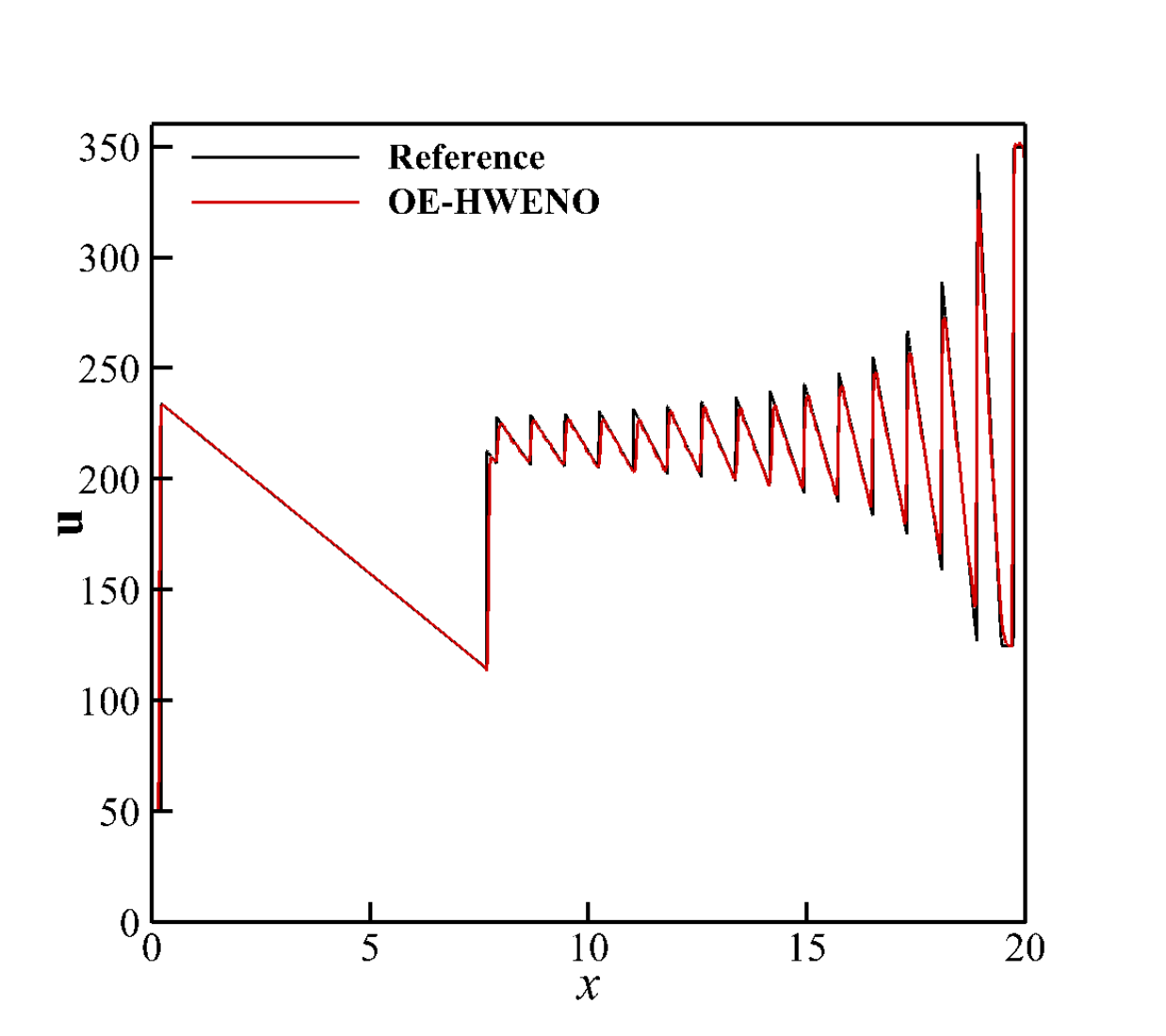}}
		\end{subfigure}  
		\begin{subfigure}{0.32\textwidth}
			{\includegraphics[width=5.25cm,angle=0]{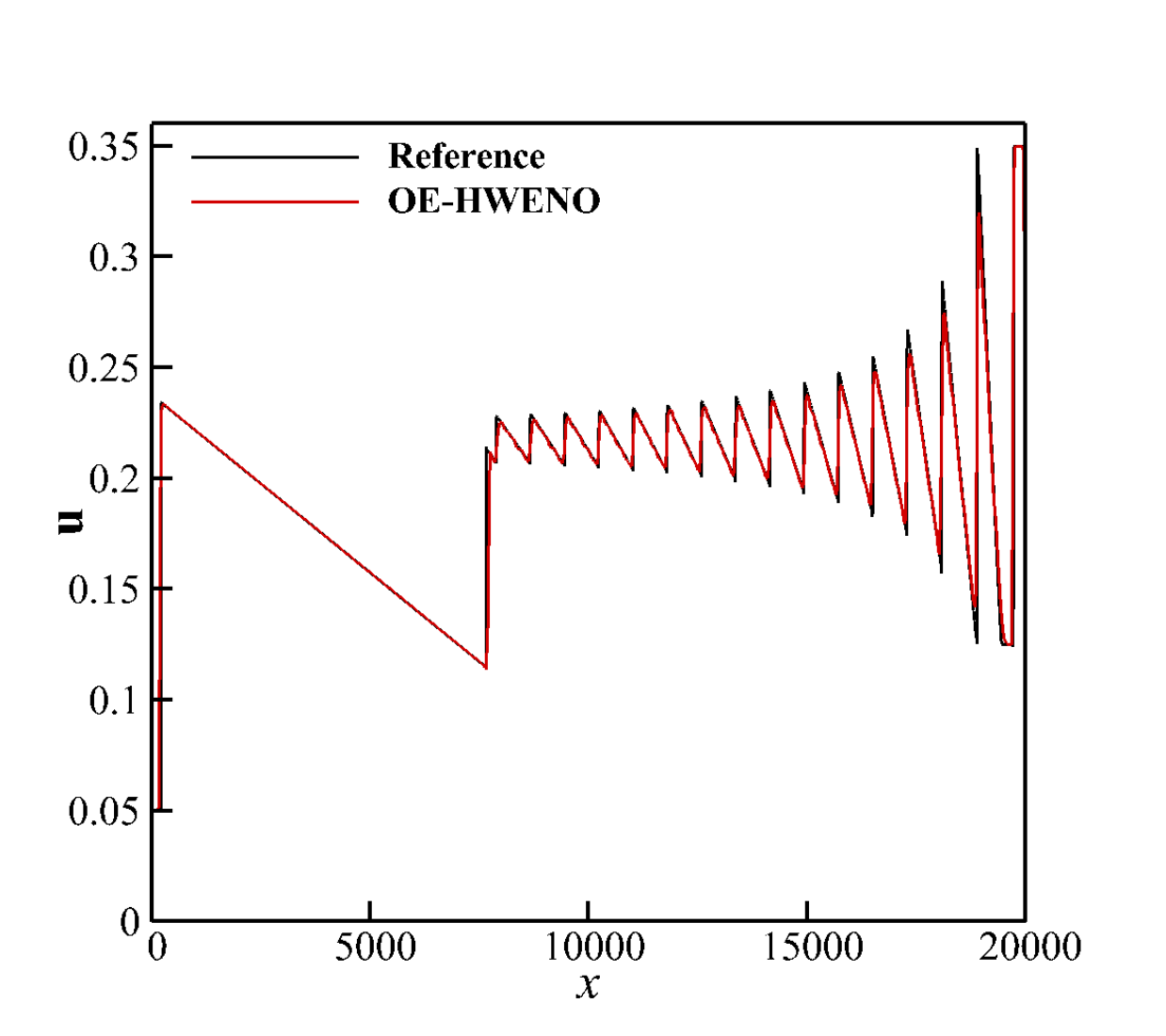}}
		\end{subfigure} 
		\begin{subfigure}{0.32\textwidth}
			{\includegraphics[width=5.25cm,angle=0]{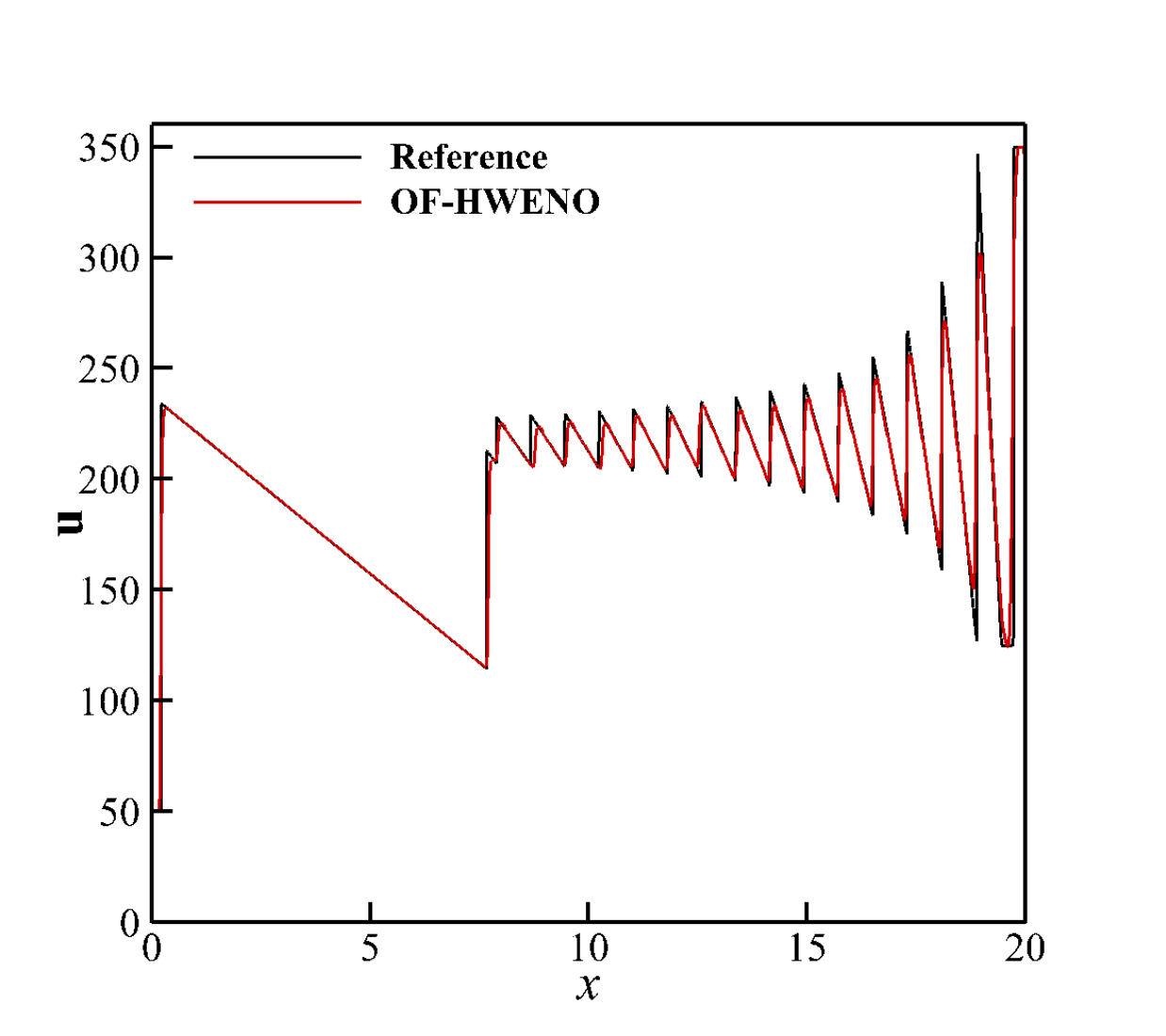}}
		\end{subfigure}  
		\begin{subfigure}{0.32\textwidth}
			{\includegraphics[width=5.25cm,angle=0]{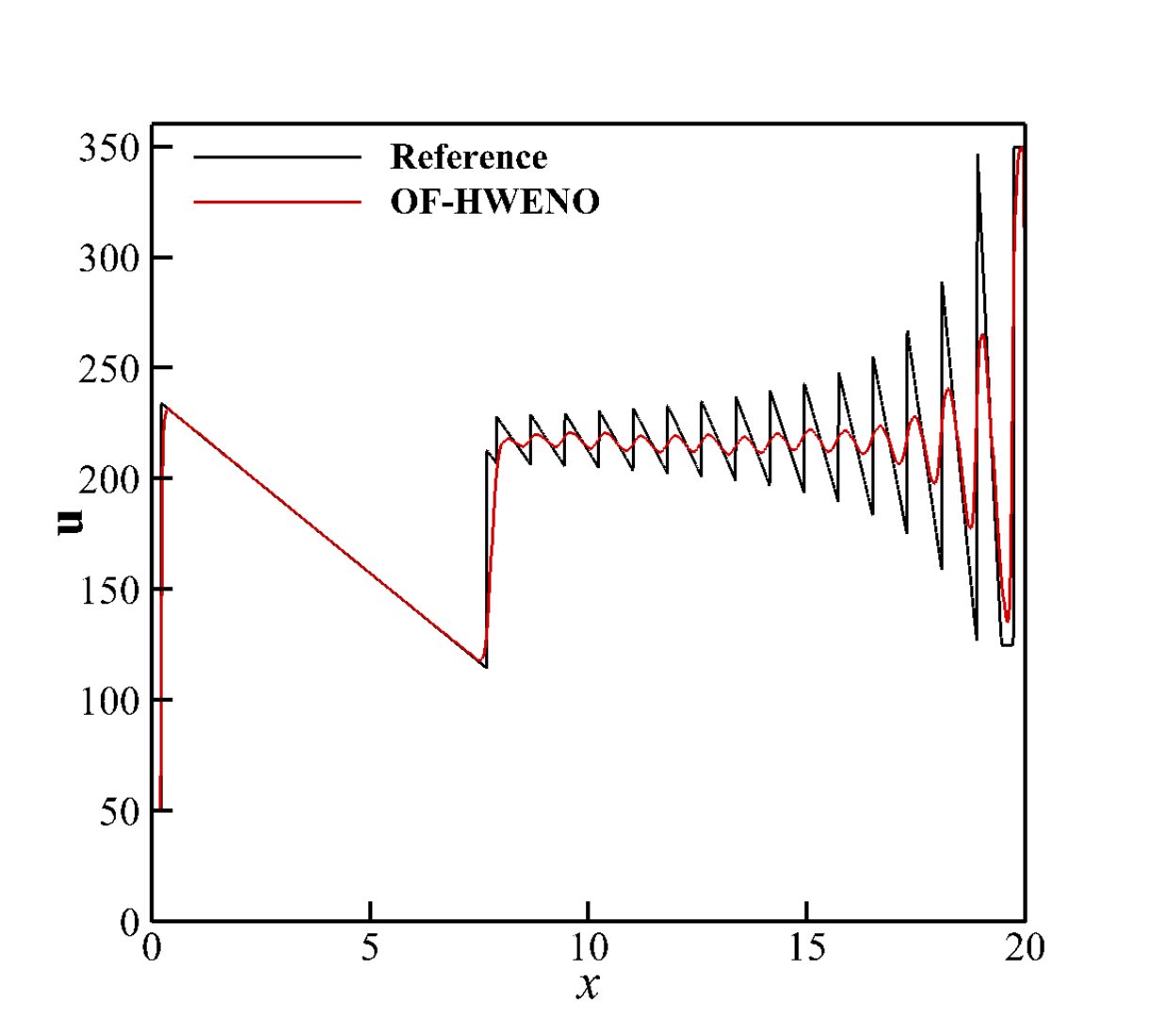}}
		\end{subfigure}  
		\begin{subfigure}{0.32\textwidth}
			{\includegraphics[width=5.25cm,angle=0]{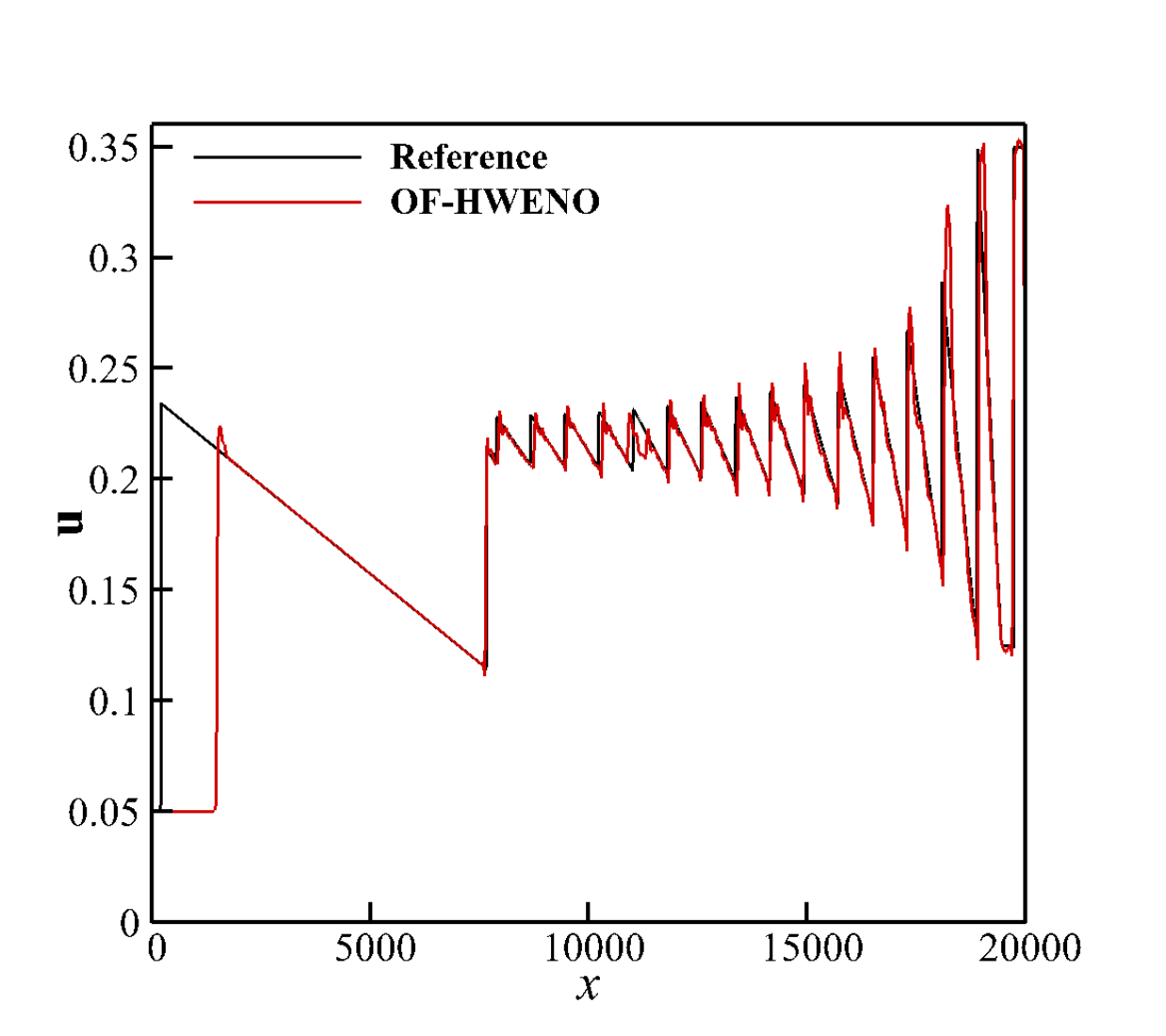}}
		\end{subfigure}   
		\caption{ Numerical results computed by the OE-HWENO and OF-HWENO methods with 800 uniform cells for the equations \eqref{sec3:LWR_Eq1}, \eqref{sec3:LWR_Eq2}, and \eqref{sec3:LWR_Eq3} (from left to right) for Example \ref{sec3:Example_LWR}. 
		} \label{sec3:Fig_LWR}
	\end{figure}
	Following \cite{LWZSC}, we simulate the traffic flow problem modeled by a scalar conservation law
	\begin{equation}\label{sec3:LWR_Eq1}
		u_t+f(u)_x=0,~f(u)=\begin{cases}
			-0.4u^2+100u,~\quad\quad\quad\quad\quad0\le u\le 50,
			\\
			-0.1u^2+15u+3500,~~\quad\quad 50\le u\le 100,
			\\
			-0.024u^2-5.2u+4760,~\quad100\le u\le 350,
		\end{cases}
	\end{equation} 
	where $t$ represents time in hours (h), $x$ denotes distance in kilometers (km), and $u$ signifies traffic density in vehicles per kilometer (veh/km). Initially, the density at the left entrance is 50 veh/km, and a traffic accident has occurred on the freeway, resulting in a piecewise linear traffic density profile as illustrated in Figure 6 of \cite{LWZSC}. The left boundary condition is specified as follows: the entrance is temporarily closed for 10 minutes to alleviate congestion. Following this closure, traffic resumes from the entrance at a density of 75 veh/km. However, after 20 minutes, the entrance flow reverts to its original density of 50 veh/km. For the right boundary, a traffic signal is positioned at the freeway exit, operating on a cyclic pattern: 2 minutes of green light (denoting zero density) followed by 1 minute of red light (denoting a jam density of 350 veh/km).
	\\ 	\indent
	To demonstrate the significance of scale-invariant and evolution-invariant properties, we reformulate equation \eqref{sec3:LWR_Eq1} in different units, resulting in the following two equivalent equations: \eqref{sec3:LWR_Eq2} and \eqref{sec3:LWR_Eq3}. The first one is that we introduce a new time variable, $\tilde{t}=60t$, measured in minutes and calculated as $\tilde{t}=60t$.  By employing this transformation, equation \eqref{sec3:LWR_Eq1} is cast to
	\begin{equation}\label{sec3:LWR_Eq2}
		u_{\tilde{t}}+\frac{1}{60}f(u)_x=0.
	\end{equation} 
	Another is to redefine the distance variable $\tilde{x}=1000x$ measured in meters (m) and transform the density as $\tilde{u}=\frac{u}{1000}$ measured in vehicles per meter (veh/m). With these transformations, we obtain
	\begin{equation}\label{sec3:LWR_Eq3}
		\tilde{u}_{{t}}+\frac{1}{60}f(1000\tilde{u})_{\tilde{x}}=0.
	\end{equation}
	We simulate this problem in the domain $[0,20~\text{km}]$ by solving the three equivalent equations \eqref{sec3:LWR_Eq1}, \eqref{sec3:LWR_Eq2} and \eqref{sec3:LWR_Eq3}, up to the time of an hour. The numerical solutions computed by the OE-HWENO and OF-HWENO schemes over 800 uniform cells are presented in Fig.~\ref{sec3:Fig_LWR}, where the reference solution is generated by the OE-HWENO scheme with 10000 cells. The numerical results of the above three equivalent equations computed by the OE-HWENO schemes are consistent without any nonphysical oscillations, thanks to the scale-invariant and evolution-invariant attributes. Conversely, the OF-HWENO method yields inconsistent numerical results for the three equivalent equations in varying units. More specifically, the OF-HWENO solutions exhibit considerable smearing in equation \eqref{sec3:LWR_Eq2}, whereas displaying notable spurious oscillations for equation \eqref{sec3:LWR_Eq3}.  Additionally, the serious nonphysical oscillations lead to a shift in the OF-HWENO solution for equation  \eqref{sec3:LWR_Eq3}, because the propagation speed of the oscillations does not match the correct speed of the discontinuity.
\end{example}

\begin{example}[Lax problem]\label{sec3:Example_Lax} 
	This is a classic Riemann problem of the 1D compressible Euler equations with discontinuous initial values. We take the scaled initial data $\bm{u}^{\lambda}(x,0)={\lambda}\bm{u}_0(x)$, where $\bm{u}_0(x)=(\rho_0,\rho_0 \mu_0,p_0)^{\top}$ is defined by 
	\begin{equation*} 
		(\rho_0,\mu_0,p_0)^{\top}=\begin{cases}
			(0.445, 0.698, 3.528)^{\top},&-0.5\le x<0,
			\\(0.5, 0, 0.571)^{\top},&0\le x\le 0.5.
		\end{cases}
	\end{equation*}	
	Outflow boundary conditions are imposed on all boundaries, and the final time is set to $T=0.16$. It is worth noting that the exact solution adheres to the relationship $\frac{1}{\lambda}\bm{u}^{\lambda}(x,t) = \bm{u}^{1}(x,t)$ for any constant $\lambda>0$. To validate the scale-invariant property, three different $\lambda$ values are chosen from the set $\{10^{-7},1,10^{7}\}$. 
	Fig.~\ref{sec3:Fig_Lax} displays the results of density obtained by the OE-HWENO and OF-HWENO methods. It is observed that the OE-HWENO solution is scale-invariant and can effectively captures the shock and contact discontinuity without any noticeable spurious oscillations, regardless of the value of $\lambda$. In contrast, the OF-HWENO method produces inconsistent numerical outputs for different $\lambda$ values. More specifically, Fig.~\ref{sec3:Fig_Lax} clearly shows that the OF-HWENO solution shows excessive smearing for $\lambda=10^{7}$ and displays spurious oscillations near shocks and contact discontinuities when $\lambda=10^{-7}$.
	\begin{figure}[!htb]
		\centering 
		\begin{subfigure}{0.35\textwidth}
			{\includegraphics[width=5.25cm,angle=0]{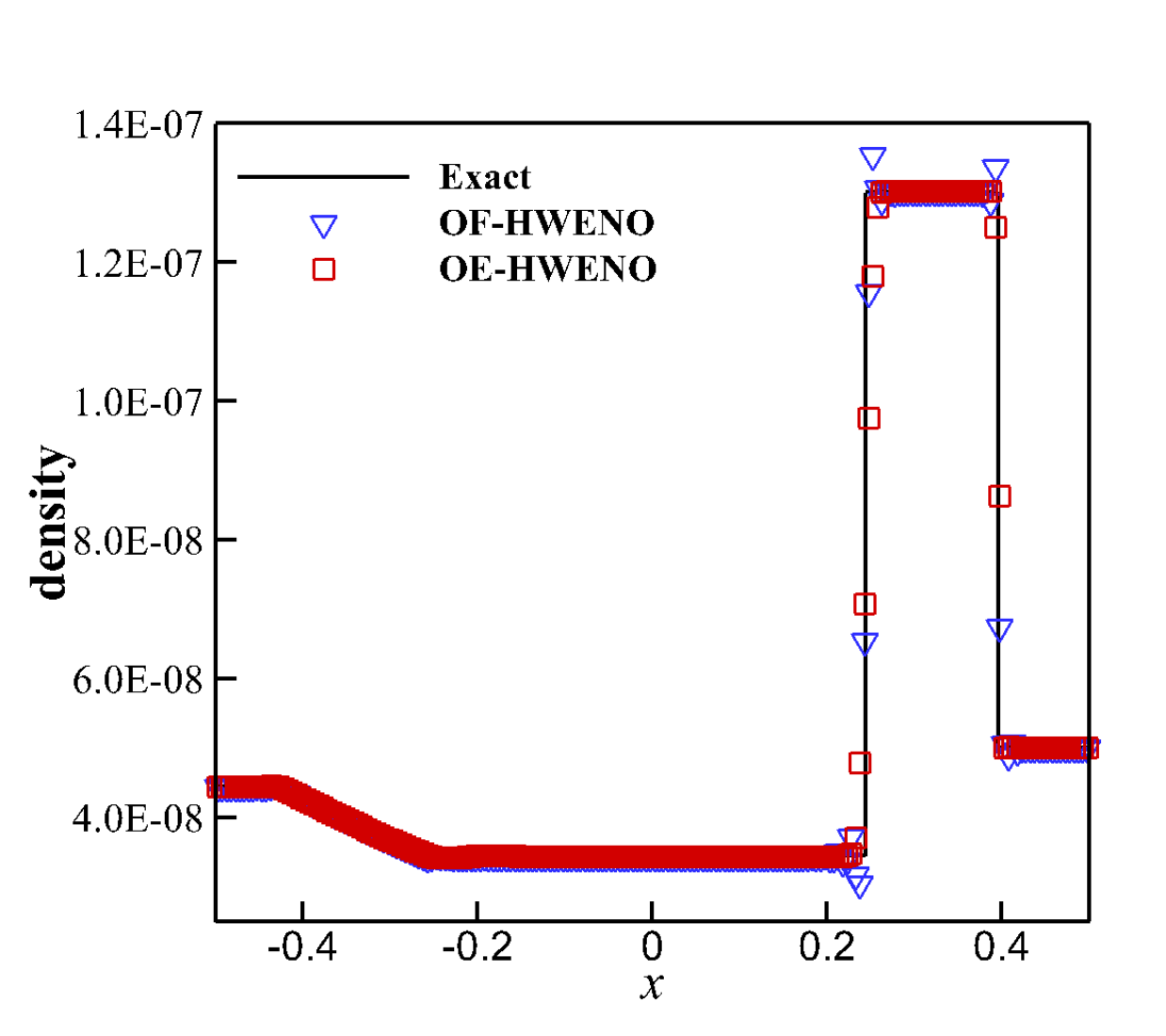}}  
		\end{subfigure} 
		\begin{subfigure}{0.33\textwidth}
			{\includegraphics[width=5.25cm,angle=0]{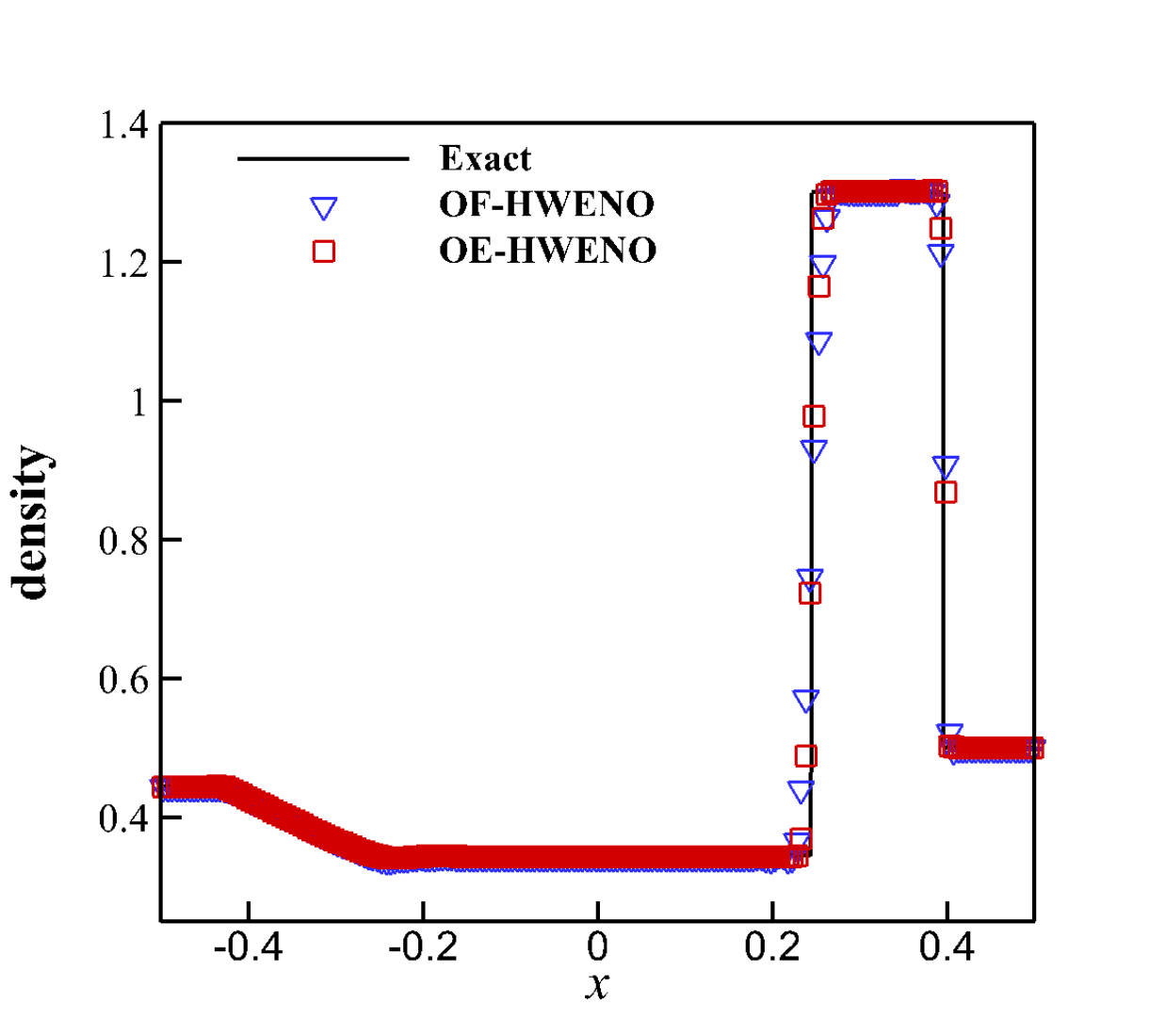}}  
		\end{subfigure} 
		\begin{subfigure}{0.3\textwidth}
			{\includegraphics[width=5.25cm,angle=0]{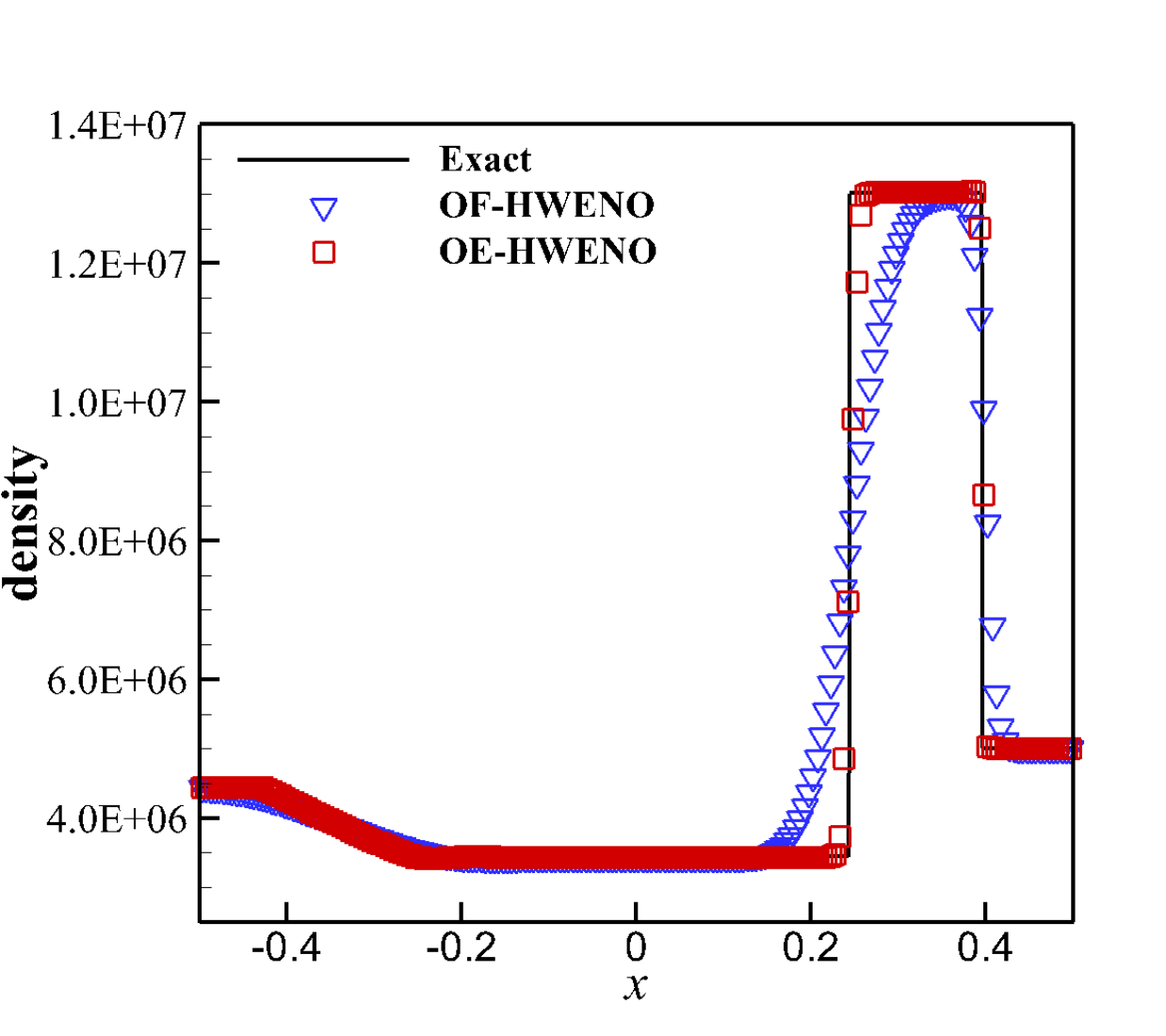}} 
		\end{subfigure} 
		\begin{subfigure}{0.35\textwidth}
			{\includegraphics[width=5.25cm,angle=0]{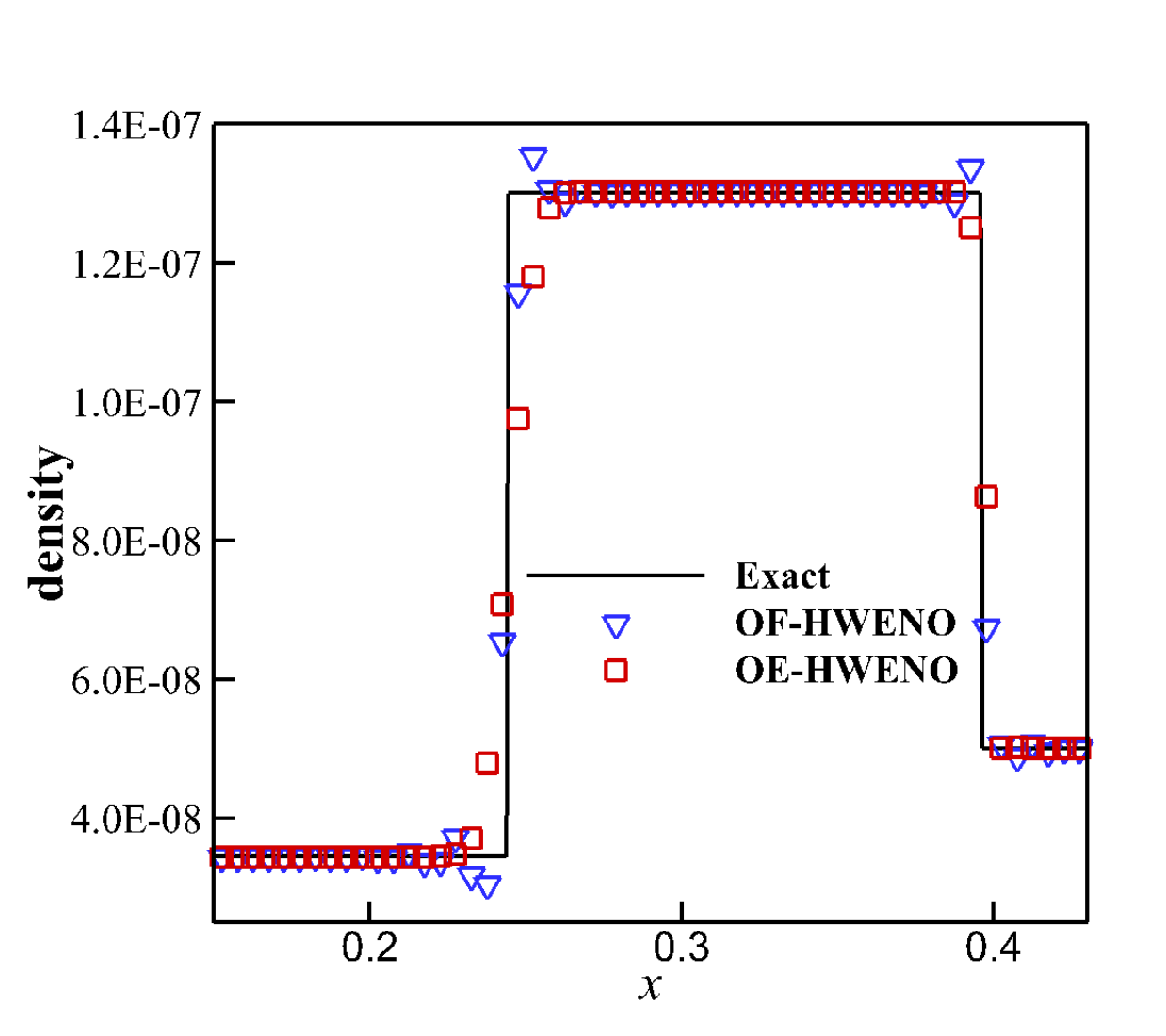}} 
			\caption{$\lambda=10^{-7}$}
		\end{subfigure} 
		\begin{subfigure}{0.33\textwidth}
			{\includegraphics[width=5.25cm,angle=0]{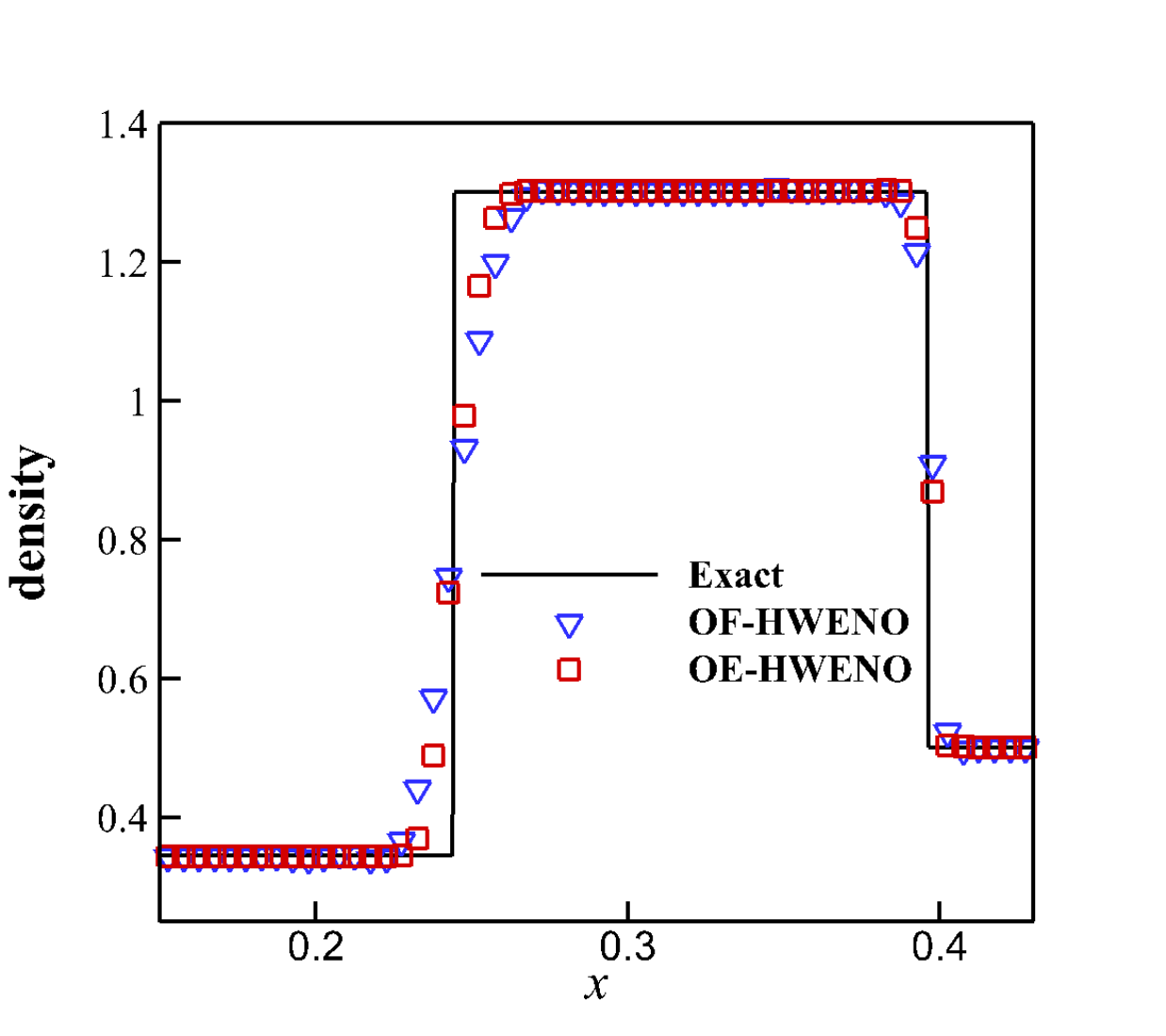}} 
			\caption{$\lambda=1$}
		\end{subfigure} 
		\begin{subfigure}{0.3\textwidth}
			{\includegraphics[width=5.25cm,angle=0]{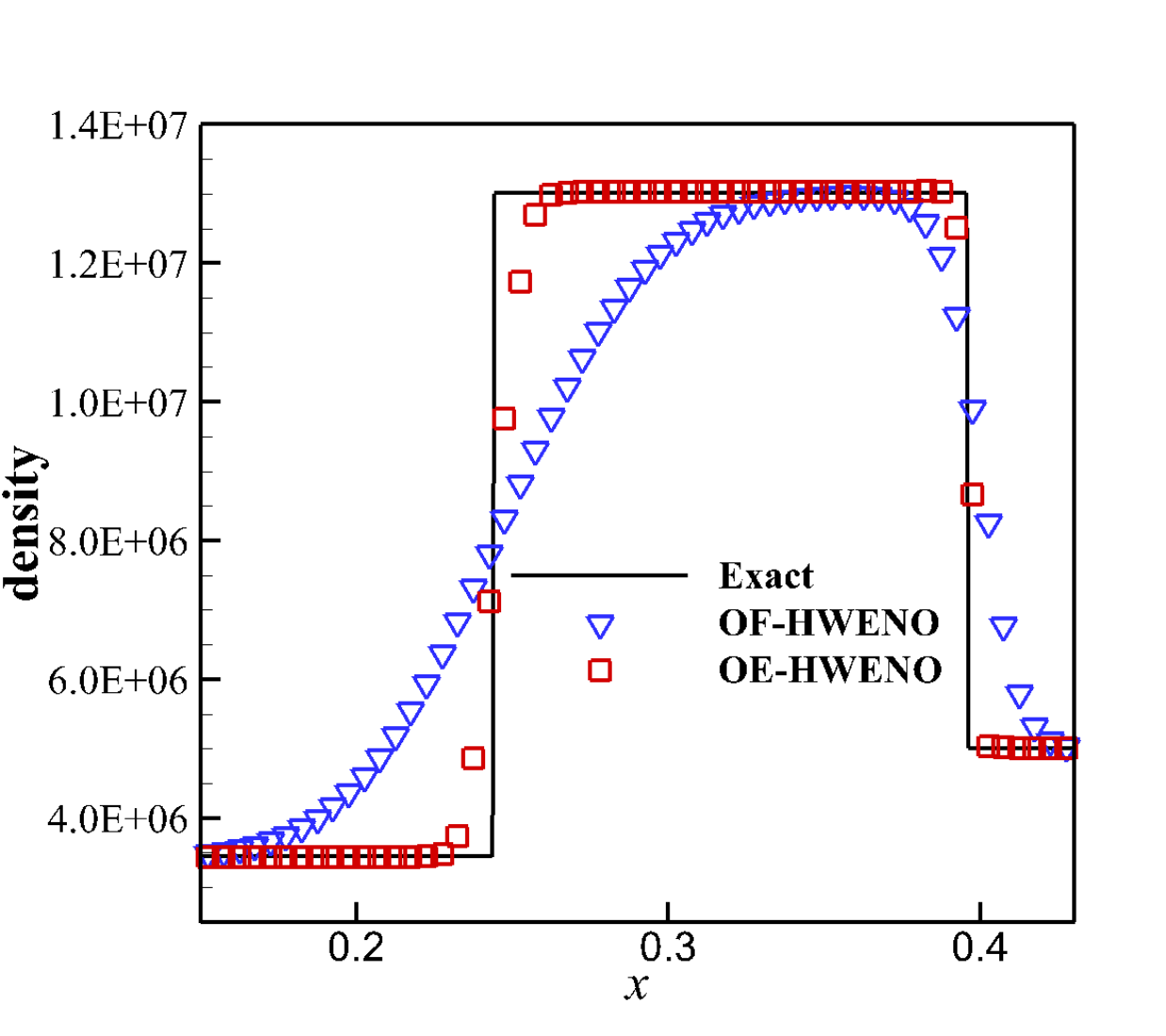}}
			\caption{$\lambda=10^{7}$}
		\end{subfigure} 
		\caption{Density of Lax problem computed by OE-HWENO and OF-HWENO schemes with $200$ uniform cells. 
		}\label{sec3:Fig_Lax}
	\end{figure}
\end{example} 

\begin{example}[Woodward--Colella blast wave problem]\label{sec3:Example_Blastwave}  
	This example simulates the interaction of two blast waves for the 1D compressible Euler equations with the initial values 
	\begin{equation*}
		(\rho_0,\mu_0,p_0)^{\top}=\begin{cases}
			(1,0,10^3)^{\top},&0<x<0.1,
			\\(1,0,10^{-2})^{\top},&0.1<x<0.9,
			\\(1,0,10^2)^{\top},&0.9<x<1.
		\end{cases}
	\end{equation*}	 
	Reflective boundary conditions are applied to all boundaries. This problem is simulated up to the final time $T=0.038$. The results of density computed by OE-HWENO and OF-HWENO methods are plotted in Fig.~\ref{sec3:Fig_Blast}, demonstrating that the OE-HWENO method exhibits higher resolution than the OF-HWENO method.
	\begin{figure}[!htb]
		\centering
		{\includegraphics[width=7.0cm,angle=0]{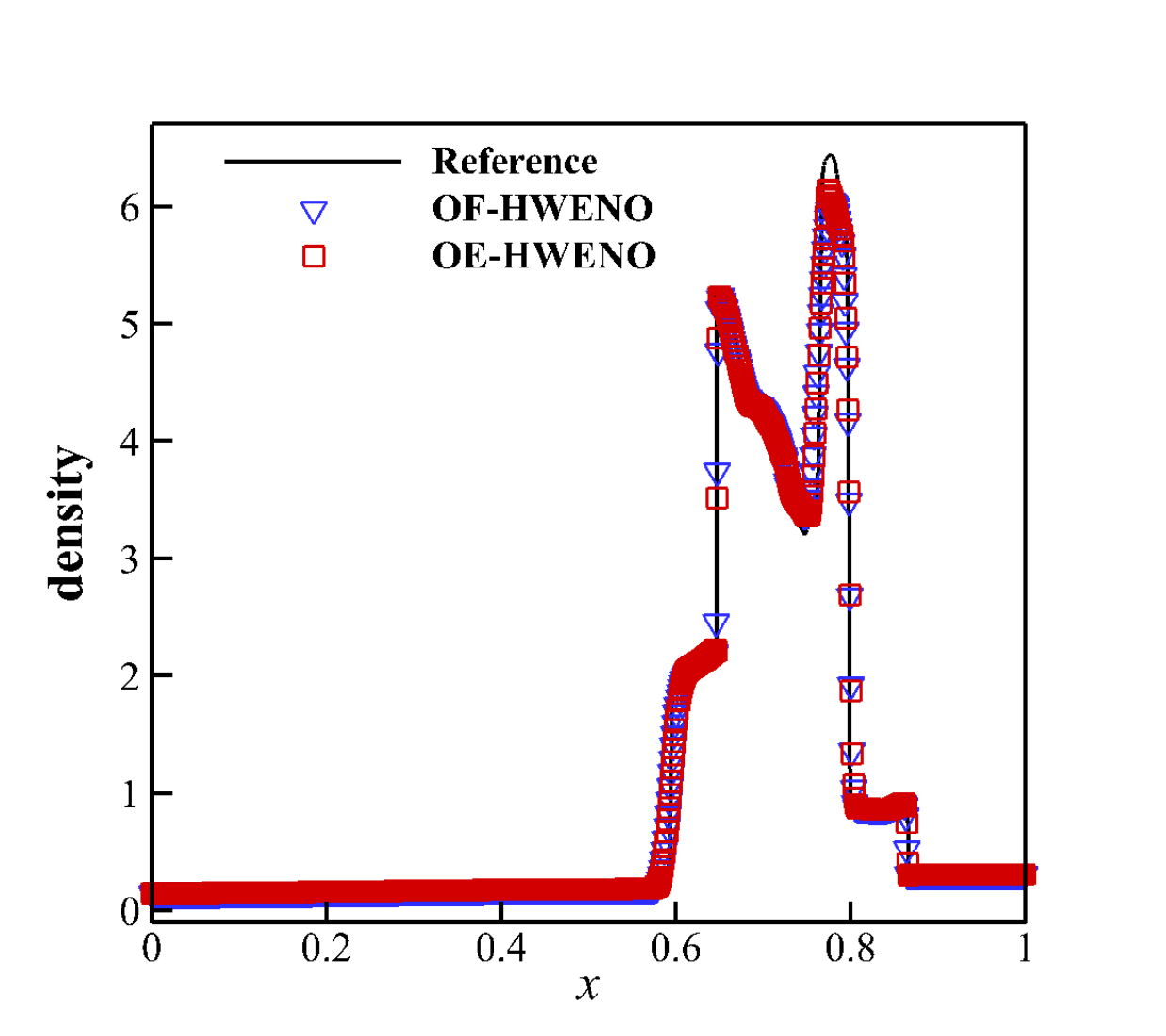}}
		{\includegraphics[width=7.0cm,angle=0]{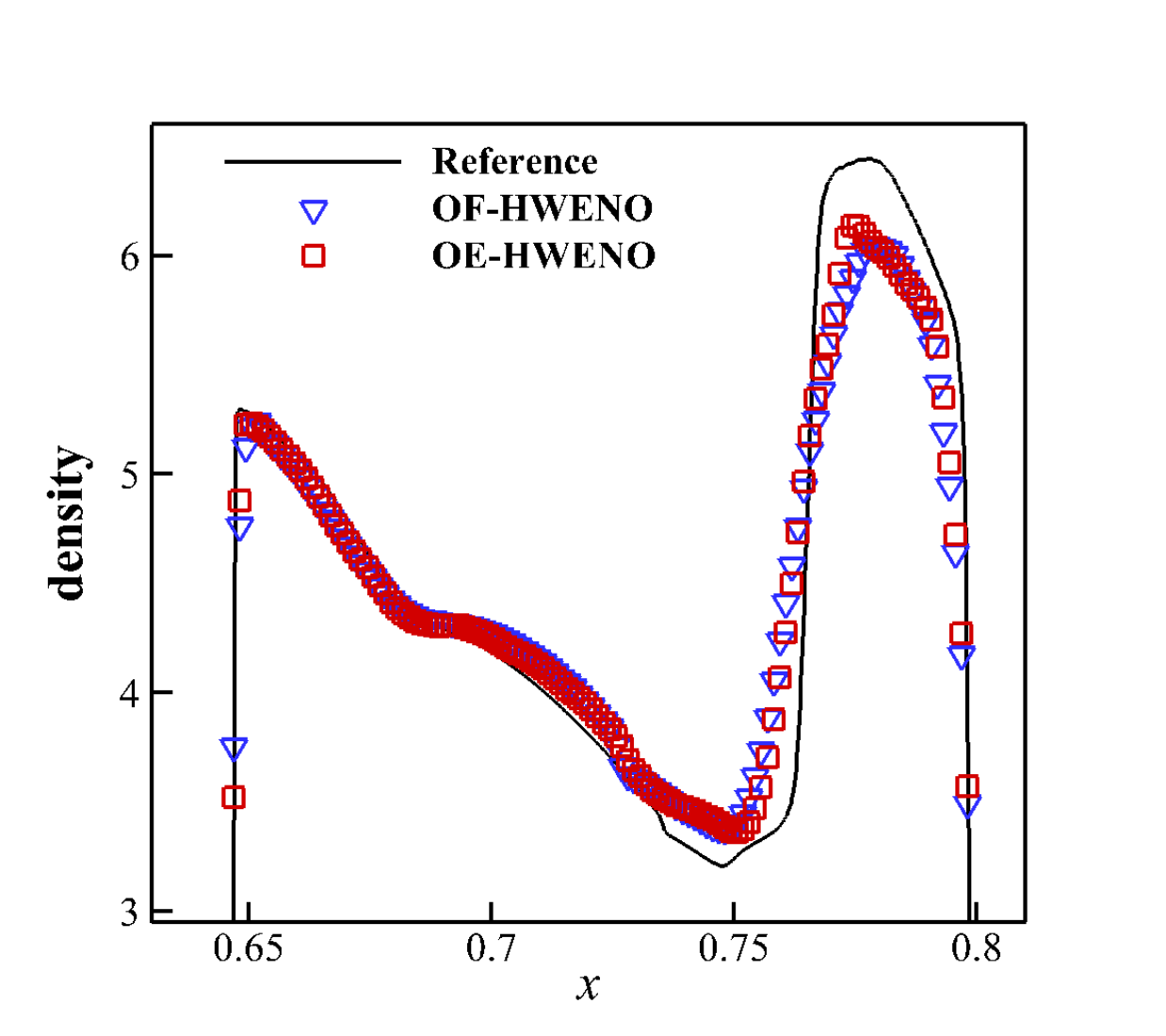}} 
		\caption{Density of the Woodward-Colella blast wave problem computed by the OE-HWENO and OF-HWENO methods with $800$ uniform cells.  
		}\label{sec3:Fig_Blast}
	\end{figure}
\end{example}

\begin{example}[1D Sedov problem]\label{Sec3:Example_1dSedov} 
	\begin{figure}[!htb]
		\centering
		\begin{subfigure}{0.32\textwidth}
			{\includegraphics[width=5.25cm,angle=0]{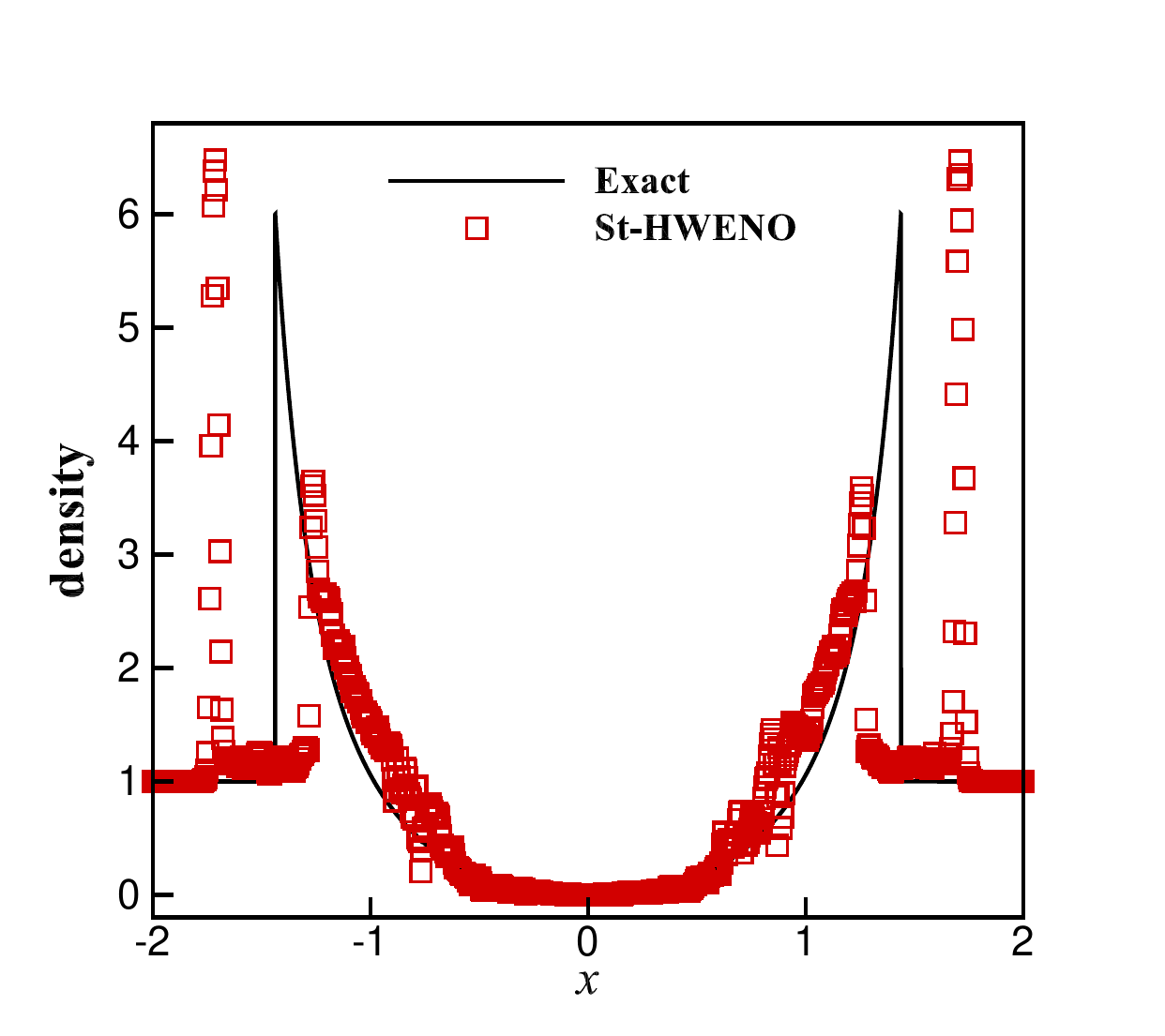}} 
		\end{subfigure} 
		\begin{subfigure}{0.32\textwidth}
			{\includegraphics[width=5.25cm,angle=0]{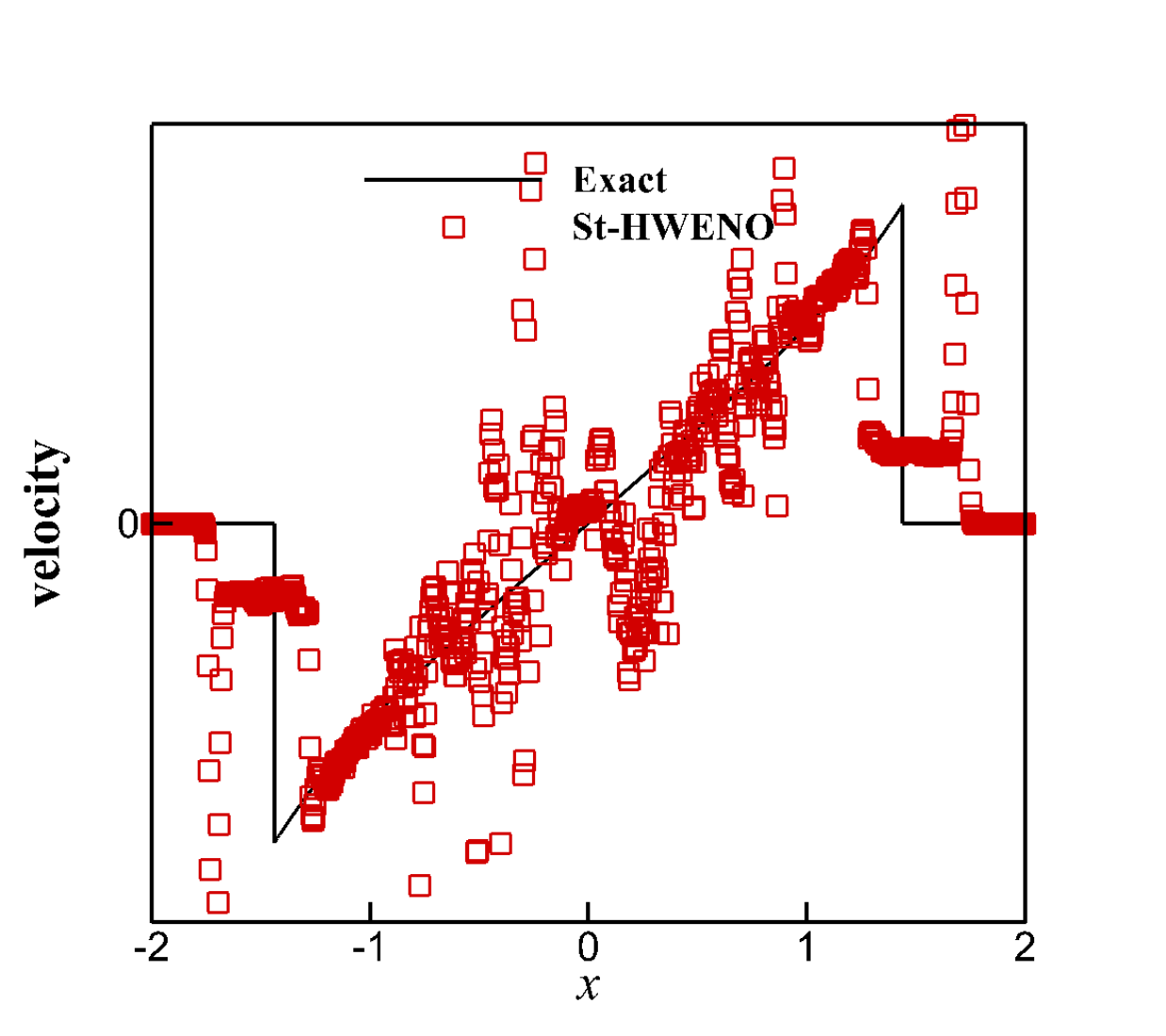}}
		\end{subfigure} 
		\begin{subfigure}{0.32\textwidth}
			{\includegraphics[width=5.25cm,angle=0]{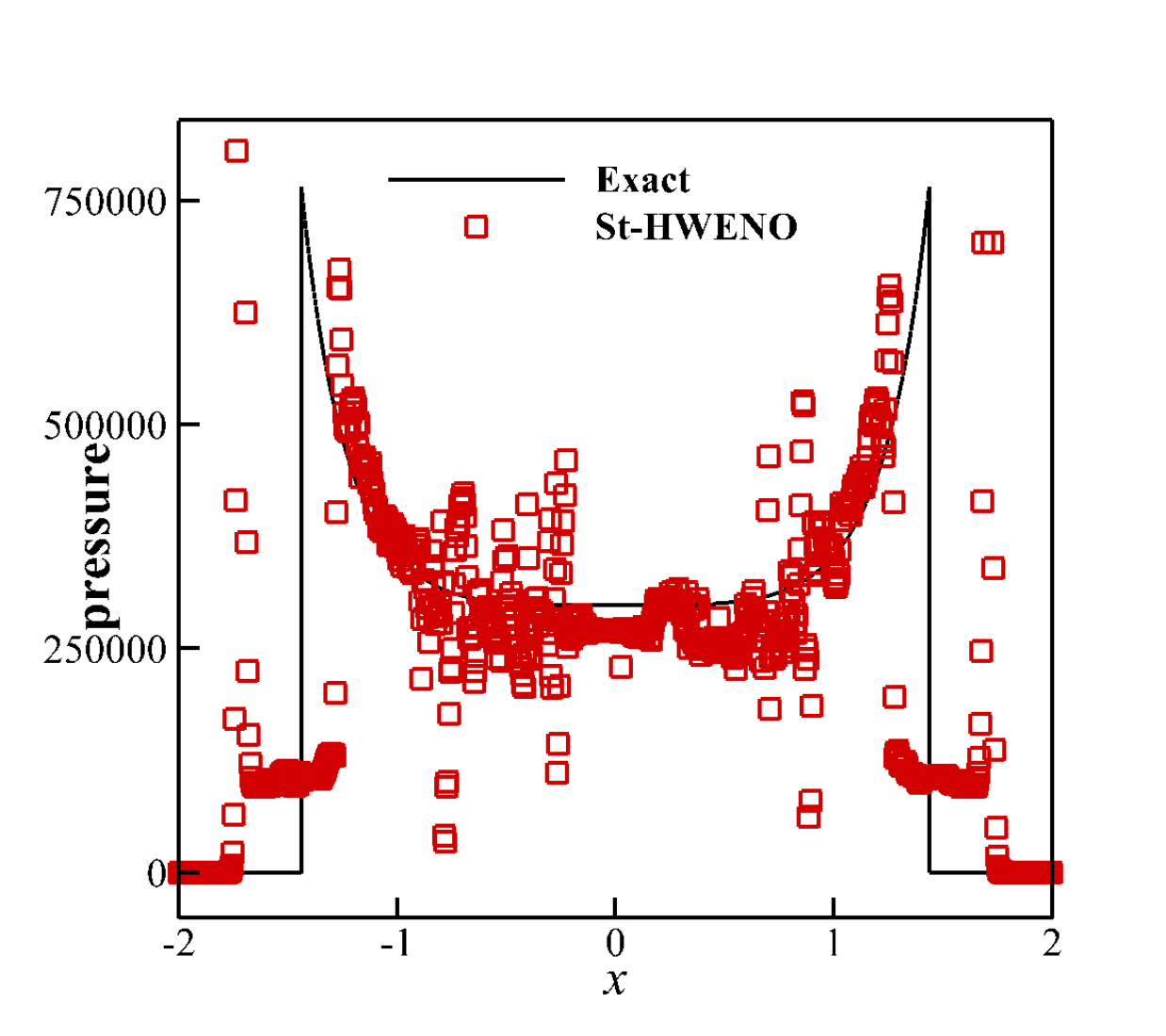}} 
		\end{subfigure} 
		\begin{subfigure}{0.32\textwidth}
			{\includegraphics[width=5.25cm,angle=0]{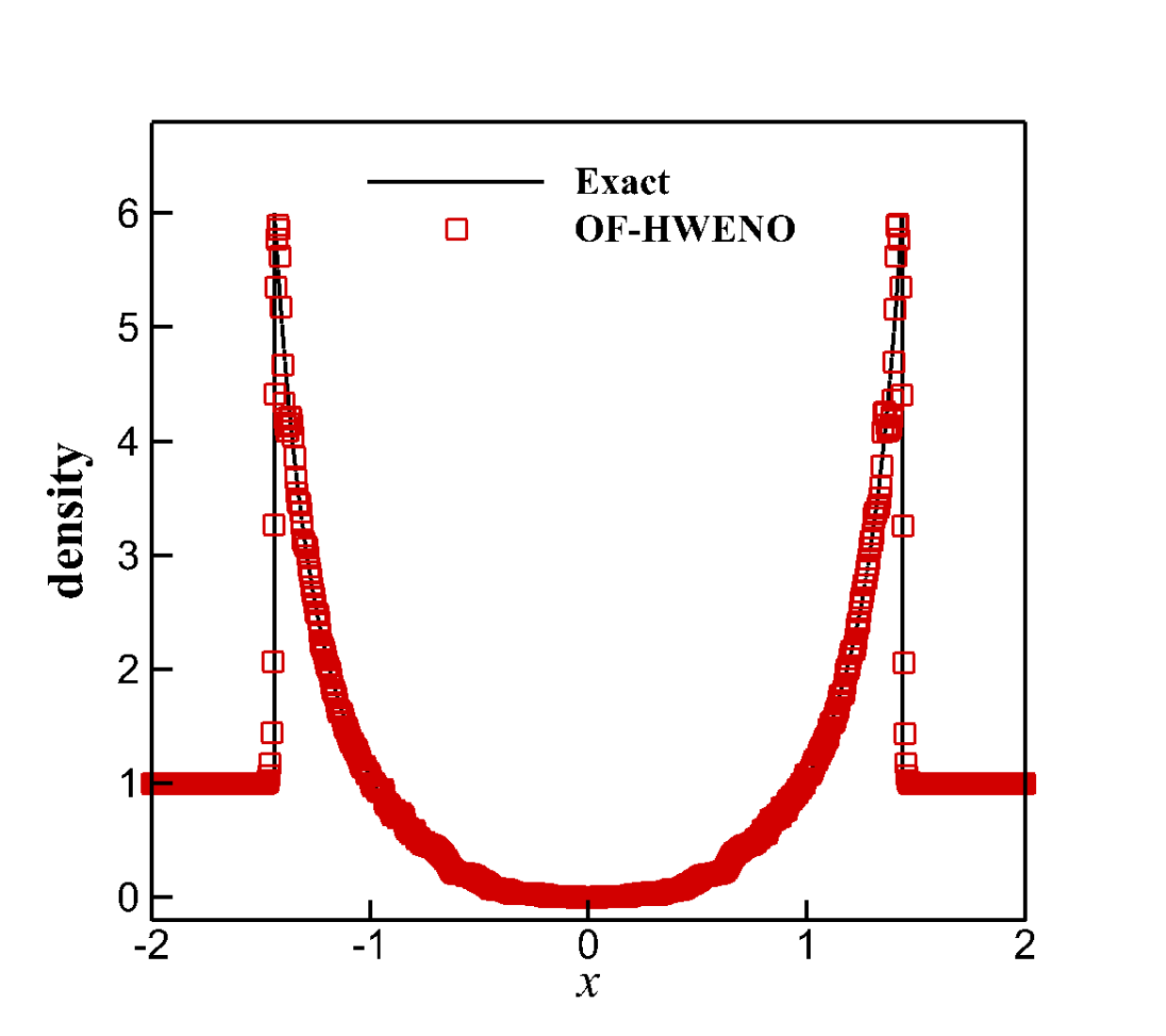}} 
		\end{subfigure} 
		\begin{subfigure}{0.32\textwidth}
			{\includegraphics[width=5.25cm,angle=0]{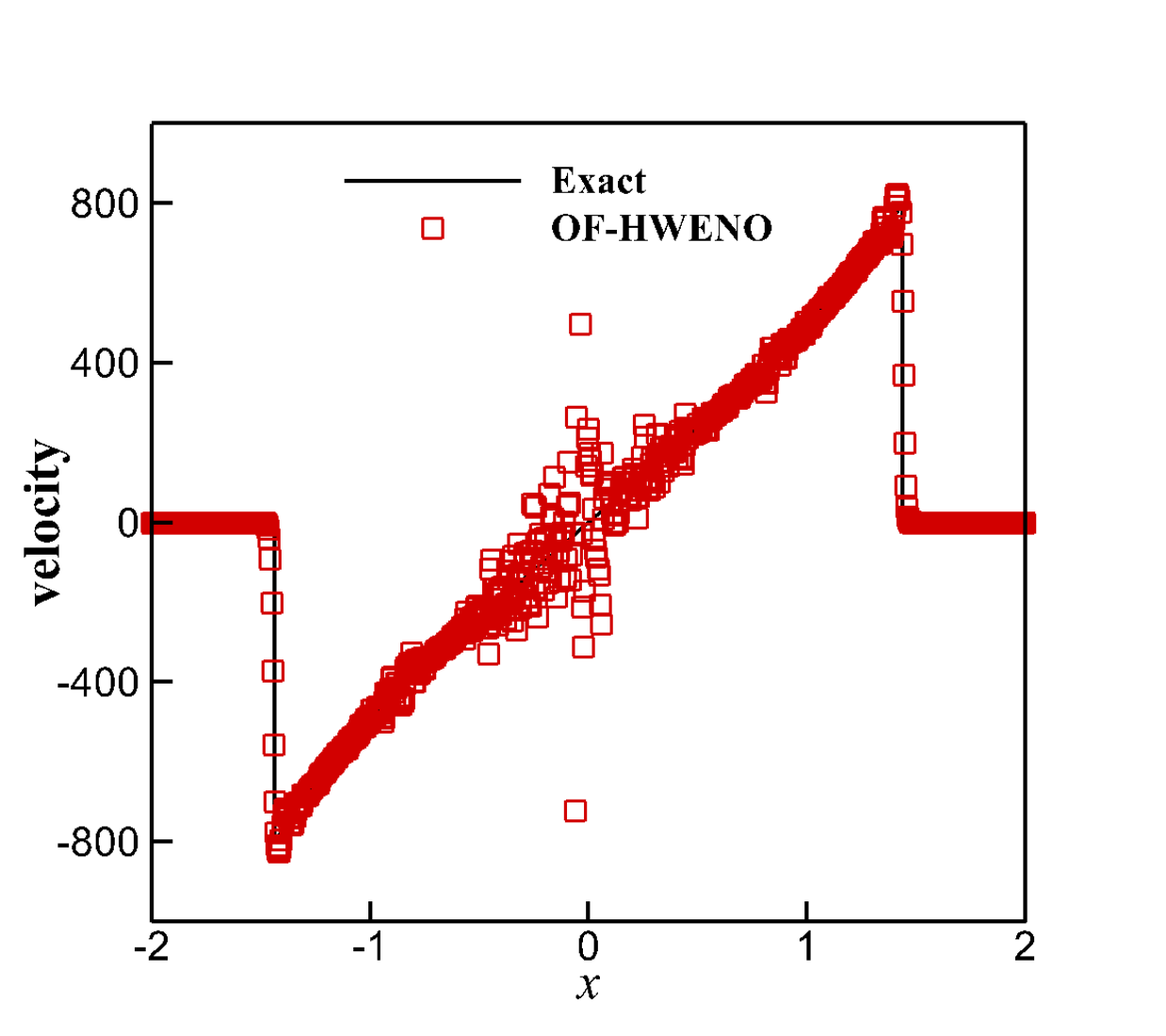}}
		\end{subfigure} 
		\begin{subfigure}{0.32\textwidth}
			{\includegraphics[width=5.25cm,angle=0]{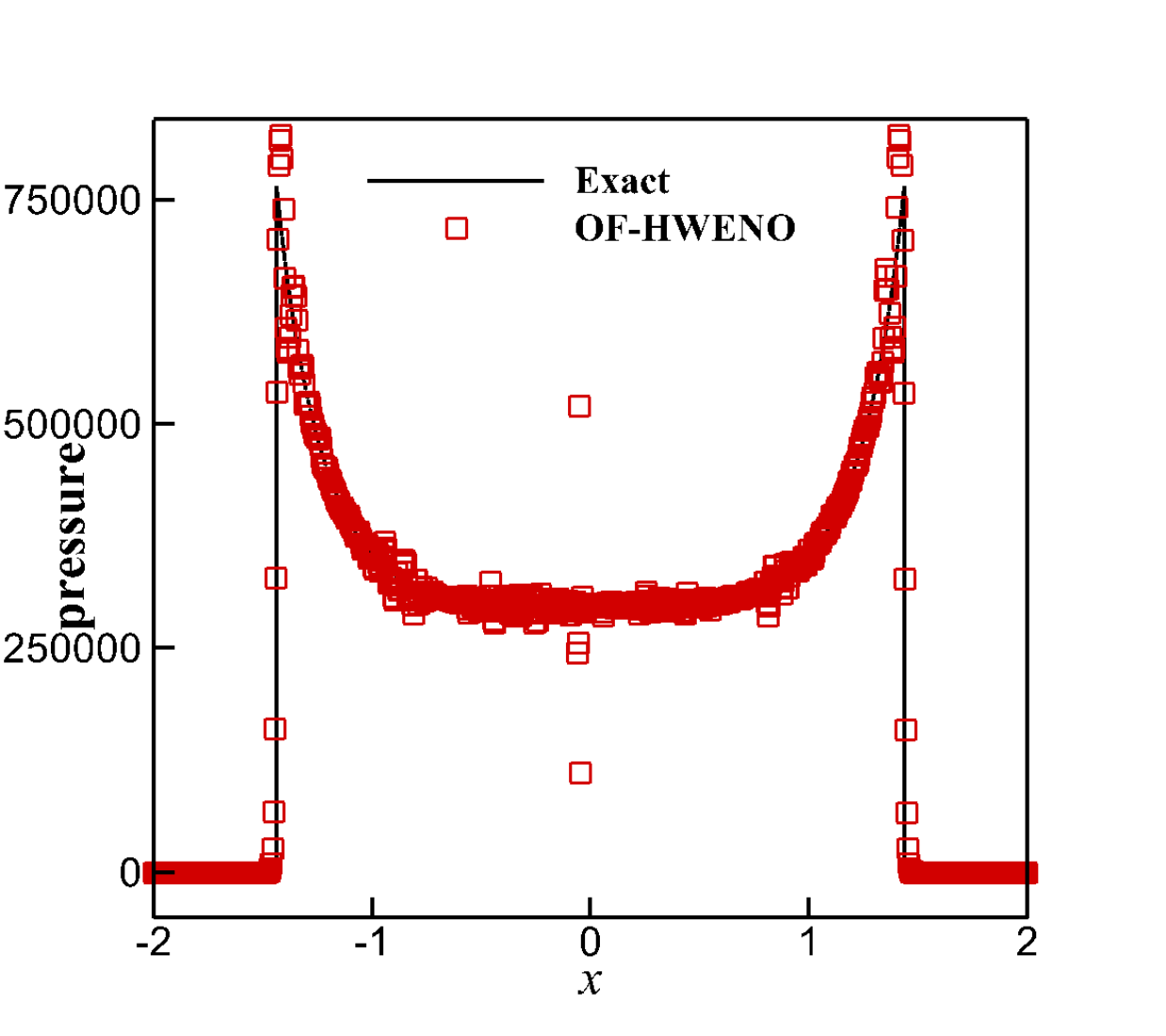}} 
		\end{subfigure} 
		\begin{subfigure}{0.32\textwidth}
			{\includegraphics[width=5.25cm,angle=0]{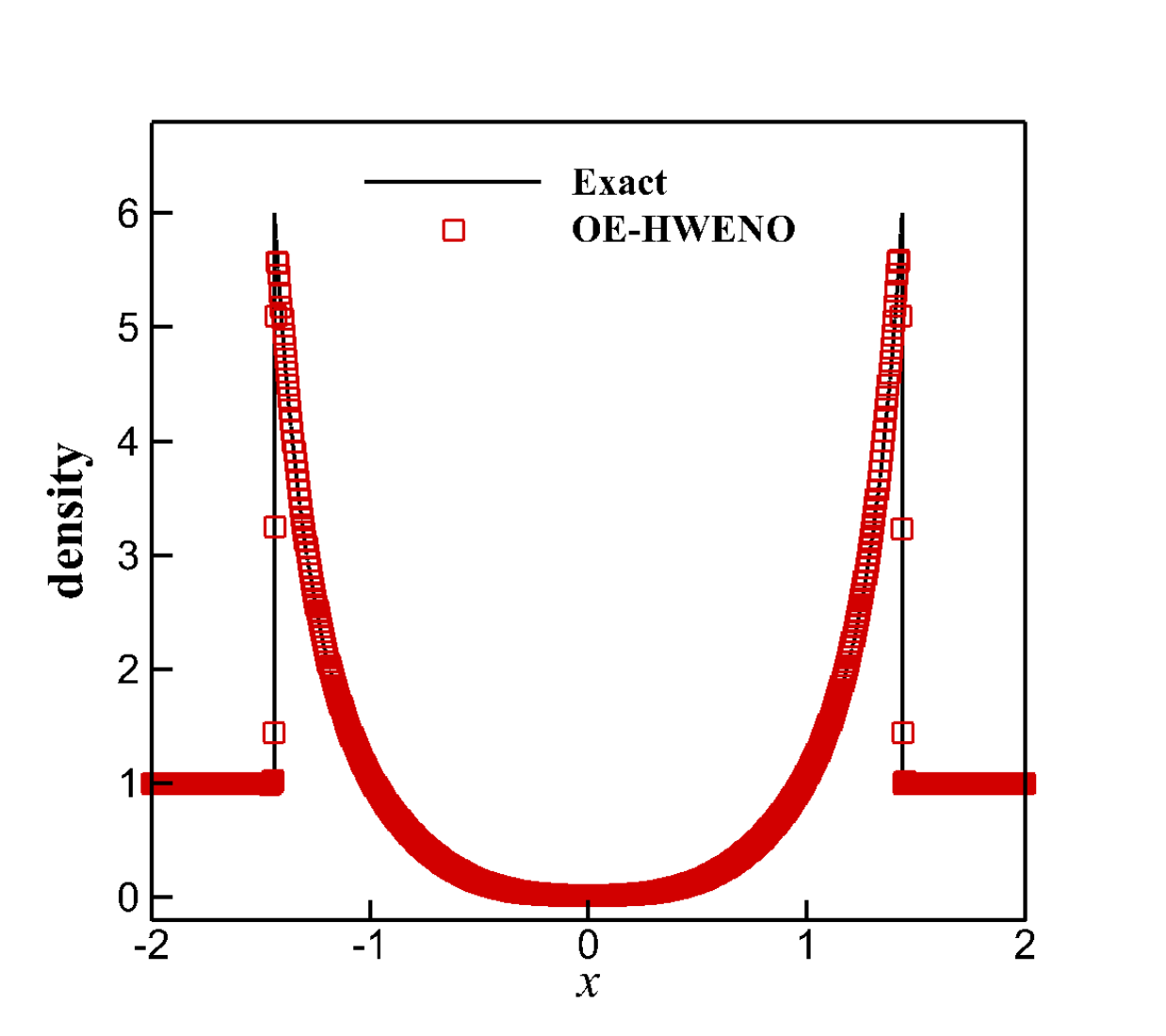}} 
		\end{subfigure} 
		\begin{subfigure}{0.32\textwidth}
			{\includegraphics[width=5.25cm,angle=0]{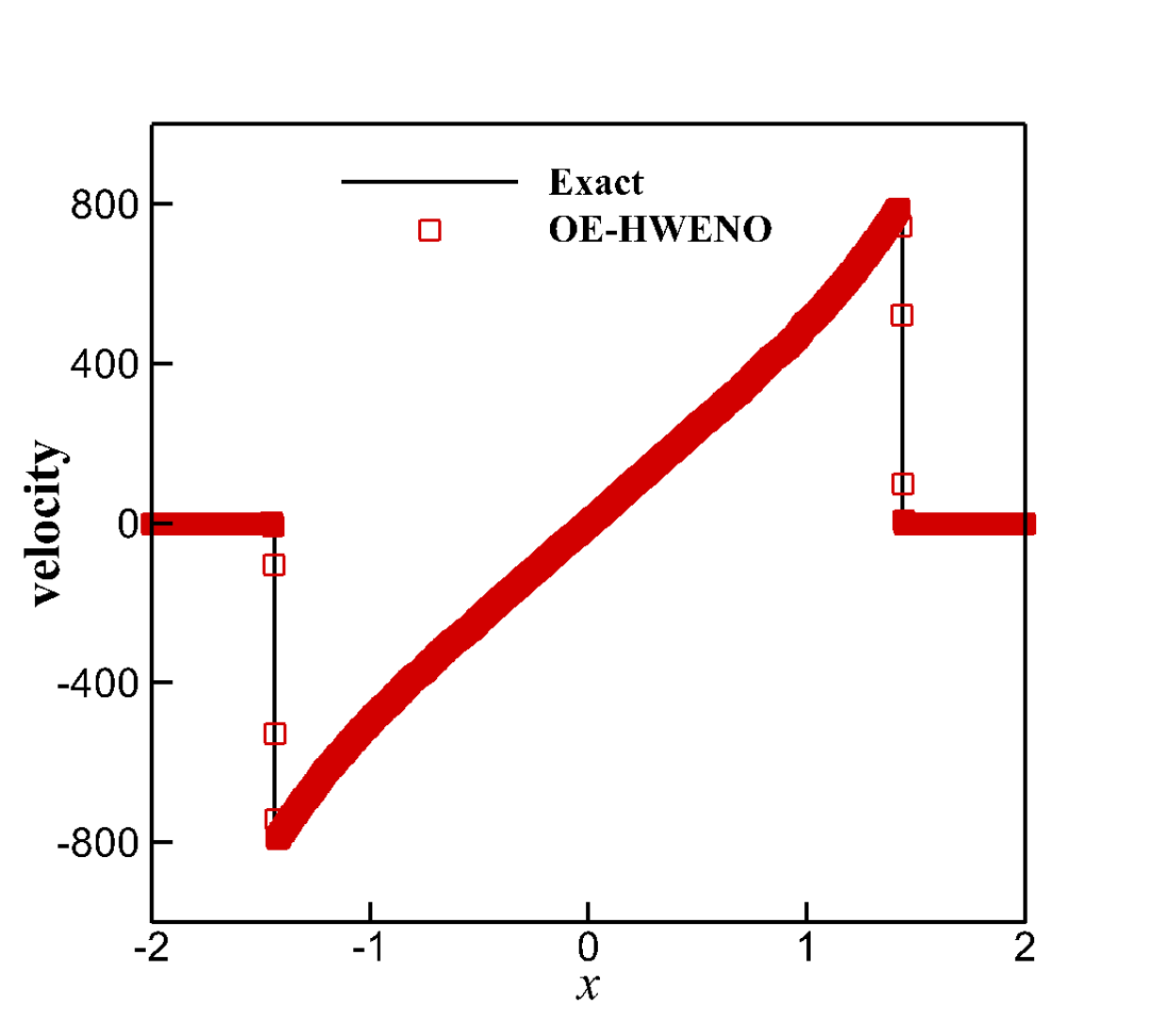}}
		\end{subfigure} 
		\begin{subfigure}{0.32\textwidth}
			{\includegraphics[width=5.25cm,angle=0]{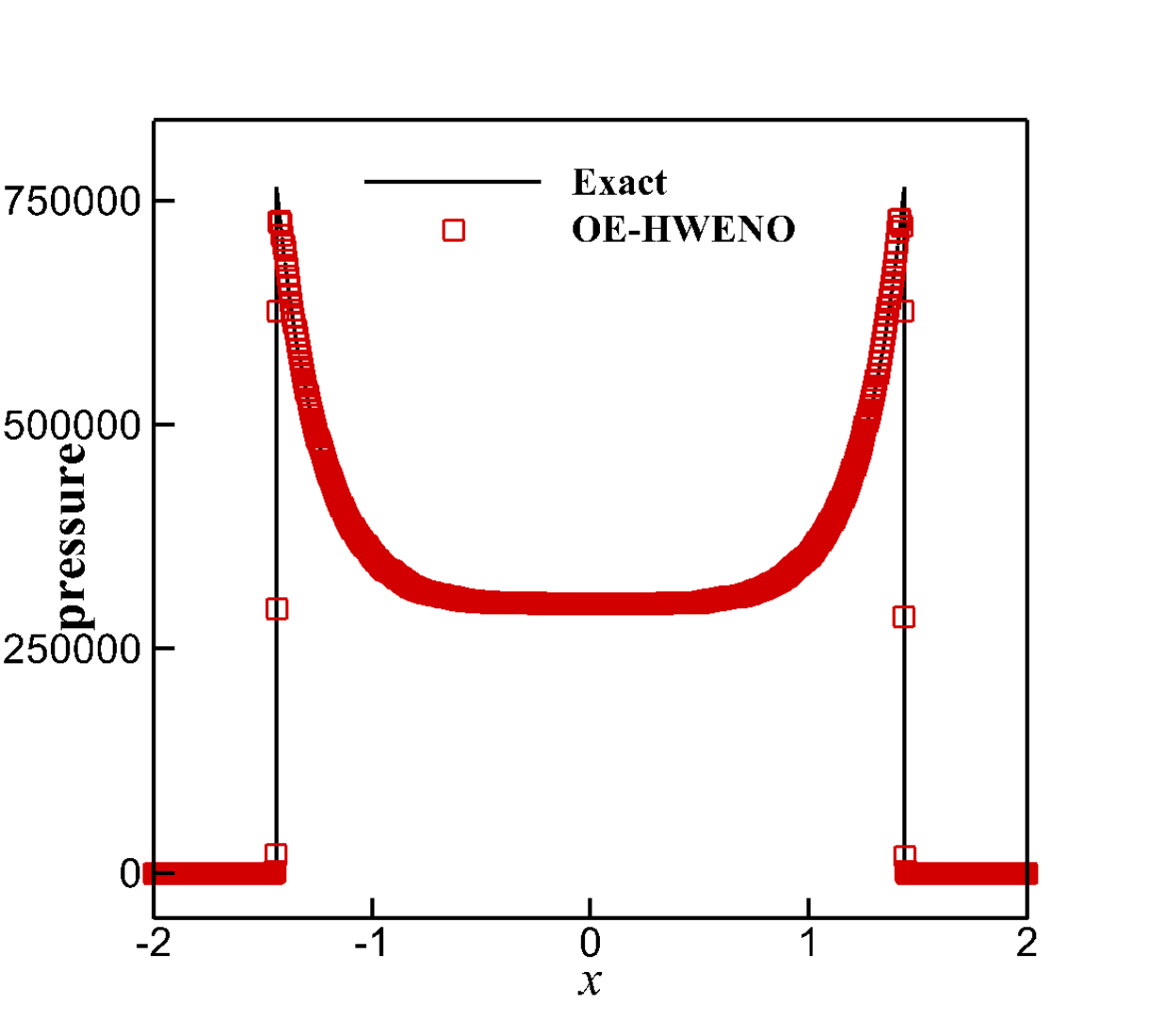}} 
		\end{subfigure}  
		\caption{Numerical results of the 1D Sedov blast wave problem computed by the St-HWENO (top), OF-HWENO (middle) and OE-HWENO (bottom) methods  with $800$ uniform cells. 
		} \label{sec3:Fig_Sedov}
	\end{figure}
	\begin{figure}[!htb]
		\centering
		{\includegraphics[width=7.0cm,angle=0]{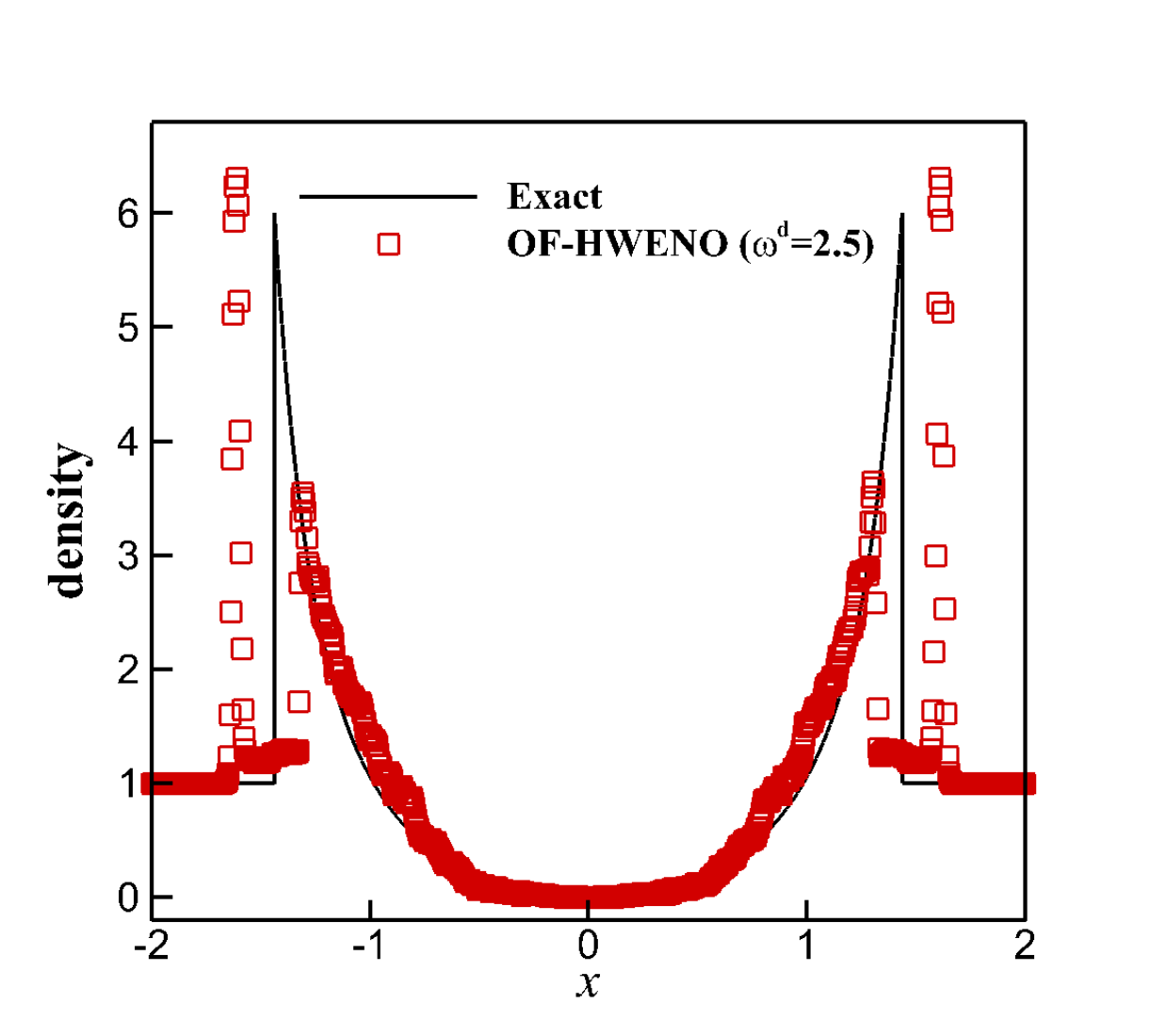}}
		{\includegraphics[width=7.0cm,angle=0]{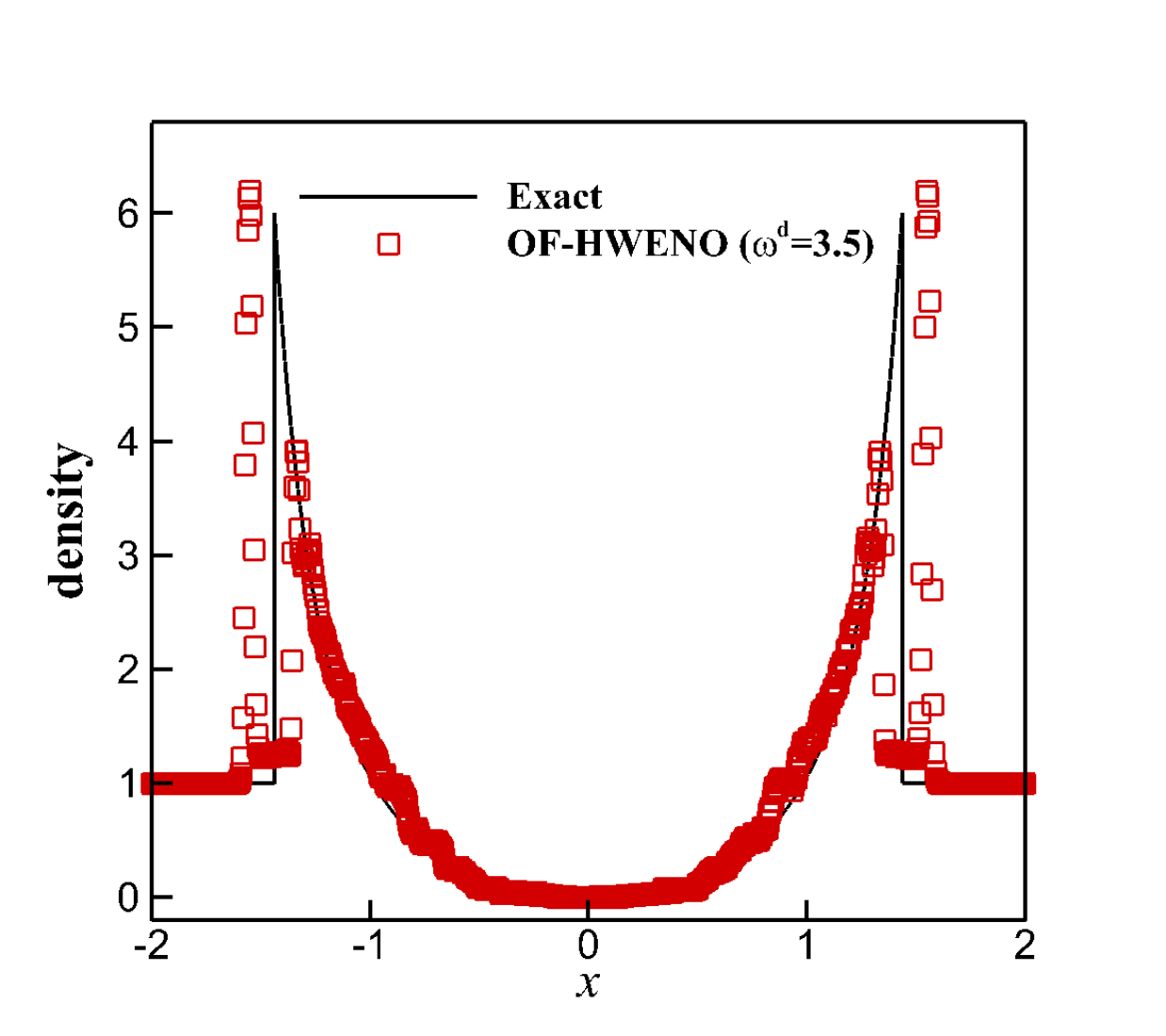}} 
		{\includegraphics[width=7.0cm,angle=0]{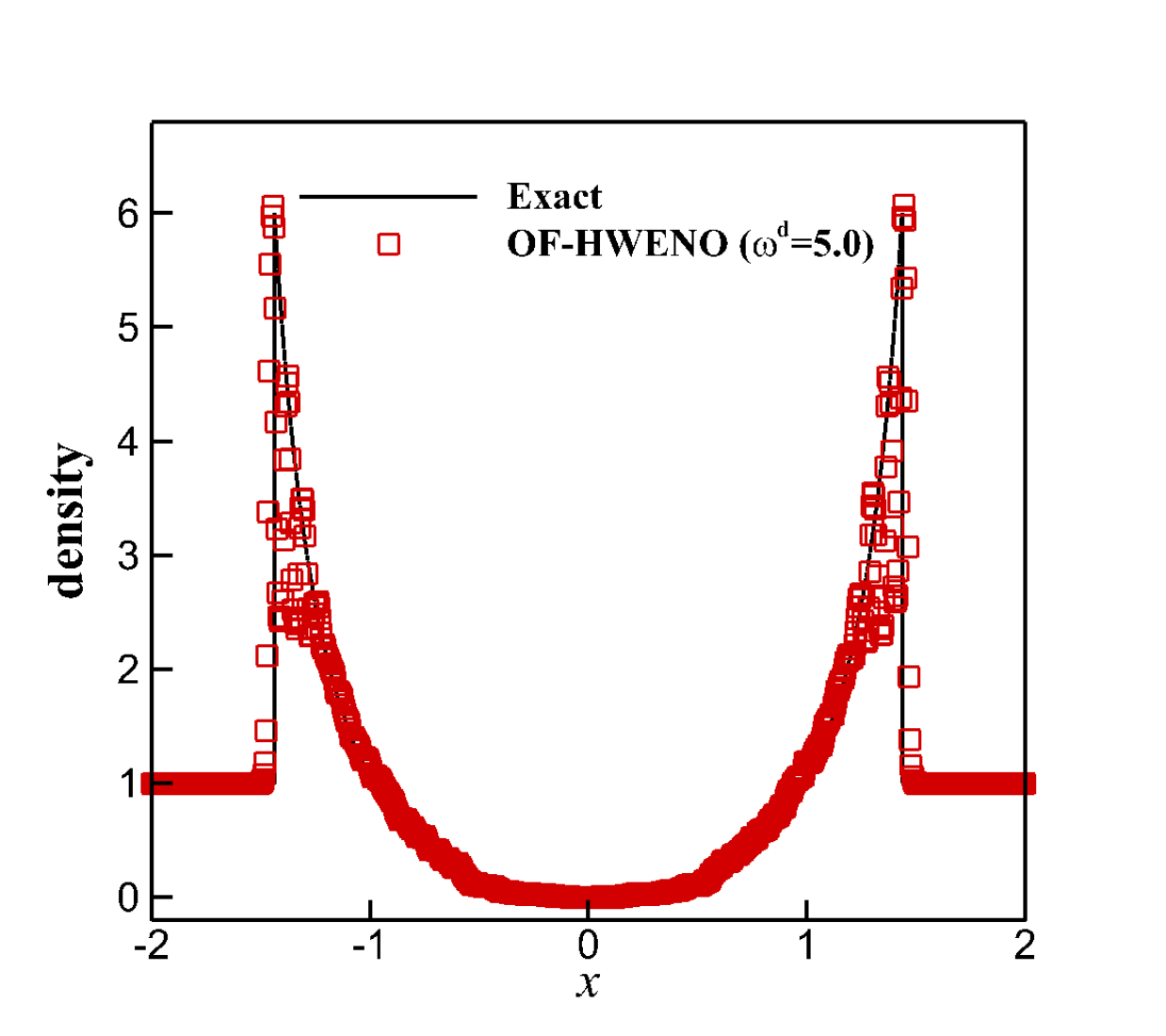}} 
		{\includegraphics[width=7.0cm,angle=0]{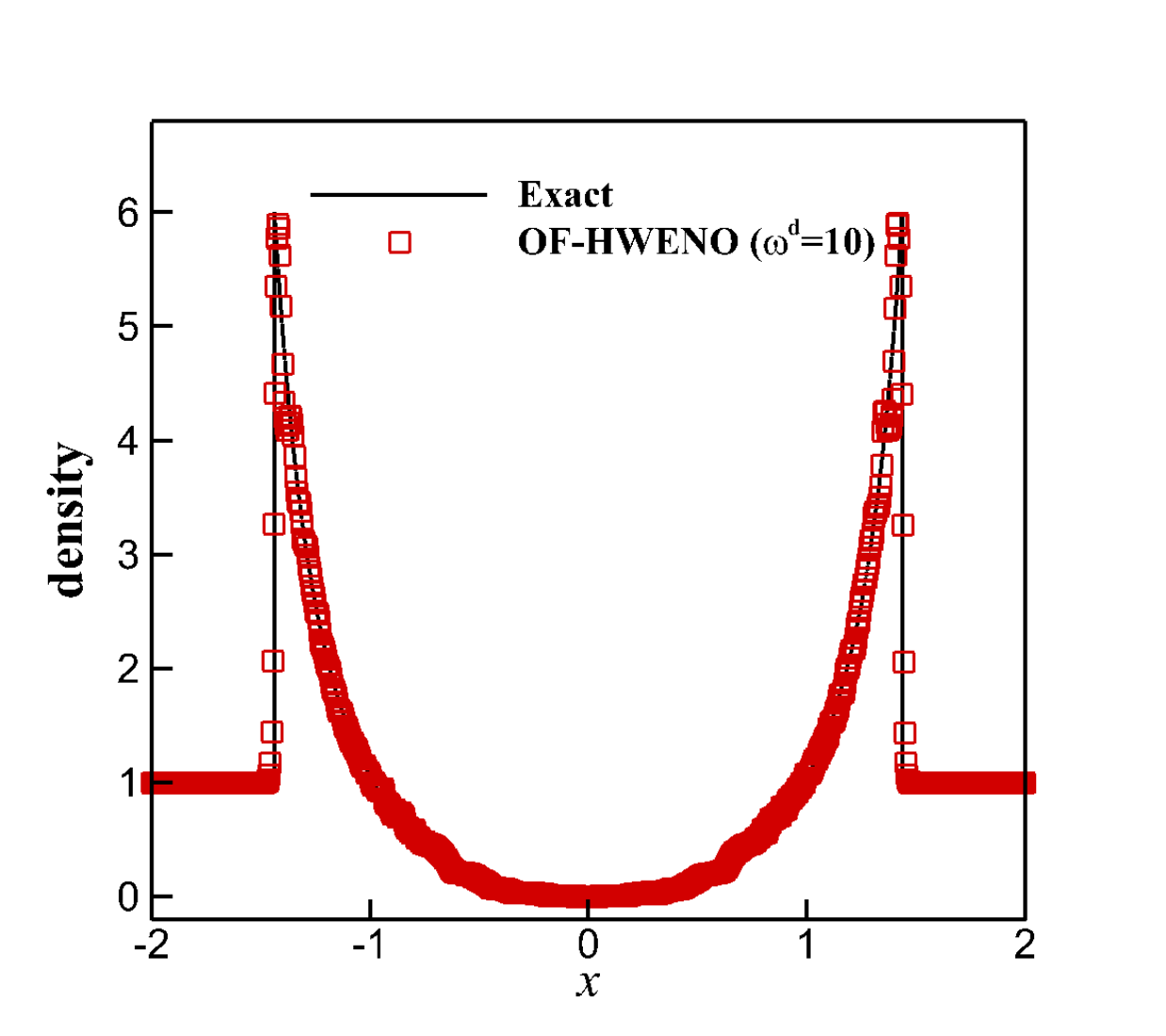}} 
		\caption{Density of the Sedov problem computed by the OF-HWENO method with $800$ uniform cells by varying the artificial parameter $\omega_d\in \{2.5, 3.5, 5, 10\}$ in the OF-HWENO method. 
		}\label{sec3:Fig_Sedov_1}
	\end{figure}
	This is an extreme problem involving very low internal energy and strong shock, which presents significant challenges in simulation. The initial condition is specified as follows
	\begin{equation*}
		(\rho_0,\mu_0,E_0)=\begin{cases}
			(1,0,10^{-12}),&x\in [-2,2] \setminus  \mbox{the center cell},
			\\(1,0,\frac{3200000}{h_x}),&x\in \mbox{the center cell}.
		\end{cases}
	\end{equation*}	
	Outflow boundary conditions are implemented on all boundaries. The exact solution is referenced in \cite{Slt,Kvp}, and the simulation runs until a final time of $T = 0.001$.  It is noteworthy that the PP limiter is necessary for maintaining the positivity of density and pressure in this example. We use ``St-HWENO" to denote the standard HWENO method without the OE procedure. For the St-HWENO and OF-HWENO methods with the PP limiter, the numerical results are presented in Fig.~\ref{sec3:Fig_Sedov}, exhibiting noticeable nonphysical oscillations. 
The damping terms of the OF-HWENO method \cite{ZQ2} contain an empirical artificial parameter $\omega^d$;  the authors of \cite{ZQ2} set $\omega^d = 3.5$ and suggested a range of $\omega^d \in [2, 5]$ for one-dimensional test cases based on their numerical experiments. However, the empirical parameter $\omega^d$ can be problem-dependent, and the challenging 1D Sedov problem, which contains large-scale variations, was not tested in \cite{ZQ2}. 
	The result of the OF-HWENO method in Fig.~\ref{sec3:Fig_Sedov} is obtained by setting $\omega^d = 10$. We make  this choice because we have conducted the 1D Sedov test by varying the artificial parameter $\omega^d \in \{2.5, 3.5, 5, 10\}$ in the OF-HWENO method. The comparison shown in Fig.~\ref{sec3:Fig_Sedov_1} indicates that the parameter $\omega^d = 10$ offers the best performance for the OF-HWENO method in the 1D Sedov test. 
	Fig.~\ref{sec3:Fig_Sedov_1} shows that the OF-HWENO method is sensitive to this artificial empirical parameter $\omega^d$, and different values of $\omega^d$ lead to various results with non-physical oscillations.  Conversely, our proposed OE-HWENO method with the PP limiter effectively handles this problem, producing essentially non-oscillatory results with high resolution, as depicted at the bottom of Fig.~\ref{sec3:Fig_Sedov}.
\end{example}

\begin{example}[Double rarefaction wave problem]\label{Sec3:Example_DoubleRarefaction} 
	This is an extreme problem containing very low density and pressure. The initial condition is given by
	\begin{equation*}
		(\rho_0,\mu_0,p_0)=\begin{cases}
			(7,-1,0.2),&-1<x<0,
			\\(7,1,0.2),&0<x<1.
		\end{cases}
	\end{equation*}	
	Outflow boundary conditions are applied to all boundaries, and the simulation is carried out up to the final time $T = 0.6$. It is noteworthy that the OE-HWENO method without the PP limiter also works well for this problem, and its numerical results are depicted in Fig.~\ref{sec3:Fig_DR}.
	\begin{figure}[!htb]
		\centering 
		\begin{subfigure}{0.32\textwidth}
			{\includegraphics[width=5.25cm,angle=0]{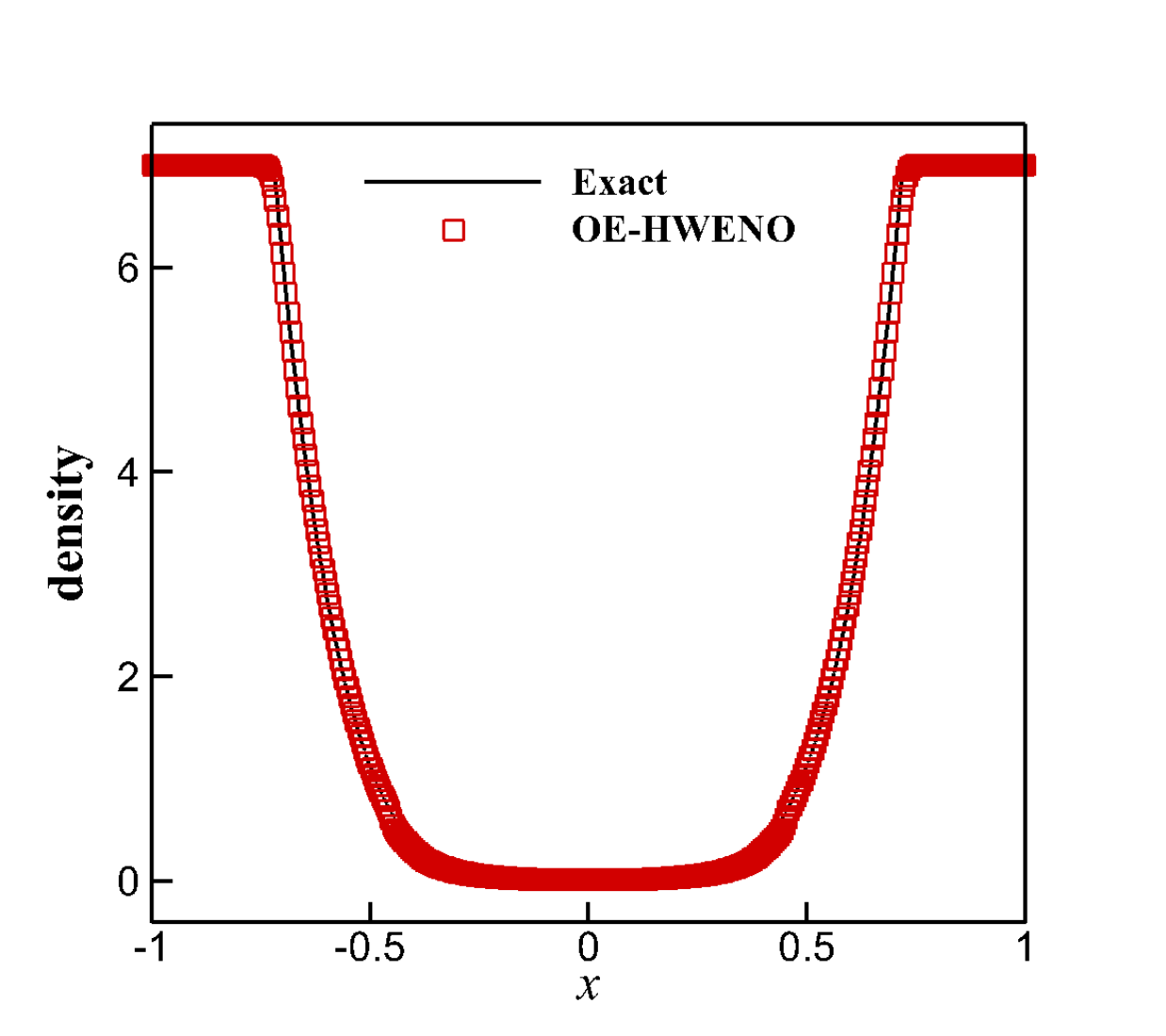}}
		\end{subfigure} 
		\begin{subfigure}{0.32\textwidth}
			{\includegraphics[width=5.25cm,angle=0]{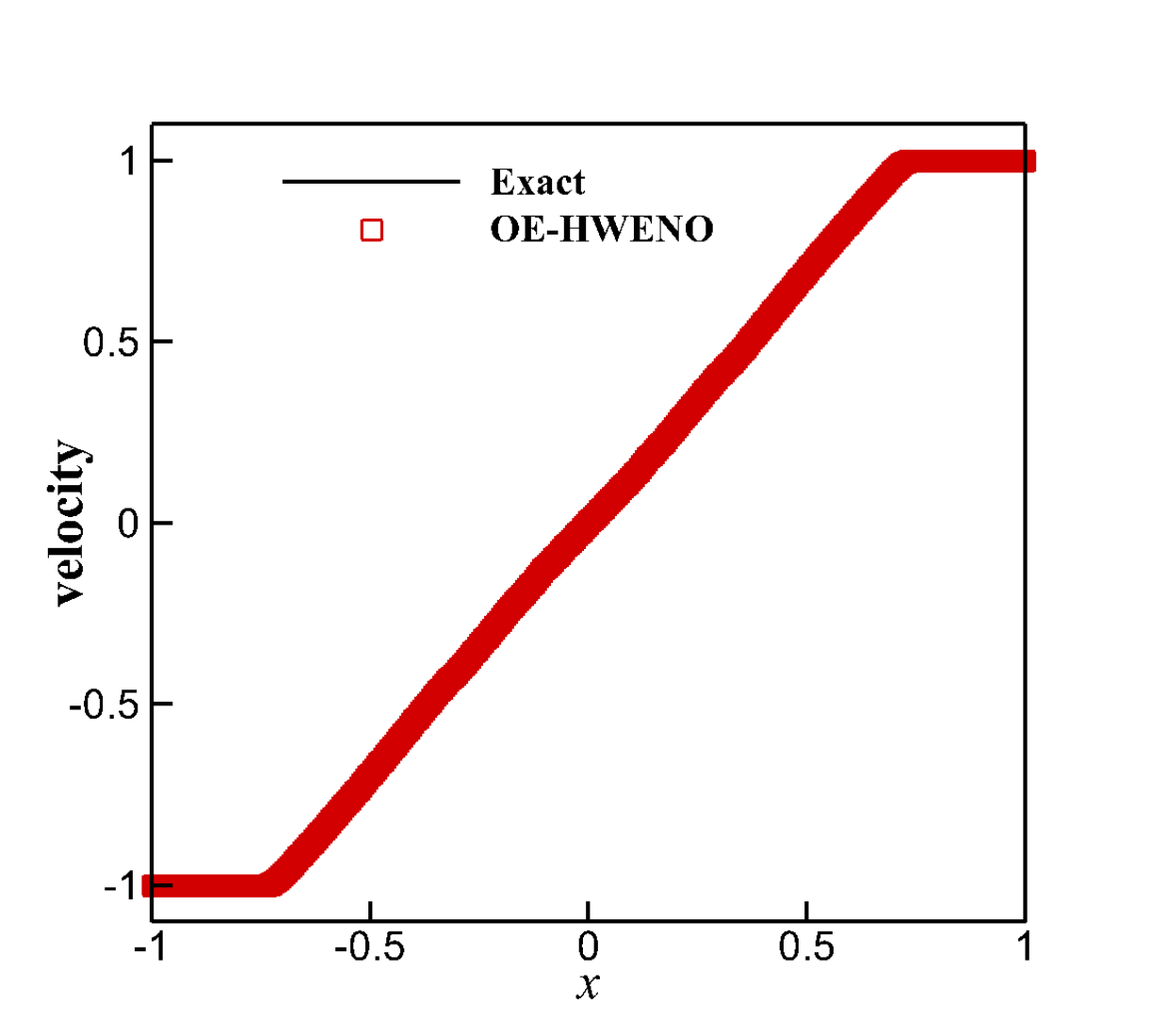}}
		\end{subfigure} 
		\begin{subfigure}{0.32\textwidth}
			{\includegraphics[width=5.25cm,angle=0]{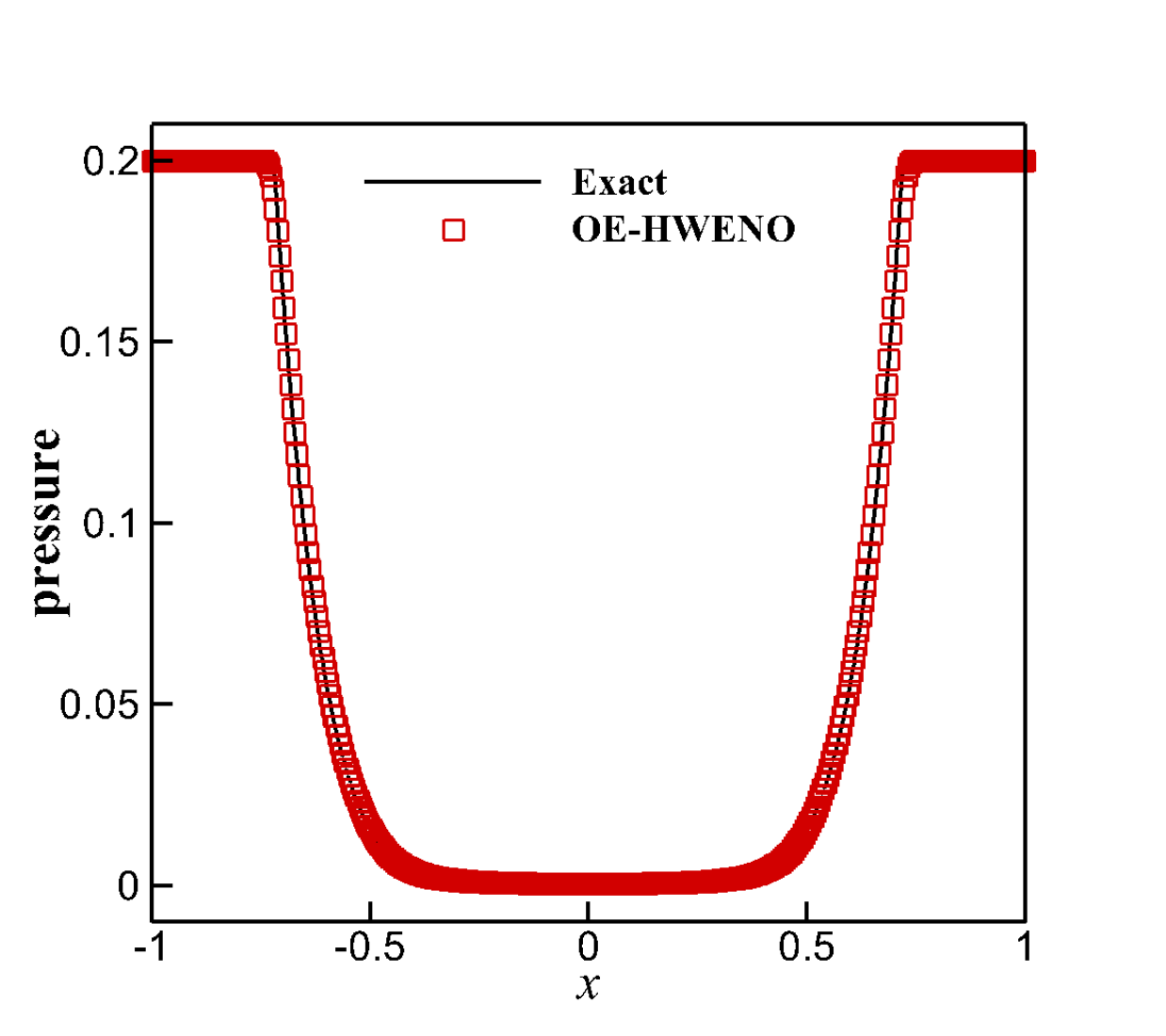}}
		\end{subfigure}  
		\caption{Numerical results of the double rarefaction wave problem computed by the OE-HWENO method with $400$ uniform cells.  
		} \label{sec3:Fig_DR}
	\end{figure}
\end{example}

\begin{example}[Leblanc problem]\label{sec3:Example_Leblanc}  
	This problem also presents extreme conditions characterized by very low internal energy and a strong shock, which poses significant challenges for robust simulation. The initial condition is specified as
	\begin{equation*}
		(\rho_0,\mu_0,p_0)=\begin{cases}
			(2,0,10^{9}),&-10<x<0,
			\\(10^{-3},0,1),&0<x<10.
		\end{cases}
	\end{equation*}	
	Outflow boundary conditions are applied to all boundaries, and the simulation is conducted until a final time of $T = 0.0001$. The computational results for the OE-HWENO method with the PP limiter are illustrated in Fig.~\ref{sec3:Fig_Leblanc}. Our method demonstrates robust performance with essentially non-oscillatory, high-resolution output for this demanding problem.
	\begin{figure}[!htb]
		\centering 
		\begin{subfigure}{0.32\textwidth}
			{\includegraphics[width=5.25cm,angle=0]{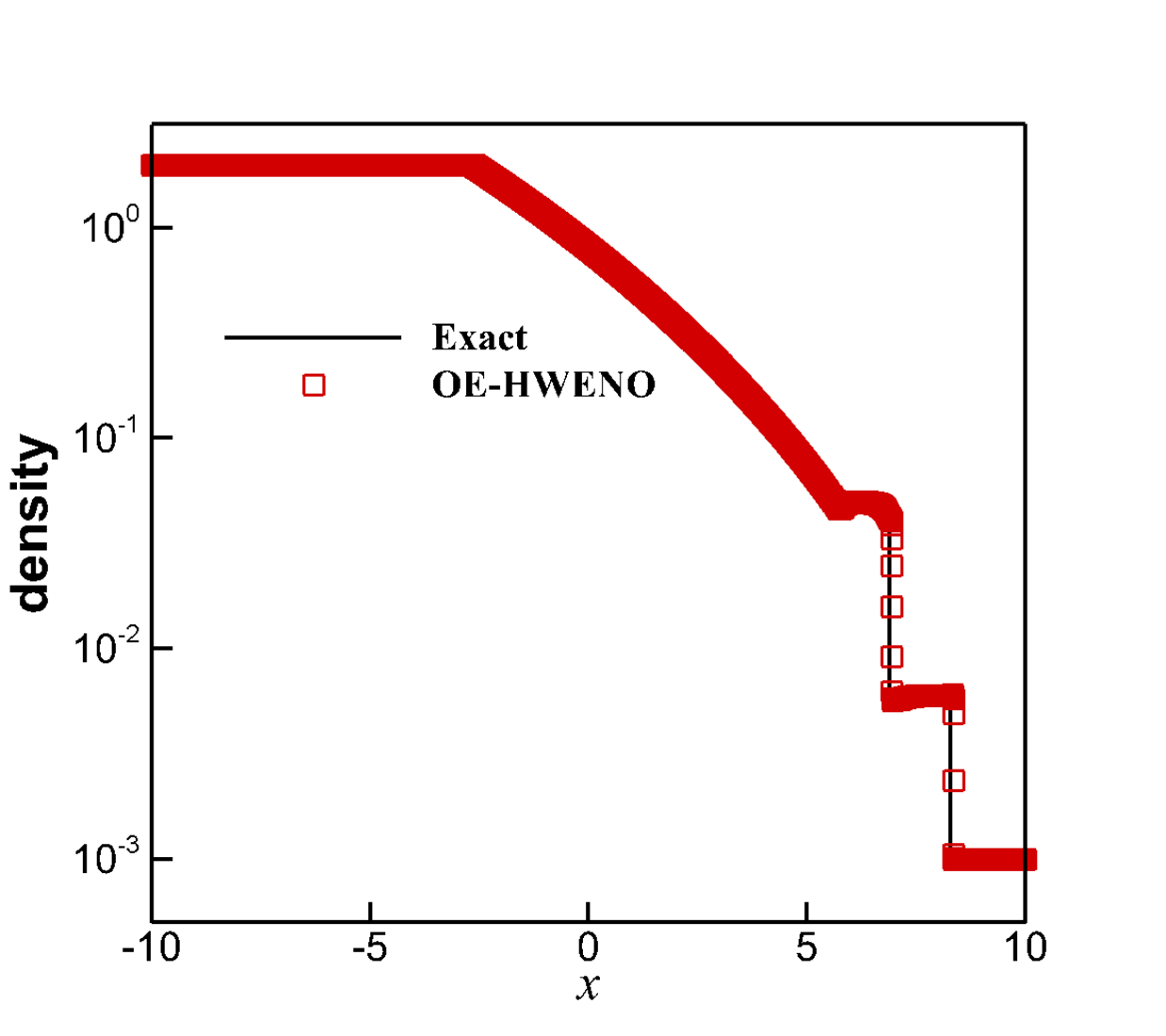}}
		\end{subfigure} 
		\begin{subfigure}{0.32\textwidth}
			{\includegraphics[width=5.25cm,angle=0]{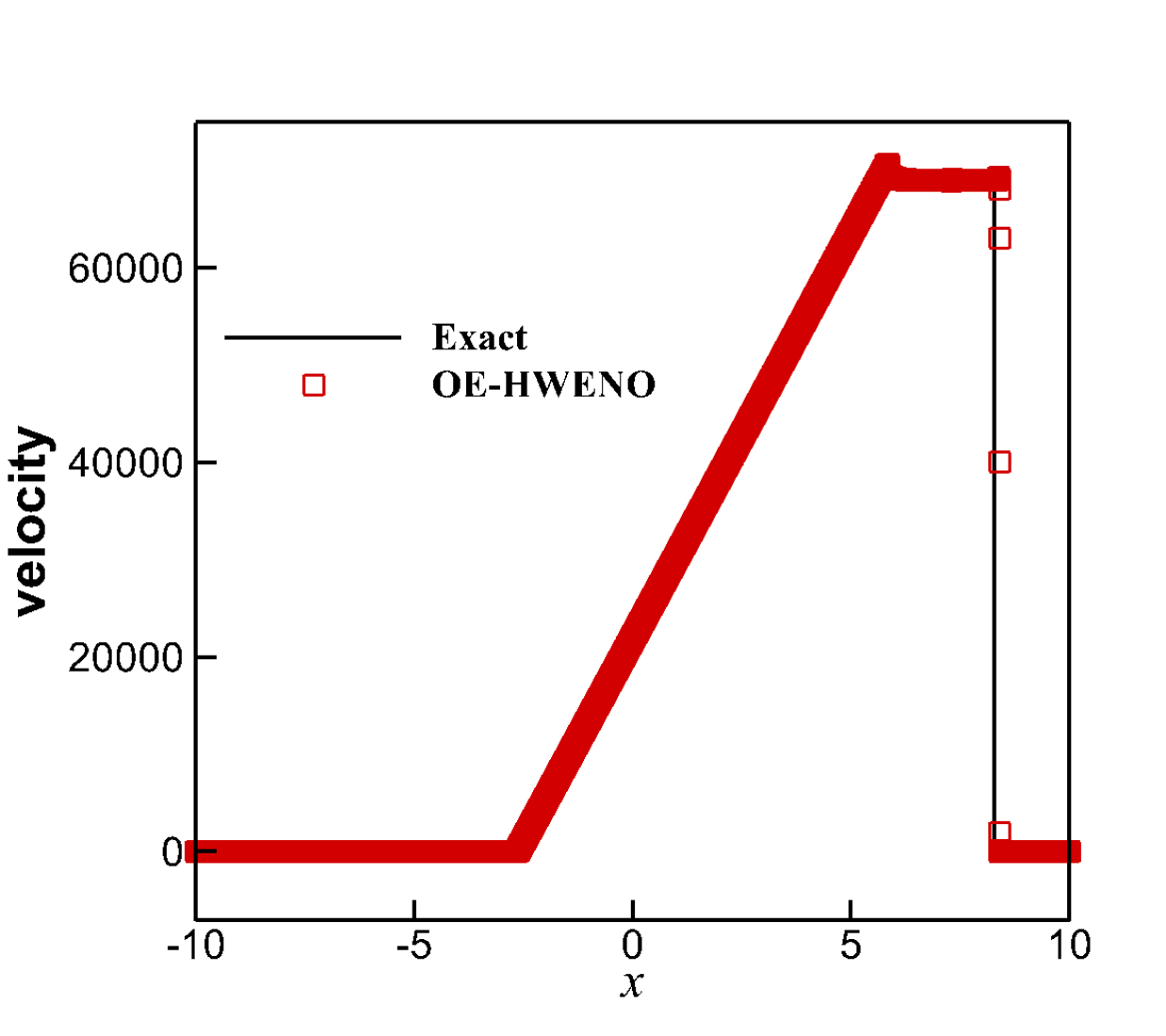}}
		\end{subfigure} 
		\begin{subfigure}{0.32\textwidth}
			{\includegraphics[width=5.25cm,angle=0]{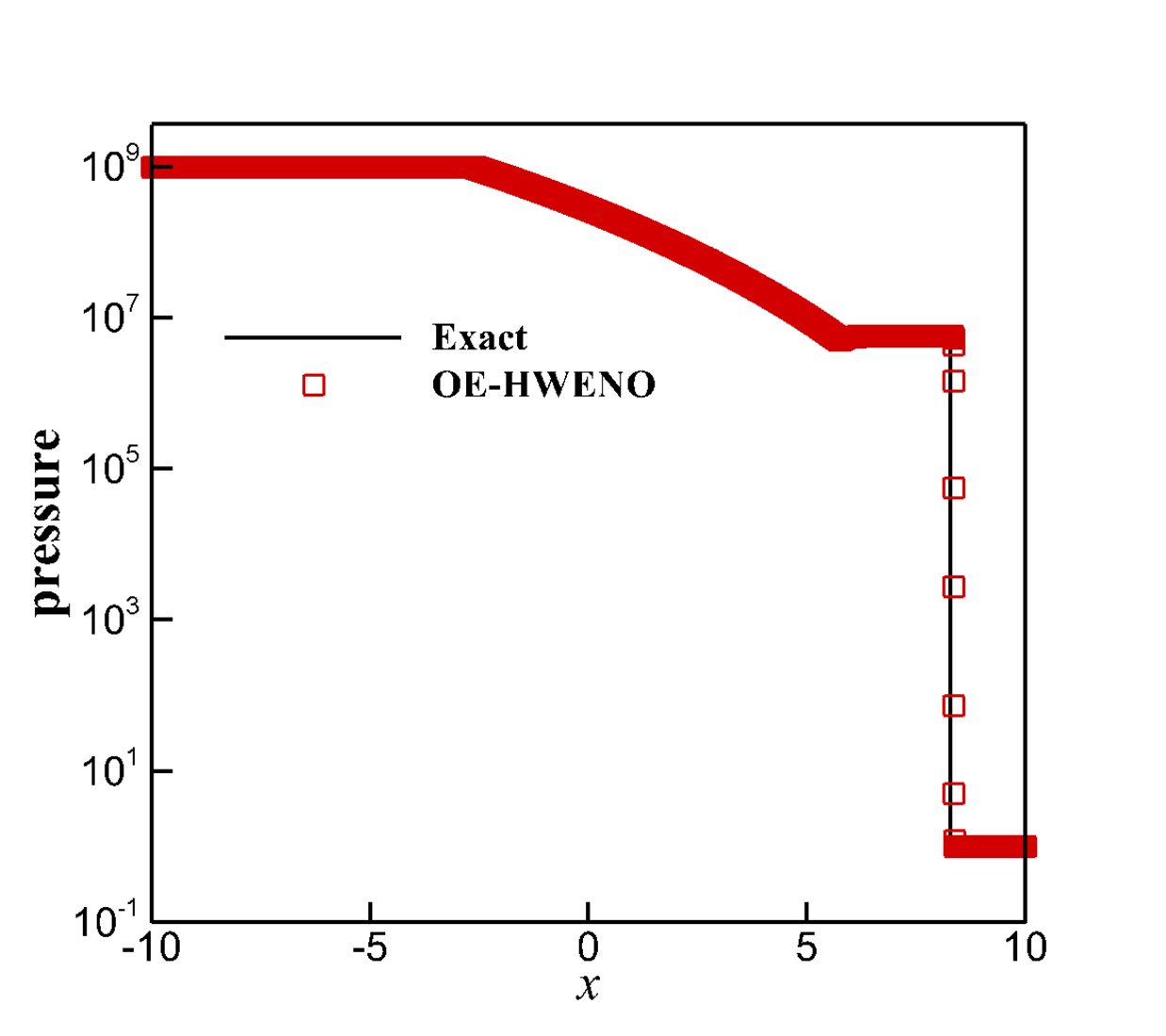}}
		\end{subfigure} 
		\caption{Numerical results of the Leblanc problem obtained  by the OE-HWENO method with $6400$ uniform cells.  
		} \label{sec3:Fig_Leblanc}
	\end{figure}
\end{example}

\begin{example}[2D Riemann problem]\label{Sec3:Example_2dRiemann}  
	\begin{figure}[!htb]
		\centering
		\begin{subfigure}{0.49\textwidth}
			{\includegraphics[width=7.5cm,angle=0]{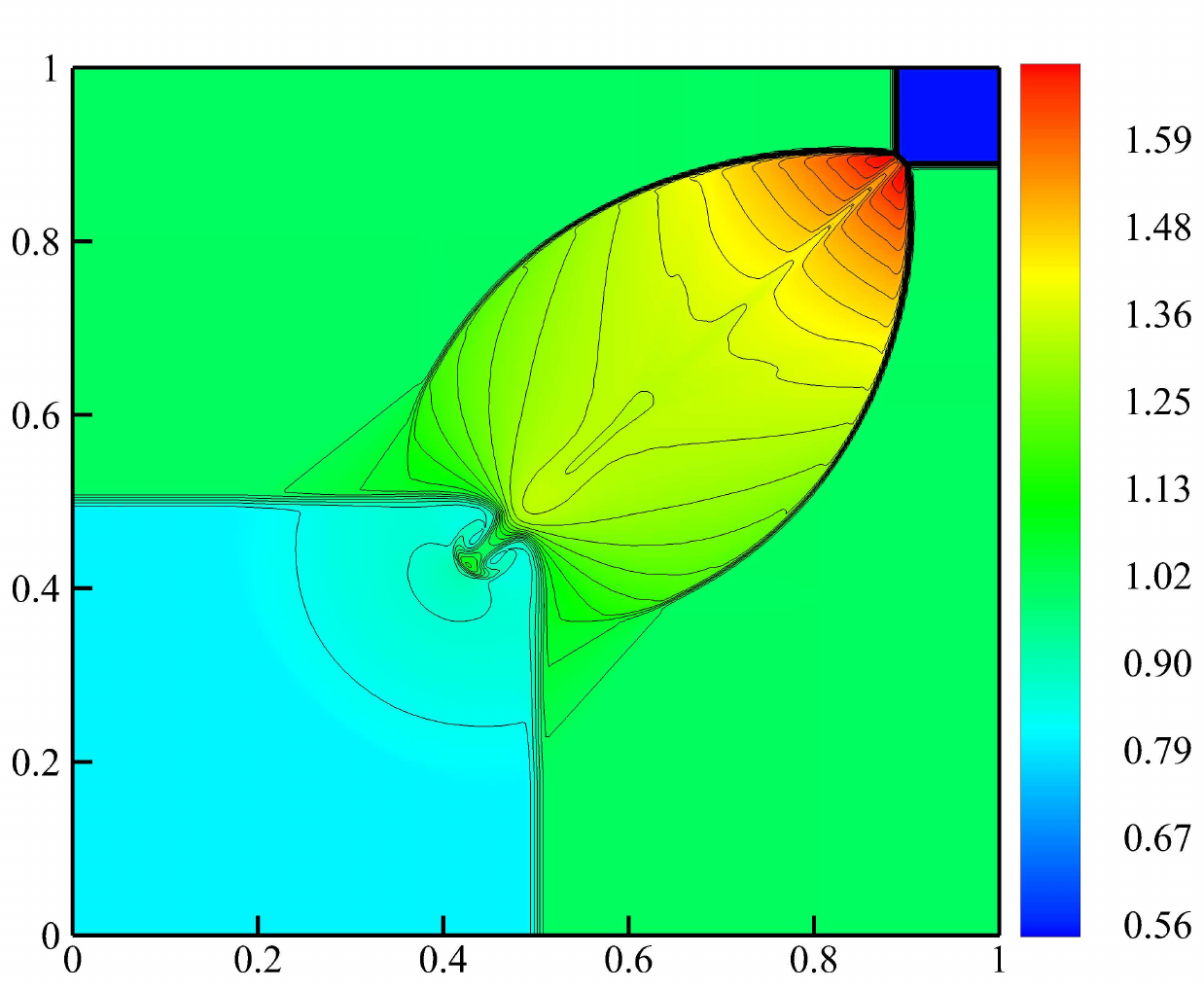}}
			\caption{OE-HWENO. $\lambda=1$}
		\end{subfigure} 
		\begin{subfigure}{0.49\textwidth}
			{\includegraphics[width=7.5cm,angle=0]{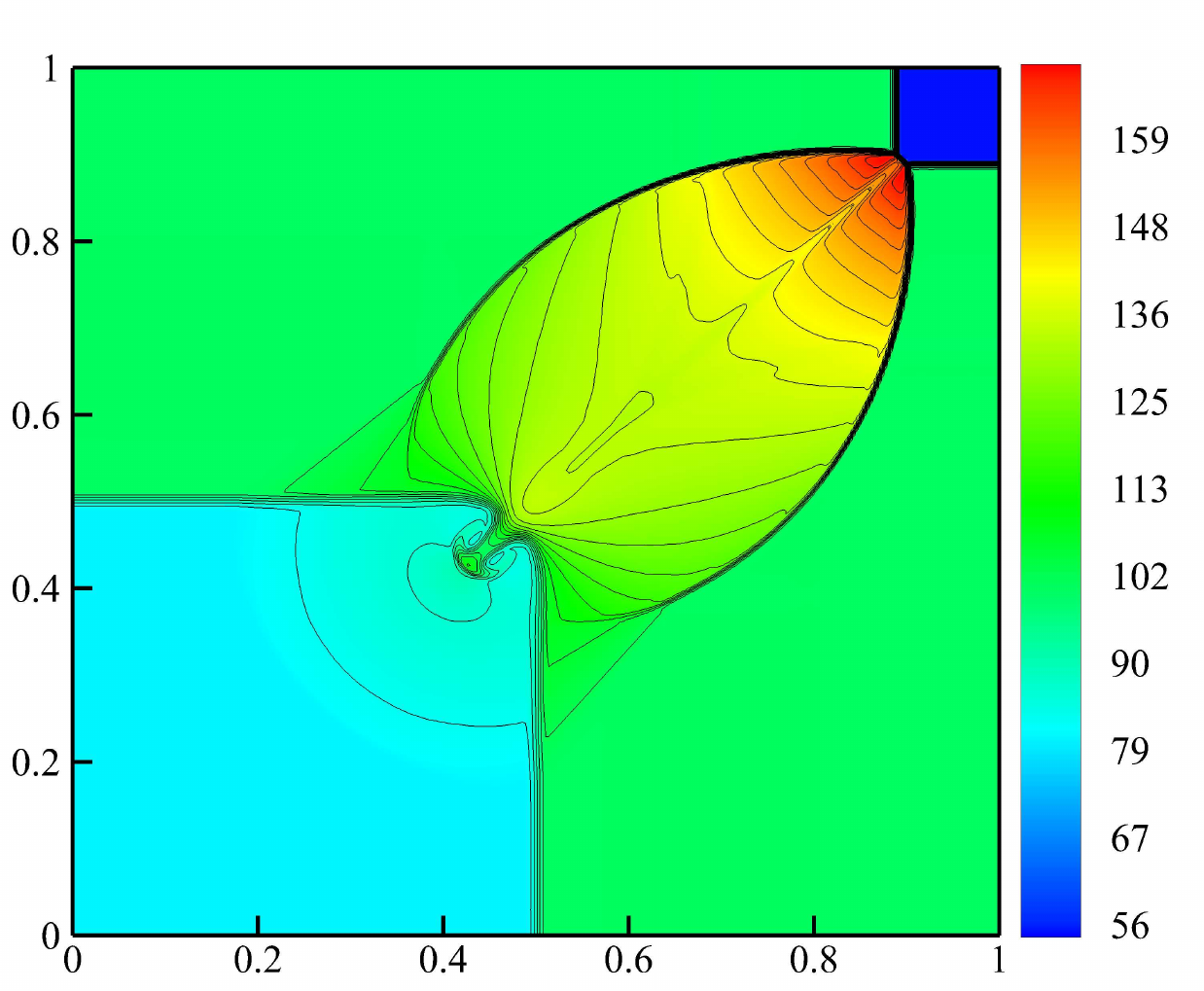}}
			\caption{OE-HWENO. $\lambda=100$}
		\end{subfigure}\vspace{-1pt}
		\begin{subfigure}{0.49\textwidth}
			{\includegraphics[width=7.5cm,angle=0]{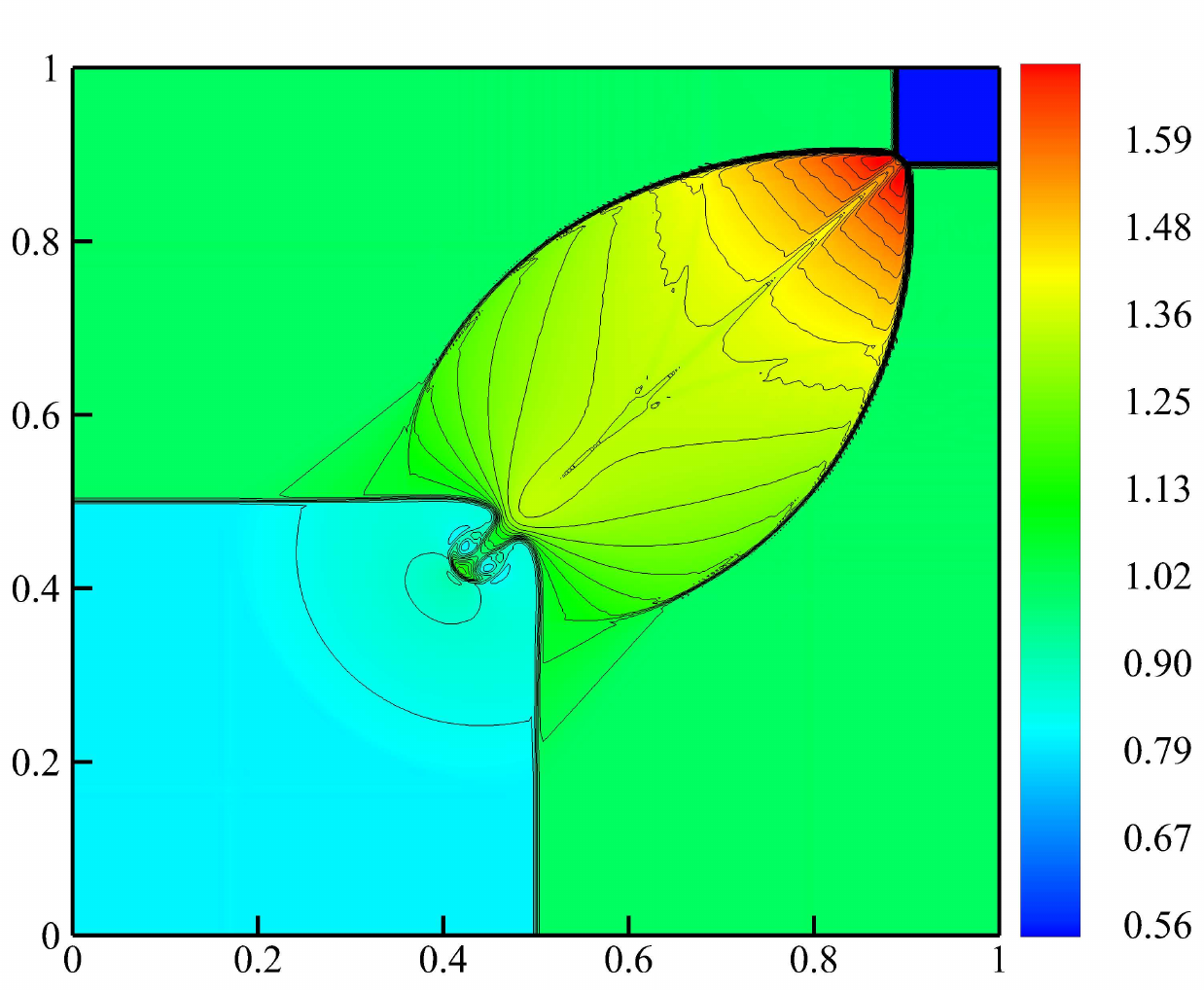}}
			\caption{OF-HWENO. $\lambda=1$}
		\end{subfigure} 
		\begin{subfigure}{0.49\textwidth}
			{\includegraphics[width=7.5cm,angle=0]{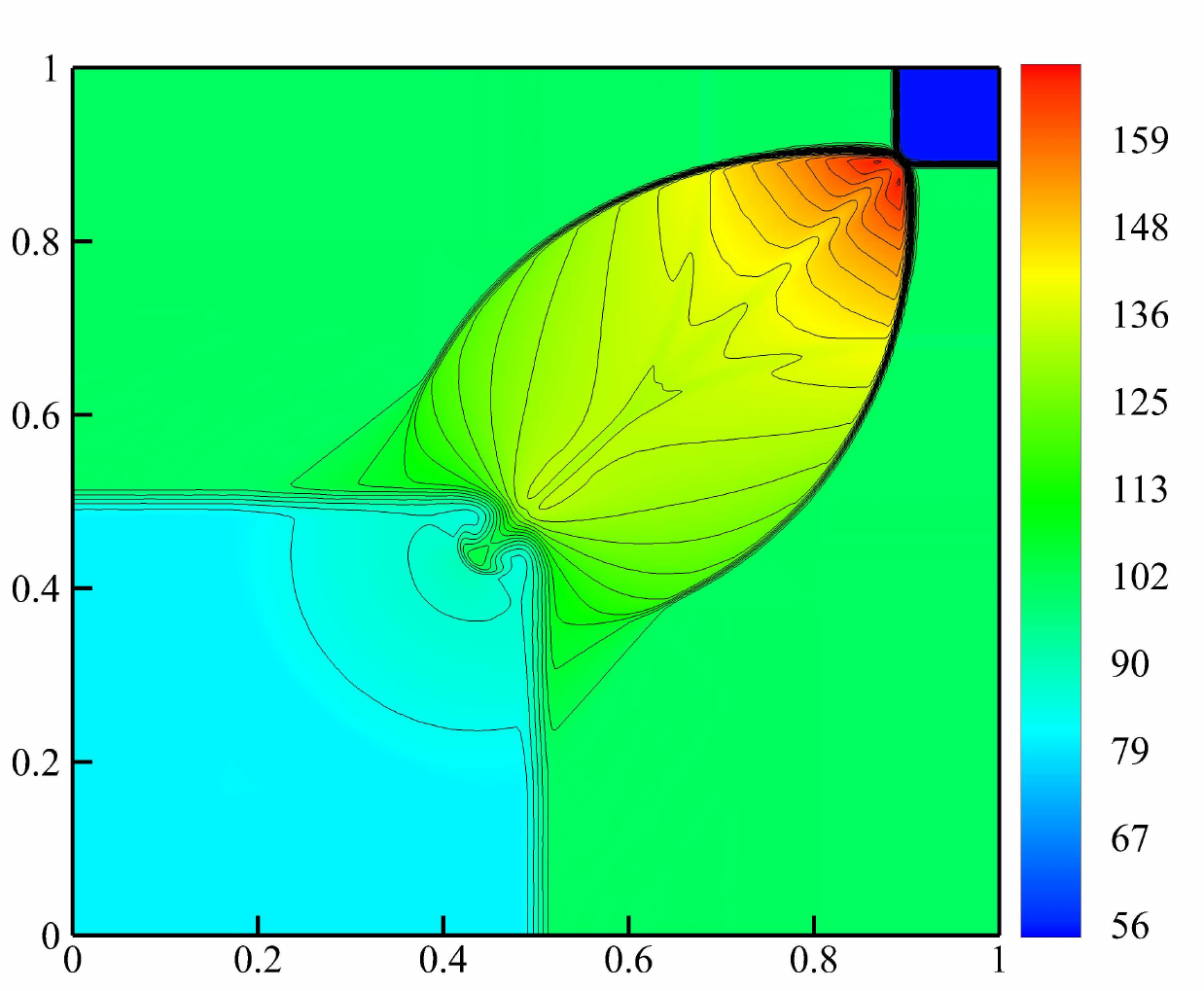}}
			\caption{OF-HWENO. $\lambda=100$}
		\end{subfigure}
		\caption{Contour plots of density computed by the OE-HWENO and OF-HWENO methods with $320\times320$ uniform cells: 30 equally spaced lines from 0.56 to 1.67 (left) and from 56 to 167 (right).  
		}\label{sec3:Fig_Riemann2d}
	\end{figure}
	This test involves two stationary contact discontinuities and two shocks for the 2D compressible Euler equations in the domain $[0,1]^2$. To verify the scale-invariant property of the 2D OE-HWENO method, we take the scaled initial data $\bm{u}^\lambda(x,y,0)=\lambda\bm{u}_0(x,y,0)$, where
	$\bm{u}_0(x,y)=(\rho_0,\rho_0\mu_0,\rho_0\nu_0,E_0)$ is defined by
	\begin{equation*}
		(\rho_0,\mu_0,\nu_0,p_0)=\begin{cases}
			(0.8,0,0,1),&x<0.5,y<0.5,
			\\
			(1,0.7276,0,1),&x<0.5,y>0.5,
			\\
			(1,0,0.7276,1),&x>0.5,y<0.5,
			\\
			(0.5313,0,0,4),&x>0.5,y>0.5,
		\end{cases}
	\end{equation*}	
	Outflow boundary conditions are imposed on all boundaries. The numerical results of density computed by the OE-HWENO and OF-HWENO methods at time $T = 0.25$ are presented in Fig.~\ref{sec3:Fig_Riemann2d} for two distinct scales ($\lambda=1$ and $\lambda=100$). For the case of the normal scale $\lambda=1$, both the OE-HWENO and OF-HWENO methods yield good results, although the OF-HWENO solution produces a few non-physical oscillations near $(0.75,0.65)$. For $\lambda=100$, the OF-HWENO method excessively smears and smooths out the detailed features near discontinuities. In contrast, the OE-HWENO method consistently produces satisfactory results, as it is scale-invariant and agrees with those obtained under the normal scale. This further demonstrates the superiority of the OE-HWENO method.
\end{example}

\begin{example}[Double Mach reflection problem.]\label{Sec3:Example_2dDM}  
	This is a benchmark test \cite{WC} involving strong shocks and their interactions, modeled by the 2D Euler equations. The computational domain is $[0,4]\times[0,1]$, and the initial condition is
	\begin{equation*}
		(\rho_0,\mu_0,\nu_0,p_0)=
		\begin{cases}
			(8,\frac{33}{4}\sin(\frac{\pi}{3}),-\frac{33}{4}\cos(\frac{\pi}{3}),116.5),&x<\frac{1}{6}+\frac{y}{\sqrt{3}},
			\\(1.4,0,0,1),&\mbox{otherwise}.
		\end{cases}
	\end{equation*}
	Inflow and outflow boundary conditions are imposed on the left and right boundaries, respectively. For the bottom boundary, the exact post-shock condition is imposed from $x=0$ to $x=\frac{1}{6}$, and a reflection wall is used for the rest. Regarding the upper boundary, the exact motion of a Mach 10 shock is specified, with the postshock state from $x = 0$ to $x =\frac{1}{6}+\frac{1}{\sqrt{3}}(1+20t)$ and the remaining portion in the preshock state. The resulting density contour plot at the final time $T = 0.2$ is presented in Fig.~\ref{sec3:Fig_DoubleMach} for the OE-HWENO method. The intricate flow characteristics, including the double Mach region, are clearly resolved with high resolution, demonstrating the good performance of the OE-HWENO method.
	\begin{figure}[!htb]
		\centering
		{\includegraphics[width=15.0cm,height=5.0cm,angle=0]{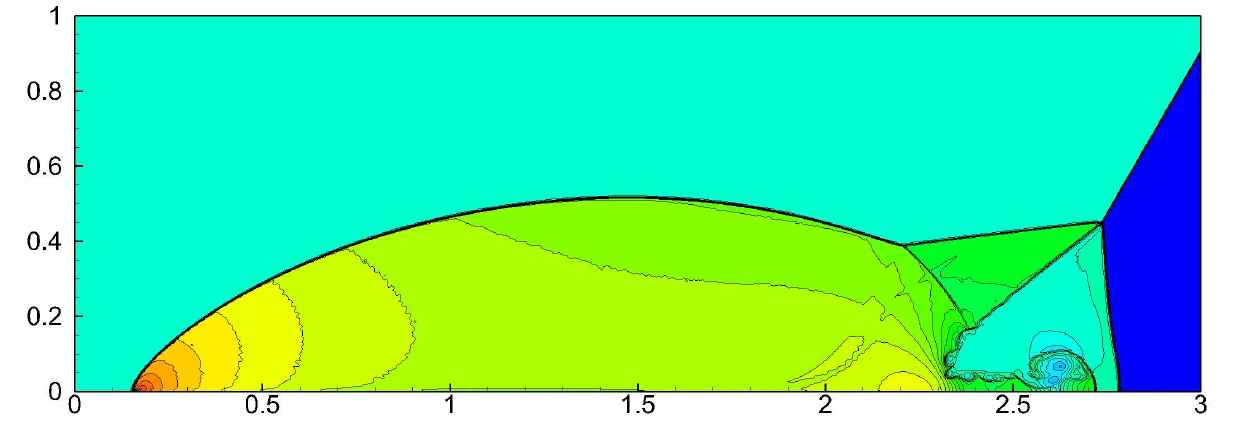}}%
		\caption{Density of double Mach reflection simulated by the OE-HWENO method with $1600\times400$ uniform cells.  
		} 
	\end{figure}\label{sec3:Fig_DoubleMach}
\end{example}

\begin{example}[2D Sedov problem]\label{Sec3:Example_2dSedov}  
	This is a challenging benchmark problem \cite{Kvp,Slt} of the 2D Euler equations. The computational domain is $[0,1.1]^2$, and the initial condition is
	\begin{equation*}
		(\rho_0,\mu_0,\nu_0,E_0)=\begin{cases}
			(1,0,0,\frac{0.244816}{h_xh_y}),&(x,y)\in{[0,h_x]\times[0,h_y]},
			\\(1,0,0,10^{-12}),&\mbox{otherwise}.
		\end{cases}
	\end{equation*}	
	The left and bottom boundaries are both reflective, while outflow conditions are applied on the right and upper boundaries. The final time is $T=1$. Similar to Example \ref{Sec3:Example_1dSedov}, this extreme problem involves extremely low pressure and strong shock, so that the PP limiter is necessary for successful simulation. The computational results are presented in Figure \ref{sec3:Fig_Sedov2D}, which are comparable to the results in \cite{ZS2}. The OE-HWENO method demonstrates good robustness in this demanding test, and the computed solutions are essentially non-oscillatory.
	\begin{figure}[!htb]
		\centering
		\begin{subfigure}{0.32\textwidth}
			{\includegraphics[width=5.0cm,height=5.0cm,angle=0]{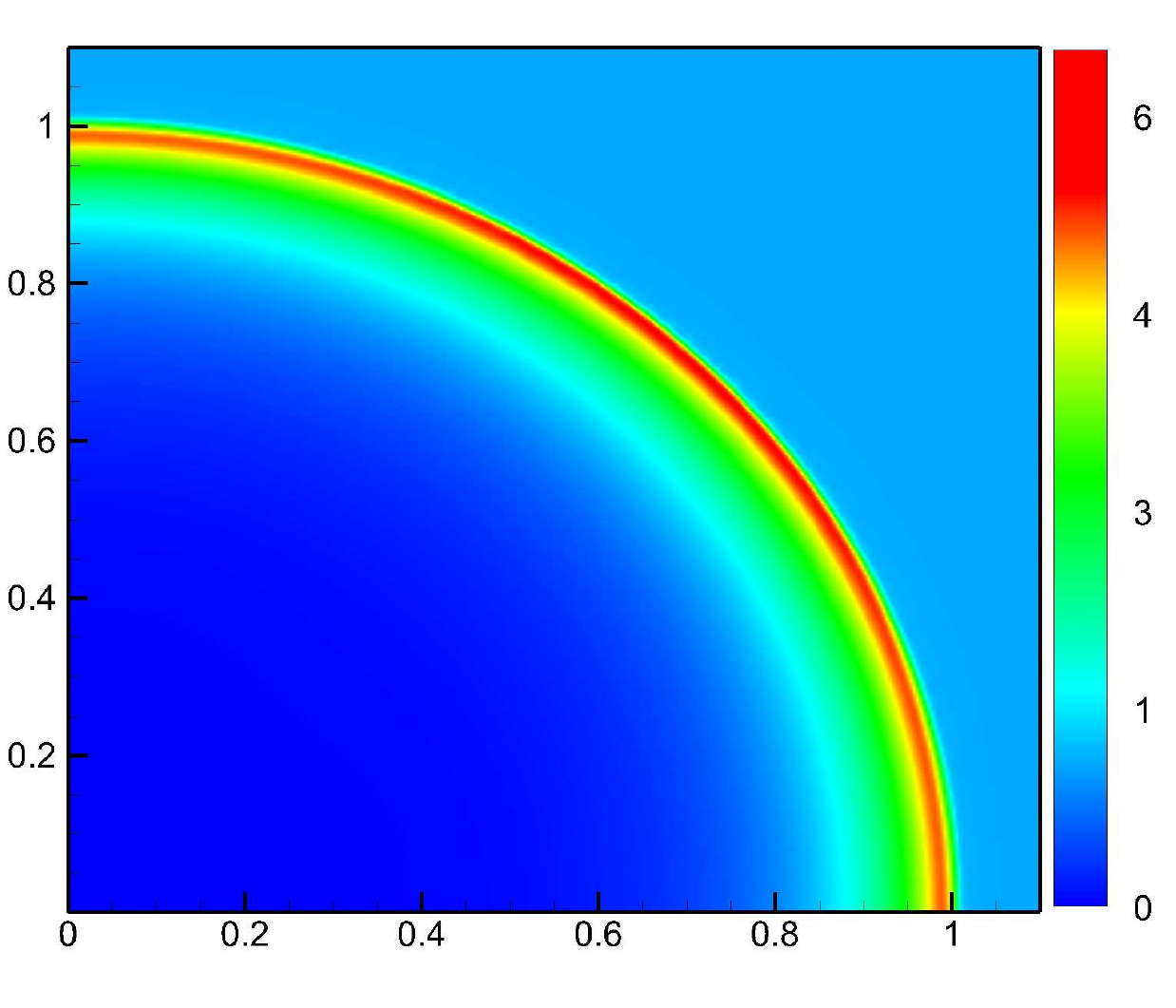}} 
			\caption{ Contour plots of density}
		\end{subfigure} 
		\begin{subfigure}{0.32\textwidth}
			{\includegraphics[width=5.0cm,height=5.0cm,angle=0]{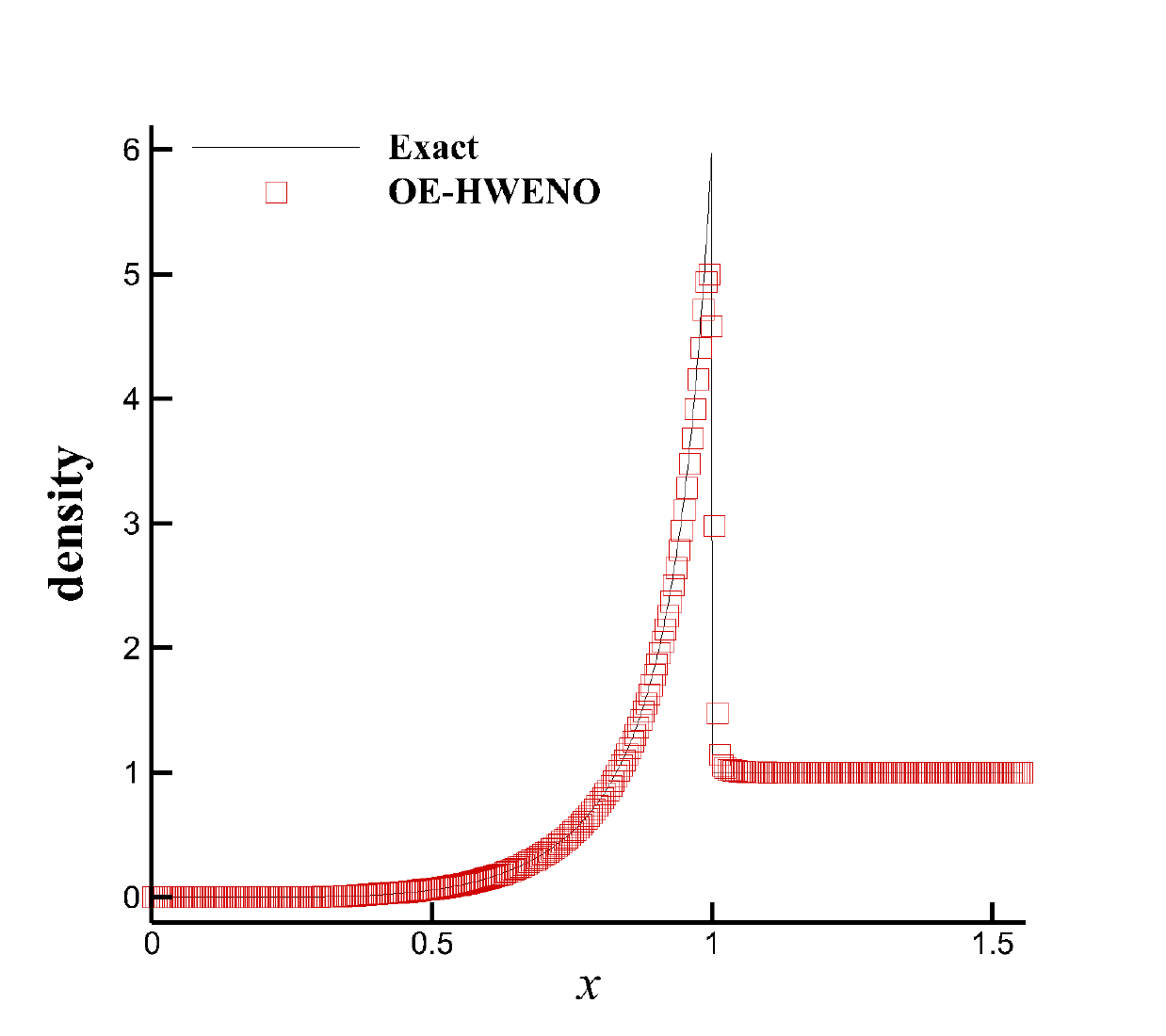}} 
			\caption{Density at $x=y$}
		\end{subfigure} 
		\begin{subfigure}{0.32\textwidth}
			{\includegraphics[width=5.25cm,height=5.25cm,angle=0]{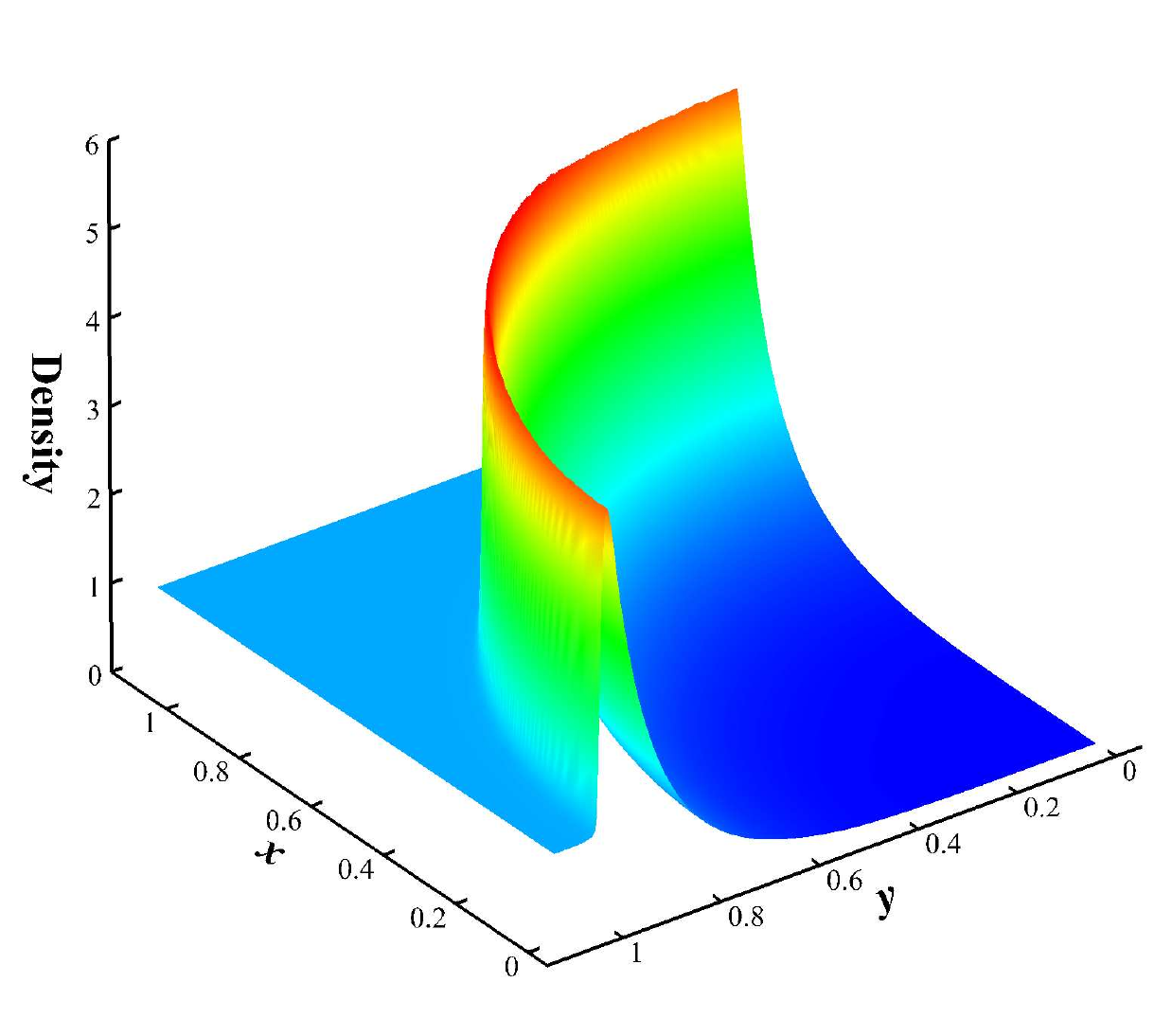}} 
			\caption{Density surface}
		\end{subfigure} 
		\caption{Numerical results of the 2D Sedov problem computed by OE-HWENO method with $320\times320$ uniform cells.  
		}
		\label{sec3:Fig_Sedov2D}
	\end{figure}
\end{example}

\begin{example}[Mach 2000 jet problem]\label{Sec3:Example_2dMach2000}  
	Finally, we further test the robustness of the OE-HWENO method by simulating a challenging Mach 2000 astrophysical jet problem studied in \cite{GD,HG,HGGS}. The computational domain is $[0,1]\times[-0.25,0.25]$. Initially, it is filled with an ambient gas with $(\rho,\mu,\nu,p)=(0.5,0,0,0.4127)$ and $\gamma=\frac{5}{3}$. The left boundary within the range $|y|<-0.05$ is inflow with a high-speed jet state $(\rho,\mu,\nu,p)=(5,800,0,0.4127)$, and all remaining parts are outflow. In Fig.~\ref{sec3:Fig_HM2000}, we present the results computed  by the OE-HWENO and St-HWENO methods with the PP limiter at  $T=0.001$. Notably, without the OE procedure, the St-HWENO method with the PP limiter exhibits obvious non-physical oscillations. In contrast, the OE-HWENO method clearly captures the intricate structures of the jet flow, including the bow shock and shear layer, which are resolved with high resolution and agree with those reported in \cite{LLS2,PSW,ZS2}.
	\begin{figure}[!thb]
		\centering 
		\begin{subfigure}{0.48\textwidth}
			{\includegraphics[width=7.5cm,height=3.75cm,angle=0]{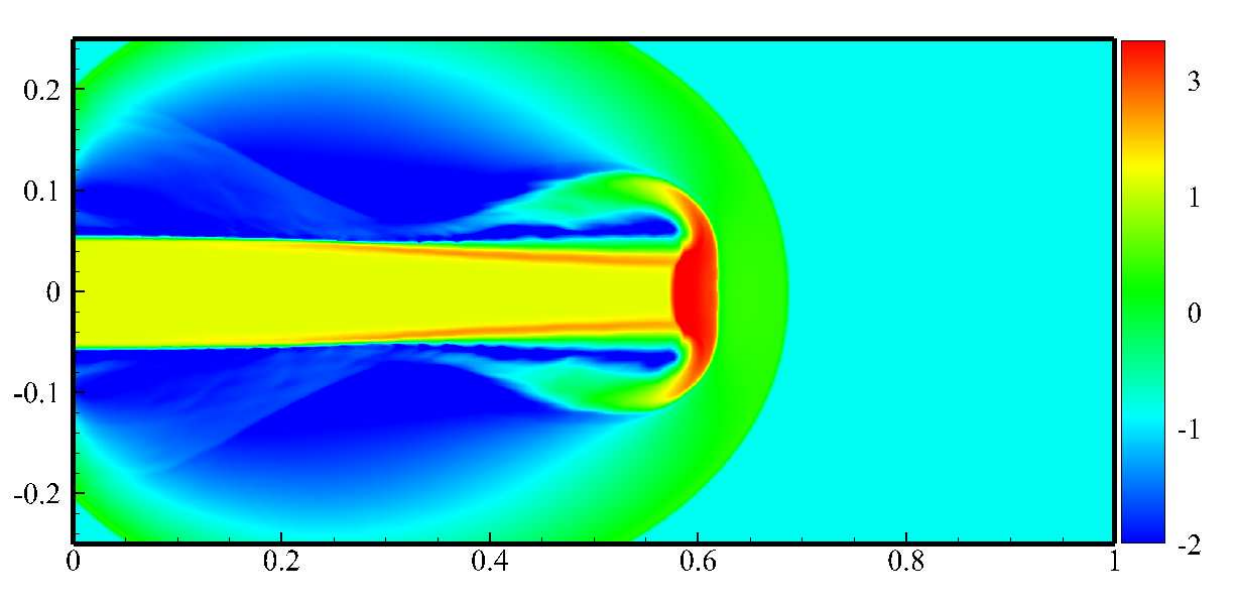}}
		\end{subfigure} 
		\begin{subfigure}{0.48\textwidth}
			{\includegraphics[width=7.5cm,height=3.75cm,angle=0]{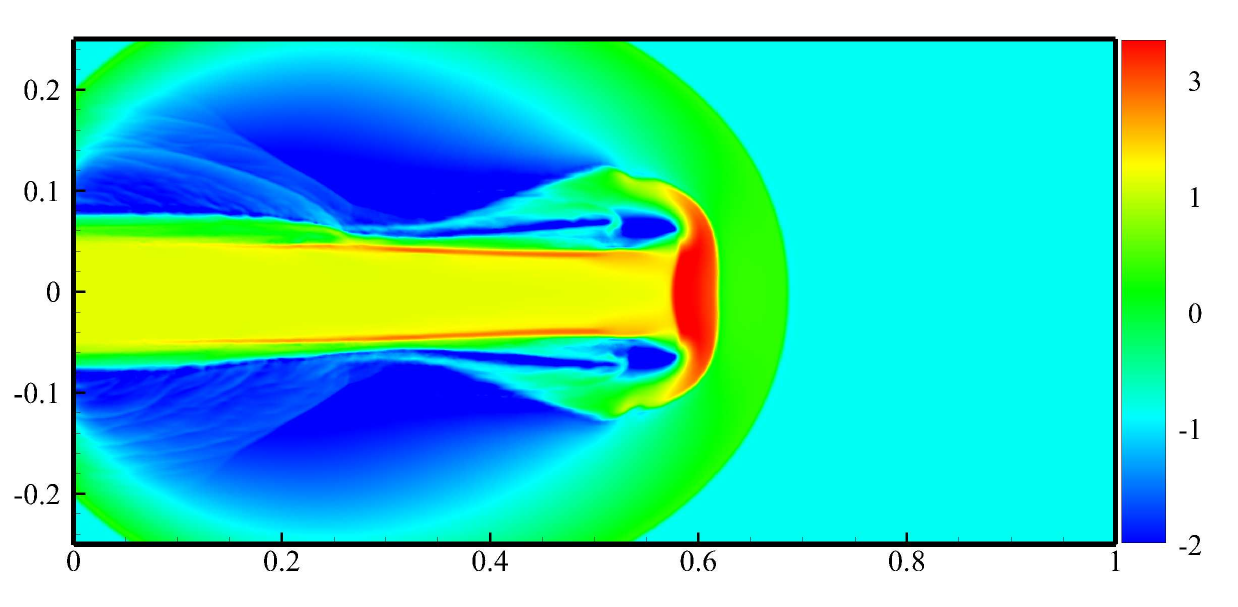}}
		\end{subfigure}
		\begin{subfigure}{0.48\textwidth}
			{\includegraphics[width=7.5cm,height=3.75cm,angle=0]{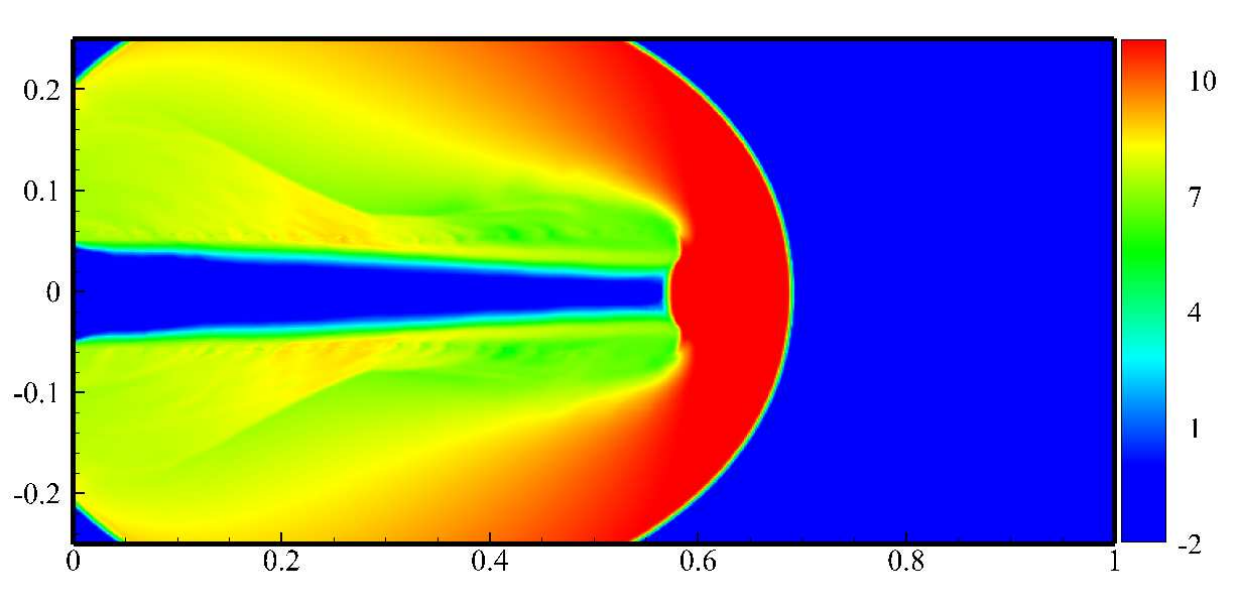}}
		\end{subfigure}
		\begin{subfigure}{0.48\textwidth}
			{\includegraphics[width=7.5cm,height=3.75cm,angle=0]{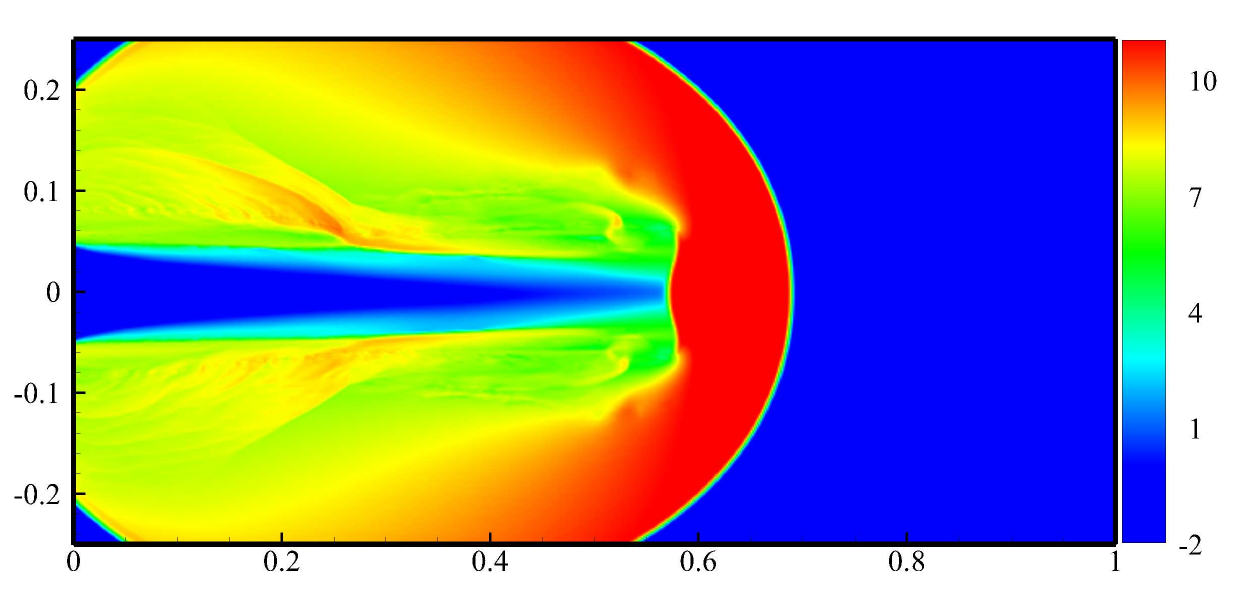}}
		\end{subfigure}
		\begin{subfigure}{0.48\textwidth}
			{\includegraphics[width=7.5cm,height=3.75cm,angle=0]{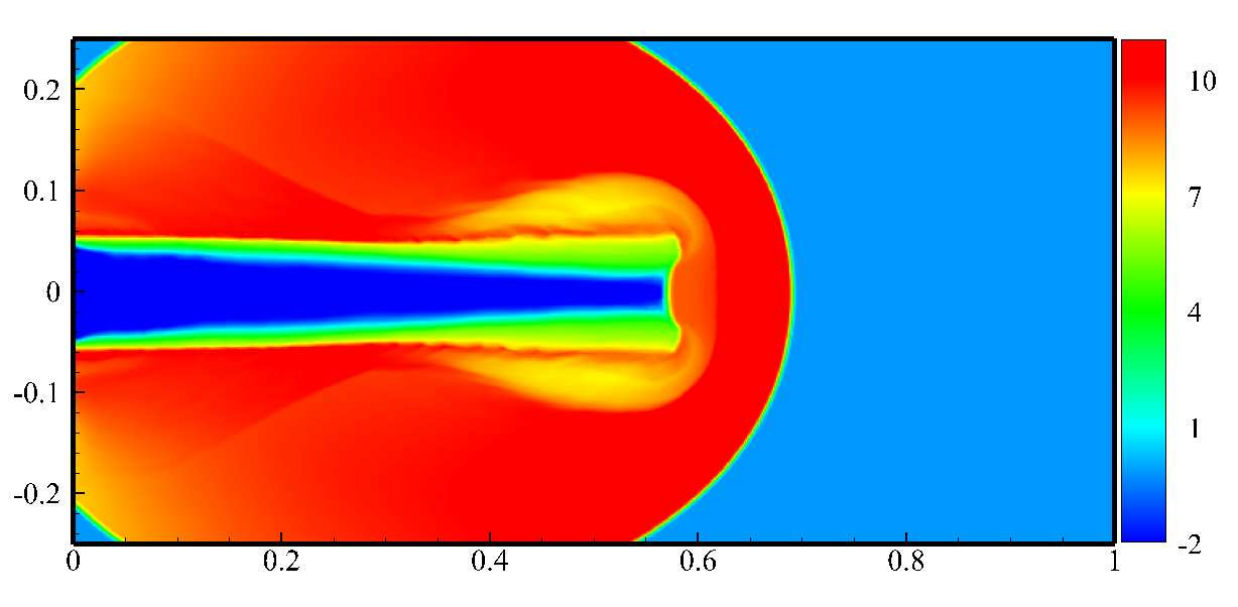}}
			\caption{OE-HWENO}
		\end{subfigure}
		\begin{subfigure}{0.48\textwidth}
			{\includegraphics[width=7.5cm,height=3.75cm,angle=0]{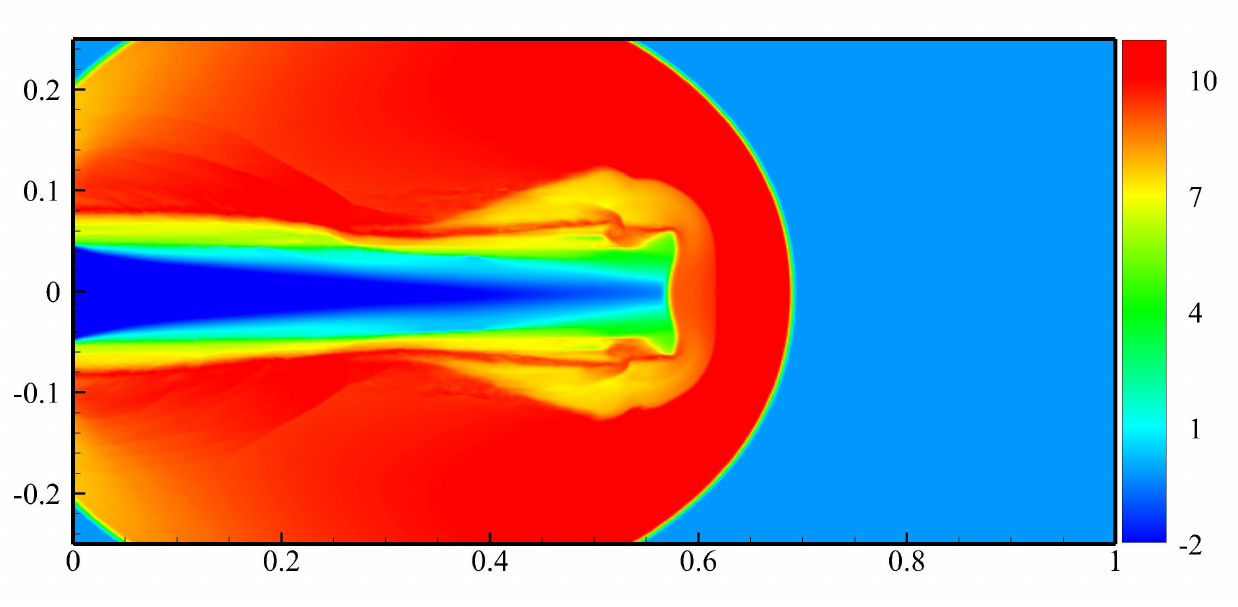}}
			\caption{St-HWENO}
		\end{subfigure}
		\caption{Numerical results of the Mach 2000 jet problem computed by the OE-HWENO and St-HWENO methods with $640\times320$ uniform cells. From top to bottom: the logarithms of density, pressure, and temperature. 
		}
		\label{sec3:Fig_HM2000}
	\end{figure}
\end{example}

\section{Conclusions}\label{sec4}

This paper has designed a high-order, oscillation-eliminating Hermite weighted essentially non-oscillatory (OE-HWENO) finite volume method for solving hyperbolic conservation laws. The OE-HWENO method stands out for its unique and efficient oscillation-eliminating (OE) procedure, which suppresses spurious oscillations in the numerical solution by damping first-order moments through a novel damping equation. This non-intrusive OE procedure is proven to maintain high-order accuracy and possesses scale- and evolution-invariant properties. 
The damping equation is exactly solvable without requiring discretization, ensuring the stability of the OE-HWENO scheme under standard CFL conditions, even in the presence of strong shocks. The efficiency of the OE procedure is significantly facilitated by this exact solver for the damping equation, requiring only a simple multiplication of the first-order moments by a damping factor. Additionally, a generic dimensionless procedure guarantees scale invariance in spatial reconstruction.  
The scale- and evolution-invariant properties of the OE and HWENO operators ensure that the numerical solutions obtained by the OE-HWENO method are oscillation-free across various problems with varying scales and wave speeds, as demonstrated in Examples \ref{sec3:Example_LWR}, \ref{sec3:Example_Lax}, and \ref{Sec3:Example_2dRiemann}, without relying on any problem-dependent parameters. Through the optimal cell average decomposition approach, the bound-preserving property of the OE-HWENO method has been rigorously proven under a milder time step constraint, provided that the HWENO reconstructed values satisfy \eqref{sec2:BP_2d_1} and \eqref{sec2:BP_2d_2}.  
	 This further enhances computational efficiency through relaxed bound-preserving time step constraints and fewer decomposition points.
The OE-HWENO method inherits the desirable features of traditional HWENO schemes, including compact stencils, high-order accuracy, high resolution, and the use of arbitrary linear weights. It also aligns well with the spectral properties of the original HWENO schemes. Extensive benchmark tests validate the robustness, accuracy, and efficiency of the OE-HWENO method across a wide range of problems.

In summary, the OE-HWENO schemes provide a highly effective and efficient numerical approach for solving hyperbolic conservation laws, overcoming challenges related to spurious oscillations and bound preservation while maintaining high-order accuracy. The non-intrusive nature of OE procedure allows seamless integration into existing HWENO codes, making it a practical tool for addressing a variety of challenging problems in computational fluid dynamics.

\section*{Acknowledgment}
This work was partially supported by Shenzhen Science and Technology Program (Grant No. RCJC20221008092757098) and  National Natural Science Foundation of China (Grant No.~12171227).

\begin{appendix}
 
\section{1D HWENO reconstruction}\label{sec:A_1dHWENO}	
 
The 1D HWENO reconstruction for a target cell $I_i$ consists of the following three steps: 

\textbf{Step I.} Construct a quintic polynomial $p_0(x)$, a cubic polynomial $p_1(x)$ and two linear polynomials $\{p_m(x)\}^3_{m=2}$ satisfying
\begin{align} \label{eq:p0}
		&\frac{1}{h_x}\int_{I_\ell}p_0(x)\mathrm{d}x=\bar{{u}}_{\ell},\quad \frac{1}{h_x}\int_{I_\ell}p_0(x)\frac{x-x_{\ell}}{h_x}\mathrm{d}x=\bar{{v}}_{\ell},~\ell=i-1,i,i+1,
		\\ \nonumber
		&\frac{1}{h_x}\int_{I_\ell}p_1(x)\mathrm{d}x=\bar{{u}}_{\ell},~\ell=i-1,i,i+1,\quad \frac{1}{h_x}\int_{I_\ell}p_1(x)\frac{x-x_\ell}{h_x}\mathrm{d}x=\bar{{v}}_{\ell},\ell=i,
		\\ \nonumber
		&\frac{1}{h_x}\int_{I_\ell}p_2(x)\mathrm{d}x=\bar{{u}}_{\ell},~\ell=i-1,i,\quad \frac{1}{h_x}\int_{I_\ell}p_3(x)\mathrm{d}x=\bar{{u}}_{\ell},~\ell=i,i+1.
	\end{align}
Let $p_m(x)=\sum\limits_{n=0}^{r_m} c_{m,n} \phi_n(x)$, where $r_m$ is the degree of $p_m(x)$. The coefficients of the resulting polynomials $\{p_m(x)\}^3_{m=0}$ are listed in Table  \ref{secA:coe1D}.
We can reformulate $p_0(x)$ and $p_1(x)$ as follows: 
\begin{equation*} 
	\left\{ 
	\begin{aligned}
		&{p_0(x)=\gamma^{\text{H}}_0\left(\frac{1}{\gamma^\text{H}_0}p_0(x) - \frac{\gamma^\text{H}_1}{\gamma^\text{H}_0}p_1(x) \right) + \gamma^\text{H}_1p_1(x),
		}
		\\&
		{p_1(x)=\gamma^\text{L}_1\left(\frac{1}{\gamma^\text{L}_1}p_1(x) - \frac{\gamma^\text{L}_2}{\gamma^\text{L}_1}p_2(x) - \frac{\gamma^\text{L}_3}{\gamma^\text{L}_1}p_3(x)\right) + \gamma^\text{L}_2p_2(x)  + \gamma^\text{L}_3p_3(x), 
		}
	\end{aligned}
	\right. 
\end{equation*}
where $\{\gamma^\text{H}_m\}^1_{m=0}$ and $\{\gamma^\text{L}_m\}^3_{m=1}$ are positive with $\sum^{1}_{m=0}\gamma^\text{H}_m=1$ and $\sum^{3}_{m=1}\gamma^\text{L}_m=1$.

\textbf{Step II.} Compute the smoothness indicators $\{\beta_m\}^3_{m=0}$ of $\{p_m(x)\}^3_{m=0}$ in $I_i$ by
\begin{equation*} 
	\beta_m=\sum\limits_{l=1}^{r_m} \int_{I_i}h_x^{2l-1} \left(\frac{{\rm d}^l p_m(x)}{{\rm d} x^l} \right)^2 \mathrm{d}x,\quad m=0,\dots,3,
\end{equation*}
where $r_m$ is the degree of $p_m(x)$. The explicit expressions of the smoothness indicators can be calculated as
\begin{equation*} 
	\left \{
	\begin{aligned}
		\beta_0=&\frac12(c_{0,1} + \frac15c_{0,3})^2 + \frac12(c_{0,1} + \frac{1}{63}c_{0,5})^2 + \frac{13}{3}(c_{0,2} + \frac{123}{455}c_{0,4})^2 + \frac{976}{25}(c_{0,3} + \frac{7235}{13664}c_{0,5})^2 \\&+ \frac{1421461}{2275}c_{0,4}^2 + \frac{242038614799}{15494976}c_{0,5}^2,
		\\
		\beta_1=&(c_{1,1} + \frac{1}{10}c_{1,3})^2 + \frac{781}{20}c_{1,3}^2 + \frac{13}{3}c_{1,2}^2, \\
		\beta_m=&c_{m,1}^2,m=2,3,\\
	\end{aligned}
	\right.
\end{equation*} 
where the coefficients $\{c_{m,n}\}$
are listed in Table \ref{secA:coe1D}.

\textbf{Step III.} Compute the nonlinear weights by
\begin{equation}\label{sec:1d_non_weights}
	\left\{ 
	\begin{aligned}
		&\omega^\text{H}_\ell= \frac{\bar{\omega}^\text{H}_\ell}{\bar{\omega}^\text{H}_0+\bar{\omega}^\text{H}_1}, \quad \mbox{with}~~ \bar{\omega}^\text{H}_\ell={\gamma}^\text{H}_\ell(1+\frac{\tau_0}{\beta_\ell+\varepsilon}),~ \ell=0,1,
		\\&                                                                   
		\omega^\text{L}_\ell= \frac{\bar{\omega}^\text{L}_\ell}{\bar{\omega}^\text{L}_1+\bar{\omega}^\text{L}_2+\bar{\omega}^\text{L}_3}, \quad \mbox{with}~~ \bar{\omega}^\text{L}_\ell={\gamma}^\text{L}_\ell(1+\frac{\tau_1}{\beta_\ell+\varepsilon}),~ \ell=1,2,3.
	\end{aligned}
	\right. 
\end{equation}
where $\tau_0=(\beta_0-\beta_1)^2$ measures the difference between $\beta_0$ and $\beta_1$; $\tau_1=(\frac{|\beta_1-\beta_2|+|\beta_1-\beta_3|}{2})^2$ measures the difference between  $\beta_1$, $\beta_2$ and $\beta_3$. Finally, we obtain the piecewise polynomial solution $u_h(x,t)$ in $I_i$ by a nonlinear HWENO reconstruction:
\begin{equation*} 
	\begin{aligned}
		&{u_h(x,t)=\omega^\text{H}_0\bigg(\frac{1}{\gamma^\text{H}_0}p_0(x) - \frac{\gamma^\text{H}_1}{\gamma^\text{H}_0}\tilde{q}_1(x) \bigg) + \omega^\text{H}_1\tilde{q}_1(x),} 
	\end{aligned} 
\end{equation*}
where $\tilde{q}_1(x)=\omega^\text{L}_1\big(\frac{1}{\gamma^\text{L}_1}p_1(x) - \frac{\gamma^\text{L}_2}{\gamma^\text{L}_1}p_2(x) - \frac{\gamma^\text{L}_3}{\gamma^\text{L}_1}p_3(x)\big) + \omega^\text{L}_2p_2(x)  + \omega^\text{L}_3p_3(x)$. 

\begin{table}[!htb]
	\centering
	\fontsize{10.}{13}\selectfont
	\begin{threeparttable}
		\caption{The coefficients in \eqref{sec:1d_non_weights}. } 
		\begin{tabular}{ccccccccccccc}
			\toprule
			{Coefficients} & $m=0$&$m=1$&$m=2$&$m=3$ \cr
			\midrule 
			$c_{m,0}$	&$\bar{u}_{{i}}$ &$\bar{u}_{{i}}$  &$\bar{u}_{{i}}$  &$\bar{u}_{{i}}$   \\
			$c_{m,1}$	&$12\bar{v}_{{i}}$  &$12\bar{v}_{{i}}$  &$\bar{u}_{i}-\bar{u}_{i-1}$  &$\bar{u}_{i+1}-\bar{u}_{i}$   \\
			$c_{m,2}$	&${\frac {73}{56}}( \bar{u}_{{i-1}}-2\bar{u}_{{i}}+\bar{u}_{{i+1}}){+\frac{135}{28}(\bar{{v}}_{i-1}-\bar{{v}}_{i+1})}$ &$\frac12( \bar{u}_{{i-1}}-2\bar{u}_{{i}}+\bar{u}_{{i+1}})$  &  &   \\
			$c_{m,3}$	&${\frac {595}{324}}(\bar{u}_{{i+1}}-\bar{u}_{{i-1}})-{\frac {5}{81}}(517\bar{v}_{{i}}+197(\bar{v}_{i-1}+{\bar{v}_{i+1}}))$  &${\frac{5}{11}} (\bar{u}_{{i+1}}-\bar{u}_{{i-1}})-{\frac {120}{11}}\bar{v}_{{i}}$  &  &   \\
			$c_{m,4}$	&$-\frac{5}{8}(\bar{u}_{i-1}-2\bar{u}_{i}+\bar{u}_{i+1}){-\frac{15}{4}(\bar{{v}}_{i-1}-\bar{{v}}_{i+1})}$  &  &  &   \\
			$c_{m,5}$	&${\frac {35}{36}}(\bar{u}_{{i-1}}-\bar{u}_{{i+1}})+{\frac {1}{9}}(133v_{{i}}+77(v_{i-1}{+v_{i+1}}))$  &  &  &   \\			 
			\bottomrule
		\end{tabular}\label{secA:coe1D}	
	\end{threeparttable}
\end{table}

\begin{figure}[!htb]
	\centering
	\tikzset{global scale/.style={scale=#1,every node/.append style={scale=#1}}}
	\centering
	\begin{tikzpicture}[global scale = 1]
		\draw(0,0)rectangle+(1.8,1.8);\draw(1.8,0)rectangle+(1.8,1.8);\draw(3.6,0)rectangle+(1.8,1.8);
		\draw(0,1.8)rectangle+(1.8,1.8);\draw(1.8,1.8)rectangle+(1.8,1.8);\draw(3.6,1.8)rectangle+(1.8,1.8);
		\draw(0,3.6)rectangle+(1.8,1.8);\draw(1.8,3.6)rectangle+(1.8,1.8);\draw(3.6,3.6)rectangle+(1.8,1.8);
		\draw(0.9,0.9)node{1};
		\draw(2.7,0.9)node{2};
		\draw(4.5,0.9)node{3};
		\draw(0.9,2.7)node{4};
		\draw(2.7,2.7)node{5};
		\draw(4.5,2.7)node{6};
		\draw(0.9,4.5)node{7};
		\draw(2.7,4.5)node{8};
		\draw(4.5,4.5)node{9};
		\draw(0.9,-0.25)node{$~~ i-1$};
		\draw(2.7,-0.25)node{$~i$};
		\draw(4.5,-0.25)node{$~~i+1$};
		\draw(5.8,0.9)node{$~~~j-1$};
		\draw(5.8,2.7)node{$~~~j$};
		\draw(5.8,4.5)node{$~~~j+1$};
	\end{tikzpicture}
	\caption{Stencil for the HWENO reconstruction with the respective label.}
	\label{sec2:stenciL2d}
\end{figure}

\section{2D HWENO reconstruction}\label{sec:A_2dHWENO}

For convenience, under the serial numbers in Fig.~\ref{sec2:stenciL2d}, we relabel the cell $I_{i,j}$ and its adjacent cells as $I_1,...,I_9$, e.g., $I_{i,j}\triangleq I_5$. Let $\{\bar{{u}}_k,\bar{{v}}_k,\bar{{w}}_k\}$ denote the zeroth- and first-order moments of the cell $I_k$, e.g., $\{\bar{{u}}_{i,j}\triangleq \bar{{u}}_5, \bar{{v}}_{i,j}\triangleq \bar{{v}}_5,  \bar{{w}}_{i,j}\triangleq \bar{{w}}_5 \}$.   
The 2D HWENO reconstruction for the target cell $I_{i,j}$ consists of the following steps:

\textbf{Step I.} Construct a quintic polynomial $p_0(x,y)$, a cubic polynomial $p_1(x,y)$ and four linear polynomials $\{p_m(x,y)\}^5_{m=2}$ satisfying 
\begin{equation}\label{sec:p0(x,y)}
		\left\{
	\begin{aligned}
		&\frac{1}{h_xh_y}\int_{I_k}p_0(x,y) \mathrm{d}x\mathrm{d}y=\bar{{u}}_k,~k=1,...,9,\\
		&\frac{1}{h_xh_y}\int_{I_{k}}p_0(x,y)\frac{x-x_{k}}{h_x}\mathrm{d}x\mathrm{d}y=\bar{{v}}_{k},~k={1,3,4,5,6,7,9},\\
		&\frac{1}{h_xh_y}\int_{I_{k}}p_0(x,y)\frac{y-y_{k}}{h_y}\mathrm{d}x\mathrm{d}y=\bar{{w}}_{k}, ~k={1,2,3,5,7,8,9},
	\end{aligned}
		\right.
\end{equation}
\begin{equation*} 
		\left\{
	\begin{aligned}
		&\frac{1}{h_xh_y}\int_{I_k}p_1(x,y) \mathrm{d}x\mathrm{d}y=\bar{{u}}_k,~k=1,...,9,\\
		&\frac{1}{h_xh_y}\int_{I_{k}}p_1(x,y)\frac{x-x_{k}}{h_x}\mathrm{d}x\mathrm{d}y={\bar{{v}}_{k},k=5,}\\
		&\frac{1}{h_xh_y}\int_{I_{k}}p_1(x,y)\frac{y-y_{k}}{h_y}\mathrm{d}x\mathrm{d}y={\bar{{w}}_{k},k=5,}
	\end{aligned}
		\right.
\end{equation*}
\begin{equation*} 
	\left\{  
	\begin{aligned}
		&\frac{1}{h_xh_y}\int_{I_k}p_2(x,y) \mathrm{d}x\mathrm{d}y=\bar{{u}}_k,~k=2,4,5, 
		\frac{1}{h_xh_y}\int_{I_k}p_3(x,y) \mathrm{d}x\mathrm{d}y=\bar{{u}}_k,~k=2,5,6,
		\\&
		\frac{1}{h_xh_y}\int_{I_k}p_4(x,y) \mathrm{d}x\mathrm{d}y=\bar{{u}}_k,~k=4,5,8, 
		\frac{1}{h_xh_y}\int_{I_k}p_5(x,y) \mathrm{d}x\mathrm{d}y=\bar{{u}}_k,~k=5,6,8. 
	\end{aligned} 
\right. 
\end{equation*}
The quintic polynomial $p_0(x,y)$ and cubic polynomial $p_1(x,y)$ can be uniquely determined by requiring them to exactly match $\bar{{u}}_5$ using the least squares method \cite{HS,ZQ1}, while the four polynomials $\{p_m(x,y)\}^5_{m=2}$ can be directly obtained by solving $3\times3$ linear systems.
We can reformulate $p_0(x,y)$ and $p_1(x,y)$ as follows:
\begin{equation*}
	\left\{ 
	\begin{aligned}
		&{p_0(x,y)=\gamma^\text{H}_0\left(\frac{1}{\gamma^\text{H}_0}p_0(x,y) - \frac{\gamma^\text{H}_1}{\gamma^\text{H}_0}p_1(x,y) \right) + \gamma^\text{H}_1p_1(x,y), 
		}
		\\&
		{p_1(x,y)=\gamma^\text{L}_1\left(\frac{1}{\gamma^\text{L}_1}p_1(x,y) - \sum\limits_{m=2}^5 \frac{\gamma^\text{L}_m}{\gamma^\text{L}_1}p_m(x,y)\right) + \sum\limits_{m=2}^5\gamma^\text{L}_m p_m(x,y),
		}
	\end{aligned}
	\right. 
\end{equation*}
where $\{\gamma^\text{H}_m\}^1_{m=0}$ and $\{\gamma^\text{L}_m\}^5_{m=2}$ are arbitrary positive linear weights satisfying $\sum_{m=0}^{1}\gamma^\text{H}_m=1$ and $\sum_{m=1}^{5}\gamma^\text{L}_m=1$. 

\textbf{Step II.} Compute the smoothness indicators $\{\beta_m\}^5_{m=0}$ of polynomials $\{p_m(x,y)\}^5_{m=0}$ in $I_{i,j}$ by
\begin{equation*} 
	\beta_m= \sum_{| {\bm \ell} |=1}^{r_{m}} |I_{i,j}|^{| {\bm \ell} |-1} \int_{I_{i,j}}\left( \frac {\partial^{|{\bm \ell}|}}{\partial x^{\ell_1}\partial y^{\ell_2}}p_m(x,y)\right)^2 \mathrm{d}x\mathrm{d}y, \quad m=0,...,4,
\end{equation*}
where ${\bm \ell}=(\ell_1,\ell_2)$, $|{\bm \ell}|=\ell_1+\ell_2$, and $r_m$ is the degree of polynomial $p_m(x, y)$.
The explicit expressions of the smoothness indicators are
\begin{align*} 
	\begin{aligned} 
		\beta_0&=
		\frac12(c_{0,1}+\frac15c_{0,6} ) ^{2}+
		\frac12(c_{0,1}+{\frac{1}{63}c_{0,15}} )^{2}+ 
		\frac12(c_{0,2}+\frac15c_{0,9} ) ^{2}+ 
		\frac12(c_{0,2}+\frac{1}{63}c_{0,20})^{2}\\&+ 
		\frac{13}{3}(c_{0,3}+\frac{123}{455}c_{0,10})^{2}+
		\frac{7}{12}(c_{0,4}+{\frac{13}{70}c_{0,13}})^{2}+
		\frac{7}{12}(c_{0,4}+{\frac{13}{70}c_{0,11}})^{2}+
		\frac{13}{3}(c_{0,5}+{\frac{123}{455}c_{0,14}})^{2}\\&+ 
		\frac{976}{25}(c_{0,6}+{\frac{7235}{13664}c_{0,15}})^{2}+
		\frac{47}{20}(c_{0,7}+{\frac{781}{4230}c_{0,18}})^{2}+
		\frac{47}{20}(c_{0,7}+{\frac{533}{987}c_{0,16}})^{2}+
		\frac{47}{20}(c_{0,8}+{\frac{781}{4230}c_{0,17}})^{2}\\&+
		\frac{47}{20}(c_{0,8}+{\frac{533}{987}c_{0,19}})^{2}+
		\frac{976}{25}(c_{0,9}+{\frac{7235}{13664}c_{0,20}})^{2}+	
		\frac{1421461}{2275}c_{0,10}^{2}+	
		\frac{4441}{105}(c_{0,11}+{\frac {21}{88820}c_{0,13}}) ^{2}\\&+
		\frac{116856056}{172725}(c_{0,16}+{\frac{40467}{233712112}c_{0,18}} ) ^{2}+
		\frac{564287369}{3331125}(c_{0,17}+{\frac{780435}{1128574738}c_{0,19}})^{2}+
		\frac{5083}{270}c_{0,12}^{2}\\&+
		\frac{7888991959}{186522000}c_{0,13}^{2}+
		\frac{1421461}{2275}c_{0,14}^{2}+
		\frac{242038614799}{15494976}c_{0,15}^{2}+ 
		\frac{263761553985963511}{1557048518172000}c_{0,18}^{2} \\&	+
		\frac{263761553985963511}{389866143242100}c_{0,19}^{2}+ 
		\frac{242038614799}{15494976}c_{0,20}^{2},
	\end{aligned}	
		\\
\begin{aligned} 
		\beta_1&=(c_{1,1} + \frac{1}{10}c_{1,6})^2 +(c_{0,2} + \frac{1}{10}c_{1,9})^2 +\frac{13}{3}(c_{1,3}^2+c_{1,5}^2 )+\frac76c_{1,4}^2 +\frac{781}{20}(c_{1,6}^2+c_{1,9}^2) +\frac{47}{10}(c_{1,7}^2+c_{1,8}^2),\\
		\beta_m&= c^2_{m,1} + c^2_{m,2},m=2,3,4,5,\\ 
	\end{aligned}
\end{align*}
where the coefficients $\{c_{m,n}\}$ 
are listed in Table \ref{secA:coe2D}.

\textbf{Step III.} Compute the nonlinear weights by
\begin{equation}\label{sec:2d_non_weights}
	\left\{ 
	\begin{aligned}
		&\omega^\text{H}_\ell= \frac{\bar{\omega}^\text{H}_\ell}{\bar{\omega}^\text{H}_0+\bar{\omega}^\text{H}_1}, \quad \mbox{with}~~ \bar{\omega}^\text{H}_\ell={\gamma}^\text{H}_\ell(1+\frac{\tau_0}{\beta_\ell+\varepsilon}),~ \ell=0,1,
		\\&                                                                   
		\omega^\text{L}_\ell= \frac{\bar{\omega}^\text{L}_\ell}{\bar{\omega}^\text{L}_1+\cdots+\bar{\omega}^\text{L}_5}, \quad  \mbox{with}~~ \bar{\omega}^\text{L}_\ell={\gamma}^\text{L}_\ell(1+\frac{\tau_0}{\beta_\ell+\varepsilon}),~ \ell=1,\cdots,5.
	\end{aligned}
	\right. 
\end{equation}
where  $\tau_0=(\beta_0-\beta_1)^2$ measures the difference between $\beta_0$ and $\beta_1$; $\tau_1=(\frac{\sum_{\ell=2}^{5}|\beta_1-\beta_\ell|}{4})^2$ measures the difference $\beta_1$ to $\beta_\ell, \ell=2,\ldots,5$. 
Finally, we obtain the piecewise polynomial solution $u_h(x,y,t)$ in the target cell $I_{i,j}$ by the nonlinear HWENO reconstruction 
\begin{equation*} 
	\begin{aligned}
		&{u_h(x,y,t)=\omega^\text{H}_0\bigg(\frac{1}{\gamma^\text{H}_0}p_0(x,y) - \frac{\gamma^\text{H}_1}{\gamma^\text{H}_0}\tilde{q}_1(x,y) \bigg) + \omega^\text{H}_1\tilde{q}_1(x,y),} 
	\end{aligned} 
\end{equation*}
where $\tilde{q}_1(x,y)=\omega^\text{L}_1\big(\frac{1}{\gamma^\text{L}_1}p_1(x,y) - \sum\limits_{m=2}^5\frac{\gamma^\text{L}_m}{\gamma^\text{L}_1}p_m(x,y) \big) + \sum\limits_{m=2}^5\omega^\text{L}_mp_m(x,y)$.   

\begin{table}[H]
	\centering
	\fontsize{10.5}{13}\selectfont
	\begin{threeparttable}
		\caption{The coefficients in \eqref{sec:2d_non_weights}.}  
		\begin{tabular}{cccccccccccccccccc}
			\toprule
			{Coefficients} & $m=0$ \cr
			\midrule	 
			$c_{m,0}$	&$\bar{u}_{{5}}$     \\	
			$c_{m,1}$	&$12\bar{v}_{{5}}$    \\	
			$c_{m,2}$	&$12\bar{w}_{{5}}$  \\	
			$c_{m,3}$	&${\frac {883(\bar{u}_{{1}}-2\bar{u}_{{2}}+\bar{u}_{{3}}+\bar{u}_{{7}}-8\bar{u}_{{8}}+\bar{u}_{{9}})}{3304}}+{\frac {363(\bar{u}_{{4}}-2\bar{u}_{{5}}+\bar{u}_{{6}})}{472}}+{\frac {663(\bar{v}_{{1}}-\bar{v}_{{3}}+\bar{v}_{{7}}-\bar{v}_{{9}})}{413}}+{\frac {2661(\bar{v}_{{4}}-\bar{v}_{{6}})}{1652}}-{\frac {3(\bar{w}_{{1}}-\bar{w}_{{2}}+\bar{w}_{{3}}+\bar{w}_{{7}}+2\bar{w}_{{8}}-\bar{w}_{{9}})}{1652}}
			$    \\	
			$c_{m,4}$	&$\frac{41(\bar{u}_{{1}}-\bar{u}_{{3}}-\bar{u}_{{7}}+\bar{u}_{{9}})}{76} + \frac{33(\bar{v}_{{1}}+\bar{v}_{{3}}-\bar{v}_{{7}}-\bar{v}_{{9}})}{19}+ \frac{33(\bar{w}_{{1}}-\bar{w}_{{3}}+\bar{w}_{{7}}-\bar{w}_{{9}})}{19}$   \\	
			$c_{m,5}$	&$\frac{883(\bar{u}_1+\bar{u}_3-2\bar{u}_4-2\bar{u}_6+\bar{u}_7+\bar{u}_9)}{3304}+\frac{363(\bar{u}_2-2\bar{u}_5+\bar{u}_8)}{472} -\frac{3(\bar{v}_1-\bar{v}_3-2\bar{v}_4+2\bar{v}_6+\bar{v}_7-\bar{v}_9)}{1652} +\frac{663(\bar{w}_1+\bar{w}_3-\bar{w}_7-\bar{w}_9)}{413}+\frac{2661(\bar{w}_2-\bar{w}_8)}{1652}$   \\	
			$c_{m,6}$	&$-\frac{595(\bar{u}_4-\bar{u}_6)}{324}-\frac{5(197(\bar{v}_4+\bar{v}_6)+1034\bar{v}_5)}{162}$    \\	
			$c_{m,7}$	&$-\frac{1695(\bar{u}_1-2\bar{u}_2+\bar{u}_3-\bar{u}_7+2\bar{u}_8-\bar{u}_9)}{2128} -\frac{135(\bar{v}_1-\bar{v}_3-\bar{v}_7+\bar{v}_9)}{56} -\frac{33(\bar{w}_1-2\bar{w}_2+\bar{w}_3+\bar{w}_7-2\bar{w}_8+\bar{w}_9)}{19}$    \\	
			$c_{m,8}$	&$-\frac{1695(\bar{u}_1-\bar{u}_3-2\bar{u}_4+2\bar{u}_6+\bar{u}_7-\bar{u}_9)}{2128} -\frac{33(\bar{v}_1+\bar{v}_3-2\bar{v}_4-2\bar{v}_6+\bar{v}_7+\bar{v}_9)}{19} -\frac{135(\bar{w}_1-\bar{w}_3-\bar{w}_7+\bar{w}_9)}{56}$    \\	
			$c_{m,9}$	&$-\frac{595(\bar{u}_2-\bar{u}_8)}{324}-\frac{5(197(\bar{w}_2+\bar{w}_8)+1034\bar{w}_5)}{162}$    \\	
			$c_{m,10}$	&$-\frac{95(\bar{u}_1-2\bar{u}_2+\bar{u}_3+\bar{u}_7-2\bar{u}_8+\bar{u}_9) -105(\bar{u}_4-2\bar{u}_5+\bar{u}_6)}{472} -\frac{290(\bar{v}_1-\bar{v}_3-\bar{v}_7+\bar{v}_9)}{236} +\frac{5(\bar{w}_1-2\bar{w}_2+\bar{w}_3-\bar{w}_7+2\bar{w}_8-\bar{w}_9)}{236}$   \\	
			$c_{m,11}$	&$-\frac{5(\bar{u}_1-\bar{u}_3-\bar{u}_7+\bar{u}_9)}{38} -\frac{30(\bar{v}_1+\bar{v}_3-\bar{v}_7+\bar{v}_9)}{19}$   \\	
			$c_{m,12}$	&$\frac{27(\bar{u}_1+\bar{u}_3+\bar{u}_7+\bar{u}_9)-54(\bar{u}_2+\bar{u}_4-2\bar{u}_5+\bar{u}_6+\bar{u}_8)}{118} -\frac{15(\bar{v}_1-\bar{v}_3-2\bar{v}_4+2\bar{v}_6+\bar{v}_7-\bar{v}_9)}{236} -\frac{15(\bar{w}_1-2\bar{w}_2+\bar{w}_3-\bar{w}_7+2\bar{w}_8-\bar{w}_9)}{236}$   \\	
			$c_{m,13}$	&$-\frac{5(\bar{u}_1-\bar{u}_3-\bar{u}_7+\bar{u}_9)}{38} -\frac{30(\bar{w}_1-\bar{w}_3+\bar{w}_7-\bar{w}_9)}{19}$   \\	
			$c_{m,14}$	&$-\frac{95(\bar{u}_1+\bar{u}_3-2\bar{u}_4-2\bar{u}_6+\bar{u}_7+\bar{u}_9)+105(\bar{u}_2-2\bar{u}_5+\bar{u}_8)}{472} +\frac{5(\bar{v}_1-\bar{v}_3-2\bar{v}_4+2\bar{v}_6+\bar{v}_7-\bar{v}_9)}{236} -\frac{290(\bar{w}_1+\bar{w}_3-\bar{w}_7-\bar{w}_9)+305(\bar{w}_2-\bar{w}_8)}{236}$   \\	
			$c_{m,15}$	&$\frac{35(\bar{u}_4-\bar{u}_6)}{36} +\frac{77(\bar{v}_4+\bar{v}_6)+266\bar{v}_5}{18}$   \\	
			$c_{m,16}$	&$\frac{5(\bar{u}_1-2\bar{u}_2+\bar{u}_3-\bar{u}_7+2\bar{u}_8-\bar{u}_9)}{16} +\frac{15(\bar{v}_1-\bar{v}_3-\bar{v}_7+\bar{v}_9)}{8}$   \\		
			$c_{m,17}$	&$\frac{5(\bar{u}_1-\bar{u}_3-2\bar{u}_4+2\bar{u}_6-\bar{u}_7-\bar{u}_9)}{38} +\frac{15(\bar{v}_1+\bar{v}_3-2\bar{v}_4-2\bar{v}_6+\bar{v}_7+\bar{v}_9)}{8}$   \\	
			$c_{m,18}$	&$\frac{5(\bar{u}_1-2\bar{u}_2+\bar{u}_3-\bar{u}_7+2\bar{u}_8-\bar{u}_9)}{38} +\frac{15(\bar{w}_1-2\bar{w}_2+\bar{w}_3+\bar{w}_7-2\bar{w}_8+\bar{w}_9)}{8}$   \\	
			$c_{m,19}$	&$\frac{5(\bar{u}_1-\bar{u}_3-2\bar{u}_4+2\bar{u}_6-\bar{u}_7-\bar{u}_9)}{16} +\frac{15(\bar{w}_1-\bar{w}_3-\bar{w}_7+\bar{w}_9)}{8}$   \\
			$c_{m,20}$	&$\frac{35(\bar{u}_2-\bar{u}_8)}{36} +\frac{77(\bar{w}_2+\bar{w}_8)+266\bar{w}_5}{18}$   \\		 
		\end{tabular}\label{secA:coe2D}	 
		\begin{tabular}{ccccccccccccccccc}
			\toprule
			{Coefficients} &$m=1$&& $m=2$&&$m=3$&&$m=4$&&$m=5$&& \cr
			\midrule	 
			$c_{m,0}$	&$\bar{u}_{5}$  &&$\bar{u}_{5}$  &&$\bar{u}_{5}$  &&$\bar{u}_{5}$ &&$\bar{u}_{5}$  \\	
			$c_{m,1}$	&$12\bar{v}_{{5}}$  &&${\bar{u}_5}-{\bar{u}_4}$  &&${\bar{u}_6}-{\bar{u}_5}$  &&${\bar{u}_5}-{\bar{u}_4}$ &&${\bar{u}_6}-{\bar{u}_5}$  \\	
			$c_{m,2}$	&$12\bar{w}_{{5}}$  &&${\bar{u}_5}-{\bar{u}_2}$  &&${\bar{u}_5}-{\bar{u}_2}$  &&${\bar{u}_8}-{\bar{u}_5}$ &&${\bar{u}_8}-{\bar{u}_5}$   \\	
			$c_{m,3}$	&$\frac{(\bar{u}_1+\bar{u}_3+\bar{u}_7+\bar{u}_9)-2(\bar{u}_2+\bar{u}_8)+3(\bar{u}_4-2\bar{u}_5+\bar{u}_6 )}{10}$  &  &  &   \\	
			$c_{m,4}$	&$\frac{\bar{u}_1+\bar{u}_3+\bar{u}_7+\bar{u}_9}{4}$  &  &  &   \\	
			$c_{m,5}$	&$\frac{(\bar{u}_1+\bar{u}_3+\bar{u}_7+\bar{u}_9)-2(\bar{u}_4+\bar{u}_6)+3(\bar{u}_2-2\bar{u}_5+\bar{u}_8 )}{10}$  &  &  &   \\	
			$c_{m,6}$	&$-\frac{5(\bar{u}_4-\bar{u}_6)}{11}-\frac{120\bar{v}_5}{11}$  &  &  &   \\	
			$c_{m,7}$	&$-\frac{\bar{u}_1+\bar{u}_3-\bar{u}_7-\bar{u}_9-2(\bar{u}_2-\bar{u}_8)}{4}$  &  &  &   \\	
			$c_{m,8}$	&$-\frac{\bar{u}_1-\bar{u}_3+\bar{u}_7-\bar{u}_9-2(\bar{u}_4-\bar{u}_6)}{4}$  &  &  &   \\	
			$c_{m,9}$	&$-\frac{5(\bar{u}_2-\bar{u}_8)}{11}-\frac{120\bar{w}_5}{11}$  &  &  &   \\	  	 
			\bottomrule
		\end{tabular}
	\end{threeparttable}	
\end{table} 	

\section{Constant matrices in \eqref{eq:2Djump} of Theorem \ref{thm:2dOE}}\label{sec:A3_Matrices}

The constant matrices $A^{(m)}$, $B^{(m)}$, and $C^{(m)}$ for $m=0,1$ are defined as 
 \begin{align*}
 	&A^{(0)}=\left[ \begin {array}{cccc} -{\frac{83711}{1721856}}&{\frac{213373}{
 			1721856}}&-{\frac{213373}{1721856}}&{\frac{83711}{1721856}}
 	\\ \noalign{\medskip}-{\frac{179273}{7748352}}&-{\frac{4144421}{
 			7748352}}&{\frac{4144421}{7748352}}&{\frac{179273}{7748352}}
 	\\ \noalign{\medskip}-{\frac{83711}{1721856}}&{\frac{213373}{1721856}}
 	&-{\frac{213373}{1721856}}&{\frac{83711}{1721856}}\end {array}
 	\right]
 	,\quad
 	A^{(1)}= \left[ \begin {array}{cccc} -{\frac{72361}{286976}}&{\frac{72361}{
 			286976}}&{\frac{72361}{286976}}&-{\frac{72361}{286976}}
 	\\ \noalign{\medskip}{\frac{471889}{1291392}}&-{\frac{471889}{1291392}
 	}&-{\frac{471889}{1291392}}&{\frac{471889}{1291392}}
 	\\ \noalign{\medskip}-{\frac{72361}{286976}}&{\frac{72361}{286976}}&{
 		\frac{72361}{286976}}&-{\frac{72361}{286976}}\end {array} \right],
 	\\
 	&B^{(0)}= \left[ \begin {array}{cccc} -{\frac{68215}{215232}}&{\frac{39895}{
 			215232}}&{\frac{39895}{215232}}&-{\frac{68215}{215232}}
 	\\ \noalign{\medskip}{\frac{165535}{968544}}&-{\frac{3677215}{968544}}
 	&-{\frac{3677215}{968544}}&{\frac{165535}{968544}}
 	\\ \noalign{\medskip}-{\frac{68215}{215232}}&{\frac{39895}{215232}}&{
 		\frac{39895}{215232}}&-{\frac{68215}{215232}}\end {array} \right]
 	,\quad
 	B^{(1)}=  \left[ \begin {array}{cccc} -{\frac{23771}{17936}}&-{\frac{21411}{
 			17936}}&{\frac{21411}{17936}}&{\frac{23771}{17936}}
 	\\ \noalign{\medskip}{\frac{164615}{80712}}&{\frac{7959}{8968}}&-{
 		\frac{7959}{8968}}&-{\frac{164615}{80712}}\\ \noalign{\medskip}-{\frac
 		{23771}{17936}}&-{\frac{21411}{17936}}&{\frac{21411}{17936}}&{\frac{
 			23771}{17936}}\end {array} \right],
 	\\
 	&C^{(0)}= \left[ \begin {array}{cccc} -{\frac{1541}{45312}}&{\frac{1541}{15104}
 	}&-{\frac{1541}{15104}}&{\frac{1541}{45312}}\\ \noalign{\medskip}0&0&0
 	&0\\ \noalign{\medskip}{\frac{1541}{45312}}&-{\frac{1541}{15104}}&{
 		\frac{1541}{15104}}&-{\frac{1541}{45312}}\end {array} \right] 
 	,\quad\quad\quad~
 	C^{(1)}=	 \left[ \begin {array}{cccc} {\frac{1665}{7552}}&-{\frac{1665}{7552}}&
 	-{\frac{1665}{7552}}&{\frac{1665}{7552}}\\ \noalign{\medskip}0&0&0&0
 	\\ \noalign{\medskip}-{\frac{1665}{7552}}&{\frac{1665}{7552}}&{\frac{
 			1665}{7552}}&-{\frac{1665}{7552}}\end {array} \right]
 	.
 \end{align*}

\end{appendix}

\bibliographystyle{abbrv}
\bibliography{Ref_OEHWENO}
 
\end{document}